\documentclass[a4paper,10pt,twoside,english]{scrartcl}
\usepackage{marginnote}
\usepackage[utf8x]{inputenc}
\usepackage{amsmath,amssymb,amsthm}
\usepackage{mathrsfs}  
\usepackage[unicode=true]{hyperref}
\usepackage[all]{hypcap}
\usepackage[T1]{fontenc}
\usepackage{lmodern}
\usepackage{graphicx}
\usepackage{enumerate}
\usepackage{setspace}
\usepackage{xspace} 
\usepackage{blkarray}
\usepackage[subnum]{cases}

\newcommand{\titel}{Local law for random Gram matrices}
\title{\titel} 
\author{
Johannes Alt\footnote{Partially funded by ERC Advanced Grant RANMAT No. 338804.} \addtocounter{footnote}{-1}\addtocounter{Hfootnote}{-1}\\
{\small \begin{tabular}{c}{IST Austria}\\{jalt@ist.ac.at} \end{tabular}} 
\and László Erd\H{o}s\footnotemark \addtocounter{footnote}{-1}\addtocounter{Hfootnote}{-1} 
\\{\small \begin{tabular}{c} IST Austria\\ lerdos@ist.ac.at\end{tabular}} 
\and Torben Krüger\footnotemark\\
{\small \begin{tabular}{c} IST Austria\\ torben.krueger@ist.ac.at\end{tabular}}
}
\date{}

\hypersetup{
	pdftitle={\titel},
	pdfauthor={Johannes Alt, László Erd\H{o}s, Torben Krüger}
}

\setlength{\oddsidemargin}{-1in}
\addtolength{\oddsidemargin}{2.0cm}
\setlength{\evensidemargin}{-1in}
\addtolength{\evensidemargin}{2.0cm}
\setlength{\topmargin}{-1in}
\addtolength{\topmargin}{1cm}
\setlength{\textwidth}{17.00cm} 
\setlength{\textheight}{24.5cm} 
\setlength{\headheight}{15mm}
\setlength{\headsep}{5mm}

\numberwithin{equation}{section}

\newcommand{\R}{\mathbb{R}}  
\ifdefined\C\renewcommand{\C}{\mathbb{C}}\else\newcommand{\C}{\mathbb{C}}\fi 
\renewcommand{\Im}{\mathrm{Im}\,} 
\renewcommand{\Re}{\mathrm{Re}\,} 
\renewcommand{\i}{\mathrm{i}} 
\newcommand{\N}{\mathbb{N}}  
\newcommand{\E}{\mathbb{E}}  
\renewcommand{\P}{\mathbb{P}}  
\newcommand{\di}{\mathrm{d}} 
\newcommand{\eps}{\varepsilon} 
\newcommand*{\defeq}{\mathrel{\vcenter{\baselineskip0.5ex \lineskiplimit0pt\hbox{\scriptsize.}\hbox{\scriptsize.}}}=}
\newcommand*{\defqe}{=\mathrel{\vcenter{\baselineskip0.5ex \lineskiplimit0pt\hbox{\scriptsize.}\hbox{\scriptsize.}}}}

\DeclareMathOperator{\supp}{supp}


\newcommand{\gon}{\ensuremath{g_{1}}}
\newcommand{\gtw}{\ensuremath{g_{2}}}
\newcommand{\mon}{\ensuremath{m_{1}}}
\newcommand{\mtw}{\ensuremath{m_{2}}}

\newcommand{\gone}[1]{\ensuremath{g_{1,#1}}}
\newcommand{\gtwo}[1]{\ensuremath{g_{2,#1}}}

\newcommand{\done}[1]{\ensuremath{d_{1,#1}}}
\newcommand{\dtwo}[1]{\ensuremath{d_{2,#1}}}

\newcommand{\df}{\mathfrak d}
\newcommand{\vf}{\mathfrak v}
\newcommand{\ff}{\mathfrak f}
\newcommand{\gf}{\mathfrak g}
\newcommand{\mf}{\mathfrak m}
\newcommand{\Bf}{\mathfrak B}
\newcommand{\Ff}{\mathfrak F}
\newcommand{\Mf}{\mathfrak M}
\newcommand{\Sf}{\mathfrak S}
\renewcommand{\sf}{\sigma}
\newcommand{\Xf}{H}

\newcommand{\Hb}{\mathbb H}

\newcommand{\normtwo}[1]{\lVert #1 \rVert_{2}}
\newcommand{\normtwoa}[1]{\left\lVert #1 \right\rVert_{2}}
\newcommand{\normtwoinf}[1]{\lVert #1 \rVert_{2\to\infty}}
\newcommand{\norminf}[1]{\lVert #1 \rVert_{\infty}}
\newcommand{\norminfa}[1]{\left\lVert #1 \right\rVert_{\infty}}

\DeclareMathOperator{\Gap}{Gap}

\newtheoremstyle{test}
  {}
  {}
  {\itshape}
  {}
  {\bfseries}
  {.}
  { }
  {}

\theoremstyle{test}
\newtheorem{defi}{Definition}[section]

\newtheorem{rem}[defi]{Remark}

\newtheorem{thm}[defi]{Theorem}
\newtheorem{lem}[defi]{Lemma}
\newtheorem{coro}[defi]{Corollary}
\newtheorem{pro}[defi]{Proposition}
\newtheorem*{rem*}{Remark}   
\newtheorem*{ex*}{Example}   
\newtheorem*{pro*}{Proposition} 
\newtheorem*{def*}{Definition}
\newtheorem*{coro*}{Corollary}
\newtheorem*{thm*}{Theorem}


\theoremstyle{test}
    \newtheorem{theorem}[defi]{Theorem}
    \newtheorem{proposition}[defi]{Proposition}
    \newtheorem{corollary}[defi]{Corollary}
    \newtheorem{lemma}[defi]{Lemma}
    \newtheorem{definition}[defi]{Definition}
    
    \newtheorem{convention}[defi]{Convention}
    \newtheorem{remark}[defi]{Remark}


\newcommand{\bels}[2] {
        \begin{equation} \label{#1} \begin{split} 
                #2 
        \end{split} \end{equation}
        }
\newcommand{\bes}[1]{
        \begin{equation*}  \begin{split} 
                #1 
        \end{split} \end{equation*}
        }

\newcommand{\rr}{\mathrm} 
\renewcommand{\cal}{\mathcal} 
 
\renewcommand{\frak}{\mathfrak} 
\newcommand{\wh}{\widehat}
\newcommand{\wt}{\widetilde}


\renewcommand{\P}{\mathbb{P}}


\newcommand{\ee}{\mathrm{e}} 
\newcommand{\ii}{\mathrm{i}} 
\newcommand{\dd}{\mathrm{d}}


\newcommand{\pb}[1]{\bigl({#1}\bigr)}

\newcommand{\abs}[1]{\lvert #1 \rvert}

\newcommand{\absa}[1]{\left\lvert #1 \right\rvert}

\newcommand{\norm}[1]{\lVert #1 \rVert}

\newcommand{\normbb}[1]{\bigg\lVert #1 \bigg\rVert}

\newcommand{\norma}[1]{\left\lVert #1 \right\rVert}

\newcommand{\avg}[1]{\langle #1 \rangle}

\newcommand{\avga}[1]{\left\langle #1 \right\rangle}

\newcommand{\scalar}[2]{\langle{#1} \mspace{2mu}, {#2}\rangle}

\newcommand{\scalara}[2]{\left\langle{#1} \,\mspace{2mu},\, {#2}\right\rangle}






\DeclareMathOperator{\diag}{diag}
\DeclareMathOperator{\tr}{Tr}

\DeclareMathOperator{\im}{Im}

\DeclareMathOperator{\dist} {dist}                
\DeclareMathOperator*{\spec}{\sigma}						

\newcommand{\1} {\mspace{1 mu}}
\newcommand{\2} {\mspace{2 mu}}
\newcommand{\msp}[1] {\mspace{#1 mu}}





\newcommand{\dimhard}{(E1)\xspace}
\newcommand{\zerohard}{(F1)\xspace}
\newcommand{\dimrect}{(E2)\xspace}
\newcommand{\zerorect}{(F2)\xspace}

\begin{document}

\maketitle
\vspace*{-1.4cm}

\begin{abstract}
We prove a local law in the bulk of the spectrum for random Gram matrices $XX^*$, a generalization of sample covariance matrices, where $X$ is a large
matrix with independent, centered entries with arbitrary variances. The limiting eigenvalue density that generalizes the Marchenko-Pastur law is determined by solving a system of 
nonlinear equations.  Our entrywise and averaged local laws are on the optimal scale with the optimal error bounds. They hold both
in the square case (hard edge) and in the properly rectangular case (soft edge). In the latter case we also establish a macroscopic gap away from zero in the spectrum of $XX^*$.
\end{abstract}

\noindent \emph{Keywords:} Capacity of MIMO channels, Marchenko-Pastur law, hard edge, soft edge, general variance profile\\
\textbf{AMS Subject Classification:} 60B20, 15B52

\section{Introduction}

Random matrices were introduced in pioneering works by Wishart \cite{WISHART1928} and Wigner \cite{Wigner1955} for applications in mathematical statistics and nuclear physics, respectively. 
Wigner argued that the energy level statistics of large atomic nuclei could be described by the eigenvalues of a large 
\emph{Wigner matrix}, i.e., a hermitian matrix $H=(h_{ij})_{i,j=1}^N$
with centered, identically distributed and independent entries (up to the symmetry constraint $H = H^*$).
He proved 
that the empirical spectral measure (or density of states) converges to the \emph{semicircle law} as the dimension of the matrix $N$
goes to infinity. 
Moreover, he postulated that the statistics of the gaps between 
consecutive eigenvalues depend only on the symmetry type of the matrix and 
are independent of the distribution of the entries in the large $N$ limit.
 The precise formulation of this
 phenomenon is called the \emph{Wigner-Dyson-Mehta universality conjecture}, see  \cite{mehta2004random}.

Historically, the second main class of random matrices is the one of \emph{sample covariance matrices}. These are
of the form  $XX^*$ where  $X$ is a $p \times n$ matrix with  centered, identically distributed
 independent entries. In statistics context, its columns contain $n$ samples of a $p$-dimensional  data vector. 
In the regime of high dimensional
  data,   i.e., in the limit when $n, p\to \infty$ in such a way that
 the ratio $p/n$ converges to a constant,   the 
empirical spectral measure of $XX^*$ was explicitly  identified by
Marchenko and Pastur \cite{MarcenkoPastur1967}.
Random matrices of the form $XX^*$ also appear in 
the theory of wireless communication; 
the spectral density of these matrices is used to compute the transmission capacity 
of a  Multiple Input Multiple Output (MIMO)  channel. 
This fundamental connection between random matrix theory and wireless communication was established by Telatar \cite{ETT:ETT4460100604} and 
Foschini \cite{Foschini1998,BLTJ:BLTJ2015} (see also \cite{TulinoVerdu} for a review). 
In this model, the element $x_{ij}$ of the \emph{channel matrix} $X$  represents the transmission coefficient
from the $j$-th  transmitter to the $i$-th receiver antenna. The received signal is given by the linear
relation $y= Xs + w$, where $s$  is the input signal and $w$ is a Gaussian noise with variance $\sigma^2$.
 In case of i.i.d. Gaussian input signals, the  channel capacity is given by
\begin{equation}\label{capacity}
  \text{Cap}=   \frac{1}{p} \log\det \Big( I + \sigma^{-2} XX^*\Big).
\end{equation}

The assumption in these models that the matrix elements of $H$ or $X$ have identical distribution is a simplification
that  does not hold in many applications. In Wigner's model, the  matrix elements  $h_{ij}$ represent random
quantum transition rates  between physical states labelled by $i$ and $j$ and their distribution
may depend on these states. Analogously, the transmission coefficients in $X$ may have different distributions.  
This leads to the natural generalizations of both classes of random matrices by allowing
for  general variances, $s_{ij} \defeq \E\abs{h_{ij}}^2$ and $s_{ij} \defeq 
\E \abs{x_{ij}}^2$ , respectively.  We will  still assume the independence of the matrix elements
and their zero expectation. Under  mild conditions on the variance matrix $S= (s_{ij})$, the limiting spectral measure   
depends only on the second moments, i.e., on $S$, and otherwise it is independent of the
fine details of the distributions of the matrix elements. However, in general there is no explicit formula for the limiting spectral measure.
In fact, the only known way to find it in the general case is to solve 
a system of nonlinear deterministic equations, known as the Dyson (or Schwinger-Dyson) equation in this context,
see \cite{Berezin1973, Wegner1979, Girko2001, KhorunzhyPastur1994}.

For the generalization of Wigner's model, the Dyson equation is a system of equations of the form
\begin{equation}
\label{eq:QVE}
-\frac{1}{m_i(z)} = z + \sum_{j=1}^Ns_{ij}m_j(z), \quad \text{ for } i=1, \ldots, N, \qquad z \in \Hb,
\end{equation}
where $z$ is a complex parameter in the upper half plane $\Hb \defeq \{ z\in \C\; : \; \im z>0\}$.
The average $\avg{m(z)}=\frac{1}{N}\sum_i m_i(z)$ in the large $N$ limit gives
the Stieltjes transform of the limiting spectral density, which then can be computed
by inverting the Stieltjes transform. 
In fact, $m_i(z)$ approximates individual diagonal matrix elements $G_{ii}(z)$ of the resolvent
$G(z) = (H-z)^{-1}$, thus the solution of \eqref{eq:QVE} gives much more information on $H$
than merely the spectral density.
In the case when $S$ is a stochastic matrix, i.e., $\sum_j s_{ij}=1$ for every $i$, the solution  $m_i(z)$ to 
\eqref{eq:QVE} is independent of $i$ and the density is still the semicircle law. The corresponding
generalized Wigner matrix was introduced in \cite{EYYbulk} and the optimal local law was proven in  \cite{EYYBern, EYYrig}.
 For the general case, a detailed analysis of \eqref{eq:QVE} and the shapes of the possible density profiles was given
in \cite{AjankiQVE, AjankiCPAM} with the optimal local law  in   \cite{Ajankirandommatrix}.

Considering the $XX^*$
 model with a general variance matrix for $X$, we note that
in statistical applications the entries of  $X$ within the same row still have the same variance, i.e., 
$s_{ik} = s_{il}$ for all $i$ and all $k,l$. However, beyond statistics, for example modeling the capacity
of MIMO channels, applications require
to analyze the spectrum of $XX^*$ with a completely general variance profile for $X$ \cite{hachem2007,wirelesscommunication}.
These are called  \emph{random Gram matrices}, see e.g. \cite{Girko2001, Hachem2008IEEE}. The corresponding Dyson equation is (see \cite{Girko2001, wirelesscommunication,TulinoVerdu} and references therein)
\begin{equation} \label{eq:self_consistent_Stieltjes_Gram}
 - \frac{1}{m_i(\zeta)} = \zeta - \sum_{k=1}^n s_{ik} \frac{1}{1 + \sum_{j=1}^n s_{jk} m_j(\zeta)}, \quad \text{ for } i=1, \ldots, p, 
 \qquad \zeta \in \Hb.
\end{equation}
We have $m_i(\zeta) \approx (XX^*-\zeta)^{-1}_{ii}$ and the average of $m_i(\zeta)$ 
yields the Stieltjes transform of the spectral density exactly as in case of  the Wigner-type ensembles.
In fact, there is a direct link between these two models: 
Girko's symmetrization trick reduces \eqref{eq:self_consistent_Stieltjes_Gram} to studying \eqref{eq:QVE} 
on $\C^{N}$ with $N=n+p$, where $S$ and $H$ are replaced by 
\begin{equation}\label{Girko}
 \Sf = \begin{pmatrix} 0 & S \\ S^t & 0 \end{pmatrix}, \quad H = \begin{pmatrix} 0 & X \\ X^* & 0 \end{pmatrix},
 \end{equation}
respectively, and $z^2=\zeta$.

The limiting spectral density, also called the \emph{global law}, is typically the first question one asks about random matrix ensembles.
It can be  strengthened by considering its \emph{local} versions. In most cases, it is expected that
the deterministic density computed via the Dyson equation accurately  describes the eigenvalue
density down to the smallest possible scale which is slightly above the typical eigenvalue spacing
(we choose the  standard  normalization such that the spacing in the bulk spectrum is of order $1/N$).
This requires to understand the  trace of the resolvent $G(z)$ at a spectral parameter very close to the real axis, down to the scales
$\im z \gg 1/N$. 
Additionally, \emph{entry-wise local laws} and \emph{isotropic local laws}, i.e.,
controlling individual matrix elements $G_{ij}(z)$ and bilinear forms $\langle v, G(z) w\rangle$, carry important information on
eigenvectors and allow for perturbation theory. 
Moreover,  effective error bounds on the speed of convergence as $N$ goes to infinity are also of great interest.

Local laws have also played a crucial role in the recent proofs of the Wigner-Dyson-Mehta conjecture.
The three-step approach, developed in a series of works by Erd{\H o}s, Schlein, Yau and Yin
\cite{ErdosSchleinYau2011,erdoes_relaxation_flow_2012} (see \cite{ErdoesYau2012} for a review),
was based on establishing the local law as the first step. Similar input was necessary in the alternative
approach by Tao and Vu in \cite{TaoVu2011_Acta,tao2012}.

In this paper, we establish the optimal local law for random Gram matrices with  a general variance matrix $S$
in the bulk spectrum;  edge analysis and local spectral universality is deferred to a forthcoming work.
We show that the empirical spectral measure of $XX^*$ can be approximated by a deterministic measure $\nu$ 
on $\R$ with a continuous density away from zero and possibly a point mass at zero.  The convergence holds 
locally down to the smallest possible scale and with an optimal speed of order $1/N$.
In the special case when $X$ is a square matrix, $n=p$, the  measure $\nu$ does not have a point mass 
but the density has an inverse square-root singularity at zero (called the \emph{hard edge case}).
In the \emph{soft edge case}, $n\ne p$, the continuous part of $\nu$ is supported away from zero
and it has a point mass of size $1-n/p$ at zero if $p>n$. All these features are well-known for the 
classical Marchenko-Pastur setup, but in the general case we need to demonstrate them without 
any explicit formula.

We now summarize some  previous related results on  Gram matrices. If each entry of $X$ has the same variance,
local Marchenko-Pastur laws have  first been proved in 
 \cite{erdoes_relaxation_flow_2012,pillai2014} for the soft edge case; and in \cite{Cacciapuoti2013,Bourgarde2014} for the hard edge case. 
The isotropic local law was given  in \cite{EJP3054}.  
Relaxing the assumption of identical variances  to a doubly stochastic variance matrix of $X$,
the optimal  local Marchenko-Pastur law has been established in \cite{ECP3121} for the hard edge case.
Sample correlation matrices in the soft edge case were considered in \cite{bao2012}.

Motivated by the linear model in multivariate statistics  and to depart from the identical distribution,
random matrices of the form $TZ Z^*T^*$  have been extensively
 studied where $T$ is a deterministic  matrix and the entries of $Z$ 
 are independent, centered and have unit variance.  If $T$ is diagonal, then 
they are generalizations of sample covariance matrices as $TZ Z^*T^*=XX^*$ and the elements of
 $X = TZ$ are also independent.
 With this definition, all entries within one row of $X$ have the same variance since 
$s_{ij}=\E \abs{x_{ij}}^2 = (TT^*)_{ii}$, i.e., it is a special case of our random Gram matrix.
In this case the Dyson system of equations  \eqref{eq:self_consistent_Stieltjes_Gram} 
can be reduced to a single equation for the average $\langle m(z)\rangle$, i.e.,
 the limiting density can still be obtained from a \emph{scalar self-consistent equation}.
This is even true for  matrices of the form  $XX^*$ with $X = TZ\widetilde T$, where both $T$ and $\widetilde T$
 are deterministic, investigated for example in \cite{couillet2014}. 
For general $T$ the elements of $X=TZ$ are not independent, so general  sample covariance
matrices are typically not Gram matrices. 
The global law for  $TZ Z^*T^*$ has been proven by Silverstein and Bai in \cite{SILVERSTEIN1995175}. Knowles and Yin showed optimal local laws for a general deterministic  $T$ in \cite{Knowles2014}. 

Finally, we review some existing results on  random Gram matrices with general variance $S$,
when  \eqref{eq:self_consistent_Stieltjes_Gram}  cannot be reduced to a simpler scalar equation.  
The global law, even with nonzero expectation of $X$, has been determined by Girko \cite{Girko2001} via \eqref{eq:self_consistent_Stieltjes_Gram}
who also established  the existence and uniqueness of the solution to \eqref{eq:self_consistent_Stieltjes_Gram}.
More recently, motivated by the theory of wireless communication, Hachem, Loubaton and Najim  initiated a rigorous study of the 
asympotic behaviour of the channel capacity \eqref{capacity} with a general variance matrix $S$   \cite{Hachem2006649,hachem2007},
 This required to establish the global law under more general conditions than Girko;  see also \cite{Hachem2008IEEE} for a review from the point of view of applications. Hachem \emph{et. al.} have also 
established Gaussian fluctuations of the channel capacity \eqref{capacity} around a deterministic limit in \cite{hachem2008}
 for the centered case. For a nonzero expectation of $X$,  a similar result  was obtained in \cite{HachemCLT}, where $S$
  was restricted to a product form. 
Very recently in  \cite{GramMatrix}, a special  $k$-fold clustered  matrix $XX^*$ was considered,
where the samples came from $k$ different clusters with possibly different distributions. 
The Dyson equation  in this case reduces to a  system of $k$  equations.
In an information-plus-noise model of the form $(R + X)(R+ X)^*$,
the effect of adding a noise matrix to $X$ with identically distributed entries was studied knowing the limiting density of $RR^*$ \cite{DOZIER2007678}.

In all previous works concerning general Gram matrices, the spectral parameter $z$ was fixed,
in particular $\Im z$ had a positive lower bound independent of the dimension of the matrix.
Technically, this positive imaginary part provided the necessary contraction factor 
in the fixed point argument that led to the existence, uniqueness and stability of the solution
to the Dyson equation, \eqref{eq:self_consistent_Stieltjes_Gram}. For local laws down to the optimal scales $\Im z\gg 1/N$,
the regularizing effect of $\Im z$  is too weak. In the bulk spectrum $\Im z$ is effectively replaced with
the local density, i.e., with the average imaginary part $\Im \langle m(z) \rangle$. The main difficulty with this heuristics is its apparent
circularity: the yet unknown solution  itself is necessary for regularizing the equation. 
This problem is present in all existing proofs of any local law.  This circularity is broken 
by separating the analysis into three  parts. First, we analyze the behavior of the solution $m(z)$ as
$\im z \to 0$. Second, we show that the solution is stable under small perturbations of the equation
and the stability is provided by $\Im \langle m(E+i0) \rangle$ for any energy $E$ in the bulk spectrum. Finally, we show that the diagonal
elements of the resolvent of the random matrix satisfy a perturbed version of \eqref{eq:self_consistent_Stieltjes_Gram},
where the perturbation is controlled by large deviation estimates. Stability then provides the local law.

While this program could be completed directly for the Gram matrix and its Dyson equation \eqref{eq:self_consistent_Stieltjes_Gram},
the argument appears much shorter if we used Girko's linearization \eqref{Girko} to reduce the problem 
to a Wigner-type matrix and use the comprehensive analysis of \eqref{eq:QVE} from \cite{AjankiQVE, AjankiCPAM}
and the  local law from \cite{Ajankirandommatrix}.
There are two major obstacles to this naive approach. 

First,  the results of \cite{AjankiQVE, AjankiCPAM} 
are not applicable as $\Sf$ does not satisfy the uniform primitivity assumption imposed in these papers
(recall that
a matrix $A$ is primitive if there is a  positive integer $L$ such that all entries of  $A^L$ are strictly positive).
This property is crucial for many proofs in \cite{AjankiQVE, AjankiCPAM} but $\Sf$
in \eqref{Girko} is a typical example of a nonprimitive matrix. 
It is not a mere technical subtlety, in fact in the current paper, 
the stability estimates of \eqref{eq:QVE} require a  completely different treatment, culminating
in the key technical bound, the Rotation-Inversion lemma (see Lemma~\ref{lem:bulk_stability} later).

Second, Girko's transformation is singular around $z\approx 0$ since it involves a  $z^2=\zeta$ change in the spectral parameter.
This accounts for the singular behavior near zero
in the limiting density for Gram matrices,  while the corresponding Wigner-type matrix
has no singularity at zero. Thus, we need to perform a more accurate  analysis near zero.
  If $p\neq n$, the soft edge case, we derive and analyze two new equations for the 
first  coefficients in the expansion of $m$ around zero. Indeed, the solutions to these new
equations describe the point mass at zero and provide information about the gap above zero in the support of the approximating measure. 
In the hard edge case, $n=p$, an additional symmetry allows us to exclude a point mass at zero.

\emph{Acknowledgement:} The authors thank Zhigang Bao for  helpful discussions. 

\paragraph{Notation}
For vectors $v, w \in \C^l$, the operations product and absolute value are defined componentwise, i.e., $vw=(v_i w_i)_{i=1}^l$ and $\abs{v} = (\abs{v_i})_{i=1}^l$. 
Moreover, for $w \in (\C \setminus \{0\})^l$, we set $1/w\defeq (1/w_i)_{i=1}^l$.
For vectors $v, w \in \C^l$, we define $\avg{w} = l^{-1} \sum_{i=1}^l w_i$, $\scalar{v}{w} = l^{-1} \sum_{i=1}^l \overline{v_i} w_i$, $\norm{w}_2^2 = l^{-1} \sum_{i=1}^l \abs{w_i}^2$ and $\norm{w}_\infty = \max_{i=1, \ldots, l} \abs{w_i}$, $\norm{v}_1 \defeq \avg{\abs{v}}$. Note that $\avg{w} = \scalar{1}{w}$ where we used the convention that $1$ also denotes the vector $(1,\ldots, 1) \in \C^l$. 
For a matrix $A \in \C^{l \times l}$, we use the short notations $\norminf{A} \defeq \norm{A}_{\infty \to \infty}$ and $\normtwo{A} \defeq \norm{A}_{2 \to 2}$ if the domain and the target are equipped with the same norm
whereas we use $\normtwoinf{A}$ to denote the matrix norm of $A$ when it is understood as a map $(\C^l, \norm{\cdot}_2) \to (\C^l, \norm{\cdot}_\infty)$. 

\section{Main results}

Let $X=(x_{ik})_{i,k}$ be a $p \times n$ matrix with independent, centered entries and variance matrix $S = (s_{ik})_{i,k}$, i.e., 
\[\E x_{ik} = 0, \quad s_{ik} \defeq \E \abs{x_{ik}}^2\]
for $i =1, \ldots, p$ and $k=1, \ldots, n$. 

\vspace*{0.5cm}
\noindent \textbf{Assumptions}: 
\begin{enumerate}[(A)]
\item The variance matrix $S$ is \emph{flat}, i.e., there is $s_*>0$ such that 
\[ s_{ik} \leq \frac{s_*}{p+n} \]
for all $i=1, \ldots, p$ and $k=1, \ldots, n$.
\item There are $L_1, L_2\in \N$ and $\psi_1, \psi_2 >0$ such that 
\[ [(SS^t)^{L_1}]_{ij} \geq \frac{\psi_1}{p+n}, \quad [(S^tS)^{L_2}]_{kl} \geq \frac{\psi_2}{p+n} \]
for all $i,j=1,\ldots, p$ and $k, l =1, \ldots, n$.
\item All entries of $X$ have bounded moments in the sense that
there are $\mu_m >0$ for $m \in \N$ such that 
\[ \E \abs{x_{ik}}^m \leq \mu_m s_{ik}^{m/2}\]
for all $i=1, \ldots, p$ and $k = 1, \ldots, n$. 
\item The dimensions of $X$ are comparable with each other, i.e., there are constants $r_1, r_2 >0$ such that 
\begin{equation*}
   r_1 \leq \frac p n\leq r_2.
\end{equation*}
\end{enumerate}

In the following, we will assume that $s_*$, $L_1$, $L_2$, $\psi_1$, $\psi_2$, $r_1$, $r_2$ and the sequence $(\mu_m)_m$ are fixed constants 
which we will call, together with some constants introduced later, \emph{model parameters}. The constants in all our estimates will depend on the model parameters 
without further notice. We will use the notation $f \lesssim g$ if there is a constant $c>0$ that depends on the model parameter only such that $f \leq cg$ and their counterparts $f \gtrsim g$ if 
$g \lesssim f$ and $f \sim g$ if $f \lesssim g$ and $f \gtrsim g$. 
The model parameters will be kept fixed whereas the parameters $p$ and $n$ are large numbers which will eventually be sent to infinity. 

We start with a theorem about the deterministic density. 

\newcommand{\dens}{\ensuremath{\pi}}
\newcommand{\poma}{\ensuremath{\pi_*}}

\begin{thm}  \label{thm:XX_star_Stieltjes_transform}
\begin{enumerate}[(i)]
\item If (A) holds true, then there is a unique holomorphic function $m \colon \Hb \to \C^p$ satisfying 
\begin{equation} \label{eq:m_equation}
-\frac{1}{m(\zeta)} = \zeta - S \frac{1}{1 + S^t m (\zeta) } 
\end{equation}
for all $\zeta \in \Hb$ such that $\Im m(\zeta) >0$ for all $\zeta \in \Hb$. Moreover, there is a probability measure $\nu$ on $\R$ whose support is contained in $[0,4s_*]$ such that 
\begin{equation}
\avg{m(\zeta)} = \int_\R \frac{1}{\omega-\zeta} \nu(\dd \omega)
\end{equation}
for all $\zeta \in \Hb$. 
\item Assume (A), (B) and (D). The measure $\nu$ is absolutely continuous wrt. the Lebesgue measure apart from a possible point mass at zero, i.e., there are a number $\poma \in [0,1]$ and a locally 
Hölder-continuous function $\dens\colon (0,\infty) \to [0,\infty)$ such that $\nu(\dd \omega) = \poma \delta_0(\dd \omega) + \dens(\omega) \mathbf 1(\omega >0)  \dd \omega$.
\end{enumerate}
\end{thm}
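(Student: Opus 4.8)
The plan is to reduce everything to \eqref{eq:m_equation} and, via Girko's linearization \eqref{Girko}, to the quadratic vector equation \eqref{eq:QVE}, isolating the two places --- non-primitivity of $\Sf$ and the singularity at $\zeta=0$ --- where the standard theory must be adapted.

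\emph{Part (i).} For $\Im\zeta>s_*$ the map $m\mapsto-\bigl(\zeta-S(1+S^tm)^{-1}\bigr)^{-1}$ is a holomorphic self-map of the bounded convex domain $\{m\in\C^p:\Im m_i>0,\ \abs{m_i}<R\}$ (any $R\in(1/\Im\zeta,1/s_*)$) whose image lies at positive distance from the boundary, so by the Earle--Hamilton theorem it has a unique fixed point, giving a solution $m(\zeta)\in\Hb^p$ of \eqref{eq:m_equation}. That $m$ is defined on all of $\Hb$ follows from Girko's linearization \eqref{Girko}: with $N=n+p$, the equation \eqref{eq:QVE} with $\Sf$ in place of $S$ has a unique solution $\mathfrak m=(\mathfrak m_1,\mathfrak m_2)\colon\Hb\to\Hb^N$ (boundedness of $\Sf$ suffices here), and eliminating $\mathfrak m_2$ shows that $\zeta\mapsto\mathfrak m_1(\sqrt\zeta)/\sqrt\zeta$ --- with the branch $\sqrt{\genarg}\colon\Hb\to\{\Re z>0,\ \Im z>0\}$ --- solves \eqref{eq:m_equation}, hence agrees with $m$ where both are defined and extends it to $\Hb$. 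The positivity $\Im m_i>0$ holds on all of $\Hb$: the set where it holds is open, contains a neighbourhood of $\infty$, and is closed in $\Hb$ because, taking imaginary parts in \eqref{eq:m_equation}, one gets the identity $(\mathcal D-\mathcal B)\,\Im m=\Im\zeta\cdot(1,\dots,1)$ with $\mathcal D=\diag(\abs{m_i}^{-2})$ positive diagonal and $\mathcal B=S\,\diag(\abs{1+(S^tm)_k}^{-2})\,S^t$ entrywise non-negative, so that at any $\zeta\in\Hb$ where $\Im m\ge 0$ componentwise no component of $\Im m$ can actually vanish; connectedness of $\Hb$ finishes it, and uniqueness on $\Hb$ follows from the $\Im\zeta>s_*$ case by the identity theorem. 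Since \eqref{eq:m_equation} forces $\zeta\avg{m(\zeta)}\to-1$ as $\zeta\to\infty$, the Nevanlinna function $\avg m$ equals $\int_\R(\omega-\zeta)^{-1}\nu(\dd\omega)$ for a probability measure $\nu$; and flatness gives $\normtwo{\Sf}\le s_*$, so $\mathfrak m$ extends holomorphically across $\R\setminus[-2\sqrt{s_*},2\sqrt{s_*}]$ and hence $\avg m$ across $\R\setminus[0,4s_*]$, i.e.\ $\supp\nu\subseteq[0,4s_*]$.

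\emph{Part (ii).} It suffices to prove that $\nu$ restricted to $(0,\infty)$ is absolutely continuous with a locally Hölder-continuous density. Indeed, by (i) $\supp\nu\subseteq[0,\infty)$, so $\nu=\nu|_{\{0\}}+\nu|_{(0,\infty)}$, where $\nu|_{\{0\}}=\poma\delta_0$ with $\poma=\nu(\{0\})\in[0,1]$ ($\nu$ being a probability measure) and $\nu|_{(0,\infty)}$ supplies the density $\dens$. To obtain this I would show that $m$ extends to a locally Hölder-continuous map on $\Hb\cup(0,\infty)$ and put $\dens(\omega)\defeq\tfrac1\pi\Im\avg{m(\omega+\ii0)}$. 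Since $m(\zeta)=\mathfrak m_1(\sqrt\zeta)/\sqrt\zeta$ and $\sqrt{\genarg}$ is bi-Lipschitz on sets bounded away from $0$, this reduces to the regularity of $\mathfrak m_1$ up to $\R\setminus\{0\}$ --- the content of the regularity theory of \cite{AjankiQVE,AjankiCPAM} --- with the essential caveat that $\Sf$ is not uniformly primitive ($\Sf^k$ has the checkerboard zero pattern for every odd $k$), so that theory does not apply off the shelf. Here assumption (B) enters: positivity of $(SS^t)^{L_1}$ and $(S^tS)^{L_2}$ is equivalent to $\Sf^{2L}$, $L\defeq\max\{L_1,L_2\}$, having uniformly positive diagonal blocks, and, as long as one stays away from $\zeta=0$, this suffices to re-establish the two ingredients of the argument: the a priori bounds (boundedness of $\mathfrak m$ and comparability of every component of $\Im\mathfrak m$ with $\Im\avg{\mathfrak m}$, the two blocks being coupled through one application of $\Sf$) and the stability of \eqref{eq:QVE} (invertibility of $1-\mathfrak m^2\Sf$ on the relevant subspace with a norm bound controlled by $\Im\avg{\mathfrak m}$). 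Assumption (D) keeps these constants uniform --- under flatness alone a degenerate ratio $p/n$ would make the density collapse. From the a priori bounds and stability, the Hölder extension of $\mathfrak m_1$, hence of $m$ and of $\dens$, follows as in \cite{AjankiCPAM}: where $\Im\avg{m(\omega+\ii0)}>0$ the density is real analytic, and near the finitely many edges in $(0,\infty)$ it vanishes in a Hölder manner.

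The main obstacle is precisely this loss of uniform primitivity: the comparability $\Im\mathfrak m_i\sim\Im\avg{\mathfrak m}$ that underpins the whole regularity theory cannot be quoted and must be re-derived from (B), with the two blocks treated asymmetrically ($\mathfrak m_1$ controlled through $SS^t$, $\mathfrak m_2$ through $S^tS$, coupled by one $S$-step), together with the matching invertibility-with-norm-bound for $1-\mathfrak m^2\Sf$; this is the substitute, away from $\zeta=0$, for the Rotation-Inversion lemma. The finer behaviour at $\zeta=0$ --- the value of $\poma$, the inverse-square-root blow-up of $\dens$ as $\omega\downarrow0$ in the square case (which is why ``locally Hölder on $(0,\infty)$'' is the natural formulation), and the gap above $0$ in the rectangular case --- needs a separate expansion of $m$ near $\zeta=0$; by the reduction above, none of it is required for part (ii) as stated.
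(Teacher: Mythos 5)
Your part (i) is essentially correct, and it takes a partly different route from the paper: the paper also constructs $m(\zeta)=M_1(\sqrt\zeta)/\sqrt\zeta$ from the QVE \eqref{eq:combined_QVE}, but it obtains $\Im m>0$ from the symmetry of the measures representing $\Mf$ and proves uniqueness by mapping an arbitrary solution of \eqref{eq:m_equation} back to a QVE solution with positive imaginary part (which requires first showing $\supp\nu_x\subset[0,\infty)$ via \eqref{eq:support_in_nonnegative_reals}), whereas your Earle--Hamilton fixed point for $\Im\zeta>s_*$ plus the identity theorem is a clean, more self-contained substitute for the uniqueness step, and your open--closed argument for positivity works (you only need the set to be nonempty, e.g.\ $\zeta=\ii T$ with $T$ large; ``a neighbourhood of $\infty$'' is more than you can easily justify, since $\Im(-1/\zeta)$ degenerates for $\zeta=T+\ii\eps$). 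One imprecision: a holomorphic extension of $\avg m$ across $(-\infty,0)$ does not by itself give $\nu((-\infty,0))=0$; you need the boundary values there to be real, which follows from $\Mf$ being purely imaginary on $\ii(0,\infty)$, i.e.\ from the symmetry of the representing measures (which in fact shows directly that each $m_i$ is the Stieltjes transform of the pushforward of that measure under $t\mapsto t^2$).

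The genuine gap is in part (ii). You correctly reduce it to Hölder regularity of $\Mf$ away from $z=0$ and correctly note that \cite{AjankiQVE,AjankiCPAM} cannot be quoted because $\Sf$ is not primitive, but you then only \emph{assert} that (B) ``suffices to re-establish'' the stability input, namely invertibility of $1-\Mf^2\Sf$ with a norm bound controlled by $\avg{\Im\Mf}$. That assertion is exactly the hard point, and no argument is offered. The obstruction is structural: because of the bipartite block form, the saturated operator $\Ff(z)=\abs{\Mf}\Sf(\abs{\Mf}\,\cdot\,)$ has symmetric spectrum, so besides the Perron--Frobenius vector $\ff=(f_1,f_2)$ there is $\ff_-=(f_1,-f_2)$ with eigenvalue $-\normtwo{\Ff}$ and hence $\Gap(\Ff)=0$; the standard QVE argument, which bounds the inverse of $\Bf$ by a spectral gap of $\Ff$ together with $1-\normtwo{\Ff}$, fails precisely in the direction $\ff_-$, and no positivity of $(SS^t)^{L_1}$, $(S^tS)^{L_2}$ removes this zero gap. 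The paper's substitute is the Rotation-Inversion lemma (Lemma \ref{lem:bulk_stability}), which controls the inverse of the full block operator in terms of $\Gap(AA^*)$ and $\abs{1-\normtwo{A^*A}\scalar{v_1}{U_1v_1}\scalar{v_2}{U_2v_2}}$, combined with the spectral gap of $F(z)F(z)^t$ from Lemma \ref{lem:hat_F} and the phase computation in Lemma \ref{lem:bound_B_inverse} showing that this quantity is $\gtrsim_\delta(\Re z)^2\avg{\Im\Mf}^{\kappa}$; the Hölder continuity of $\avg{\Mf}$ (Lemma \ref{lem:continuity_mf}), on which part (ii) rests, is derived from exactly this bound. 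Note also that this machinery is needed precisely in the regime $\delta\le\abs{z}\le 10$ that you use, so it is not merely a $\zeta\approx0$ device as your closing paragraph suggests. Until you prove such a stability bound (or an equivalent replacement), your part (ii) is a plan rather than a proof.
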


Part (i) of this theorem has already been proved in \cite{hachem2007} and we will see that it also follows directly from \cite{AjankiQVE,AjankiCPAM}.
We included this part only for completeness.  Part (ii) is a new result. 

For $\zeta \in \C\setminus \R$, we denote the resolvent of $XX^*$ at $\zeta$ by \[R(\zeta) \defeq (XX^*-\zeta)^{-1}\] and its entries by $R_{ij}(\zeta)$ for $i, j =1, \ldots, p$. 

We state our main result, the local law, i.e., optimal estimates on the resolvent $R$, both in entrywise and in averaged form. 
In both cases, we provide different estimates when the real part of the spectral parameter $\zeta$ is in the bulk and when it is away from the spectrum.
As there may be many zero eigenvalues, hence, a point mass at zero in the density $\nu$, our analysis for spectral parameters $\zeta$ in the vicinity of zero 
requires a special treatment.  
We thus first prove the local law under the general assumptions (A) -- (D) for $\zeta$ away from zero. Some additional 
assumptions in the following subsections will allow us to extend our arguments to all $\zeta$. 

All of our results are uniform in the spectral parameter $\zeta$ which is contained in some spectral domain 
\begin{equation} \label{eq:def_spectral_domain}
\mathbb D_{\delta} \defeq \{ \zeta \in \Hb \colon \delta \leq \abs{\zeta} \leq 10s_*\}
\end{equation}
for some $\delta \geq 0$. In the first result, we assume $\delta >0$.
In the next section, under additional assumptions on $S$, we will work on the bigger spectral domain $\mathbb D_0$ that also includes a neighbourhood of zero.

\begin{thm}[Local Law for Gram matrices] \label{thm:local_law_gram}
Let $\delta, \eps_*>0$ and $\gamma \in (0,1)$. 
If $X$ is a random matrix satisfying (A) -- (D) then for every $\eps >0$ and $D>0$ there is a constant $C_{\eps,D} >0$ such that 
\begin{subequations}\label{eq:local_law_XX*}
\begin{align} 
\P \left( \exists \zeta \in \mathbb D_{\delta}, i, j \in \{1, \ldots, p\}: \Im \zeta \geq p^{-1+\gamma}, \:\dens(\Re \zeta) \geq \eps_*, \: 
\left| R_{ij}(\zeta)- m_i (\zeta) \delta_{ij} \right| \geq \frac{p^{\eps}}{\sqrt{p\Im \zeta}} \right) \leq \frac{C_{\eps,D}}{p^D},  \label{eq:local_law_XX_star_bulk}\\
\P \left( \exists \zeta \in \mathbb D_{\delta}, i, j \in \{1, \ldots, p\}: \dist(\zeta,\supp \nu) \geq \eps_*, \: 
\left| R_{ij}(\zeta)- m_i (\zeta) \delta_{ij} \right| \geq \frac{p^{\eps}}{\sqrt{p}} \right) \leq \frac{C_{\eps,D}}{p^D}, \label{eq:local_law_XX_star_away}
\end{align}
\end{subequations}
for all $p \in \N$. 
Furthermore, for any sequences of deterministic vectors $w \in \C^p$ satisfying $\norm{w}_\infty \leq 1$, we have 
\begin{subequations}\label{eq:local_law_XX*_averaged}
\begin{align} 
\P \left( \exists \zeta \in \mathbb D_{\delta} : \Im \zeta \geq p^{-1+\gamma}, \:\dens(\Re \zeta) \geq \eps_*, \: 
\absa{\frac{1}{p} \sum_{i=1}^p w_i \left[ R_{ii}(\zeta)- m_i (\zeta) \right]} \geq \frac{p^\eps}{p\Im \zeta} \right) \leq \frac{C_{\eps,D}}{p^D}, \\
\P \left(\exists \zeta \in \mathbb D_{\delta} : \dist(\zeta,\supp \nu) \geq \eps_*, \:  \absa{\frac{1}{p} \sum_{i=1}^p w_i \left[ R_{ii}(\zeta)- m_i (\zeta) \right]} \geq \frac{p^\eps}{p} \right)\leq \frac{C_{\eps,D}}{p^D},
\end{align}
\end{subequations}
for all $ p \in \N$. In particular, choosing $w_i = 1$ for all $i=1, \ldots, p$ in \eqref{eq:local_law_XX*_averaged} yields that $p^{-1} \tr R(\zeta)$ is close to $\avg{m(\zeta)}$. 

The constant $C_{\eps,D}$ depends, in addition to $\eps$ and $D$, only on the model parameters and on $\gamma$, $\delta$ and $\eps_*$.
\end{thm}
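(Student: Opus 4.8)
\emph{Proof strategy.} The plan is to run the standard three-step scheme for local laws, but on the Hermitian linearization rather than directly on \eqref{eq:self_consistent_Stieltjes_Gram}. Set $N\defeq n+p$ and introduce, following \eqref{Girko}, the Wigner-type matrix $H$ with variance profile $\Sf$ and its resolvent $G(z)\defeq(H-z)^{-1}$, $z\in\Hb$. A Schur-complement computation identifies the $p\times p$ upper-left block of $G(z)$ with $z\,R(z^2)$, and shows that the block decomposition $\mf=(\mf^{(1)},\mf^{(2)})\in\C^p\times\C^n$ of the solution of the QVE $-1/\mf_a(z)=z+(\Sf\mf(z))_a$ satisfies $\mf^{(1)}_i(z)=z\,m_i(z^2)$. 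Since $\zeta\mapsto z=\sqrt{\zeta}$ is a biholomorphism of a neighbourhood of $\mathbb D_\delta$ onto its image (here we use $\delta>0$) which distorts the imaginary part only by a bounded factor and satisfies $\abs{z}=\abs{\zeta}^{1/2}\in[\sqrt{\delta},\sqrt{10s_*}]$, and since $N\sim p$ by (D), it suffices to prove the entrywise and averaged local laws for $G$ on the corresponding spectral domain in the $z$-variable; the bounds then transfer to $R$ up to constants.

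\emph{Deterministic input.} The second step collects the properties of $\mf$. From the analysis of the QVE one obtains $\abs{\mf_a}\sim 1$ uniformly on the relevant domain and, using assumption (B) (the $L_1$- and $L_2$-step positivity of $SS^t$ and $S^tS$), the comparison $\Im\mf_a\sim\Im\avg{\mf}$; Theorem~\ref{thm:XX_star_Stieltjes_transform}(ii) together with the boundedness and Hölder continuity of $\dens$ gives $\Im\avg{\mf}\gtrsim\eps_*$ whenever $\dens(\Re\zeta)\ge\eps_*$ and $\Im\zeta\le 10s_*$, while on the off-spectrum domain the spectral parameter stays at distance $\gtrsim\eps_*$ from $\supp\nu$. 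The decisive ingredient is the stability of the QVE \eqref{eq:QVE}: because $\Sf$ is block off-diagonal it is \emph{not} primitive (indeed $\Sf^{2k}$ is block diagonal and $\Sf^{2k+1}$ block off-diagonal for every $k$), so the self-adjoint Perron--Frobenius argument of \cite{AjankiQVE,AjankiCPAM} does not apply; instead the inverse of the linearization $1-\mf^2\Sf$ is controlled by the Rotation--Inversion Lemma~\ref{lem:bulk_stability}, which isolates the single near-unstable (rotational) direction and yields a bound $\lesssim(\Im\avg{\mf})^{-1}\lesssim\eps_*^{-1}$ in the bulk, respectively a uniform bound on the off-spectrum domain.

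\emph{Probabilistic step and bootstrap.} For the linearized model the Schur complement formula gives $-1/G_{aa}=z+(\Sf\avg{G})_a+\Upsilon_a$, where $\avg{G}$ denotes the vector of diagonal entries of $G$; using flatness (A), the moment bounds (C), the standard large-deviation estimates for quadratic forms in the independent entries and the Ward identity $\sum_b\abs{G_{ab}}^2=\Im G_{aa}/\Im z$, the perturbation obeys, up to $p^\eps$ factors and with overwhelming probability, $\abs{\Upsilon_a}\lesssim\max_{b\neq a}\abs{G_{ab}}+(N^{-1}+\Im\avg{G}/(N\Im z))^{1/2}$, with a parallel bound for the off-diagonal entries. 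Feeding this into the stability bound converts the coupled system into a self-improving inequality $\Lambda\lesssim\eps_*^{-1}(\Lambda^2+\Psi)$ for $\Lambda\defeq\max_{a,b}\abs{G_{ab}-\mf_a\delta_{ab}}$ and the fluctuation scale $\Psi\defeq(N\Im z)^{-1/2}$ in the bulk, respectively $\Psi\defeq N^{-1/2}$ on the off-spectrum domain. Running this along a continuity argument in $\Im z$, started in the trivial regime $\Im z\sim 1$, propagated down to $\Im z\sim p^{-1+\gamma}$ on a polynomially fine $z$-grid and extended to all $z$ by the $(\Im z)^{-2}$-Lipschitz bounds on $G$ and $\mf$, yields $\Lambda\lesssim p^\eps\Psi$ with overwhelming probability, which is the entrywise estimate after transfer to $R$. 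The averaged estimates follow from the fluctuation-averaging mechanism: for deterministic weights with $\norm{w}_\infty\le1$ the sum $\frac1p\sum_i w_i(G_{ii}-\mf_i)$ enjoys an extra cancellation gaining one more factor $\Psi$, producing the bounds $p^\eps/(p\Im\zeta)$ and $p^\eps/p$; rephrasing the overwhelming-probability statements in the $C_{\eps,D}p^{-D}$ form is then routine.

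\emph{Main obstacle.} The hard part is the stability analysis behind the Rotation--Inversion Lemma~\ref{lem:bulk_stability}: the block off-diagonal structure of $\Sf$ destroys primitivity and requires a genuinely different treatment of the near-unstable directions of $1-\mf^2\Sf$ than in the Wigner-type literature. The secondary difficulty, the behaviour of $m$ and of the density near $\zeta=0$, is exactly what confines the present theorem to $\abs{\zeta}\ge\delta>0$ and is handled separately in the next section.
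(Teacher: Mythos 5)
Your proposal follows essentially the same route as the paper: Girko's linearization to the Wigner-type matrix $H$ with $\mathcal G_{11}(z)=zR(z^2)$ and $M_1(z)=z\,m(z^2)$, stability of the QVE via the Rotation--Inversion Lemma~\ref{lem:bulk_stability} (correctly identifying non-primitivity of $\Sf$ as the obstruction), Schur-complement/large-deviation bounds with a bootstrap in $\Im z$, fluctuation averaging for the weighted diagonal sums, and transfer back through $\zeta=z^2$ on $\mathbb D_\delta$ -- which is precisely how the paper deduces Theorem~\ref{thm:local_law_gram} from Theorem~\ref{thm:local_law_H}. Minor imprecisions (the stability bound is a power $\avg{\Im\Mf}^{-\kappa}$ rather than $\avg{\Im\Mf}^{-1}$, and $\Im\sqrt\zeta$ is only bounded below, not above, by a constant times $\Im\zeta$) are harmless since $\delta$ and $\eps_*$ are fixed and the needed direction of the comparison holds.
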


These results are optimal up to the arbitrarily small tolerance exponents $\gamma>0$ and $\eps>0$.  We remark that under stronger (e.g. subexponential) moment conditions in (C), one may replace the
$p^\gamma$ and $p^\eps$ factors with high powers of $\log p$.

Owing to the symmetry of the assumptions (A) -- (D) in $X$ and $X^*$, we can exchange $X$ and $X^*$ in Theorem \ref{thm:local_law_gram} and obtain a statement about $X^*X$ instead of $XX^*$
as well. 

For the results in the up-coming subsections, we need the following notion of a sequence of high probability events. 

\begin{defi}[Overwhelming probability]
Let $N_0\colon (0,\infty) \to \N$ be a function that depends on the model parameters and the tolerance exponent $\gamma$ only. For a sequence $A = (A^{(p)})_p$ of random events, we say that $A$ holds true \textbf{asymptotically 
with overwhelming probability} (a.w.o.p.) if for all $D>0$ 
\[ \P ( A^{(p)} ) \geq 1- p^D \]
for all $p \geq N_0(D)$. 
\end{defi}

We denote the eigenvalues of $XX^*$ by $\lambda_1 \leq \ldots \leq \lambda_{p}$ and define
\begin{equation}
i(\chi) \defeq \left\lceil p\int_{-\infty}^\chi \nu(\dd \omega)\right\rceil, \quad \text{ for } \chi \in \R.
\end{equation}
For a spectral parameter $\chi \in \R$ in the bulk, the nonnegative integer $i(\chi)$ is the index of an eigenvalue expected to be close to $\chi$.

\begin{thm} \label{thm:Bulk_rigidity_general}
Let $\delta, \eps_*>0$ and $X$ be a random matrix satisfying (A) -- (D).
\begin{enumerate}[(i)]
\item (Bulk rigidity away from zero) For every $\eps>0$ and $D>0$, there exists a constant $C_{\eps,D}> 0$ such that 
\begin{equation}  \label{eq:bulk_rigidity_Gram}
\P\left( \exists \;\tau \in (\delta,10s_*]: \dens(\tau) \geq \eps_*, \abs{ \lambda_{i(\tau)} - \tau} \geq  \frac{p^\eps}{p} \right) \leq \frac{C_{\eps,D}}{p^D}
\end{equation}
holds true for all $p \in \N$. 

The constant $C_{\eps,D}$ depends, in addition to $\eps$ and $D$, only on the model parameters as well as on $\delta$ and $\eps_*$.
\item Away from zero, all eigenvalues lie in the vicinity of the support of $\nu$, i.e., a.w.o.p.
\begin{equation} \label{eq:no_eigenvalues_away_from_support}
 \sigma(XX^*) \cap \{ \tau; \abs{\tau} \geq \delta, ~\dist(\tau, \supp \nu) \geq \eps_* \} = \varnothing.
\end{equation}
\end{enumerate}
\end{thm}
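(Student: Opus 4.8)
I would deduce both parts from the local law, Theorem~\ref{thm:local_law_gram}, together with the cheap a priori bound $\sigma(XX^*)\subseteq[0,10s_*]$ a.w.o.p. The latter is standard — for instance $\E\tr(XX^*)^k\lesssim(5s_*)^kp$ for $k\sim\log p$, using only the flatness~(A) of $S$, followed by Markov's inequality — and, together with $\supp\nu\subseteq[0,4s_*]$ from Theorem~\ref{thm:XX_star_Stieltjes_transform}, it lets one truncate test functions at a macroscopic scale above the spectrum. Part~(ii) then follows from the classical ``no eigenvalue outside the support'' argument: if $\lambda_j\ge\delta$ and $\dist(\lambda_j,\supp\nu)\ge\eps_*$, then $\lambda_j\le 10s_*$ by the norm bound, so $\zeta\defeq\lambda_j+\ii\eta\in\mathbb D_\delta$ with $\dist(\zeta,\supp\nu)\ge\eps_*$ for every $\eta\in(0,1]$, while
\[ \frac1p\Im\tr R(\zeta)=\frac1p\sum_i\frac{\eta}{(\lambda_i-\lambda_j)^2+\eta^2}\ge\frac1{p\eta},\qquad \Im\avg{m(\zeta)}=\int\frac{\eta}{(\omega-\lambda_j)^2+\eta^2}\,\nu(\dd\omega)\le\frac{\eta}{\eps_*^2}. \]
Choosing $\eta=p^{-2}$ contradicts the averaged away-from-spectrum estimate in \eqref{eq:local_law_XX*_averaged} (with $w\equiv1$), which holds a.w.o.p.\ and uniformly in $\zeta\in\mathbb D_\delta$; since $\tau>10s_*$ is excluded by the norm bound and $\tau<0$ by $XX^*\ge0$, this proves \eqref{eq:no_eigenvalues_away_from_support}.

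For part~(i), write $\mathfrak n(E)\defeq p^{-1}\#\{i:\lambda_i\le E\}$ and $\mathfrak n_\nu(E)\defeq\nu((-\infty,E])$, so that $i(\tau)=\lceil p\,\mathfrak n_\nu(\tau)\rceil$. The crux is the \emph{rigidity of the counting function}: for every $\eps>0$, a.w.o.p.\ one has $|\mathfrak n(E)-\mathfrak n_\nu(E)|\le p^{-1+\eps}$ uniformly for $E\in[\delta,10s_*]$ with $\dens(E)\ge\eps_*$. Granting this, part~(i) is routine. By the uniform local Hölder continuity of $\dens$ there is $c>0$, depending only on $\eps_*$ and the model parameters, with $\dens\ge\eps_*/2$ on $[\tau-c,\tau+c]$; hence $\mathfrak n_\nu(\tau)\in[\tfrac{\eps_*c}{2},1-\tfrac{\eps_*c}{2}]$ (so $i(\tau)\sim p$) and $|\mathfrak n_\nu(\tau\pm t)-\mathfrak n_\nu(\tau)|\ge\eps_* t/2$ for $0<t\le c$. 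Taking $t=Cp^{-1+\eps}$ with $C$ large and inserting the counting-function bound forces $\#\{i:\lambda_i\le\tau-t\}<i(\tau)\le\#\{i:\lambda_i\le\tau+t\}$, i.e.\ $|\lambda_{i(\tau)}-\tau|\le t\le p^\eps/p$. Uniformity over $\tau$ costs only a union bound, since $i(\cdot)$ is constant on at most $p$ maximal intervals whose bulk portions have length $<1/(p\eps_*)$.

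To prove the counting-function rigidity I would apply the Helffer–Sjöstrand formula to a smooth $f\approx\mathbf 1_{[E,\infty)}$ mollified at the scale $\eta_*\defeq p^{-1+\gamma}$ around $E$ (with $\gamma$, the tolerance exponent of Theorem~\ref{thm:local_law_gram}, chosen $\le\eps/2$) and compactly supported with its upper edge above $4s_*$, where a.w.o.p.\ there are no eigenvalues and $\nu$ carries no mass; then $p^{-1}\tr f(XX^*)-\int f\,\dd\nu=-(\mathfrak n(E)-\mathfrak n_\nu(E))+O(p^{-1+\gamma})$, and the left side equals a two-dimensional integral of $\partial_{\bar z}\tilde f(x+\ii y)$ against $p^{-1}\tr R(x+\ii y)-\avg{m(x+\ii y)}$. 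This integral is handled regionwise: in the macroscopic region and near the upper edge, $|p^{-1}\tr R-\avg m|\lesssim p^\eps/p$ by the away-from-spectrum estimate of Theorem~\ref{thm:local_law_gram}; in the mollification window around $E$ at heights $|y|<\eta_*$, where no local law is available, one invokes only the a priori fact that any interval of length $\eta\ge\eta_*$ in the bulk carries $O(p\eta)$ eigenvalues, a consequence of $p^{-1}\Im\tr R(x+\ii\eta)\lesssim1$ (averaged bulk local law plus $\Im\avg m\lesssim1$ away from zero); and in that same window at heights $\eta_*\le|y|\lesssim1$, where the pointwise bound $|p^{-1}\tr R-\avg m|\le p^\eps/(p|y|)$ alone would only give $o(1)$, one integrates by parts in the real part $x$ of the spectral parameter, converting $\partial_x$ into a gain $|y|^{-1}$ via the Cauchy formula (analyticity of $z\mapsto p^{-1}\tr R(z)-\avg{m(z)}$ off $\R$), so that the residual $y$-integral of $|y|^{-1}$ is only logarithmic. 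Adding up yields the bound $p^{-1+\eps}$ at a fixed $E$, and a polynomially fine grid together with the monotonicity of $\mathfrak n$ upgrades it to the uniform statement.

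The main obstacle is exactly this last point: extracting the \emph{optimal} scale $p^{-1+\eps}$, rather than merely $\mathfrak n(E)-\mathfrak n_\nu(E)=o(1)$. A direct Helffer–Sjöstrand estimate that feeds in the local law pointwise loses a full power of $p$; recovering the $1/p$ eigenvalue-spacing scale is what forces the combined use of the analyticity of the resolvent (the integration by parts in $x$) and the a priori interval-count bound near $E$. The remaining ingredients — the norm bound, part~(ii), the passage from the counting function to individual eigenvalues, and the uniformity in $\tau$ — are standard.
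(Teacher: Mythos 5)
Your argument is correct, and at its core it is the same mechanism the paper relies on, just packaged differently. The paper proves Theorem \ref{thm:Bulk_rigidity_general} by a one-line reduction to the corresponding statement for the Girko-symmetrized matrix $\Xf$ (Corollary \ref{cor:rigidity_H}), and that corollary is in turn obtained by citing the counting-function/Helffer--Sj\"ostrand proofs of Corollaries 1.10 and 1.11 in \cite{Ajankirandommatrix}; you instead run the same two standard arguments directly at the level of $XX^*$ from Theorem \ref{thm:local_law_gram}: the contradiction with the averaged away-from-spectrum law at $\Im\zeta=p^{-2}$ for part (ii), and Helffer--Sj\"ostrand rigidity of the counting function, mollified only at the bulk energy $E$ and cut off above the spectrum, for part (i). Your key structural observation --- that the $\chi'$-terms of the quasi-analytic extension live at macroscopic heights, where $\dist(\zeta,\supp\nu)\gtrsim 1$, so that only the bulk law near $\tau$, the away-from-support law, and an optimal global law are needed, with the sub-$\eta_*$ window handled by the $O(p\eta)$ interval count and the intermediate heights by integration by parts exploiting analyticity --- is exactly the point the paper records after Corollary \ref{cor:rigidity_H} ("rigidity at a particular point $\tau_0$ in the bulk requires only (i)--(iii)"), and it is what makes the argument work without any edge analysis. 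The one ingredient you supply that the paper does not spell out is the a priori norm bound $\sigma(XX^*)\subseteq[0,10s_*]$ a.w.o.p.\ via the moment method under (A) and (C), needed to rule out eigenvalues beyond the domain $\mathbb D_\delta$; this is a legitimate and standard addition (the paper absorbs it into the cited machinery). What your route buys is a self-contained proof on the Gram-matrix side that never mentions $\Xf$; what the paper's route buys is brevity and the simultaneous treatment of $XX^*$ and $X^*X$ through the single object $\Xf$, consistent with how the local law itself was derived. Minor bookkeeping (e.g.\ that the upper truncation of $f$ and the heights $|y|\sim 1$ stay inside $\mathbb D_{\delta}$, and adjusting $\eps_*$ by factors of two) is routine and does not affect the argument.
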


In the following two subsections, we distinguish between square Gram matrices, $n=p$,  and properly rectangular Gram matrices, $\abs{p/n -1} \geq d_* >0$, 
in order to extend the local law, Theorem \ref{thm:local_law_gram}, to include zero in the spectral domain $\mathbb D$. Since the density of states behaves differently around zero in these two cases, 
separate statements and proofs are necessary. 

\subsection{Square Gram matrices}

The following concept is well-known in linear algebra. For understanding singularities of the density of states in random matrix theory, it was introduced in \cite{AjankiQVE}. 

\begin{defi}[Fully indecomposable matrix]
A $K \times K$ matrix $T = (t_{ij})_{i,j=1}^K$ with nonegative entries is called \textbf{fully indecomposable} if for any two subsets $I, J \subset \{1,\ldots, K\}$ such that $\# I + \# J \geq K$, 
 the submatrix $(t_{ij})_{i \in I, j \in J}$ contains a nonzero entry. 
\end{defi}

For square Gram matrices, we add the following assumptions.

\begin{enumerate}
\item[\dimhard] The matrix $X$ is square, i.e., $n=p$.
\item[\zerohard] The matrix $S$ is \textbf{block fully indecomposable}, i.e., there are constants $\varphi >0$, $K \in \N$, a fully indecomposable matrix $Z = (z_{ij})_{i,j=1}^K$ with $z_{ij} \in \{0,1\}$ and 
a partition $(I_i)_{i=1}^K$ of $\{1, \ldots, p\}$ such that 
 \[ \# I_i = \frac{p}{K}, \quad s_{xy} \geq \frac{\varphi}{p+n} z_{ij}, \quad  x \in I_i\text{ and }y \in I_j \]
for all $i, j =1, \ldots, K$. 
\end{enumerate}

The constants $\varphi$ and $K$ in \zerohard are considered model parameters as well.

\begin{rem} \label{rem:assumption_square_Gram}
Clearly, \dimhard yields (D) with $r_1=r_2=1$.
Moreover, adapting the proof of Theorem 2.2.1 in \cite{BapatRaghavan}, we see that \zerohard implies (B) with $L_1$, $L_2$, $\psi_1$ and $\psi_2$ explicitly depending on $\varphi$ and $K$.
\end{rem}

\begin{thm}[Local law for square Gram matrices] \label{thm:eigenvalue_hard_edge}
If $X$ satisfies (A), (C), (E1) and (F1), then 
\begin{enumerate}[(i)]
\item The conclusions of Theorem \ref{thm:local_law_gram} are valid with the following modifications: 
 \eqref{eq:local_law_XX_star_away} and \eqref{eq:local_law_XX*_averaged} hold true for $\delta=0$ $($cf. \eqref{eq:def_spectral_domain}$)$ while instead of \eqref{eq:local_law_XX_star_bulk}, we have 
\begin{equation} \label{eq:local_law_XX*_square}
\P \left( \exists \zeta \in \mathbb D_0, \exists i, j : \Im \zeta \geq p^{-1+\gamma}, \:\dens(\Re \zeta) \geq \eps_*, \: 
\left| R_{ij}(\zeta)- m_i (\zeta) \delta_{ij} \right| \geq p^{\eps}\sqrt\frac{\avg{\Im m(\zeta)}}{{p\Im \zeta}} \right) \leq \frac{C_{\eps,D}}{p^D}.
\end{equation}
\item $\poma =0$ and the limit $\lim_{\omega \downarrow 0}\dens(\omega) \sqrt \omega$ exists and lies in $(0,\infty)$. 
\item (Bulk rigidity down to zero) 
For every $\eps_*>0$ and every $\eps>0$ and $D>0$, there exists a constant $C_{\eps,D}>0$ such that 
\begin{equation} \label{eq:bulk_rigidity}
\P\left( \exists \; \tau \in (0,10s_*]: \dens(\tau) \geq \eps_*, \abs{ \lambda_{i(\tau)} - \tau} \geq  \frac{p^\eps}{p}\left(\sqrt{\tau}+\frac{1}{p}\right) \right) \leq \frac{C_{\eps, D}}{p^D}
\end{equation}
for all $p \in \N$. 
The constant $C_{\eps,D}$ depends, in addition to $\eps$ and $D$, only on the model parameters and on $\eps_*$. 
\item There are no eigenvalues away from the support of $\nu$, i.e., \eqref{eq:no_eigenvalues_away_from_support} holds true with $\delta =0$. 
\end{enumerate}
\end{thm}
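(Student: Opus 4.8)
The plan is to reduce everything, via Girko's symmetrization \eqref{Girko}, to a local law for the $2p\times 2p$ Wigner-type matrix $H$ and then transfer it back. Fix the branch of the square root mapping $\Hb$ into the open first quadrant, set $z\defeq\sqrt\zeta$ (so $z^2=\zeta$, $\Re z>0$, $\Im z>0$), and let $G(z)\defeq(H-z)^{-1}$. A Schur complement computation identifies the upper-left $p\times p$ block $\wh G(z)$ of $G(z)$ with $z\,R(\zeta)$, hence $R(\zeta)=z^{-1}\wh G(z)$; writing the solution $\mf=(\mf_1,\mf_2)$, $\mf_1,\mf_2\in\C^p$, of the Dyson equation \eqref{eq:QVE} for the pair $(\Sf,z)$ in the same block form, one likewise gets $m(\zeta)=z^{-1}\mf_1(z)$ for the solution $m$ of \eqref{eq:m_equation}. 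Since $\Re\zeta>0$ throughout the bulk one has $\Re z\sim\abs z=\sqrt{\abs\zeta}$, hence $\Im z\sim\Im\zeta/\abs z$ and $\abs z\,\Im z\sim\Im\zeta$ (so $\Im\zeta\geq p^{-1+\gamma}$ forces $\Im z\gtrsim p^{-1+\gamma}$, using $\abs\zeta\leq 10s_*$), and, because $\mf_1,\mf_2$ are comparable in the square case, $\avg{\Im m(\zeta)}\sim\avg{\Im\mf(z)}/\abs z$ and $\dens(\Re\zeta)\sim\rho_H(\Re z)/\abs z$, where $\rho_H$ is the density of states of $H$; in particular $\Re z$ is in the bulk of $\rho_H$ exactly when $\Re\zeta$ is in the bulk of $\dens$. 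These conversions explain the precise form of \eqref{eq:local_law_XX*_square}.

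The core step is the local law for $H$ uniformly in $z$ with $\Im z\geq p^{-1+\gamma}$ and $\Re z$ in the bulk, \emph{including a neighbourhood of $z=0$}: I would show that a.w.o.p.\ $\abs{G_{ij}(z)-\mf_i(z)\delta_{ij}}\lesssim p^\eps\bigl(\sqrt{\avg{\Im\mf(z)}/(p\Im z)}+(p\Im z)^{-1}\bigr)$ and $\abs{p^{-1}\sum_i w_i(G_{ii}(z)-\mf_i(z))}\lesssim p^\eps(p\Im z)^{-1}$. This is the usual three-step scheme. First, the deterministic analysis of \eqref{eq:QVE} under the hypotheses of the theorem (Remark~\ref{rem:assumption_square_Gram} gives (B)) yields $\mf(z)\sim 1$ together with invertibility of the stability operator $1-\mf(z)^2\Sf$, with $\norm{(1-\mf(z)^2\Sf)^{-1}}$ controlled by $(\avg{\Im\mf(z)})^{-1}$; because $\Sf$ is block off-diagonal the primitivity-based arguments of \cite{AjankiQVE,AjankiCPAM} fail, and this is exactly where the Rotation-Inversion lemma, Lemma~\ref{lem:bulk_stability}, enters, delivering the stability bound uniformly down to $z=0$ by means of (F1). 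Second, the entries $G_{ij}$ satisfy a perturbed version of \eqref{eq:QVE} whose error is bounded by the standard large-deviation estimates for quadratic forms, using (A) and (C). Third, feeding the stability bound into that perturbed equation and then running the fluctuation-averaging mechanism upgrades it to the entrywise and averaged bounds above. Multiplying by $z^{-1}$ and using the first-paragraph conversions transfers these into \eqref{eq:local_law_XX*_square} and the averaged laws of Theorem~\ref{thm:local_law_gram} with $\delta=0$ --- in the bulk $\avg{\Im\mf(z)}\gtrsim 1$, so the square-root term dominates $(p\Im z)^{-1}$ and no additive term remains in \eqref{eq:local_law_XX*_square} --- while the estimates away from $\supp\nu$ follow in the same way from the gap in Theorem~\ref{thm:Bulk_rigidity_general}. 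For $\abs\zeta\geq\delta$ everything is already contained in Theorem~\ref{thm:local_law_gram}, where $\avg{\Im m(\zeta)}\sim 1$, so only small $\abs\zeta$ requires this new input.

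For part (ii), observe that when $n=p$ the matrix $\Sf$ carries, besides the chiral symmetry that makes $\sigma(H)$ symmetric about $0$, the symmetry exchanging the two index blocks; combined with (F1) this keeps the relevant self-energy operator non-degenerate at $z=0$, so that $\mf(z)$ stays uniformly bounded as $z\to 0$ and $\Im\avg{\mf(\mathrm{i}0)}=\pi\rho_H(0)$ is strictly positive (and $\dens$ is locally H\"older by Theorem~\ref{thm:XX_star_Stieltjes_transform}(ii)). From $\zeta\avg{m(\zeta)}=z\avg{\mf_1(z)}\to 0$ as $\zeta\to 0$ nontangentially it follows that $\nu$ has no atom at $0$, i.e.\ $\poma=0$; and $\dens(\omega)\sqrt\omega=\tfrac1\pi\Im\avg{\mf_1(\sqrt\omega+\mathrm{i}0)}\to\rho_H(0)\in(0,\infty)$ as $\omega\downarrow 0$, which is the asserted hard-edge behaviour.

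Parts (iii) and (iv) are the standard consequences of part (i). Integrating the averaged law against a Helffer--Sj\"ostrand contour turns it into the counting-function bound $\bigl\lvert p^{-1}\#\{i:\lambda_i\leq\tau\}-\int_{-\infty}^\tau\nu\bigr\rvert\lesssim p^{\eps-1}$ a.w.o.p.\ for $\tau$ in the bulk, while $\Im R(\zeta)=o(1)$ on $\{\zeta:\dist(\zeta,\supp\nu)\geq\eps_*\}$ gives \eqref{eq:no_eigenvalues_away_from_support} with $\delta=0$. Inverting the counting bound and using $\dens(\tau)\sim\tau^{-1/2}$ from part (ii) gives $\abs{\lambda_{i(\tau)}-\tau}\lesssim p^\eps(p\dens(\tau))^{-1}\lesssim p^\eps p^{-1}(\sqrt\tau+p^{-1})$, i.e.\ \eqref{eq:bulk_rigidity}, the additive $p^{-1}$ absorbing the regime $\tau\lesssim p^{-2}$ where individual eigenvalues are not resolved. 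The main obstacle is the uniform control near $z=0$ in the core step: the change of variables $z^2=\zeta$ degenerates the spectral scale of $H$ and the transfer factor $z^{-1}$ blows up, so one genuinely needs the Dyson equation of $H$ to remain uniformly stable all the way to $z=0$ --- precisely the regime where the block structure of $\Sf$ destroys primitivity --- and the $H$-bounds must be matched to the $\zeta$-bounds scale by scale, also controlling the imaginary part of $z^{-1}$ and the propagation of errors through this inversion; this is what (F1) and Lemma~\ref{lem:bulk_stability} are built to handle.
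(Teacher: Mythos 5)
There is a genuine gap at the heart of your core step. You claim that under (F1) the stability operator $\Bf(z)=\abs{\Mf(z)}^2/\Mf(z)^2-\Ff(z)$ (equivalently $1-\Mf(z)^2\Sf$) is invertible \emph{uniformly down to} $z=0$, with norm controlled by $\avg{\Im\Mf(z)}^{-1}$, and that the Rotation-Inversion lemma delivers this. That is false. In the square case the solution is purely imaginary on the imaginary axis, $\Mf(\i\eta)=\i\vf(\i\eta)$, and at $z=0$ one has $\Ff(0)1=1$, hence $\Bf(0)\oneminusone=-\oneminusone-\Ff(0)\oneminusone=0$: the vector $\oneminusone=(1,-1)^t$ lies in the kernel of $\Bf(0)$, and in fact $\norminf{\Bf^{-1}(z)}\sim\abs{z}^{-1}$ (this is exactly Lemma \ref{lem:final_estimate_norm_B_inverse}; note also that the right-hand side of \eqref{bound on block inverse} in Lemma \ref{lem:bulk_stability} is infinite at $z=0$, since there $U_1=U_2=-1$, $\normtwo{A^*A}=1$). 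So the naive three-step scheme you describe breaks down precisely in the regime the theorem is about, and no amount of (F1) or the Rotation-Inversion lemma alone repairs it.

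What actually rescues the local law near $\zeta=0$, and what your proposal never invokes, is the additional structural identity available only in the square case: $\sigma(XX^*)=\sigma(X^*X)$, hence $\avg{g_1(z)}=\avg{g_2(z)}$ for the diagonal resolvent entries, i.e.\ condition \eqref{eq:assumption_stability_hard_edge_easy}. This forces $\avg{\gf\df\,\oneminusone}=0$, so the perturbation has (to leading order in $\abs z$) no component along the nearly-singular eigenvector $b(z)\approx\oneminusone$ of $\Bf(z)$; decomposing along the associated spectral projections $1=P+Q$ and using $\norminf{\Bf^{-1}Q}\lesssim1$ together with $\norminf{\Bf^{-1}}\lesssim\abs z^{-1}$ yields the bounded stability estimates \eqref{eq:stability_g-m_hard_edge} and \eqref{eq:stability_average_g-m_hard_edge} of Proposition \ref{pro:hard_edge_stability}, which then replace Lemma \ref{lem:bulk_stability_qve} in the bootstrap of Subsection \ref{subsec:local_law_H}. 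The same cancellation mechanism (via $\avg{v_1}=\avg{v_2}$ and the block fully indecomposable structure) is also what underlies the deterministic facts you assert for part (ii) — boundedness of $\Mf$ at $z=0$ and $\avg{\Im\Mf(\i0)}\gtrsim1$ — which in the paper require the variational argument of Lemma \ref{lem:dad_problem} rather than a symmetry remark. Your reductions $z^2=\zeta$, the transfer of error bounds, the deduction of $\poma=0$ from boundedness of $z\Mf_1(z)$, and the derivation of rigidity (iii)–(iv) from the averaged law are all in the right spirit, but without identifying the trace constraint and the compensation of the unstable direction $\oneminusone$, the uniform local law near zero — and with it parts (i), (iii) and (iv) — cannot be established along the route you propose.
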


We remark that the bound of the individual resolvent entries \eqref{eq:local_law_XX*_square} deteriorates as $\zeta$ gets close to zero since $\avg{\Im m(\zeta)} \sim \abs{\zeta}^{-1/2}$ in this regime 
while the averaged version \eqref{eq:local_law_XX*_averaged}, with $\delta=0$, does not show this behaviour. 

\subsection{Properly rectangular Gram matrices}

\begin{enumerate}
\item[\dimrect] The matrix $X$ is properly rectangular, i.e., there is $d_*>0$ such that 
\[ \left|\frac{p}{n} - 1\right| \geq d_*. \] 
\item[\zerorect] The matrix elements of $S$ are bounded from below, i.e., there is a $\varphi >0$ such that 
	\[ s_{ik} \geq \frac{\varphi}{n+p} \]
for all $i=1, \ldots, p$ and $k =1, \ldots, n$.
\end{enumerate}

The constants $d_*$ and $\varphi$ in \dimrect and \zerorect, respectively, are also considered as model parameters. 
Note that \zerorect is a simpler version of \zerohard. For properly rectangular Gram matrices we work under the stronger condition \zerorect 
for simplicity but our analysis could be adjusted to some weaker condition as well.

\begin{rem}
Note that \zerorect immediately implies condition (B) with $L=1$.
\end{rem}

We introduce the lower edge of the absolutely continuous part of the distribution $\nu$ for properly rectangular Gram matrices
\begin{equation} \label{eq:def_delta_dens}
 \delta_\dens \defeq \inf\{ \omega>0\colon \dens(\omega) >0 \}. 
\end{equation}

\begin{thm}[Local law for properly rectangular Gram matrices] \label{thm:eigenvalue_XX_star_X_star_X}
Let $X$ be a random matrix satisfying (A), (C), (D), \dimrect and \zerorect. We have 
\begin{enumerate}[(i)]
\item The gap between zero and the lower edge is macroscopic $\delta_\dens \sim 1$. 
\item (Bulk rigidity down to zero) The estimate \eqref{eq:bulk_rigidity_Gram} holds true with $\delta = 0$. 
\item 
There are no eigenvalues away from the support of $\nu$, i.e., \eqref{eq:no_eigenvalues_away_from_support} holds true with $\delta =0$. 
\item If $p>n$, then $\poma = 1 - n/p$ and $\dim \ker(XX^*) =p-n$ a.w.o.p.
\item If $p<n$, then $\poma = 0$ and $\dim \ker(XX^*) =0$ a.w.o.p.
\item (Local law around zero) For every $\eps_* \in (0, \delta_\dens)$, every $\eps>0$ and $D>0$, there exists a constant $C_{\eps, D}>0$, 
such that  
\begin{equation}
\P\left( \exists \,\zeta \in \Hb, i,j \in \{ 1, \ldots, p \} \colon \abs{\zeta} \leq \delta_\dens-\eps_*, \: \abs{R_{ij}(\zeta) - m_i(\zeta) \delta_{ij}} \geq \frac{p^\eps}{\abs{\zeta}\sqrt{p}}\right) \leq \frac{C_{\eps,D}}{p^D}, 
\label{eq:local_law_around_zero1} 
\end{equation}
for all $p \in \N$ if $p>n$ and 
\begin{equation}
\P\left( \exists \,\zeta \in \Hb, i, j \in \{1, \ldots, p\} \colon \abs{\zeta} \leq \delta_\dens-\eps_*, \: \abs{R_{ij}(\zeta) - m_i(\zeta) \delta_{ij}} \geq \frac{p^\eps}{\sqrt{p}}\right) \leq \frac{C_{\eps,D}}{p^D}, 
\label{eq:local_law_around_zero2} 
\end{equation}
for all $p \in \N$ if $p<n$. Moreover, in both cases
\begin{equation}
\P \left( \exists \,\zeta \in \Hb\colon \abs{\zeta} \leq \delta_\dens-\eps_*, \: \absa{\frac{1}{p} \sum_{i=1}^p [R_{ii}(\zeta) - m_i(\zeta) ]} \geq \frac{p^\eps}{p}\right) \leq \frac{C_{\eps,D}}{p^D},
\label{eq:local_law_around_zero3} 
\end{equation}
for all $p \in \N$.

The constant $C_{\eps,D}$ depends, in addition to $\eps$ and $D$, only on the model parameters and on $\eps_*$. 
\end{enumerate}
\end{thm}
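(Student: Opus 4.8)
The plan is to reduce the whole theorem to a quantitative analysis of the Dyson equation \eqref{eq:m_equation} in a macroscopic neighbourhood of $\zeta=0$, following the three-part strategy of the paper: (a) describe the solution $m$ near $\zeta=0$; (b) establish the stability of \eqref{eq:m_equation} there; (c) show that the resolvent entries of $XX^*$ solve a perturbed version of \eqref{eq:m_equation}. The outcome is then fed into Theorem~\ref{thm:Bulk_rigidity_general} and into the resolvent scheme underlying Theorem~\ref{thm:local_law_gram}. Since Girko's symmetrization \eqref{Girko} is singular at $\zeta\approx0$, near zero I would work directly with \eqref{eq:m_equation} and with its dual, the analogous equation for $X^*X$.

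For the deterministic statements, sending $\zeta\to0$ in \eqref{eq:m_equation} one sees that the solution bifurcates according to the sign of $p-n$. For $p>n$ it develops a simple pole, $-\zeta\,m(\zeta)\to h\in(0,\infty)^p$, while the auxiliary vector $m^{(2)}(\zeta):=-\zeta^{-1}\big(1+S^t m(\zeta)\big)^{-1}$, which approximates the diagonal of the resolvent of $X^*X$, stays finite, $m^{(2)}(\zeta)\to g\in(0,\infty)^n$. Substituting these asymptotics into \eqref{eq:m_equation} and into its dual produces the coupled limiting equations
\begin{equation*}
 \frac1{h_i}=1+(Sg)_i,\qquad \frac1{g_k}=(S^th)_k,\qquad i\le p,\ k\le n .
\end{equation*}
Under \zerorect the vectors $Sg$ and $S^th$ are, to leading order, the constant vectors $\avg g$ and $\avg h$, so this system has a unique positive solution with all entries comparable to $1$; the elementary identities $\sum_i h_i(1+(Sg)_i)=p$ and $\sum_k g_k(S^th)_k=n$ then force $\avg h=1-n/p$, and since $\poma=\nu(\{0\})=-\lim_{\eta\downarrow0}\i\eta\,\avg{m(\i\eta)}=\avg h$ this gives $\poma=1-n/p$; the case $p<n$ is symmetric, with the pole now sitting in the $X^*X$-block, and yields $\poma=0$. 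To turn this pointwise statement at $\zeta=0$ into a macroscopic gap I would continue the real solution of \eqref{eq:m_equation} to an interval $(-c,c)$ with $c\sim1$ by the implicit function theorem, using the quantitative invertibility of the stability operator of \eqref{eq:m_equation} provided by the Rotation--Inversion lemma (Lemma~\ref{lem:bulk_stability}), whose constants stay uniform down to $\zeta=0$ under \zerorect; on this interval $\Im m\equiv0$, hence $\dens\equiv0$, so $\delta_\dens\ge c\sim1$, while $\delta_\dens\le 4s_*$ is immediate since by Theorem~\ref{thm:XX_star_Stieltjes_transform} the absolutely continuous part of $\nu$ has mass $1-\poma\gtrsim1$ and is contained in $[0,4s_*]$. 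This settles (i) and the deterministic halves of (iv) and (v).

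For the local law near zero, note that on the domain $\{\abs\zeta\le\delta_\dens-\eps_*\}$ the limiting density vanishes, so \eqref{eq:m_equation} is stable there with no hidden dependence on the unknown $\Im m$; more precisely, the Rotation--Inversion lemma yields stability with a weight equal to $1$ in the bounded block and to $\abs\zeta^{-1}$ in the block carrying the pole. I would then run the scheme of Theorem~\ref{thm:local_law_gram}: the diagonal entries $R_{ii}(\zeta)$ satisfy \eqref{eq:m_equation} up to an additive error bounded a.w.o.p.\ by $p^\eps p^{-1/2}$ times the same weight — the large-deviation bounds for the linear and quadratic forms in the rows of $X$ carry over verbatim, with $\Im\zeta$ now replaced by the order-one lower bound on $\dist(\zeta,\sigma(XX^*))$ — and combine this with stability along a continuity argument in $\zeta$ descending from large $\Im\zeta$; this gives \eqref{eq:local_law_around_zero1} when $p>n$ (weight $\abs\zeta^{-1}$, from $m\sim1/\zeta$) and \eqref{eq:local_law_around_zero2} when $p<n$ (weight $1$). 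In the point-mass-free regime the deterministic $m$ is bounded and real in the gap, so \eqref{eq:local_law_around_zero2} forces $\Im R_{kk}(\tau+\i\eta)\lesssim p^\eps p^{-1/2}+\eta$ throughout $(0,\delta_\dens-\eps_*)$, which excludes any eigenvalue of the corresponding matrix there; applied to $X^*X$ when $p>n$ (legitimate by the $X\leftrightarrow X^*$ symmetry of (A), (C), (D), \dimrect, \zerorect) and to $XX^*$ when $p<n$, this shows invertibility a.w.o.p.\ and hence $\dim\ker(XX^*)=\max(p-n,0)$ a.w.o.p., completing (iv) and (v). Finally \eqref{eq:local_law_around_zero3} is obtained not by fluctuation averaging but by an exact cancellation: with $\dim\ker(XX^*)=\max(p-n,0)$ and $\poma=\max(1-n/p,0)$, the singular term $\big(p^{-1}\#\{\alpha\colon\lambda_\alpha=0\}-\poma\big)(-\zeta)^{-1}$ in $p^{-1}\tr R(\zeta)-\avg{m(\zeta)}$ vanishes a.w.o.p., and the remaining difference involves only eigenvalues $\ge\delta_\dens-\eps_*/2$ and the absolutely continuous part of $\nu$, so that $\omega\mapsto(\omega-\zeta)^{-1}$ is uniformly analytic there for $\abs\zeta\le\delta_\dens-\eps_*$; hence it is $\lesssim p^\eps/p$ by the averaged bulk local law and rigidity (Theorems~\ref{thm:local_law_gram} and \ref{thm:Bulk_rigidity_general}).

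The remaining parts come essentially for free: (iii) is Theorem~\ref{thm:Bulk_rigidity_general}(ii) applied with $\delta=\eps_*/2$, because $0\in\supp\nu$ makes the condition $\dist(\tau,\supp\nu)\ge\eps_*$ imply $\abs\tau\ge\eps_*$, and (ii) is Theorem~\ref{thm:Bulk_rigidity_general}(i) applied with $\delta=\delta_\dens/2$, because $\dens(\tau)\ge\eps_*$ forces $\tau\ge\delta_\dens\sim1$. I expect the main obstacle to be the deterministic near-zero analysis: showing that the limiting equations have a solution quantitatively separated from the locus where the stability operator degenerates, and, more delicately, that stability of \eqref{eq:m_equation} persists with the correct $\abs\zeta$-weights uniformly in a macroscopic neighbourhood of the pole at $\zeta=0$. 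This is exactly where \zerorect and the Rotation--Inversion lemma are indispensable, and it is precisely the point at which the naive reduction through Girko's symmetrization breaks down.
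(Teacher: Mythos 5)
Your deterministic analysis near $\zeta=0$ (the pole coefficient $h$, the bounded companion $g$, the identities giving $\avg h=1-n/p$, and the continuation of the real solution across a macroscopic interval) is in the spirit of the paper's Proposition~\ref{pro:QVE_close_to_zero}, and parts (i), (ii) and the identification of $\poma$ would come out of it, modulo details (uniqueness and the continuation are done in the paper via a variational functional and a holomorphic ODE for $b$, not via the Rotation--Inversion lemma, whose inversion bounds, cf.\ Lemma~\ref{lem:bound_B_inverse}, are only established for $\abs z\geq\delta$). The genuine gap is in your treatment of (iv)--(vi), and it is a circularity: you derive the local law around zero by claiming the large-deviation bounds ``carry over verbatim, with $\Im\zeta$ replaced by the order-one lower bound on $\dist(\zeta,\sigma(XX^*))$'', but such a lower bound is exactly what is not available a priori. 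For $p>n$ zero is an eigenvalue of $XX^*$ of macroscopic multiplicity, so $\dist(\zeta,\sigma(XX^*))\leq\abs\zeta$ and $\norm{R(\zeta)}\sim\abs\zeta^{-1}$; and for the complementary block ($X^*X$ when $p>n$, or $XX^*$ when $p<n$) nothing proved so far excludes a few stray eigenvalues at distance $o(1)$ from zero --- Theorem~\ref{thm:Bulk_rigidity_general}(ii) and Corollary~\ref{cor:rigidity_H}(iii) only empty out intervals $[\delta_1,\delta_2]$ with fixed $\delta_1>0$. You then use the very local law obtained under this assumption to conclude $\dim\ker(XX^*)=\max(p-n,0)$ and the absence of small eigenvalues, i.e.\ the conclusion is an input of its own proof. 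No stochastic stability analysis of the perturbed equation near $z=0$ in the rectangular case is supplied (the paper deliberately avoids one), so the loop does not close; part (iii) for $p<n$ inherits the same problem, since there $0\notin\supp\nu$ and excluding eigenvalues near zero is precisely statement (v).

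The paper closes this gap with two ideas absent from your proposal. First, the kernel dimension and the spectral gap of $\Xf$ (Theorem~\ref{thm:kernel_XX_star}(i),(ii)) are obtained by a homotopy $\Xf_t=\sqrt{1-t}\,\Xf+\sqrt t\,\wh\Xf$ to a Gaussian matrix, combining the a.w.o.p.\ emptiness of a fixed annulus $[\delta_1,\delta_2]$ (Lemma~\ref{lem:gap_spectrum_H2}) with the known hard-edge bound for Gaussian Wishart matrices (\cite{Feldheim2010}, or \cite{silverstein1985}) and continuity of eigenvalues in $t$; this is the only place where extra eigenvalues accumulating at zero are ruled out. Second, the local laws \eqref{eq:local_law_around_zero1}--\eqref{eq:local_law_around_zero3} are then \emph{not} proved by running the self-consistent scheme near zero: on the a.w.o.p.\ event of the spectral gap, $\zeta\mapsto\zeta\big(R_{ij}(\zeta)-m_i(\zeta)\delta_{ij}\big)$ (and, using the exact cancellation $\dim\ker XX^*=p\poma$, the averaged difference) extends holomorphically to a disk of radius $\sim1$ around zero, and the bounds on the circle $\abs\zeta=\delta$ from \eqref{eq:local_law_XX_star_away} are propagated inward by the maximum principle. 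Your alternative for \eqref{eq:local_law_around_zero3} via rigidity and a Riemann-sum comparison also overreaches, since bulk rigidity gives no $O(1/p)$ control of eigenvalues near the soft edges (edge analysis is deferred in this paper). To repair your proposal you would need either to supply the missing ingredient ruling out small nonzero eigenvalues (e.g.\ the Gaussian comparison) or to prove a genuine stability-plus-fluctuation estimate for the perturbed Dyson equation uniformly down to $\zeta=0$ in the rectangular case, neither of which is sketched.
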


If $p>n$, then the Stieltjes transform of the empirical spectral measure of $XX^*$ has a term proportional to $1/{\zeta}$ due to the macroscopic kernel of $XX^*$. 
This is the origin of the additional factor $1/\abs{\zeta}$ in \eqref{eq:local_law_around_zero1}. 

\begin{rem}
As a consequence of Theorem \ref{thm:eigenvalue_hard_edge} and Theorem \ref{thm:eigenvalue_XX_star_X_star_X} and under the same conditions, the standard methods in \cite{EJP3054} and \cite{Ajankirandommatrix} can be 
used to prove an anisotropic law and delocalization of eigenvectors in the bulk.
\end{rem}

\section{Quadratic vector equation}

For the rest of the paper, without loss of generality we will assume that $s_*=1$ in (A), which can be achieved by a simple rescaling of $X$.
In the whole section, we will assume that the matrix $S$ satisfies (A), (B) and (D) without further notice. 

\subsection{Self-consistent equation for resolvent entries}

We introduce the random matrix $\Xf$ and the deterministic matrix $\Sf$ defined through
\begin{equation} \label{eq:def_H_S}
 \Xf  = \begin{pmatrix} 0 & X \\ X^* & 0 \end{pmatrix}, \qquad \Sf = \begin{pmatrix} 0 & S \\ S^t & 0 \end{pmatrix}.
\end{equation}
Note that both matrices, $\Xf$ and $\Sf$ have dimensions $(p+n)\times (p+n)$. We denote their entries by $\Xf = (h_{xy})_{x,y}$ and $\Sf = (\sf_{xy})_{x,y}$, respectively, where $\sf_{xy} = \E \abs{h_{xy}}^2$ 
with $x,y=1, \ldots, n+p$. 

It is easy to see that condition (B) implies 
\begin{enumerate}
\item[(B')] There are $L \in \N$ and $\psi >0$ such that 
\begin{equation} \label{eq:lower_bound_Sf_L}
 \sum_{k=1}^L (\Sf^k)_{xy} \geq \frac{\psi}{n+p}
\end{equation}
for all $x, y = 1, \ldots, n+p$. 
\end{enumerate}

In the following, a crucial part of the analysis will be devoted to understanding the resolvent of $H$ at $z\in \Hb$, i.e., the matrix 
\begin{equation} \label{eq:def_G}
 G(z) \defeq (H-z)^{-1} 
\end{equation}
whose entries are denoted by $G_{xy}(z)$ for $x, y =1, \ldots, n+p$. 
For $V \subset \{1, \ldots, n+p\}$, we use the notation $G_{xy}^{(V)}$ to denote the entries of the resolvent $G^{(V)}(z) = (H^{(V)}-z)^{-1}$ of the matrix $H^{(V)}_{xy} = h_{xy} \mathbf 1(x \notin V) \mathbf 
1(y \notin V)$ where $x,y =1, \ldots, n+p$.

The Schur complement formula and the resolvent identities applied to $G(z)$ yield the self-consistent equations 
\begin{subequations} \label{eq:self_const_split}
\begin{align}
-\frac{1}{\gone{i}(z)} & = z + \sum_{k=1}^n s_{ik} \gtwo{k}(z) + \done{i}(z),  \label{eq:self_const_1}\\
-\frac{1}{\gtwo{k}(z)} & = z + \sum_{i=1}^p s_{ik} \gone{i}(z) + \dtwo{k}(z), \label{eq:self_const_2} 
\end{align}
\end{subequations}
where $\gone{i}(z) \defeq  G_{ii}(z)$ for $i=1, \ldots, p$ and $\gtwo{k}(z)\defeq G_{k+p,k+p}(z)$ for $k=1, \ldots, n$ with the error terms 
\begin{align*}
\done{r}  & \defeq \sum_{k,l=1, k\neq l}^n x_{rk}G_{kl}^{(r)} \overline x_{rl} + \sum_{k=1}^n \left(\abs{x_{rk}}^2-s_{rk}\right) G_{k+n,k+n}^{(r)} - \sum_{k=1}^n s_{rk} \frac{G_{k+n,r}G_{r,k+n}}{\gone{r}}, \\ 
\dtwo{m}  & \defeq \sum_{i, j =1, i \neq j}^{p} \overline x_{im}G_{ij}^{(m+p)} x_{jm} + \sum_{i=1}^p \left(\abs{x_{im}}^2-s_{im}\right) G_{ii}^{(m+p)} - \sum_{i=1}^p s_{im} \frac{G_{i,m+p}G_{m+p,i}}{\gtwo{m}} 
\end{align*}
for $r=1, \ldots, p$ and $m=1, \ldots, n$. 

We will prove a local law which states that $\gone{i}(z)$ and $\gtwo{k}(z)$ can be approximated by $M_{1,i}(z)$ and $M_{2,k}(z)$, respectively,
where $M_1\colon \Hb \to \C^p$ and $M_2\colon \Hb \to \C^n$ are the unique solution of 
\begin{subequations} \label{eq:QVE_split}
\begin{align}
-\frac{1}{M_1} & = z + S M_2 , \label{eq:QVE_m1}\\
-\frac{1}{M_2} & = z + S^t M_1, \label{eq:QVE_m2}
\end{align}
\end{subequations}
which satisfy $\Im M_1(z)>0$ and $\Im M_2(z)>0$ for all $z \in \Hb$. 

The system of self-consistent equations for $\gon$ and $\gtw$ in \eqref{eq:self_const_split} can be seen as a perturbation of the system \eqref{eq:QVE_split}.
With the help of $\Sf$, equations \eqref{eq:QVE_m1} and \eqref{eq:QVE_m2} can be combined to a vector equation for $\Mf=(M_1, M_2)^t \in \Hb^{p+n}$, i.e., 
\begin{equation}
 -\frac{1}{\Mf}= z + \Sf\Mf. 
\label{eq:combined_QVE}
\end{equation}
Since $\Sf$ is symmetric, has nonnegative entries and fulfills (A) with $s_*=1$, Theorem 2.1 in \cite{AjankiQVE} is applicable to \eqref{eq:combined_QVE}.
Here, we take $a=0$ in Theorem 2.1 of \cite{AjankiQVE}. This theorem implies that \eqref{eq:combined_QVE} has a unique solution $\Mf$ with $\Im \Mf(z) >0$ for any $z \in \Hb$. 
Moreover, by this theorem, $\Mf_x$ is the Stieltjes transform of a symmetric probability measure 
on $\R$ whose support is contained in $[-2,2]$ for all $x=1, \ldots, n+p$ and we have 
\begin{equation}
\label{eq:L2_bound}
\norm{\Mf(z)}_2 \leq \frac{2}{\abs{z}}
\end{equation}
for all $z \in \Hb$. 
The function $\avg{\Mf}$ is the Stieltjes transform of a symmetric probability measure on $\R$ which we denote by $\rho$, i.e., 
\begin{equation} \label{eq:def_rho}
\avg{\Mf(z)} = \int_\R \frac{1}{t-z} \rho(\di t)
\end{equation}
for $z \in \Hb$. Its support is contained in $[-2,2]$.

We combine \eqref{eq:self_const_1} and \eqref{eq:self_const_2} to obtain 
\begin{equation}
 -\frac{1}{\gf} = z + \Sf \gf + \df, 
\label{eq:perturbed_combined_QVE}
\end{equation}
where $\gf = (g_1, g_2)^t$ and $\df =(d_1, d_2)^t$. We think of \eqref{eq:perturbed_combined_QVE} as a perturbation of \eqref{eq:combined_QVE} and 
most of the subsequent subsection is devoted to the study of \eqref{eq:perturbed_combined_QVE} for an arbitrary perturbation $\df$. 

Before we start studying \eqref{eq:combined_QVE} we want to indicate how $m$ and $R$ are related to $\Mf$ and $G$, respectively. 
The Stieltjes transforms as well as the resolvents are essentially related via the same transformation of the spectral parameter.
If $\mathcal G_{11}(z)$ denotes the upper left $p \times p$ block of $G(z)$ then $R(z^2) = (XX^* - z^2)^{-1} = \mathcal G_{11}(z)/z$. 
In the proof of Theorem \ref{thm:XX_star_Stieltjes_transform} in Subsection \ref{subsec:proof_of_existence_and_structure}, we will see that $m$ and $M_1$ are related via $m(\zeta) = M_1(\sqrt \zeta)/\sqrt \zeta$.  
(We always choose the branch of the square root satisfying $\Im \sqrt \zeta >0$ for $\Im \zeta >0$.) 
Assuming this relation and introducing $m_2(\zeta) \defeq M_2(\sqrt \zeta)/\sqrt \zeta$, we obtain 
\begin{align*}
 - \frac{1}{m(\zeta)} & = \zeta (1 + S m_2(\zeta)), \\
 - \frac{1}{m_2(\zeta)} & = \zeta (1 + S^t m(\zeta)) 
\end{align*}
from \eqref{eq:QVE_split}. 
Solving the second equation for $m_2$ and plugging the result into the first one yields \eqref{eq:m_equation} immediately.
In fact, $m_2$ is the analogue of $m$ corresponding to $X^*X$, i.e, the Stieltjes transform of the deterministic measure approximating the eigenvalue density of $X^*X$. 

\subsection{Structure of the solution}

We first notice that the inequality $s_{ik} \leq 1/(n+p)$ implies
\begin{equation} \label{eq:St_norm_two_inf_less_1}
 \norm{S^t w}_\infty = \max_{k=1, \ldots, n} \sum_{i=1}^p s_{ik} \abs{w_i} \leq \max_{k=1, \ldots, n} \left( p \sum_{i=1}^p s_{ik}^2 \right)^{1/2} \left(\frac{1}{p}\sum_{i=1}^p \abs{w_i}^2\right)^{1/2}
 \leq \norm{w}_2  
\end{equation}
for all $w \in \C^p$, i.e., $\normtwoinf{S^t} \leq 1$.
Now, we establish some preliminary estimates on the solution of \eqref{eq:combined_QVE}.

\begin{lem} \label{lem:estimates_m}
Let $z \in \Hb$ and $x\in \{1, \ldots, n+p\}$. We have 
\begin{subequations} \label{eq:mf_upper_bound_supp_rho}
\begin{align}
\abs{\Mf_x(z)} & \leq \frac{1}{\dist(z,\supp \rho)} 
, \label{eq:mf_upper_bound_supp_rho1}\\
\Im \Mf_x(z) & \leq \frac{\Im z}{\dist(z,\supp \rho)^2} 
. \label{eq:mf_upper_bound_supp_rho2}
\end{align}
\end{subequations}
If $z \in \Hb$ and $\abs{z} \leq 10$  then 
\begin{subequations}
\begin{align}
\abs{z} & \lesssim  \abs{\Mf_x(z)} \leq \norm{\Mf(z)}_\infty \lesssim  \frac{\abs{z}^{2-2L}}{\langle \Im \Mf (z) \rangle} \label{eq:first_estimate_m}\\
\abs{z}^{2L}  \langle \Im \Mf (z) \rangle & \lesssim  \Im \Mf_x(z) .  \label{eq:Im_m_i_geq_average_Im_m}
\end{align}
\end{subequations}
In particular, the support of the measures representing $\Mf_x$ is independent of $x$ away from zero.
\end{lem}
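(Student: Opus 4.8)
The plan is to exploit the combined quadratic vector equation \eqref{eq:combined_QVE}, $-1/\Mf = z + \Sf\Mf$, together with the representation of $\Mf_x$ as the Stieltjes transform of a symmetric probability measure supported in $[-2,2]$. The bounds \eqref{eq:mf_upper_bound_supp_rho1} and \eqref{eq:mf_upper_bound_supp_rho2} are immediate from this representation: if $\Mf_x(z) = \int (t-z)^{-1}\mu_x(\di t)$ with $\mu_x$ a probability measure whose support lies in $\supp\rho$ (this follows because $\avg{\Mf} = \int(t-z)^{-1}\rho(\di t)$ and each $\Mf_x$ shares the same support, as will be a byproduct of the lower bounds below), then $\abs{\Mf_x(z)} \leq \int \abs{t-z}^{-1}\mu_x(\di t) \leq \dist(z,\supp\rho)^{-1}$, and similarly $\Im\Mf_x(z) = \int \Im z\,\abs{t-z}^{-2}\mu_x(\di t) \leq \Im z\,\dist(z,\supp\rho)^{-2}$. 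To be careful about circularity I would first establish the lower bounds for $\abs{z}\le 10$ purely from the fixed-point equation, and only afterward conclude the common-support statement.

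For the regime $\abs{z}\le 10$, start from $-1/\Mf_x = z + (\Sf\Mf)_x$. Since $\Sf$ satisfies (A) with $s_*=1$ and $\norm{\Mf}_2 \le 2/\abs{z}$ by \eqref{eq:L2_bound}, the flatness bound gives $\norm{\Sf\Mf}_\infty \lesssim \normtwoinf{\Sf}\,\norm{\Mf}_2 \lesssim 1/\abs{z}$ — but this is too crude near zero. Instead I would use that $\Sf$ has nonnegative entries and $\Im\Mf_x > 0$, so taking imaginary parts in the equation yields
\begin{equation*}
\frac{\Im\Mf_x}{\abs{\Mf_x}^2} = \Im z + (\Sf\,\Im\Mf)_x \geq (\Sf\,\Im\Mf)_x.
\end{equation*}
Iterating this relation $L$ times and invoking (B'), i.e. $\sum_{k=1}^L(\Sf^k)_{xy}\ge \psi/(n+p)$, produces a lower bound of the form $\Im\Mf_x \gtrsim \abs{z}^{2L}\,\avg{\Im\Mf}$ — more precisely, one tracks the factors $\abs{\Mf_y}^2 \gtrsim \abs{z}^2$ appearing at each iteration step (which itself needs the lower bound $\abs{\Mf_x}\gtrsim\abs{z}$, obtained by noting $\abs{\Mf_x}^{-1} = \abs{z+(\Sf\Mf)_x} \lesssim \abs{z} + \norm{\Sf\Mf}_\infty \lesssim \abs{z}^{-1}$ via the crude bound, hence $\abs{\Mf_x}\gtrsim\abs{z}$). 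This establishes \eqref{eq:Im_m_i_geq_average_Im_m}. For the upper bound \eqref{eq:first_estimate_m}, take imaginary parts again: $\Im z + (\Sf\,\Im\Mf)_x = \Im\Mf_x/\abs{\Mf_x}^2 \le \Im\Mf_x/(c\abs{z})^2$, so $(\Sf\,\Im\Mf)_x \lesssim \abs{z}^{-2}\,\Im\Mf_x$; iterating $L$ times and using (B') gives $\avg{\Im\Mf} \lesssim \abs{z}^{-2L}\norm{\Im\Mf}_\infty$. Combined with $\Im\Mf_x \le \abs{\Mf_x}$ and the bound $\abs{\Mf_x}^{-1}\ge \Im(\,-1/\Mf_x) = \Im z + (\Sf\,\Im\Mf)_x \gtrsim \avg{\Im\Mf}$, one gets $\abs{\Mf_x} \lesssim 1/\avg{\Im\Mf} \lesssim \abs{z}^{2-2L}/\avg{\Im\Mf}$ after reorganizing; care is needed to route the $L$ powers correctly, but this is bookkeeping with the two iterated inequalities.

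The main obstacle is the near-zero behavior: the naive $\ell^2$--$\ell^\infty$ estimate degenerates as $\abs{z}\to 0$, and one must instead extract all regularity from the primitivity-type hypothesis (B') applied to the imaginary part of the equation, keeping scrupulous track of the powers of $\abs{z}$ that accumulate with each application of $\Sf$. Once \eqref{eq:first_estimate_m}--\eqref{eq:Im_m_i_geq_average_Im_m} are in place, the final sentence follows: \eqref{eq:Im_m_i_geq_average_Im_m} shows $\Im\Mf_x(z)>0$ whenever $\avg{\Im\Mf(z)}>0$, so for any $x$ the measure $\mu_x$ representing $\Mf_x$ has an absolutely continuous density bounded below wherever $\rho$ does, hence all the $\mu_x$ share the same support away from zero, which is $\supp\rho$.
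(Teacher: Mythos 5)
Your route is the same one the paper takes: the paper proves this lemma by adapting Lemma 5.4 of \cite{AjankiQVE}, replacing the lower bound on the entries of $S^L$ by the lower bound on the entries of $\sum_{k=1}^L\Sf^k$ from (B'), which is exactly the scheme you describe (Stieltjes-transform bounds for \eqref{eq:mf_upper_bound_supp_rho}, the crude bound $\abs{\Mf_x}\gtrsim\abs z$ from $\normtwoinf{\Sf}\le 1$ and \eqref{eq:L2_bound}, then iteration of the imaginary part of \eqref{eq:combined_QVE} together with (B')). Your derivation of \eqref{eq:Im_m_i_geq_average_Im_m} is correct. One small simplification: for \eqref{eq:mf_upper_bound_supp_rho} there is no circularity to defer, since $\rho$ is the average of the representing measures $\mu_x$, so $\mu_x\le (n+p)\rho$ and $\supp\mu_x\subseteq\supp\rho$ is immediate; you do not need the "common support" byproduct (which anyway only holds away from zero).

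There is, however, a step in your write-up of the upper bound in \eqref{eq:first_estimate_m} that fails as written: you claim $\abs{\Mf_x}^{-1}\ge \Im z+(\Sf\,\Im\Mf)_x\gtrsim\avg{\Im\Mf}$, i.e.\ $\abs{\Mf_x}\lesssim 1/\avg{\Im\Mf}$. Since only $\sum_{k\le L}\Sf^k$, not $\Sf$ itself, has entries bounded below, $(\Sf\,\Im\Mf)_x\gtrsim\avg{\Im\Mf}$ is unjustified, and it is genuinely false within the lemma's hypotheses: in the properly rectangular case with $p>n$, at $z=\ii\eta$ with $\eta$ small one has $\abs{M_{1,i}(\ii\eta)}\sim \eta^{-1}$ and $\avg{\Im\Mf(\ii\eta)}\sim\eta^{-1}$, so the claimed bound would read $\eta^{-1}\lesssim\eta$. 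The repair uses exactly the two iterated inequalities you already set up, but the $\abs z$-powers must be routed into this step rather than appended afterwards: inserting $\Im\Mf_y\gtrsim\abs z^{2(k-1)}(\Sf^{k-1}\Im\Mf)_y$ inside $(\Sf\,\Im\Mf)_x$ and summing over $k=1,\dots,L$ gives, via (B'),
\begin{equation*}
(\Sf\,\Im\Mf)_x\;\gtrsim\;\abs z^{2L-2}\,\avg{\Im\Mf},\qquad\text{hence}\qquad \norm{\Mf}_\infty\;\le\;\max_x\big((\Sf\,\Im\Mf)_x\big)^{-1}\;\lesssim\;\frac{\abs z^{2-2L}}{\avg{\Im\Mf}}.
\end{equation*}
With this correction (which is the "bookkeeping" you allude to, but done at the right place), your argument coincides with the paper's proof.
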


The proof essentially follows the same line of arguments as the proof of Lemma 5.4 in \cite{AjankiQVE}.
However, instead of using the lower bound on the entries of $S^L$ as in \cite{AjankiQVE} we have to make use of the lower bound on the entries of $\sum_{k=1}^L \Sf^k$. 

To prove another auxiliary estimate on $\Sf$, we define the vectors $\Sf_x = (\sf_{xy})_{y=1,\ldots, n+p} \in  \R^{n+p}$ for $x=1, \ldots, n+p$. 
Since \eqref{eq:lower_bound_Sf_L} implies 
\[\psi \leq \sum_{k=1}^L \sum_{y=1}^{n+p} (\Sf^k)_{xy} \leq \sum_{k=1}^L \sum_{v=1}^{n+p} \sf_{xv} \max_{t=1, \ldots, n+p} \sum_{y=1}^{n+p} (\Sf^{k-1})_{ty}\leq L \sum_{v=1}^{n+p} \sf_{xv}  \]
for any fixed $x=1, \ldots, n+p$, where we used $\norminf{\Sf^{k-1}} \leq \norminf{\Sf}^{k-1} \leq 1$ by (A), we obtain
\begin{equation}
\inf_{x=1, \ldots, n+p} \norm{\Sf_x}_1 \geq \frac{\psi}{L}. 
\label{eq:estimate_S_i}
\end{equation}
In particular, together with (A), this implies  
\begin{equation}
\label{eq:sum_s_ij_sim_1}
\sum_{j=1}^p s_{jk} \sim 1, \quad \sum_{l=1}^n s_{il} \sim 1, \qquad i =1, \ldots, p,\quad k=1, \ldots, n. 
\end{equation} 
In the study of the stability of \eqref{eq:combined_QVE} when perturbed by a vector $\df$, as in \eqref{eq:perturbed_combined_QVE}, the linear operator 
\begin{equation} \label{eq:definition_Ff}
\Ff(z)v \defeq \abs{\Mf(z)} \Sf( \abs{\Mf(z)}v)
\end{equation}
for $v \in \C^{n+p}$ plays an important role. 
Before we collect some properties of operators of this type in the next lemma, we first recall the definition of the gap of an operator from \cite{AjankiQVE}. 

\begin{defi} \label{def:Gap}
Let $T$ be a compact self-adjoint operator on a Hilbert space. The \emph{spectral gap} $\Gap(T) \geq 0$ is the difference between the two largest eigenvalues of $\abs{T}$ (defined by spectral 
calculus). If the operator norm $\norm{T}$ is a degenerate eigenvalue of $\abs{T}$, then $\Gap(T) =0$. 
\end{defi}

In the next lemma, we study matrices of the form $\widehat \Ff(r)_{xy} \defeq r_x \sf_{xy} r_y $ where $r \in (0,\infty)^{n+p}$ and $x,y = 1, \ldots, n+p$. 
If $\inf_x r_x>0$ then \eqref{eq:lower_bound_Sf_L} implies that all entries of $\sum_{k=1}^L \wh \Ff(r)^k$ are strictly positive. Therefore, by the Perron-Frobenius theorem, the eigenspace corresponding to the 
largest eigenvalue $\wh \lambda(r)$ of $\wh \Ff(r)$ is one-dimensional and spanned by a unique non-negative vector $\widehat \ff = \wh \ff(r)$ such that $\langle \widehat \ff, \widehat \ff \rangle =1 $. 

The block structure of $\Sf$ implies that there is a matrix $\wh F(r) \in \R^{p \times n}$ such that 
\begin{equation}\label{eq:intro_wh_F}
 \wh \Ff(r) = \begin{pmatrix} 0 & \wh F(r) \\ \wh F(r)^t & 0 \end{pmatrix}.
\end{equation}
However, for this kind of operator, we obtain $\sigma\big(\wh \Ff(r)\big) = - \sigma\big(\wh \Ff(r)\big)$, i.e., $\Gap(\wh \Ff(r)) = 0$ by above definition. Therefore, we will 
compute $\Gap(\wh F(r) \wh F(r) ^t)$, instead. 
We will apply these observations for $\Ff(z)$ where the blocks $\wh F(\abs{\Mf(z)})$ will be denoted by $F(z)$. 

\begin{lem} \label{lem:hat_F}
For a vector $r \in (0,\infty)^{n+p}$ which is bounded by constants $r_+ \in (0,\infty)$ and $r_- \in (0,1]$, i.e., 
\[  r_-  \leq r_x \leq r_+  \]
for all $x= 1, \ldots, n+p$, we define the matrix $\widehat \Ff(r)$ with entries $\widehat \Ff(r)_{xy} \defeq r_x \sf_{xy} r_y $ for $x, y = 1, \ldots, n+p$. Then the eigenspace corresponding to 
$\widehat \lambda(r) \defeq \norm{\widehat \Ff(r)}_{2 \to 2}$ is one-dimensional and $\wh \lambda(r)$ satisfies the estimates 
\begin{equation}
r_-^2 \lesssim \widehat \lambda(r) \lesssim r_+^2. 
\label{eq:estimates_hat_lambda}
\end{equation}
There is a unique eigenvector $\wh \ff=\wh \ff(r)$ corresponding to $\wh\lambda(r)$ satisfying $\wh \ff_x \geq 0$ and $\norm{\wh \ff}_2 =1$. Its  components satisfy
\begin{equation}
\label{eq:estimates_wh_f}
 \frac{r_-^{2L}}{r_+^4} \min\left\{ \wh \lambda(r), \wh \lambda(r)^{-L+2}\right\} \lesssim \wh \ff_x \lesssim \frac{r_+^4}{\wh \lambda(r)^2}, \quad \text{ for all }x = 1, \ldots, n+p. 
\end{equation}
Moreover,  $\wh F(r) \wh F(r)^t$ has a spectral gap 
\begin{equation} \label{eq:Gap_FF^t}
\Gap\left(\wh F(r) \wh F(r)^t\right) \gtrsim \frac{r_-^{8L}}{r_+^{16}} \min\left \{ \widehat \lambda(r)^6, \widehat \lambda(r)^{-8L +10}\right\}.
\end{equation}
\end{lem}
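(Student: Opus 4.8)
The plan is to treat $\wh\Ff(r)$ as a small perturbation (in the sense of spectral geometry, not operator norm) of the rank-one-like operator $\abs{\Mf}\Sf\abs{\Mf}$ that one would get from a genuinely positive matrix, and to run the standard Perron--Frobenius-type argument from \cite{AjankiQVE} but bookkeeping every constant in terms of $r_\pm$ and $\wh\lambda(r)$. First I would record the two elementary facts that make everything go: by (A) with $s_*=1$ we have $\norminf{\Sf}\le 1$ and $\normtwo{\Sf}\le 1$, hence $\normtwo{\wh\Ff(r)}\le r_+^2$, which gives the upper bound $\wh\lambda(r)\lesssim r_+^2$; for the lower bound one tests against a suitable vector --- e.g. use \eqref{eq:estimate_S_i}, $\norm{\Sf_x}_1\ge \psi/L$, together with \eqref{eq:lower_bound_Sf_L} to see $\scalar{v}{\wh\Ff(r)^L v}\gtrsim r_-^{2L}\norm{v}_2^2$ for $v$ the normalized all-ones vector (or an eigenvector), so $\wh\lambda(r)^L\gtrsim r_-^{2L}$ and thus $\wh\lambda(r)\gtrsim r_-^2$, proving \eqref{eq:estimates_hat_lambda}. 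One-dimensionality of the top eigenspace and existence of the nonnegative normalized eigenvector $\wh\ff$ follow from Perron--Frobenius applied to $\sum_{k=1}^L\wh\Ff(r)^k$, which has all entries strictly positive by \eqref{eq:lower_bound_Sf_L} once $r_->0$; this is exactly the reasoning already sketched in the paragraph preceding the lemma.

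Next I would prove the two-sided bound \eqref{eq:estimates_wh_f} on the components of $\wh\ff$. The eigenvalue equation is $\wh\lambda(r)\wh\ff_x=\sum_y r_x\sf_{xy}r_y\wh\ff_y$. For the \textbf{upper} bound: iterate once more to get $\wh\lambda(r)^2\wh\ff_x=\sum_{y}(\wh\Ff(r)^2)_{xy}\wh\ff_y\le r_+^4\sum_y (\Sf^2)_{xy}\wh\ff_y$ (using $r_y\le r_+$ twice and dropping one $r_x\le r_+$ is not available since $r_x$ could be small --- instead keep $r_x$; actually $\wh\lambda(r)^2\wh\ff_x\le r_+^2 r_x\sum_y(\Sf\cdot)\ldots$, and since $r_x\le r_+$ one gets $\le r_+^4\norm{\Sf}_{\infty\to\infty}^2\norm{\wh\ff}_\infty$, but $\norm{\wh\ff}_\infty\le\norm{\wh\ff}_2=1$), yielding $\wh\ff_x\le r_+^4/\wh\lambda(r)^2$. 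For the \textbf{lower} bound one uses positivity more carefully: from $\wh\lambda(r)^L\wh\ff_x=\sum_y(\wh\Ff(r)^L)_{xy}\wh\ff_y\ge r_-^{2L}\sum_y(\Sf^L)_{xy}\wh\ff_y$, but $(\Sf^L)_{xy}$ need not be positive, so instead sum \eqref{eq:lower_bound_Sf_L} over the $L$ powers: $\sum_{k=1}^L\wh\lambda(r)^k\wh\ff_x\ge r_-^{2L}\sum_{k=1}^L\sum_y(\Sf^k)_{xy}\wh\ff_y\ge r_-^{2L}\frac{\psi}{n+p}\sum_y\wh\ff_y\gtrsim r_-^{2L}\avg{\wh\ff}$. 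Finally $\avg{\wh\ff}$ is bounded below by $\avg{\wh\ff^2}=1/(n+p)$ times... no --- rather $\avg{\wh\ff}\ge\avg{\wh\ff^2}/\norm{\wh\ff}_\infty\ge (n+p)^{-1}\cdot\wh\lambda(r)^2/r_+^4$ using the just-proved upper bound, and $\sum_{k=1}^L\wh\lambda(r)^k\lesssim\max\{\wh\lambda(r),\wh\lambda(r)^L\}$, which after rearranging gives the stated $\wh\ff_x\gtrsim \frac{r_-^{2L}}{r_+^4}\min\{\wh\lambda(r),\wh\lambda(r)^{-L+2}\}$. (The bookkeeping of which power of $\wh\lambda$ appears is the fiddly part here.)

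For the spectral gap \eqref{eq:Gap_FF^t} I would argue as follows. The $2\times 2$ block structure \eqref{eq:intro_wh_F} forces $\sigma(\wh\Ff(r))$ to be symmetric about $0$, so $\pm\wh\lambda(r)$ are both eigenvalues and $\Gap(\wh\Ff(r))=0$ --- this is why we pass to $\wh F(r)\wh F(r)^t$, whose eigenvalues are the squares of the singular values of $\wh F(r)$, i.e. the squares of the nonnegative eigenvalues of $\wh\Ff(r)$. Write $\wh\ff=(\wh\ff^{(1)},\wh\ff^{(2)})$ in block form; then $\wh\ff^{(1)}$ is the Perron eigenvector of $\wh F(r)\wh F(r)^t$ with eigenvalue $\wh\lambda(r)^2$. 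The spectral gap of $\wh F(r)\wh F(r)^t$ is $\wh\lambda(r)^2$ minus its second-largest eigenvalue. I would obtain a quantitative gap by a Perron--Frobenius/Birkhoff-contraction argument: the matrix $\wh G\defeq\wh F(r)\wh F(r)^t$ has strictly positive entries (since $\wh\Ff(r)^2$ restricted to the first block is $\wh F\wh F^t$, and $\sum_{k=1}^L\wh\Ff^k$ having positive entries forces $\wh F\wh F^t$ to have positive entries after enough iterations --- more precisely $(\wh F\wh F^t)^m$ has all entries $\gtrsim$ something positive for $m$ comparable to $L$), with entrywise lower bound of order $r_-^{4}\cdot(\text{const}/(n+p))$ after $O(L)$ steps and upper bound of order $r_+^4\normtwo{\Sf}^2\lesssim r_+^4$. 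The ratio of these controls the Hilbert projective diameter of the image, which by the Birkhoff--Hopf theorem gives a contraction rate $1-\tanh(\Delta/4)$ with $\Delta\lesssim\log(r_+/r_-)$ --- but this only gives a gap depending on $\log$, not a clean power. The cleaner route, and the one I would actually pursue, is the one used in \cite{AjankiQVE}: bound the second eigenvalue by $\max\{\scalar{v}{\wh G v}:\norm{v}_2=1,\ v\perp\wh\ff^{(1)}\}$ and split $v=v_+-v_-$ into its positive and negative parts; orthogonality to the strictly positive vector $\wh\ff^{(1)}$ forces both $v_\pm\ne 0$, and using the entrywise lower bound $(\wh G)_{ij}\gtrsim c_0$ with $c_0\sim r_-^{4}/(n+p)$ (or rather $r_-^{8L}$-type after the proper number of iterations) one gets $\scalar{v}{\wh G v}\le\wh\lambda(r)^2-c'\,c_0\,\norm{v_+}_1\norm{v_-}_1$, and then bounds $\norm{v_\pm}_1$ below using $\norm{v_+}_1\norm{v_-}_1\gtrsim\scalar{\wh\ff^{(1)}}{v_+}\scalar{\wh\ff^{(1)}}{v_-}/\norm{\wh\ff^{(1)}}_\infty^2$ combined with $\scalar{\wh\ff^{(1)}}{v_+}=\scalar{\wh\ff^{(1)}}{v_-}$ (from $v\perp\wh\ff^{(1)}$) and a lower bound on this common value coming from $\norm{v}_2=1$ plus the component bounds \eqref{eq:estimates_wh_f} on $\wh\ff^{(1)}$. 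Tracking all the powers of $r_\pm$ and $\wh\lambda(r)$ through this chain --- the iteration count $O(L)$ feeding into the $r_-^{8L}$ and the $\min\{\wh\lambda^6,\wh\lambda^{-8L+10}\}$ --- is \textbf{the main obstacle}; it is precisely the place where the nonprimitivity of $\Sf$ (only $\sum_{k=1}^L\Sf^k>0$, not $\Sf^L>0$) forces one to replace each clean application of a positive-matrix estimate by a sum over $L$ powers, squaring or $L$-th-powering the loss each time, which is where the exponents $8L$ and $-8L+10$ come from.
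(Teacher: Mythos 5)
Your strategy is essentially the paper's: \eqref{eq:estimates_hat_lambda} and \eqref{eq:estimates_wh_f} via the Perron--Frobenius argument of Lemma 5.6 in \cite{AjankiQVE} with $S^L$ replaced by $\sum_{k=1}^L\Sf^k$ (the paper simply cites this adaptation), and \eqref{eq:Gap_FF^t} via a positivity-based bound on the second eigenvalue, which is exactly the content of the gap lemma the paper invokes (Lemma 5.7 in \cite{AjankiQVE}, Lemma 5.6 in \cite{AjankiCPAM}). Two concrete points, however, need repair or completion. First, the step $\norm{\wh\ff}_\infty\le\norm{\wh\ff}_2=1$ in your upper bound on $\wh\ff_x$ is false under the paper's conventions, where $\norm{w}_2^2=(n+p)^{-1}\sum_x\abs{w_x}^2$ while $\norm{w}_\infty=\max_x\abs{w_x}$, so $\norm{\wh\ff}_\infty$ may be as large as $\sqrt{n+p}$. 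The conclusion $\wh\ff_x\lesssim r_+^4/\wh\lambda(r)^2$ is still correct, but it should be reached differently: since $(\Sf^2)_{xy}\le\norminf{\Sf}\max_u\sf_{uy}\le(n+p)^{-1}$ entrywise, one gets $\wh\lambda(r)^2\,\wh\ff_x\le r_+^4\sum_y(\Sf^2)_{xy}\wh\ff_y\le r_+^4\avg{\wh\ff}\le r_+^4\norm{\wh\ff}_2=r_+^4$. This repair matters, because your lower bound on $\wh\ff_x$ uses $\avg{\wh\ff}\ge 1/\norm{\wh\ff}_\infty$ and hence rests on the upper bound.

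Second, in the gap argument the inequality $\scalar{v}{\wh F(r)\wh F(r)^t v}\le\wh\lambda(r)^2-c'c_0\norm{v_+}_1\norm{v_-}_1$ cannot be applied to $\wh F(r)\wh F(r)^t$ itself: under (B) only $(SS^t)^{L_1}$, not $SS^t$, has an entrywise lower bound, so $\wh F(r)\wh F(r)^t$ may have vanishing entries and the constant $c_0$ you need would be zero. You gesture at iterating, but the argument has to be run, as in the paper, on $T=\sum_{k=1}^L\big(\wh F(r)\wh F(r)^t/\wh\lambda(r)^2\big)^k$, whose entries obey $t_{ij}\ge r_-^{4L}\min\{\wh\lambda(r)^{-2},\wh\lambda(r)^{-2L}\}\,\psi/(n+p)$ by (B'); the cited gap lemma then gives $\Gap(T)\gtrsim(\norm{\wh f_1}_2^2/\norm{\wh f_1}_\infty^2)\,p\inf_{ij}t_{ij}$, and one must transfer this back using that $q(x)=x+\cdots+x^L$ is increasing on $[0,1]$ and $\sigma(\wh F(r)\wh F(r)^t/\wh\lambda(r)^2)\subset[0,1]$, so that $\Gap(T)\sim\Gap(\wh F(r)\wh F(r)^t)/\wh\lambda(r)^2$. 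This transfer step, together with inserting \eqref{eq:estimates_wh_f} into $\norm{\wh f_1}_2^2/\norm{\wh f_1}_\infty^2$, is precisely where the exponents $r_-^{8L}/r_+^{16}$ and $\min\{\wh\lambda(r)^6,\wh\lambda(r)^{-8L+10}\}$ come from; since you defer this bookkeeping as ``the main obstacle,'' the stated exponents in \eqref{eq:Gap_FF^t} remain unverified in your write-up, even though the mechanism you describe is the right one.
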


The estimates in \eqref{eq:estimates_hat_lambda} and \eqref{eq:estimates_wh_f} can basically be proved following the proof of Lemma 5.6 in \cite{AjankiQVE} where 
$S^L$ is replaced by $\sum_{k=1}^L \Sf^k$ and $(\wh F/\wh \lambda)^L$ by $\sum_{k=1}^L (\wh \Ff/\wh \lambda)^k$. Therefore, we will only show \eqref{eq:Gap_FF^t} assuming the other estimates.

\begin{proof} 
We write $\wh \ff = (\wh f_1, \wh f_2)^t$ for $\wh f_1 \in \C^p$ and $\wh f_2 \in \C^n$ and define a linear operator on $\C^p$ through
\[ T \defeq \sum_{k=1}^L \left(\frac{\wh F \wh F^t}{\wh \lambda^2}\right)^k. \]
Thus, $\normtwo{T} = L$ as $T \wh f_1 = L \wh f_1$. Using (B') we first estimate the entries $t_{ij}$ by 
\[ t_{ij} \geq \sum_{k=1}^L \frac{r_-^{4k}}{\wh \lambda^{2k}} \left((SS^t)^k\right)_{ij} \geq r_-^{4L} \min \left\{ \wh \lambda^{-2}, \wh \lambda^{-2L}\right\} \frac{\psi}{n+p}, \quad\text{ for }i, j = 1, \ldots, p. \]
Estimating $\norm{\wh f_1}_2$ and $\norm{\wh f_1}_\infty$ from \eqref{eq:estimates_wh_f} and applying Lemma 5.6 in \cite{AjankiCPAM} or Lemma 5.7 in \cite{AjankiQVE} yields 
\[ \Gap(T) \geq \frac{\norm{\wh f_1}_2^2}{\norm{\wh f_1}_\infty^2} p \inf_{i,j} t_{ij}  \gtrsim \frac{r_-^{8L}}{r_+^{16}}\min \left\{ \wh \lambda^4, \wh \lambda^{-8L+8}\right\}. \]
Here we used (D) and note that the factor $\inf_{i,j} t_{ij}$ in Lemma 5.6 in \cite{AjankiCPAM} is replaced by $p \inf_{i,j} t_{ij}$ as $t_{ij}$ are considered as the matrix entries of $T$ and not as the 
kernel of an integral operator on $L^2(\{1, \ldots,p \})$ where $\{1, \ldots, p\}$ is equipped with the uniform probability measure. 
As $q(x) \defeq x + x^2 + \ldots + x^L$ is a monotonously increasing, differentiable function on $[0,1]$ and $\sigma(\wh F \wh F^t/\wh \lambda^2) \subset [0,1]$ we obtain 
$\Gap(T) \sim \Gap(\wh F \wh F^t)/\wh \lambda^2$ which concludes the proof.
\end{proof}

\begin{lem} \label{eq:F_without_hat}
The matrix $\Ff(z)$ defined in \eqref{eq:definition_Ff} with entries $\Ff_{xy}(z) = \abs{\Mf_x(z)} \sigma_{xy}\abs{\Mf_y(z)}$ has the norm  
\begin{equation} \label{eq:normtwo_F_explicit}
 \normtwo{\Ff(z)} = 1- \frac{\Im z \langle \ff(z) \abs{\Mf(z)} \rangle}{ \left\langle \ff(z)\Im \Mf(z)\abs{\Mf(z)}^{-1} \right\rangle },
\end{equation}
where $\ff(z)$ is the unique eigenvector of $\Ff(z)$ associated to $\normtwo{\Ff(z)}$. In particular, we obtain 
\begin{equation}
\label{eq:estimate_norm_F}
(1-\normtwo{\Ff(z)})^{-1} \lesssim \frac{1}{\abs{z}} \min\left\{\frac{1}{\Im z}, \frac{1}{\abs{z} \dist(z, \supp \rho)^2}  \right\}
\end{equation}
for $z \in \Hb$ satisfying $\abs{z} \leq 10$. 
\end{lem}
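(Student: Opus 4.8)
The plan is to obtain the identity \eqref{eq:normtwo_F_explicit} from the imaginary part of the vector Dyson equation \eqref{eq:combined_QVE}, by recognizing that $\Im\Mf/\abs{\Mf}$ is an exact eigenvector of $\Ff(z)$ up to an explicit inhomogeneity, and then to read off \eqref{eq:estimate_norm_F} from the a priori bounds on $\Mf$ already collected in Lemma~\ref{lem:estimates_m}.

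\emph{The identity.} Fix $z\in\Hb$ and abbreviate $\Mf=\Mf(z)$, $\Ff=\Ff(z)$, $\ff=\ff(z)$, $\lambda=\normtwo{\Ff}$. Taking imaginary parts componentwise in $-\Mf_x^{-1}=z+(\Sf\Mf)_x$, using $\Im(-\Mf_x^{-1})=\Im\Mf_x/\abs{\Mf_x}^2$ (valid since $\Im\Mf_x>0$) and that $\Sf$ is real, gives $\Im\Mf_x/\abs{\Mf_x}^2=\Im z+(\Sf\,\Im\Mf)_x$. Multiplying by $\abs{\Mf_x}$ and noting that $\abs{\Mf_x}(\Sf\,\Im\Mf)_x=(\Ff v)_x$ for $v\defeq\Im\Mf/\abs{\Mf}$ (componentwise), this becomes $v-\Ff v=(\Im z)\abs{\Mf}$. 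Pairing with $\ff$ and using that $\Ff$ is symmetric with $\Ff\ff=\lambda\ff$, so that $\scalar{\ff}{v-\Ff v}=(1-\lambda)\scalar{\ff}{v}$, we get $(1-\lambda)\scalar{\ff}{v}=(\Im z)\scalar{\ff}{\abs{\Mf}}$. Since $\ff\geq 0$, $\ff\neq 0$ and $v_x>0$, the scalar product $\scalar{\ff}{v}$ is strictly positive, so dividing yields \eqref{eq:normtwo_F_explicit}; as a by-product this shows $\lambda<1$, because the right-hand side is positive. (Existence and uniqueness up to scalars of the nonnegative eigenvector $\ff$ comes from the Perron--Frobenius argument recalled just before Lemma~\ref{lem:hat_F}, applied with $r=\abs{\Mf(z)}$, which satisfies $\inf_x\abs{\Mf_x(z)}>0$ for $z\in\Hb$; recall that $\sum_{k\leq L}\Sf^k$ has strictly positive entries by (B').)

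\emph{The bound.} Rewrite the identity as $(1-\lambda)^{-1}=\scalar{\ff}{v}\,[(\Im z)\scalar{\ff}{\abs{\Mf}}]^{-1}$. Using $\ff\geq 0$, estimate the numerator by $\scalar{\ff}{v}\leq\norm{v}_\infty\avg{\ff}$ and the denominator from below by $\scalar{\ff}{\abs{\Mf}}\geq(\min_x\abs{\Mf_x})\avg{\ff}$; the common factor $\avg{\ff}$ cancels, leaving $(1-\lambda)^{-1}\leq\norm{v}_\infty\,[(\Im z)\min_x\abs{\Mf_x}]^{-1}$. For $\abs{z}\leq 10$, \eqref{eq:first_estimate_m} gives $\min_x\abs{\Mf_x}\gtrsim\abs{z}$, while $v_x=\Im\Mf_x/\abs{\Mf_x}$ is bounded both by $1$ trivially and, combining \eqref{eq:mf_upper_bound_supp_rho2} with $\min_x\abs{\Mf_x}\gtrsim\abs{z}$, by $v_x\lesssim(\Im z)\,[\abs{z}\dist(z,\supp\rho)^2]^{-1}$, so $\norm{v}_\infty\lesssim\min\{1,(\Im z)(\abs{z}\dist(z,\supp\rho)^2)^{-1}\}$. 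Substituting these two estimates produces exactly \eqref{eq:estimate_norm_F}.

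I do not foresee a genuine obstacle: the argument is essentially one line once one sees that $v=\Im\Mf/\abs{\Mf}$ solves $v-\Ff v=(\Im z)\abs{\Mf}$. The only points that need a little care are verifying $\scalar{\ff}{v}>0$ (which both legitimizes the division and forces $\normtwo{\Ff(z)}<1$), and checking that every input about $\Mf$ that is used is already contained in Lemma~\ref{lem:estimates_m}; in particular, the finer eigenvector estimates \eqref{eq:estimates_wh_f} of Lemma~\ref{lem:hat_F} are \emph{not} needed for this lemma.
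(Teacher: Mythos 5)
Your proof is correct and follows essentially the same route as the paper: the identity is obtained exactly as in the paper's proof (imaginary part of \eqref{eq:combined_QVE}, multiplication by $\abs{\Mf}$, pairing with $\ff$ using the symmetry of $\Ff$ and $\Ff\ff=\normtwo{\Ff}\ff$), and the bound \eqref{eq:estimate_norm_F} is deduced, as the paper asserts, directly from the estimates of Lemma~\ref{lem:estimates_m}. Your write-up merely spells out the elementary bookkeeping ($\scalar{\ff}{v}\leq\norm{v}_\infty\avg{\ff}$, $\scalar{\ff}{\abs{\Mf}}\geq(\min_x\abs{\Mf_x})\avg{\ff}$, $\min_x\abs{\Mf_x}\gtrsim\abs{z}$) that the paper leaves implicit, and correctly notes that the finer eigenvector estimates of Lemma~\ref{lem:hat_F} are not needed here.
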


\begin{proof}
The derivation of \eqref{eq:normtwo_F_explicit} follows the same steps as the proof of (4.4) in \cite{AjankiCPAM} (compare Lemma 5.5 in \cite{AjankiQVE} as well). 
We take the imaginary part of \eqref{eq:combined_QVE}, multiply the result by $\abs{\Mf}$ and take the scalar product with $\ff$. Thus, we obtain 
\begin{equation} \label{eq:normtwo_F_aux_1}
 \scalara{\ff}{\frac{\Im \Mf}{\abs{\Mf}}} = \Im z \avg{\ff \abs{\Mf}} + \normtwo{\Ff}\scalara{\ff}{\frac{\Im \Mf}{\abs{\Mf}}},
\end{equation}
where we used the symmetry of $\Ff$ and $\Ff\ff = \normtwo{\Ff}\ff$. Solving \eqref{eq:normtwo_F_aux_1} for $\normtwo{\Ff}$ yields \eqref{eq:normtwo_F_explicit}.

 Now, \eqref{eq:estimate_norm_F} 
is a direct consequence of Lemma \ref{lem:estimates_m} and \eqref{eq:normtwo_F_explicit}. 
\end{proof}

\subsection{Stability away from the edges and continuity}

All estimates of $\Mf - \gf$, when $\Mf$ and $\gf$ satisfy \eqref{eq:combined_QVE} and \eqref{eq:perturbed_combined_QVE}, respectively, are 
based on inverting the linear operator 
\[ \Bf(z) v \defeq \frac{\abs{\Mf(z)}^2}{\Mf(z)^2} v - \Ff(z)v \]
for $v \in \C^{n+p}$. The following lemma bounds $\Bf^{-1}(z)$ in terms of $\langle \Im \Mf (z) \rangle$ if $z$ is away from zero. 
For $\delta >0$, we use the notation $f \lesssim_\delta g$ if and only if there is an $r >0$ which is allowed to depend on model parameters such that $f \lesssim \delta^{-r} g$. 

\begin{lem} \label{lem:bound_B_inverse}
There is a universal constant $\kappa\in \N$ such that for all $\delta>0$ we have 
\begin{align}
\normtwo{\Bf^{-1}(z)}  &\lesssim_\delta 
\min\left\{ \frac{1}{(\Re z)^2 \langle \Im \Mf(z) \rangle^{\kappa}}, \: \frac{1}{\Im z},\:  \frac{1}{\dist(z, \supp \rho)^2}\right\}, 
\label{eq:estimate_norm_B_inverse}\\
\norminf{\Bf^{-1}(z)}  &\lesssim_\delta 
\min\left\{ \frac{1}{(\Re z)^2 \langle \Im \Mf(z) \rangle^{\kappa+2}},\: \frac{1}{(\Im z)^3}, \:  \frac{1}{\dist(z, \supp \rho)^4}\right\}  
\label{eq:estimate_norm_B_inverse_infty} 
\end{align}
for all $z \in \Hb$ satisfying 
$\delta\leq \abs{z} \leq 10$. 
\end{lem}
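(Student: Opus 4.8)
The plan is to invert $\Bf(z)$ by exploiting the structure $\Bf = \abs{\Mf}^2/\Mf^2 - \Ff$, where $\Ff$ is a symmetric operator whose top eigenvalue is $\normtwo{\Ff}$ with associated eigenvector $\ff$. I would split the analysis according to which of the three competing terms in the minimum is relevant, but the core is always the same mechanism: the operator $\Bf$ is a perturbation of the self-adjoint, Perron--Frobenius-type operator $\abs{\Mf}^2/\abs{\Mf}^2 - \Ff = 1 - \Ff$ by the diagonal ``phase'' multiplication $\abs{\Mf}^2/\Mf^2 - 1$, which is small because $\Mf$ is close to being purely imaginary (i.e., $\arg \Mf_x$ is small) precisely when $\Im \Mf_x$ is not too small relative to $\abs{\Mf_x}$. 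First I would record, via Lemma~\ref{lem:estimates_m} and the flatness bound $\abs{z} \lesssim \abs{\Mf_x} \lesssim \abs{z}^{2-2L}/\langle \Im \Mf\rangle$, that in the regime $\delta \le \abs{z} \le 10$ the entries of $\Mf$ are comparable to each other up to powers of $\langle \Im \Mf\rangle$, so that $\abs{\Mf}$ is a bounded vector bounded below by $\delta$-powers of $\langle \Im \Mf\rangle$, which makes Lemma~\ref{lem:hat_F} applicable with $r = \abs{\Mf}$.

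The key step is the \emph{Rotation-Inversion} argument (the paper later calls it the Rotation-Inversion lemma): decompose any $v \in \C^{n+p}$ as $v = \langle \ff, v\rangle \ff + v^\perp$ where $v^\perp \perp \ff$. On the orthogonal complement, $1 - \Ff$ is invertible with norm controlled by the spectral gap: by \eqref{eq:Gap_FF^t} of Lemma~\ref{lem:hat_F}, $\normtwo{(1 - \Ff)^{-1}\big|_{\ff^\perp}} \lesssim_\delta \langle \Im \Mf\rangle^{-O(1)}$, and since the diagonal perturbation has small norm after the same reduction, $\Bf$ remains invertible there with a $\delta$-power-of-$\langle \Im \Mf\rangle$ bound. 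The delicate direction is the $\ff$-component: one computes $\langle \ff, \Bf \ff\rangle = \langle \ff, (\abs{\Mf}^2/\Mf^2)\ff\rangle - \normtwo{\Ff}$, and here I would use \eqref{eq:normtwo_F_explicit} from Lemma~\ref{eq:F_without_hat}, which gives $1 - \normtwo{\Ff} = \Im z \langle \ff \abs{\Mf}\rangle / \langle \ff \Im \Mf \abs{\Mf}^{-1}\rangle$, together with the expansion $\abs{\Mf_x}^2/\Mf_x^2 = 1 - 2\ii\, \Im \Mf_x/\abs{\Mf_x} \cdot \abs{\Mf_x}/\Mf_x \approx 1 + O(\Im \Mf_x/\abs{\Mf_x})$ refined to second order. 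The upshot is that $\abs{\langle \ff, \Bf\ff\rangle}$ is bounded below, up to $\delta$-powers of $\langle \Im \Mf\rangle$, by $\max\{\Im z/\abs{z}, \langle \Im \Mf\rangle^2\}$ or so — and $\langle \Im \Mf\rangle$ itself is comparable to $\min\{\Im z /\dist(z,\supp\rho)^2, \dots\}$ via the Stieltjes-transform representation \eqref{eq:def_rho}, which recovers the $(\Im z)^{-1}$ and $\dist(z,\supp\rho)^{-2}$ alternatives in the minimum. Combining the two components by a standard $2\times 2$ block (Schur-complement) bound for $\Bf^{-1}$ relative to the splitting $\C \ff \oplus \ff^\perp$ yields \eqref{eq:estimate_norm_B_inverse}.

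For the $\norminf$ bound \eqref{eq:estimate_norm_B_inverse_infty}, I would not redo the spectral analysis but instead bootstrap from the $\normtwo$ bound: write $\Bf^{-1} = (\abs{\Mf}^2/\Mf^2)^{-1}(1 - (\abs{\Mf}^2/\Mf^2)^{-1}\Ff)^{-1}$ is not quite self-adjoint-friendly, so better to use the identity $\Bf^{-1} = \mathrm{diag}(\Mf^2/\abs{\Mf}^2) + \Bf^{-1}\Ff\,\mathrm{diag}(\Mf^2/\abs{\Mf}^2)$ (from $\Bf \cdot \mathrm{diag}(\Mf^2/\abs{\Mf}^2) = 1 - \Ff\,\mathrm{diag}(\Mf^2/\abs{\Mf}^2)$, rearranged), so that $\norminf{\Bf^{-1}} \lesssim 1 + \norminf{\Bf^{-1}}\,\norminf{\Ff}$ is circular — instead I would pass through $\normtwoinf{\cdot}$: $\norminf{\Bf^{-1} v} \le \normtwoinf{\Ff}\,\normtwo{\Bf^{-1}v} + \norminf{v} \lesssim \normtwo{\Bf^{-1}}\norminf{v} + \norminf{v}$ using $\normtwoinf{\Ff} \lesssim \normtwoinf{\Sf} \lesssim 1$ (as in \eqref{eq:St_norm_two_inf_less_1}) and $\normtwo{v} \le \norminf{v}$. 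This loses one extra power of $\normtwo{\Bf^{-1}}$ relative to the two-norm bound when iterated once more to handle the off-diagonal, accounting for the squared/quartic exponents $(\Im z)^{-3}$ and $\dist(z,\supp\rho)^{-4}$ and the extra $\langle \Im \Mf\rangle^{-2}$. I expect the main obstacle to be the lower bound on $\abs{\langle \ff, \Bf\ff\rangle}$: one must carefully track the interplay between the phase of $\Mf_x$ and $\Im \Mf_x/\abs{\Mf_x}$ so that the two contributions — the $\Im z$ term coming from $1 - \normtwo{\Ff}$ and the genuinely quadratic term from the diagonal rotation — do not cancel, and this is exactly where the flatness and the $\sum_{k=1}^L \Sf^k$ lower bound (rather than $\Sf^L$) must be used, since $\Sf$ from \eqref{eq:def_H_S} is not primitive and the naive Perron--Frobenius estimates degenerate.
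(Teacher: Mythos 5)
The central gap is in your two-norm bound. You present $\Bf=\abs{\Mf}^2/\Mf^2-\Ff$ as a perturbation of $1-\Ff$ by a ``small'' diagonal phase and reduce the delicate part to a lower bound on the single quadratic form $\scalar{\ff}{\Bf\ff}$, treating $\ff^\perp$ by the spectral gap. Neither step survives. In the bulk regime (small $\Im z$ and small $\dist(z,\supp\rho)$ with $\avg{\Im\Mf}\sim 1$), the phases $\arg\Mf_x$ are of order one, so $\abs{\Mf}^2/\Mf^2-1$ is not small in any norm; note also that $\Mf$ nearly purely imaginary gives $\abs{\Mf}^2/\Mf^2\approx -1$, the opposite of your parenthetical. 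More importantly, since $\Ff$ is block off-diagonal its spectrum is symmetric: $\ff_-=(f_1,-f_2)\in\ff^\perp$ is an eigenvector with eigenvalue $-\normtwo{\Ff}$, so the dangerous subspace is the two-dimensional span of $(v_1,0)$ and $(0,v_2)$, not the line $\C\ff$; and because $\Bf$ is non-normal, a lower bound on $\abs{\scalar{\ff}{\Bf\ff}}$ does not bound $\normtwo{\Bf^{-1}}$ at all. A $2\times2$ model shows this: with $U_1=\ii$, $U_2=-\ii$ and $\normtwo{\Ff}=1$ the operator is singular while $\abs{\scalar{\ff}{\Bf\ff}}=1$, because invertibility is governed by the \emph{product} $1-\normtwo{F^tF}\scalar{v_1}{U_1v_1}\scalar{v_2}{U_2v_2}$ and not by the sum entering $\scalar{\ff}{\frak{U}\ff}$. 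This two-dimensional analysis is exactly the content of the Rotation-Inversion lemma (Lemma \ref{lem:bulk_stability}), which gives $\normtwo{\Bf^{-1}}\lesssim\big[\Gap(FF^t)\,\absb{1-\normtwo{F^tF}\scalar{v_1}{U_1v_1}\scalar{v_2}{U_2v_2}}\big]^{-1}$; the real work of the paper's proof is then the lower bound $\absb{1-\normtwo{F^tF}\scalar{v_1}{U_1v_1}\scalar{v_2}{U_2v_2}}\gtrsim_\delta(\Re z)^2\avg{\Im\Mf}^{\kappa}$, obtained from the real part of the QVE (giving $\norm{\Re\Mf/\abs{\Mf}}_2\gtrsim\abs{\Re z}\avg{\Im\Mf}$) combined with the eigenvector bounds $\inf_x\ff_x\gtrsim_\delta\avg{\Im\Mf}^{4}$ and $\Gap(FF^t)\gtrsim_\delta\avg{\Im\Mf}^{16}$ from Lemma \ref{lem:hat_F}. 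Your claimed lower bound $\max\{\Im z/\abs{z},\avg{\Im\Mf}^2\}$ is neither derived nor of the right form: the genuine degeneration is in $(\Re z)^2$ (this is why the square/hard-edge case needs the extra symmetry $\avg{g_1}=\avg{g_2}$ in Proposition \ref{pro:hard_edge_stability}). Only the $1/\Im z$ and $1/\dist(z,\supp\rho)^2$ alternatives come, as you say, from $\normtwo{\Bf^{-1}}\leq(1-\normtwo{\Ff})^{-1}$ and \eqref{eq:estimate_norm_F}.

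The sup-norm step also has an error in its justification: you use $\normtwoinf{\Ff}\lesssim\normtwoinf{\Sf}\lesssim 1$, but $\Ff v=\abs{\Mf}\,\Sf(\abs{\Mf}v)$ only yields $\normtwoinf{\Ff}\leq\norm{\Mf}_\infty^2\normtwoinf{\Sf}$, and $\norm{\Mf}_\infty$ is not $O(1)$; it is only controlled by $\min\{\avg{\Im\Mf}^{-1}\abs{z}^{2-2L},(\Im z)^{-1},\dist(z,\supp\rho)^{-1}\}$. The paper's route is the elementary inequality \eqref{eq:norm_hat_F_inverse} with $R=\Ff$ and $D=\abs{\Mf}^2/\Mf^2$ (so $\inf_x\abs{D_{xx}}=1$), and it is precisely the factor $\norm{\Mf}_\infty^2$ that accounts for the extra exponents $\kappa+2$, $(\Im z)^{-3}$ and $\dist(z,\supp\rho)^{-4}$; your ``iterate once more'' accounting does not substitute for this.
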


For the proof of this result, we will need the two following lemmata. 
We recall that by the Perron-Frobenius theorem an irreducible matrix with nonnegative entries has a unique $\ell^2$-normalized eigenvector with positive entries corresponding to its largest eigenvalue. 
By the definition of the spectral gap, Definition \ref{def:Gap}, we observe that if $AA^*$ is irreducible then $\Gap(AA^*)=\normtwo{AA^*}-\max ( \sigma(AA^*)\setminus\{\normtwo{AA^*}\})$.

\begin{lem}[Rotation-Inversion Lemma] \label{lem:bulk_stability}
There exists a positive constant $C$ such that for all $n,p \in \N$, unitary matrices $U_1 \in \C^{p \times p}$, $U_2 \in \C^{n \times n}$ and $A \in \R^{p \times n}$ with nonnegative entries such that $A^*A$ and 
$AA^*$ are irreducible and $\normtwo{A^*A}\in (0,1]$, the following bound holds:
\begin{equation} \label{bound on block inverse}
\normbb{
 \left(
\begin{array}{cc}
U_1	&	A
\\
A^*	&	U_2
\end{array}
\right)^{-1}
}_2
\,\leq\, 
\frac{C}{\Gap(AA^*)\abs{1-\normtwo{A^{*}A}\scalar{v_1}{U_1v_1}\scalar{v_2}{U_2v_2}}}\,,
\end{equation}
where $v_1\in \C^p$ and $v_2 \in \C^n$ are the unique positive, normalized eigenvectors with $AA^*v_1=\normtwo{A^*A} v_1$ and $A^*Av_2=\normtwo{A^*A}v_2$.
The norm on the left hand side of  \eqref{bound on block inverse} is infinite if and only if the right hand side of \eqref{bound on block inverse} is infinite, i.e., in this case the inverse does not exist.
\end{lem}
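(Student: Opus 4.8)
The plan is to reduce the bound to an estimate for the resolvent of a single $p\times p$ contraction and then to use the spectral gap of $A$ to isolate the one resonant direction. First I would reduce: multiplying the matrix in \eqref{bound on block inverse} from the left by the unitary $\mathrm{diag}(U_1^\ast,U_2^\ast)$ and taking the Schur complement with respect to the lower–right identity block, one sees that it is invertible if and only if $I-T$ is, where $T\defeq U_1^\ast A\,U_2^\ast A^\ast\in\C^{p\times p}$, and that the norm of its inverse is comparable, up to a universal constant (using $\normtwo A\le 1$), to $\normtwo{(I-T)^{-1}}$; in particular the two quantities are infinite simultaneously. Since $AA^\ast$ is irreducible, Perron–Frobenius makes its top eigenvalue $\lambda\defeq\normtwo{A^\ast A}$ simple, so $g\defeq\Gap(AA^\ast)>0$ always; hence the right‑hand side of \eqref{bound on block inverse} is infinite exactly when $P\defeq|1-\lambda\scalar{v_1}{U_1v_1}\scalar{v_2}{U_2v_2}|=0$, and it remains to prove $\normtwo{(I-T)^{-1}}\lesssim (gP)^{-1}$ together with the matching singularity statement. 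If $1-\lambda\ge 2g$ the trivial estimate $\normtwo{(I-T)^{-1}}\le(1-\lambda)^{-1}$ already suffices, so one may assume $\lambda$ close to $1$.

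Next I would record the structure coming from the singular value decomposition of $A$. Let $\sigma_1=\sqrt\lambda\ge\sigma_2\ge\dots$ be the singular values, with (positive, by Perron–Frobenius) singular vectors $v_1,v_2$, so that $Av_2=\sqrt\lambda\,v_1$, $A^\ast v_1=\sqrt\lambda\,v_2$, $A$ maps $v_2^{\perp}$ into $v_1^{\perp}$, $\sigma_1^2-\sigma_2^2=g$, and $A=\sqrt\lambda\,v_1v_2^\ast+A_\perp$ with $\normtwo{A_\perp}=\sigma_2\le\sqrt{\lambda-g}$. Expanding $T$ accordingly gives $T=T_0+E$, where $T_0=\lambda\,\overline{\alpha_2}\,(U_1^\ast v_1)v_1^\ast$ is rank one ($\alpha_j\defeq\scalar{v_j}{U_jv_j}$) and $E$ is a sum of three terms whose ranges lie in $v_1^{\perp}$ and which each carry a factor $\sigma_2\le\sqrt{\lambda-g}$ or $\sigma_2^2$, while its overlap with $v_1$ carries in addition the factors $\beta_j\defeq\sqrt{1-|\alpha_j|^2}$; an elementary computation gives $\beta_1\beta_2\le P$. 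By Sherman–Morrison $I-T_0$ is invertible iff $1-\lambda\overline{\alpha_1\alpha_2}\neq 0$, with $\normtwo{(I-T_0)^{-1}}\lesssim P^{-1}$. Finally — and this is where the $g$ in the bound enters — for $w\perp v_1$ one has $Tw=AU_2^\ast A_\perp^\ast w$, hence $\normtwo{Tw}\le\sqrt{\lambda-g}\,\normtwo w$, so the compression $\Pi_1^{\perp}(I-T)\Pi_1^{\perp}$ to $v_1^{\perp}$ is invertible with $\normtwo{(\Pi_1^{\perp}(I-T)\Pi_1^{\perp})^{-1}}\le(1-\sqrt{\lambda-g})^{-1}\lesssim g^{-1}$.

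With these inputs I would run a Feshbach–Schur decomposition of $I-T$ along $\C v_1\oplus v_1^{\perp}$. The scalar Schur complement is $\chi=\chi_0-(\text{feedback})$, where $\chi_0=1-\scalar{v_1}{Tv_1}=1-\lambda\overline{\alpha_1\alpha_2}-\scalar{v_1}{Ev_1}$; because two of the three pieces of $E$ have vanishing $(v_1,v_1)$–matrix element (they involve $A_\perp^\ast v_1=0$, respectively $A_\perp U_2v_2\perp v_1$), one gets $|\scalar{v_1}{Ev_1}|\le\sqrt{\lambda-g}\,\beta_1\beta_2\le\sqrt{\lambda-g}\,P$, whence $|\chi_0|\ge(1-\sqrt{\lambda-g})P\gtrsim gP$. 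Combining this with the $g^{-1}$ bound for the $v_1^{\perp}$–sector and the control $\normtwo{v_1^\ast T\Pi_1^{\perp}},\normtwo{\Pi_1^{\perp}Tv_1}\lesssim\sqrt{\lambda-g}\,(\beta_1+\beta_2)$ on the off‑diagonal blocks, the Schur/Feshbach inversion formula assembles to $\normtwo{(I-T)^{-1}}\lesssim(gP)^{-1}$. The singularity statement then follows since $P=0$ forces $\lambda=1$, $U_jv_j=e^{\i\theta_j}v_j$ and $e^{\i(\theta_1+\theta_2)}=1$, for which the vector $(v_1,-e^{\i\theta_1}v_2)$ is annihilated by the matrix in \eqref{bound on block inverse}.

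I expect the genuine obstacle to be precisely the feedback term in $\chi$ together with the off‑diagonal blocks: their naive norm bounds are of order $\sqrt{\lambda-g}\,P/g$, which is not $\lesssim gP$ unless $g$ is of order one, so to keep the effective $(1,1)$–Schur complement of size $\gtrsim gP$ one must exploit the precise rank‑one structure of the three pieces of $E$ and the fact that the range of $E$ is confined to $v_1^{\perp}$ (so that the dangerous direction $v_1$ is only fed back into through the small quantities $\sqrt{\lambda-g}$ and $\beta_j\lesssim\sqrt P$), supplemented by a short case distinction according to the relative sizes of $P$ and $g^2$ (in the regime $P\gtrsim\lambda-g$ a direct Neumann series for $(I-T_0)^{-1}E$ already gives the stronger bound $\normtwo{(I-T)^{-1}}\lesssim P^{-1}$).
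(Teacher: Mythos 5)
Your reduction to $I-T$ with $T=U_1^*A\,U_2^*A^*$ is sound, and so are the preparatory estimates ($\beta_1\beta_2\le P$ where $P=|1-\lambda\alpha_1\alpha_2|$, the bound $|\chi_0|\gtrsim gP$ with $g=\Gap(AA^*)$, and $\normtwo{(\Pi_1^\perp(I-T)\Pi_1^\perp)^{-1}}\lesssim g^{-1}$). But the heart of the lemma is precisely the step you defer to ``exploiting structure'': lower-bounding the \emph{full} Schur complement $\chi=\chi_0-r^*D^{-1}s$ by $gP$. The structure you propose to exploit is not there. Because of the factor $U_1^*$, the ranges of the three pieces of $E$ are \emph{not} contained in $v_1^\perp$ (e.g.\ the piece $\sqrt\lambda\,(U_1^*v_1)(A_\perp U_2v_2)^*$ has range $\C\,U_1^*v_1$); the off-diagonal block $\Pi_1^\perp Tv_1$ contains $\lambda\overline{\alpha_2}\,\Pi_1^\perp U_1^*v_1$, of size $\beta_1$ with no $\sigma_2$ factor; and in the only nontrivial regime $1-\lambda<2g$ the factor $\sqrt{\lambda-g}$ is close to $1$, so it is no smallness parameter. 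Hence the only available bound on the feedback is $\normtwo{r}\normtwo{D^{-1}}\normtwo{s}\lesssim(\beta_1+\beta_2)^2/g\lesssim P/g$, which exceeds $|\chi_0|\sim P$ by $1/g$ and the target $gP$ by $1/g^2$; no case split between $P$ and $g^2$ changes this, and the Neumann-series fallback also fails since $\normtwo{(I-T_0)^{-1}}\,\normtwo{E}$ (with $\normtwo{E}\ge\sigma_2^2$ possibly close to $1$) need not be below $1$. The obstruction is genuine, not an artifact of crude constants: take $U_2=I$, $U_1$ a rotation by a small angle $\theta$, and $A\approx\mathrm{diag}(1,\sqrt{1-g})$ (perturbed to make it irreducible). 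Then $\chi_0=1-\cos\theta\approx P\approx\theta^2/2$, while the feedback equals $(1-g)\sin^2\theta/\bigl(1-(1-g)\cos\theta\bigr)\approx\theta^2/g$, which for $\theta^2\ll g\ll1$ dwarfs both $\chi_0$ and $gP$. So ``$\chi_0$ minus a small feedback'' can never yield $|\chi|\gtrsim gP$; one would have to compute the feedback with its phases rather than bound its norm, and your sketch contains no mechanism for that.

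For contrast, the paper avoids inverting the complementary block altogether: it works with the full $(p+n)$-dimensional matrix, decomposes an arbitrary unit vector along the two Perron directions $a_\pm=\tfrac1{\sqrt2}(v_1,\pm v_2)$ and their orthogonal complement, and proves $\normtwo{(\mathcal U+\mathcal A)w}\gtrsim\Gap(AA^*)\,\kappa$ by a five-regime case analysis (choosing in each regime which component of $(\mathcal U+\mathcal A)w$ to project on), relating $\kappa$ only at the end to the explicit $2\times2$ matrix with determinant $1-\normtwo{A^*A}\scalar{v_1}{U_1v_1}\scalar{v_2}{U_2v_2}$. Your framework could in principle be salvaged, but it would require a replacement for the feedback estimate — which is exactly the content the five-regime argument supplies.
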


This lemma is proved in the appendix. 

\begin{lem}
Let $R\colon \C^{n+p} \to \C^{n+p}$ be a linear operator and $D \colon \C^{n+p} \to \C^{n+p}$ a diagonal operator. 
If $R - D$ is invertible and $D_{xx} \neq 0$ for all $x =1, \ldots, n+p$ then
\begin{equation}
\norminf{(R - D )^{-1} } \leq \left(\inf_{x=1}^{n+p} \abs{D_{xx}}\right)^{-1} \left( 1 + \normtwoinf{R}\normtwo{(R - D )^{-1}} \right). 
\label{eq:norm_hat_F_inverse}
\end{equation}
\end{lem}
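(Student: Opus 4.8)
The plan is to write $R = D + (R-D)$ and isolate the diagonal part. More precisely, from the operator identity
\begin{equation*}
 (R-D)^{-1} = -D^{-1} + D^{-1} R (R-D)^{-1},
\end{equation*}
which follows by multiplying $R - D = -D(I - D^{-1}R)$ from the left by $D^{-1}$ and rearranging, or simply by checking that multiplying both sides by $(R-D)$ on the right gives an identity. Since $D$ is diagonal and invertible by assumption, $D^{-1}$ is the diagonal operator with entries $1/D_{xx}$, and $\norminf{D^{-1}} = (\inf_x \abs{D_{xx}})^{-1}$.

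Then I would take the $\norm{\genarg}_\infty$ operator norm of both sides of the identity above and use the triangle inequality together with submultiplicativity of operator norms:
\begin{equation*}
 \norminf{(R-D)^{-1}} \leq \norminf{D^{-1}} + \norminf{D^{-1}} \cdot \norminf{R(R-D)^{-1}}.
\end{equation*}
The point is now to estimate $\norminf{R(R-D)^{-1}}$ by factoring the map $\C^{n+p} \to \C^{n+p}$ through the $\norm{\genarg}_2$-normed space in the middle: $(R-D)^{-1}$ is applied first, viewed as a map into $(\C^{n+p}, \norm{\genarg}_2)$, and then $R$ is applied, viewed as a map $(\C^{n+p},\norm{\genarg}_2) \to (\C^{n+p}, \norm{\genarg}_\infty)$. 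This gives $\norminf{R(R-D)^{-1}} \leq \normtwoinf{R}\, \normtwo{(R-D)^{-1}}$. Substituting this and $\norminf{D^{-1}} = (\inf_x\abs{D_{xx}})^{-1}$ into the previous display yields exactly \eqref{eq:norm_hat_F_inverse}.

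There is essentially no obstacle here; this is a soft functional-analytic bookkeeping argument. The only points requiring a word of care are: verifying that the resolvent-type identity is legitimate, which needs $D$ invertible (given) and $R-D$ invertible (given); and being precise about which norm is placed on the intermediate copy of $\C^{n+p}$ so that the factorization $\norminf{R(R-D)^{-1}} \leq \normtwoinf{R}\normtwo{(R-D)^{-1}}$ is the correct reading of the mixed-norm operator norm $\normtwoinf{\genarg}$ as defined in the notation paragraph. I would state the identity, take norms, factor, and conclude in three short lines.
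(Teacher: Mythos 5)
Your argument is correct and is essentially the proof the paper has in mind (the paper only refers to the analogous proof of (5.28) in \cite{AjankiQVE}, which runs along the same lines): the identity $(R-D)^{-1}=-D^{-1}+D^{-1}R(R-D)^{-1}$, the triangle inequality, and the factorization of $R(R-D)^{-1}$ through the intermediate $\norm{\cdot}_2$-normed space. The only implicit ingredient worth stating is that $\norm{w}_2\leq\norm{w}_\infty$ for the paper's normalized $\ell^2$-norm, which is what turns $\norm{(R-D)^{-1}}_{\infty\to 2}$ into $\normtwo{(R-D)^{-1}}$ and makes the bound $\norminf{R(R-D)^{-1}}\leq\normtwoinf{R}\,\normtwo{(R-D)^{-1}}$ legitimate.
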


The proof of \eqref{eq:norm_hat_F_inverse} follows a similar way as the proof of (5.28) in \cite{AjankiQVE}.

\begin{proof}[Proof of Lemma \ref{lem:bound_B_inverse}]
The bound on $\norminf{\Bf^{-1}(z)}$, \eqref{eq:estimate_norm_B_inverse_infty}, follows from \eqref{eq:estimate_norm_B_inverse} by employing \eqref{eq:norm_hat_F_inverse}.
We use \eqref{eq:norm_hat_F_inverse} with $R = \Ff(z)$ and $D = \abs{\Mf(z)}^2/\Mf(z)^2$ and observe that $\normtwoinf{\Ff(z)} \leq \norm{\Mf}_\infty^2 \normtwoinf{\Sf}$. 
Therefore, \eqref{eq:estimate_norm_B_inverse_infty} follows from \eqref{eq:estimate_norm_B_inverse} as $\norm{\Mf}_\infty \lesssim \min\{ \avg{\Im \Mf}^{-1}, (\Im z)^{-1}, \dist(z, \supp \rho)^{-1}\}$ 
by \eqref{eq:first_estimate_m} and \\ $\min\{ \avg{\Im \Mf}^{-1}, (\Im z)^{-1}, \dist(z, \supp \rho)^{-1}\} \gtrsim_\delta 1$ by \eqref{eq:first_estimate_m} and $\delta \leq \abs{z} \leq 10$.

Now we prove \eqref{eq:estimate_norm_B_inverse}. Our first goal is the following estimate
\begin{equation}
\normtwo{\Bf^{-1}(z)} \lesssim_\delta \frac{1}{\Gap(F(z)F(z)^t) (\Re z)^2 \langle \Im \Mf(z) \rangle^{\kappa} }
\label{eq:first_estimate_B_inverse}
\end{equation}
for some universal $\kappa \in \N$ which will be a consequence of Lemma \ref{lem:bulk_stability}. We apply this lemma with 
\[ \begin{pmatrix} 0 & F(z) \\ F(z)^t & 0 \end{pmatrix} = \Ff(z) \defeq \widehat \Ff(\abs{\Mf(z)}), \quad \frak U \defeq \begin{pmatrix} U_1 & 0 \\ 0 & U_2 \end{pmatrix} = \diag\left(\frac{\abs{\Mf(z)}^2}{\Mf(z)^2}\right) \] 
and $v_1 \defeq f_1/\norm{f_1}_2$ and $v_2 \defeq f_2/\norm{f_2}_2$ where $\ff = (f_1, f_2) \in \C^{p+n}$. 
Note that $\lambda(z) \defeq  \wh \lambda(\abs{\Mf(z)}) = \normtwo{\Ff(z)}$ in Lemma \ref{lem:hat_F} and $F(z) = \wh F(\abs{\Mf(z)})$ in the notation of \eqref{eq:intro_wh_F}.
In Lemma \ref{lem:hat_F}, we choose $r_- \defeq \inf_x \abs{\Mf_x(z)}$ and $r_+ \defeq \norm{\Mf(z)}_\infty$ and use the bounds $r_- \gtrsim \abs{z}$ and $r_+ \lesssim \abs{z}^{2-2L}/\langle \Im \Mf(z)\rangle$ 
by \eqref{eq:first_estimate_m}. Moreover, we have 
\begin{equation} \label{eq:estimates_lambda}
\abs{z}^2 \lesssim \normtwo{\Ff(z)} \leq 1 
\end{equation}
by \eqref{eq:first_estimate_m}, \eqref{eq:estimates_hat_lambda} and \eqref{eq:normtwo_F_explicit}.

We write $\frak{U}=\diag(\ee^{-\i2\psi})$, i.e., $\ee^{\i \psi} = \Mf/\abs{\Mf}$, to obtain
\[
\scalar{v_1}{U_1 v_1}=\scalar{v_1}{(\cos \psi_1- \i \sin \psi_1)^2v_1}\,=\, \scalar{v_1}{(1-2(\sin \psi_1)^2-2\i\cos \psi_1\sin\psi_1)v_1}
\]
and a similar relation holds for $\scalar{v_2}{U_2v_2}$. Thus, we compute
\begin{align*}
&\Re \left(1-\normtwo{F(z)^tF(z)}\scalar{v_1}{(1-2(\sin \psi_1)^2-2\i\cos \psi_1\sin\psi_1)v_1}\scalar{v_2}{(1-2(\sin \psi_2)^2-2\i\cos \psi_2\sin\psi_2)v_2}\right) \\
\,=\,&1-\normtwo{F(z)^tF(z)}(1-2\scalar{v_1}{(\sin \psi_1)^2v_1}-2\scalar{v_2}{(\sin \psi_2)^2v_2} +4\scalar{v_1}{(\sin \psi_1)^2v_1}\scalar{v_2}{(\sin \psi_2)^2v_2}) 
\end{align*}
Using $2a+2b-4ab \geq (a+b)(2-a-b)$ for $a, b \in \R$, and estimating the absolute value by the real part yields 
\begin{align}
 \absa{1-\normtwo{F(z)^tF(z)}\scalar{v_1}{U_1 v_1} \scalar{v_2}{U_2v_2}} \hspace*{9cm} \nonumber \\
 \geq 1-\normtwo{F(z)^tF(z)} + \normtwo{F(z)^tF(z)} \left(\langle v_1,\left(\sin \psi_1\right)^2 v_1\rangle+\langle v_2,\left(\sin \psi_2\right)^2v_2\rangle\right) \nonumber \\ 
~~~~ \times \left( \langle v_1, \left(\cos \psi_1\right)^2 v_1 \rangle+\langle v_2,\left(\cos \psi_2\right)^2v_2\rangle\right)\nonumber \\
 \gtrsim \abs{z}^4 \langle \ff, \left(\sin \psi\right)^2 \ff \rangle \langle \ff, \left( \cos \psi\right)^2 \ff \rangle\hspace*{6.7cm}\nonumber \\
 \gtrsim_\delta \left(\inf_{x=1,\ldots, n+p} \ff_x^4\right) \left\langle\left(\frac{\Im \Mf}{\abs{\Mf}}\right)^2\right\rangle\left\langle\left(\frac{\Re \Mf}{\abs{\Mf}}\right)^2\right\rangle,\hspace*{3.9cm}\label{eq:estimate_norm_B_inverse_2}
\end{align}
where we used $1 \geq \normtwo{F(z)^tF(z)}= \normtwo{\Ff}^2 \gtrsim \abs{z}^4$ by \eqref{eq:estimates_lambda} and 
$\langle \ff, \left(\sin \psi\right)^2 \ff \rangle \langle \ff, \left( \cos \psi\right)^2 \ff \rangle \leq 1$ in the second step.
In order to estimate the last expression, we use $r_- \gtrsim \abs{z}$ and $\normtwo{\Ff(z)} \leq 1$ by \eqref{eq:estimates_lambda} as well as \eqref{eq:first_estimate_m}, \eqref{eq:estimates_hat_lambda} and \eqref{eq:estimates_wh_f} 
to get for the first factor 
\begin{equation}
 \inf_{x=1,\ldots, n+p} \ff_x^4 \gtrsim r_-^{8L+8} r_+^{-16} \gtrsim_\delta \langle \Im \Mf \rangle^{16}. 
\label{eq:estimate_norm_B_inverse_4}
\end{equation}
To estimate the last factor in \eqref{eq:estimate_norm_B_inverse_2}, we multiply the real part of \eqref{eq:combined_QVE} with $\abs{\Mf}$ and obtain 
\[ (1 + \Ff) \frac{\Re \Mf}{\abs{\Mf}} = - \tau \abs{\Mf} \]
if $z = \tau + \i \eta$ for $\tau, \eta \in \R$. 
Estimating $\norm{\cdot}_2$ of the last equation yields 
\[ \abs{\tau} \norm{\Mf}_2 \leq 2 \norma{ \frac{\Re \Mf }{\abs \Mf}}_2  \]
by \eqref{eq:estimates_lambda}.
As $\norm{\Mf}_2 \geq \norm{\Im \Mf}_2 \geq \langle \Im \Mf \rangle$ we get 
\begin{equation}
2 \norma{ \frac{\Re \Mf }{\abs \Mf}}_2 \geq \abs{\tau}  \langle \Im \Mf \rangle. 
\label{eq:estimate_two_norm_real_part}
\end{equation}

Finally, we use \eqref{eq:estimate_norm_B_inverse_4} for the first factor in \eqref{eq:estimate_norm_B_inverse_2} and \eqref{eq:estimate_two_norm_real_part} for the last factor and apply the last 
estimate in \eqref{eq:first_estimate_m} 
and Jensen's inequality, $\avg{(\Im \Mf)^2} \geq \avg{\Im \Mf}^2$, to estimate the second factor which yields 
\begin{equation}
 \absa{1-\normtwo{F(z)^tF(z)} \scalar{v_1}{U_1 v_1} \scalar{v_2}{U_2v_2}}  \gtrsim_\delta \abs{\tau}^2 \langle \Im \Mf \rangle^{\kappa}.
\label{eq:estimate_norm_B_inverse_3}
\end{equation}
This completes the proof of \eqref{eq:first_estimate_B_inverse}.

Next, we bound $\Gap(F(z)F(z)^t)$ from below by applying Lemma \ref{lem:hat_F} with $r_- \defeq \inf_x \abs{\Mf_x(z)}$ and $r_+\defeq \norm{\Mf(z)}_\infty$. As $F(z) = \wh F(\abs{\Mf(z)})$ we have  
\[ \Gap(F(z)F(z)^t) \gtrsim_\delta \langle \Im \Mf(z) \rangle ^{16}, \]
where we used the estimates in \eqref{eq:first_estimate_m} and \eqref{eq:estimates_lambda}.
Combining this estimate on $\Gap(F(z)F(z)^t)$ with \eqref{eq:first_estimate_B_inverse} and \eqref{eq:estimate_norm_F} and increasing $\kappa$, we obtain 
\begin{align*}
\normtwo{\Bf^{-1}(z)} & \lesssim_\delta \min\left\{ \frac{1}{(\Re z)^2 \langle \Im \Mf(z)\rangle^{\kappa}} , \frac{1}{\Im z}, \frac{1}{\dist(\Re z, \supp \rho)^2}   \right\} 
\end{align*}
as $\normtwo{\Bf^{-1}(z)} \leq (1-\normtwo{\Ff(z)})^{-1}$ and $\delta \leq \abs{z} \leq 10$.
\end{proof}

\begin{lem}[Continuity of the solution] \label{lem:continuity_mf}
If $\Mf$ is the solution of the QVE \eqref{eq:combined_QVE} then $z \mapsto \langle \Mf(z) \rangle$ can be extended to a locally Hölder-continuous function on $\overline{\Hb} \backslash \{0\}$.
Moreover, for every $\delta>0$ there is a constant $c$ depending on $\delta$ and the model parameters such that 
\begin{equation}
\abs{ \langle \Mf (z_1)\rangle -\langle \Mf(z_2)\rangle} \leq c \abs{z_1-z_2}^{1/(\kappa +1)} 
\end{equation}
for all $z_1, z_2 \in \overline{\Hb} \backslash \{0\}$ such that $\delta \leq \abs{z_1}, \abs{z_2} \leq 10$
where $\kappa$ is the universal constant of Lemma \ref{lem:bound_B_inverse}. 
\end{lem}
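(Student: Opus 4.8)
The plan is to control the derivative $\partial_z\langle\Mf\rangle$ through the inverse of the stability operator $\Bf$, to upgrade this into Hölder continuity of the imaginary part $h\defeq\langle\Im\Mf\rangle$ up to the real axis by means of a power substitution that linearises the resulting differential inequality, and finally to recover Hölder continuity of the full function $\langle\Mf\rangle$ from the classical boundary regularity of Stieltjes transforms of Hölder-continuous densities.

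First I would record a derivative bound. Differentiating the QVE \eqref{eq:combined_QVE} in $z$ gives $\bigl(\tfrac{1}{\Mf(z)^2}-\Sf\bigr)\partial_z\Mf(z)=1$ (the first term understood as multiplication by the vector $\Mf(z)^{-2}$), and since $\tfrac{1}{\Mf(z)^2}-\Sf=\tfrac{1}{\abs{\Mf(z)}}\,\Bf(z)\,\tfrac{1}{\abs{\Mf(z)}}$ (diagonal conjugation), one obtains $\partial_z\Mf(z)=\abs{\Mf(z)}\,\Bf(z)^{-1}(\abs{\Mf(z)})$ with componentwise products. Hence, by Cauchy--Schwarz and \eqref{eq:L2_bound},
\[
\absb{\partial_z\langle\Mf(z)\rangle}=\absb{\scalar{\abs{\Mf(z)}}{\Bf(z)^{-1}(\abs{\Mf(z)})}}\le\norm{\Mf(z)}_2^2\,\normtwo{\Bf(z)^{-1}}\lesssim_\delta\normtwo{\Bf(z)^{-1}}
\]
for $\delta\le\abs z\le10$. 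Using Lemma~\ref{lem:bound_B_inverse}: if $\abs{\Re z}\ge\delta/2$ the first entry in the minimum of \eqref{eq:estimate_norm_B_inverse} is $\lesssim_\delta\langle\Im\Mf(z)\rangle^{-\kappa}$; if $\abs{\Re z}<\delta/2$ then $\Im z\gtrsim_\delta1$, so the second entry is $\lesssim_\delta 1$, and since $\langle\Im\Mf(z)\rangle\le\norm{\Mf(z)}_2\lesssim_\delta1$ we again get $\lesssim_\delta\langle\Im\Mf(z)\rangle^{-\kappa}$. Thus $\absb{\partial_z\langle\Mf(z)\rangle}\lesssim_\delta\langle\Im\Mf(z)\rangle^{-\kappa}$ for all $z\in\Hb$ with $\delta\le\abs z\le10$.

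Next, set $h(z)\defeq\langle\Im\Mf(z)\rangle=\Im\langle\Mf(z)\rangle>0$. Since $\langle\Mf\rangle$ is holomorphic, the Cauchy--Riemann equations give $\abs{\nabla h(z)}=\absb{\partial_z\langle\Mf(z)\rangle}$, so $\abs{\nabla(h^{\kappa+1})}=(\kappa+1)h^\kappa\abs{\nabla h}\lesssim_\delta1$ on $\Hb\cap\{\delta\le\abs z\le10\}$. Integrating along short paths (the annular region is locally convex up to a harmless geometric constant), $h^{\kappa+1}$ is locally Lipschitz there, hence extends continuously to $\{z\in\overline\Hb\colon\delta\le\abs z\le10\}$; consequently $h$ extends continuously, stays finite on $\R$, and
\[
\absb{h(z_1)-h(z_2)}\le\absb{h(z_1)^{\kappa+1}-h(z_2)^{\kappa+1}}^{1/(\kappa+1)}\lesssim_\delta\abs{z_1-z_2}^{1/(\kappa+1)},
\]
using $\abs{a-b}^{\kappa+1}\le\abs{a^{\kappa+1}-b^{\kappa+1}}$ for $a,b\ge0$ (the case $\abs{z_1-z_2}\gtrsim_\delta1$ being trivial from $\abs{\langle\Mf(z)\rangle}\le2/\abs z$). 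Because the convergence $h(\,\cdot+\i\eta)\to h(\,\cdot+\i0)$ is uniform, Stieltjes inversion shows that $\rho$ restricted to any interval $I\subset\R\setminus\{0\}$ is absolutely continuous with bounded, locally $\tfrac1{\kappa+1}$-Hölder density $t\mapsto\pi^{-1}h(t+\i0)$; in particular $\rho$ has no singular part away from $0$.

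Finally I would deduce the statement for $\langle\Mf\rangle$ itself. Fix $\delta>0$ and cover $\{z\in\overline\Hb\colon\delta\le\abs z\le10\}$ by finitely many small balls. Near a boundary point $\tau_0\notin\supp\rho$ the integrand of $\langle\Mf(z)\rangle=\int(t-z)^{-1}\rho(\di t)$ is holomorphic, so $\langle\Mf\rangle$ extends real-analytically there. Near $\tau_0\in\supp\rho$ with $\tau_0\ne0$, pick a short interval $I\ni\tau_0$ with $0\notin\overline I$ and split $\rho=\rho\mathbf 1_{I}+\rho\mathbf 1_{I^c}$: the second piece gives a function holomorphic near $\tau_0$, while $\rho\mathbf 1_{I}(\di t)=\varphi(t)\,\di t$ with $\varphi$ Hölder on $\overline I$ by the previous step, and the classical boundary regularity of Cauchy transforms of Hölder-continuous densities (Privalov's theorem) shows that $z\mapsto\int_I(t-z)^{-1}\varphi(t)\,\di t$ extends $\tfrac1{\kappa+1}$-Hölder to $\overline\Hb$ near $\tau_0$. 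Patching the local pieces together with the holomorphy of $\langle\Mf\rangle$ on $\Hb$, and treating $\abs{z_1-z_2}\gtrsim_\delta1$ by the crude bound $\abs{\langle\Mf(z)\rangle}\le2/\abs z$, yields the claimed estimate uniformly on $\{\delta\le\abs z\le10\}$; letting $\delta\downarrow0$ gives local Hölder continuity on $\overline\Hb\setminus\{0\}$.

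I expect the main obstacle to be exactly this passage to the real axis. The derivative bound of the second paragraph degenerates too fast in the vertical direction near the edges of $\supp\rho$ -- there $\langle\Im\Mf\rangle\sim\sqrt{\Im z}$ while $\normtwo{\Bf^{-1}}$ can be of order $\dist(z,\supp\rho)^{-2}$ -- so a direct path integral of $\absb{\partial_z\langle\Mf\rangle}$ emanating from the boundary diverges and cannot by itself produce a boundary Hölder bound for the real part. The resolution is to extract Hölder regularity of $h$ first, where the substitution $h\mapsto h^{\kappa+1}$ turns the differential inequality $\abs{\nabla h}\lesssim_\delta h^{-\kappa}$ into a Lipschitz estimate, and only afterwards recover $\Re\langle\Mf\rangle$ from the mapping properties of the Cauchy transform; a subsidiary point to be verified along the way is that $\rho$ has no singular component away from $0$, which follows from the finiteness and continuity of $h(\,\cdot+\i0)$.
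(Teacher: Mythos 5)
Your proposal is correct and follows essentially the same route as the paper: differentiate the QVE, bound $\abs{\partial_z\langle\Im\Mf\rangle}\lesssim_\delta\langle\Im\Mf\rangle^{-\kappa}$ via Cauchy--Schwarz, \eqref{eq:L2_bound} and Lemma \ref{lem:bound_B_inverse} (with the same case distinction in the minimum), integrate this differential inequality to get $1/(\kappa+1)$-Hölder continuity of $\langle\Im\Mf\rangle$ up to the real axis, identify $\pi^{-1}\Im\langle\Mf\rangle$ as the density of $\rho$ away from zero, and then upgrade to $\langle\Mf\rangle$ by decomposing $\rho$ and using boundary regularity of Stieltjes transforms of Hölder densities (your appeal to Privalov's theorem plays the role of Lemma A.7 in \cite{AjankiQVE}).
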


\begin{proof}
In a first step, we prove that $z \mapsto \langle \Im \Mf(z)\rangle$ is locally Hölder-continuous. 
Taking the derivative of \eqref{eq:combined_QVE} with respect to $z \in \Hb$ yields 
\[ (1-\Mf^2(z)\Sf) \partial_z \Mf(z) = \Mf(z)^2. \]
By using that $\partial_z \phi = \i 2 \partial_z \Im \phi$ for every analytic function $\phi$ and taking the average, we get 
\[ \i 2 \partial_z \langle\Im \Mf \rangle  = \langle \abs{\Mf}, \Bf^{-1} \abs{\Mf}\rangle. \]
Here, we suppressed the $z$-dependence of $\Bf^{-1}$. We apply Cauchy-Schwarz inequality and use \eqref{eq:L2_bound}, \eqref{eq:estimate_norm_B_inverse} and \eqref{eq:first_estimate_m} to obtain 
\[ \abs{\partial_z \langle \Im \Mf \rangle} \leq \norm{\Mf}_2 \norm{\Bf^{-1}}_{2\to 2} \norm{\Mf}_2 \lesssim_\delta \min\{ (\Re z)^{-2}\langle \Im \Mf \rangle^{-\kappa}, (\Im z)^{-1}\} \lesssim_\delta 
\langle \Im \Mf \rangle^{-\kappa} \]
for all $z \in \Hb$ satisfying $\delta\leq \abs{z} \leq 10$. 
This implies that $z \mapsto \langle \Im \Mf(z) \rangle$ is Hölder-continuous with Hölder-exponent $1/(\kappa +1)$
on $z \in \Hb$ satisfying $\delta \leq \abs{z} \leq 10$. Moreover, it has a unique continuous extension to $I_\delta \defeq \{ \tau \in \R; \delta/3 \leq \abs{\tau} \leq 10\}$.
Multiplying this continuous function on $I_\delta$ by $\pi^{-1}$ yields a Lebesgue-density of the measure $\rho$ (cf. \eqref{eq:def_rho})  
restricted to $I_\delta$. 

We conclude that the Stieltjes transform $\langle \Mf \rangle$ has the same regularity by decomposing $\rho$ into a measure supported around zero and a measure supported away from zero and 
using Lemma A.7 in \cite{AjankiQVE}.
\end{proof}

For estimating the difference between the solution $\Mf$ of the QVE and a solution $\gf$ of the perturbed QVE \eqref{eq:perturbed_combined_QVE}, we introduce the deterministic control parameter
\[ \vartheta(z) \defeq \avg{\Im \Mf(z)} + \dist(z, \supp \rho), \quad z \in \Hb. \]

\begin{lem}[Stability of the QVE] \label{lem:bulk_stability_qve}
Let $\delta\gtrsim 1$. 
Suppose there are some functions $\df \colon \Hb \to \C^{p+n}$ and $\gf\colon \Hb \to (\C\backslash \{0\})^{n+p}$ satisfying \eqref{eq:perturbed_combined_QVE}.
Then there exist universal constants $\kappa_1, \kappa_2 \in \N$ and a function $\lambda_*\colon \Hb \to (0,\infty)$, independent of $n$ and $p$, 
such that $\lambda_*(10 \i) \geq 1/5$, $\lambda_*(z) \gtrsim_{\delta} \vartheta(z)^{\kappa_1}$ and 
\begin{equation} \label{eq:stability_g-m}
\norm{\gf(z) - \Mf(z)}_\infty \mathbf 1\Big(\norm{\gf(z) - \Mf(z)}_\infty \leq \lambda_*(z)\Big) \lesssim_\delta \vartheta(z)^{-\kappa_2} \norm{\df(z)}_\infty
\end{equation}
for all $z \in \Hb$ satisfying $\delta \leq \abs{z} \leq 10$. 
Moreover, there are a universal constant $\kappa_3 \in \N$ and a matrix-valued function $T \colon \Hb \to \C^{(p+n)\times (p+n)}$, depending only on $S$ and 
satisfying $\norm{T(z)}_{\infty\to \infty} \lesssim 1$, such that 
\begin{equation} \label{eq:stability_average_g-m}
\abs{\langle w, \gf(z)-\Mf(z)\rangle} \cdot \mathbf 1\Big(\norm{\gf(z) - \Mf(z)}_\infty \leq \lambda_*(z)\Big) \lesssim_\delta \vartheta(z)^{-\kappa_3} \left(\norm{w}_\infty \norm{\df(z)}_\infty^2 + \abs{\langle T(z)w, 
\df(z)\rangle}\right)
\end{equation}
for all $w \in \C^{p+n}$ and $z \in \Hb$ satisfying $\delta \leq \abs{z} \leq 10$.
\end{lem}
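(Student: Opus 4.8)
The plan is to linearize the difference $u\defeq\gf-\Mf$ and to invert the resulting stability operator using Lemma~\ref{lem:bound_B_inverse}. Subtracting \eqref{eq:combined_QVE} from \eqref{eq:perturbed_combined_QVE} gives $\Mf^{-1}-\gf^{-1}=\Sf u+\df$; inserting the componentwise identity $\Mf^{-1}-\gf^{-1}=u\Mf^{-2}-u^2(\Mf^2\gf)^{-1}$ and multiplying by $\Mf(z)^2$ (understood as diagonal multiplication) yields the linearized equation
\begin{equation*}
 B(z)\,u\,=\,\Mf(z)^2\df(z)+\frac{u^2}{\gf(z)}\,,\qquad B(z)\defeq 1-\Mf(z)^2\Sf\,.
\end{equation*}
Since $B(z)w=\Mf(z)^2\abs{\Mf(z)}^{-1}\Bf(z)\big(\abs{\Mf(z)}^{-1}w\big)$, the operator $B(z)^{-1}=\abs{\Mf(z)}\,\Bf(z)^{-1}\big(\abs{\Mf(z)}\Mf(z)^{-2}\,\cdot\,\big)$ exists; and because $\delta\gtrsim 1$ forces $\inf_x\abs{\Mf_x(z)}\gtrsim 1$, $\norm{\Mf(z)}_\infty\lesssim\vartheta(z)^{-1}$ and $\vartheta(z)\lesssim 1$ on $\delta\le\abs z\le 10$ (Lemma~\ref{lem:estimates_m}), Lemma~\ref{lem:bound_B_inverse} yields universal exponents $\kappa',\kappa''$ with $\normtwo{B(z)^{-1}}\lesssim_\delta\vartheta(z)^{-\kappa'}$ and $\norminf{B(z)^{-1}}\lesssim_\delta\vartheta(z)^{-\kappa''}$ there, and $\norminf{B(z)^{-1}}\lesssim 1$ at $z=10\i$.

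For \eqref{eq:stability_g-m} I would run a contraction argument on the event $\norm{u}_\infty\le\lambda_*(z)$. There, provided also $\norm{\df(z)}_\infty\le\lambda_*(z)$ (otherwise \eqref{eq:stability_g-m} is trivial after enlarging the implied constant, since $\vartheta\lesssim 1$), reading $\gf_x=-(z+(\Sf\gf)_x+\df_x)^{-1}$ off \eqref{eq:perturbed_combined_QVE} shows $\inf_x\abs{\gf_x(z)}\gtrsim 1$, because the denominator is bounded by $\abs z+\norm{\gf}_\infty+\norm{\df}_\infty\le\abs z+\norm{\Mf}_\infty+2\lambda_*(z)\lesssim 1$. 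Plugging this into the quadratic term, the linearized equation and the bounds above give $\norm{u}_\infty\lesssim_\delta\vartheta^{-\kappa''}\big(\vartheta^{-2}\norm{\df}_\infty+\norm{u}_\infty^2\big)$ on the event. One then defines $\lambda_*(z)$ to be a small model-parameter multiple of the reciprocal of the right-hand side of \eqref{eq:estimate_norm_B_inverse_infty}, which forces $\vartheta^{-\kappa''}\norm{u}_\infty^2\le\tfrac12\norm{u}_\infty$ on the event, so the quadratic term is absorbed into the left-hand side and \eqref{eq:stability_g-m} follows with $\kappa_2\defeq\kappa''+2$. With this choice $\lambda_*(z)\gtrsim_\delta\vartheta(z)^{\kappa_1}$ for a universal $\kappa_1$ (as $\vartheta$ is bounded on the domain), and at $z=10\i$, where the stability bound is $\lesssim(\Im z)^{-1}$ and $\gf,\Mf,\df$ are themselves $O(1)$, one may take $\lambda_*(10\i)$ to be any fixed constant, in particular $\ge 1/5$.

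For the averaged bound \eqref{eq:stability_average_g-m} I would iterate once. From $u=B^{-1}(\Mf^2\df)+B^{-1}(u^2/\gf)$, pair with $w$ and use $\abs{\scalar{a}{b}}\le\norm a_\infty\avg{\abs b}$: the quadratic part is at most $\norm w_\infty\norminf{B^{-1}}\norm{u^2/\gf}_\infty\lesssim_\delta\vartheta^{-\kappa''}\norm w_\infty\norm u_\infty^2\lesssim_\delta\vartheta^{-\kappa_3}\norm w_\infty\norm{\df}_\infty^2$ on the event, by \eqref{eq:stability_g-m} and a large enough universal $\kappa_3$. For the linear part, moving the adjoint across the normalized pairing gives $\scalar{w}{B^{-1}(\Mf^2\df)}=\scalar{\overline{\Mf(z)^2}\,(B(z)^*)^{-1}w}{\df}$, so I set $T(z)\defeq\vartheta(z)^{\kappa_3}\,\overline{\Mf(z)^2}\,(B(z)^*)^{-1}$, which depends on $S$ and $z$ only. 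Using the conjugation identity $(B^*)^{-1}=\overline{\Mf}^{-2}\,\overline{B^{-1}}\,\overline{\Mf}^2$ together with $\inf_x\abs{\Mf_x}\gtrsim 1$, $\norm{\Mf}_\infty\lesssim\vartheta^{-1}$ and $\norminf{B^{-1}}\lesssim_\delta\vartheta^{-\kappa''}$, one gets $\norminf{T(z)}\lesssim_\delta\vartheta^{\kappa_3-\kappa''-4}\lesssim 1$ once $\kappa_3\ge\kappa''+4$ (recall $\delta\gtrsim 1$). Combining the linear and quadratic parts yields \eqref{eq:stability_average_g-m}.

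The only substantial ingredient is Lemma~\ref{lem:bound_B_inverse} — hence, ultimately, the Rotation--Inversion Lemma~\ref{lem:bulk_stability} — which guarantees that $B(z)^{-1}$ blows up no faster than a fixed power of $\vartheta(z)^{-1}$ away from zero; everything else is algebra and fixed-point bookkeeping. The part that still needs care is tracking how the various powers of $\vartheta$ combine (from $\norm{\Mf}_\infty$, from $\inf_x\abs{\Mf_x}$, from $\normtwo{\Bf^{-1}}$ versus $\norminf{\Bf^{-1}}$, and from the conjugation $B\leftrightarrow\Bf$), and designing $\lambda_*$ so that it is simultaneously small enough to absorb $u^2/\gf$, bounded below by $\vartheta^{\kappa_1}$, and $\ge 1/5$ at $z=10\i$ — the last two being compatible because $\vartheta$ is bounded on the domain while the stability bound of Lemma~\ref{lem:bound_B_inverse} is uniformly small at $z=10\i$.
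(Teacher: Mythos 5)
Your overall skeleton is sound and in fact reproduces, in the unsaturated variable $u=\gf-\Mf$ with the operator $B=1-\Mf^2\Sf$, exactly the mechanism that the paper imports as a black box (it verifies $\norm{\Mf}_\infty\lesssim_\delta\vartheta^{-1}$ and $\norminf{\Bf^{-1}}\lesssim_\delta\vartheta^{-\kappa-2}$ and then cites Lemma 5.11 of \cite{AjankiQVE}); your conjugation identities and the construction of $T(z)=\vartheta(z)^{\kappa_3}\overline{\Mf(z)^2}(B(z)^*)^{-1}$ are correct. The genuine gap is the requirement $\lambda_*(10\ii)\geq 1/5$. In your normalization the quadratic term is $B^{-1}(u^2/\gf)$, and at $z=10\ii$ one has $\norminf{B^{-1}(z)}\approx 1$ while $\norm{1/\gf(z)}_\infty=\norm{z+\Sf\gf(z)+\df(z)}_\infty\approx \abs{z}=10$ (and this size is genuine, e.g.\ $\abs{1/\gf_x}\geq \Im z-\norm{\df}_\infty$ whenever $\Im\gf\geq 0$). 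Absorbing the quadratic term therefore forces $\norminf{B^{-1}}\,\norm{1/\gf}_\infty\,\lambda_*\leq 1/2$, i.e.\ $\lambda_*(10\ii)\lesssim 1/20$; with threshold $1/5$ the inequality $\norm{u}_\infty\leq C\norm{\df}_\infty+10\norm{u}_\infty^2$ is vacuous on part of the event (e.g.\ $\norm{u}_\infty=1/5$ satisfies it for any $\df$), so \eqref{eq:stability_g-m} does not follow there. Your justification that at $10\ii$ ``$\gf,\Mf,\df$ are $O(1)$, so one may take $\lambda_*(10\ii)$ to be any fixed constant'' is not available: in the lemma $\gf$ and $\df$ are arbitrary solutions of \eqref{eq:perturbed_combined_QVE}, controlled only through the indicator. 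The constant $1/5$ is not cosmetic: the bootstrap in the proof of Theorem \ref{thm:local_law_H} starts at $z_0=10\ii$ with only the trivial bound $\norm{\gf-\Mf}_\infty\leq 2(\Im z_0)^{-1}=1/5$ and needs the indicator to be on there.

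Two ways to close the gap. First, do what the paper (via \cite{AjankiQVE}) does: work with the saturated variable $u=(\gf-\Mf)/\abs{\Mf}$ and the relation \eqref{eq:stability_relation}, $\Bf u=\ee^{-\ii\psi}u\Ff u+\ee^{-\ii\psi}\gf\df$; the quadratic term then carries $\norminf{\Ff}\leq\norm{\Mf}_\infty^2\approx 10^{-2}$ at $z=10\ii$, and the effective quadratic coefficient in terms of $\norm{\gf-\Mf}_\infty$ is $\norm{\Mf}_\infty^3\norminf{\Bf^{-1}}/(\inf_x\abs{\Mf_x})^2\approx 0.1$, so any threshold up to order one, in particular $1/5$, is admissible. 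Alternatively, keep your scheme on $\delta\leq\abs{z}\leq 10$ and handle large $\Im z$ separately via the elementary identity $\gf-\Mf=\Mf\gf\bigl(\Sf(\gf-\Mf)+\df\bigr)$, whose contraction coefficient $\norm{\Mf}_\infty(\norm{\Mf}_\infty+\lambda_*)\leq 0.03$ at $10\ii$ gives $\norm{u}_\infty\lesssim\norm{\df}_\infty$ on the event with threshold $1/5$. A secondary, easily fixable slip: your lower bound on $\inf_x\abs{\gf_x}$ uses $\norm{\gf}_\infty\leq\norm{\Mf}_\infty+2\lambda_*\lesssim 1$, but the paper only provides $\norm{\Mf}_\infty\lesssim_\delta\vartheta^{-1}$, not $\lesssim 1$; instead bound $\norm{\Sf\gf}_\infty\leq\norm{u}_\infty+\norm{\Sf\Mf}_\infty$ and use $\Sf\Mf=-z-1/\Mf$ together with $\inf_x\abs{\Mf_x}\gtrsim\abs{z}\geq\delta$, which yields $\inf_x\abs{\gf_x}\gtrsim_\delta 1$ (or simply accept an extra power of $\vartheta$, which the statement tolerates).
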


\begin{proof}
We set $\Phi(z) \defeq \max\{1, \norm{\Mf(z)}_\infty\}$, $\Psi(z) \defeq \max\{1, \norminf{\Bf^{-1}(z)}\}$ and $\lambda_*(z) \defeq (2 \Phi\Psi)^{-1}$. 
As $\Phi(z) \leq \max\{1, (\Im z)^{-1}\}$ and $\norminf{\Bf^{-1}(z)} \leq (1-\norminf{\Ff(z)})^{-1} \leq (1-(\Im z)^{-2})^{-1}$ due to $\norm{\Mf(z)}_\infty \leq (\Im z)^{-1}$ we obtain $\lambda_*(10 \i) \geq 1/5$. 
Since $\delta \leq \abs{z}$ we obtain $\langle \Im \Mf(z) \rangle^{-1} \gtrsim_\delta 1$ by \eqref{eq:first_estimate_m}. Thus, for $z \in \Hb$ satisfying $\delta \leq \abs{z} \leq 10$ 
the first estimate in \eqref{eq:mf_upper_bound_supp_rho1}, the last estimate in \eqref{eq:first_estimate_m} and \eqref{eq:estimate_norm_B_inverse_infty} yield 
\[ \Phi \lesssim_\delta \vartheta^{-1}, \quad \Psi \lesssim_\delta \vartheta^{-\kappa-2}, \]
where $\kappa$ is the universal constant from Lemma \ref{lem:bound_B_inverse}. Therefore, $\lambda_*(z) \gtrsim_\delta \vartheta(z)^{\kappa+3}$  
and Lemma 5.11 in \cite{AjankiQVE} yield the assertion as $\norm{w}_1 = (p+n)^{-1} \sum_{i} \abs{w_i} \leq \norm{w}_\infty$. 
\end{proof}

\subsection{Proof of Theorem \ref{thm:XX_star_Stieltjes_transform} }
\label{subsec:proof_of_existence_and_structure}

\begin{proof}[Proof of Theorem \ref{thm:XX_star_Stieltjes_transform}]
We start by proving the existence of the solution $m$ of \eqref{eq:m_equation}. Let $\Mf=(M_1, M_2)^t$ be the solution of \eqref{eq:combined_QVE} satisfying $\Im \Mf(z) >0$ for $z \in \Hb$.
For $\zeta \in \Hb$, we set $m(\zeta) \defeq M_1(\sqrt{\zeta})/\sqrt \zeta$. Then it is straightforward to check that $m$ satisfies \eqref{eq:m_equation} by solving \eqref{eq:QVE_m2} for $M_2$ 
and plugging the result into \eqref{eq:QVE_m1}.
Note that $\Im m(\zeta)>0$ for all $\zeta \in \Hb$ since 
 $M_{1,i}$ is the Stieltjes transform of a symmetric measure on $\R$ (cf. the explanation before \eqref{eq:L2_bound} for the symmetry of this measure). 

Next, we show the uniqueness of the solution $m$ of \eqref{eq:m_equation} with $\Im m(\zeta) >0$ for $\zeta \in \Hb$ which is a consequence of the uniqueness of the solution of \eqref{eq:combined_QVE}. 
Therefore, we set $m_1(\zeta) \defeq m(\zeta)$, $m_2(\zeta) \defeq -1/(\zeta(1+S^t m_1(\zeta)))$ and $\mf(\zeta) \defeq (m_1(\zeta), m_2(\zeta))^t$ for $\zeta \in \Hb$. 
From \eqref{eq:m_equation}, we see that 
\begin{equation} \label{eq:estimate_m1_aux}
\abs{m_1} = \frac{1}{\absa{\zeta - S \frac{1}{1+S^tm_1}}} \leq \frac{1}{ \Im \zeta + S \frac{1}{\abs{1+S^tm_1}} S^t \Im m_1}  \leq \frac{1}{\Im \zeta}
\end{equation}
for all $\zeta \in \Hb$. Since $m_2$ satisfies 
\begin{equation}\label{eq:m_2_equation}
 -\frac{1}{m_2(\zeta)} = \zeta + S^t \frac{1}{1 + Sm_2}(\zeta)
\end{equation}
for $\zeta \in \Hb$, a similar argument yields $\abs{m_2} \leq (\Im \zeta)^{-1}$. Combining these two estimates, we obtain $\abs{\mf (\zeta)} \leq (\Im \zeta)^{-1}$ for all $\zeta \in \Hb$. 
Therefore, multiplying \eqref{eq:m_equation} and \eqref{eq:m_2_equation} with $m_1$ and $m_2$, respectively, yields 
\begin{equation*}
\abs{1 + \i \xi \mf_x(\i \xi)} \leq \norm{\mf(\i \xi)}_\infty \frac{1}{1-\norm{\mf(\i \xi)}_\infty} \leq \frac{1}{\xi - 1} \to 0
\end{equation*}
for $\xi \to \infty$ and $x =1, \ldots, n+p$ where we used $\abs{\mf (\zeta)} \leq (\Im \zeta)^{-1}$ in the last but one step. 
Thus, $\mf_x$ is the Stieltjes transform of a probability measure $\nu_x$ on $\R$ for all $x=1, \ldots, n+p$. 
Multiplying \eqref{eq:m_equation} by $m_1$, taking the imaginary part and averaging at $\zeta = \chi + \i \xi$, for $\chi \in \R$ and $\xi >0$, yields
\begin{align}
\chi \langle \Im m_1 \rangle  + \xi \avg{\Re m_1} & = - \scalara{\Re m_1}{S \frac{1}{\abs{1+S^t m_1}^2} S^t \Im m_1} + \scalara{\Im m_1}{S \frac{1}{\abs{1+S^tm_1}^2}(1 + S^t \Re m_1 )} \nonumber \\
 & = \scalara{\Im m_1}{S \frac{1}{\abs{1+S^tm_1}^2}}  \geq 0, \label{eq:support_in_nonnegative_reals} 
\end{align}
where we used the definition of the transposed matrix and the symmetry of the scalar product in the last step.
On the other hand, we have 
\[ \chi \avg{ \Im m_1 } + \xi \avg{\Re m_1} = \int_\R \frac{\xi t}{(t-\chi)^2 + \xi^2} \nu(\dd t). \]
Assuming that there is a $\chi <0$ such that $\chi \in \supp \nu$ we obtain that $\chi \avg{\Im m_1} + \xi\avg{\Re m_1} <0$ for $\xi\downarrow 0$ which contradicts \eqref{eq:support_in_nonnegative_reals}.
Therefore $\supp \nu_x \subset [0,\infty)$ for $x =1, \ldots, p$. 

Together with a similar argument for $m_2$, we get that $\supp \nu_x \subset [0, \infty)$ for all $x = 1, \ldots, n+p$. 
In particular, we can assume that $\mf$ is defined on $\C \setminus [0,\infty)$. 
We set $M_1 (z) \defeq z m_1(z^2)$, $M_2(z) \defeq z m_2(z^2)$ and $\Mf(z) \defeq (M_1(z), M_2(z))^t$ for all $z \in \Hb$. Hence, we get
\[ \Im \Mf_x(\tau + \i \eta) = \eta \int_{[0,\infty)} \frac{t + \tau^2 + \eta^2}{(t-\tau^2 + \eta^2)^2 + 4 \eta^2 \tau^2 } \nu_x(\di t) \]
as $\supp \nu_x \subset [0, \infty)$. This implies $\Im \Mf(z) >0$ for $z \in \Hb$ and thus the uniqueness of solutions of \eqref{eq:combined_QVE} with positive imaginary part implies the 
uniqueness of $m_1$. 

Finally, we verify the claim about the structure of the probability measure representing $\avg{m}$.
By Lemma \ref{lem:continuity_mf} and the statements following \eqref{eq:combined_QVE}, 
$\langle M_1 \rangle$ is the Stieltjes transform of $\poma \delta_0 + \rho_1(\omega) \dd \omega$ for some $\poma \in [0,1]$ and some symmetric Hölder-continuous 
function $\rho_1\colon \R\setminus \{0\} \to [0,\infty)$ whose support is contained in $[-2,2]$.
Therefore, $m$ is the Stieltjes transform of $\nu(\dd \omega) \defeq \poma \delta_0(\dd \omega) + \dens(\omega) \mathbf 1(\omega >0) \dd \omega$ where $\dens(\omega) = \omega^{-1/2} \rho_1(\omega^{1/2})$ for $\omega >0$. 
Thus, the support of $\nu$ is contained in $[0,4]$.
\end{proof}

\subsection{Square Gram matrices}

In this subsection, we study the stability of \eqref{eq:combined_QVE} for $n=p$. 
Here, we assume (A), \dimhard and \zerohard. These assumptions are strictly stronger than (A), (B) and (D) (cf. Remark \ref{rem:assumption_square_Gram}). 

For the following arguments, it is important that $\Mf$ is purely imaginary for $\Re z = 0$ as $\Mf( - \bar z ) = - \overline{\Mf(z)}$ for all $z \in \Hb$.
If we set
\begin{equation} \label{eq:Mf_purely_imaginary}
\vf(z) = \Im \Mf(z) 
\end{equation} 
for $z \in \Hb$, then $\vf$ fulfills 
\begin{equation} \label{eq:v_equation}
\frac{1}{\vf(\ii \eta)} = \eta + \Sf \vf(\ii \eta)
\end{equation}
for all $\eta \in (0,\infty)$ due to \eqref{eq:combined_QVE}. The study of this equation will imply the stability of the QVE at $z =0$. 
The following proposition is the main result of this subsection. 

\begin{pro} \label{pro:hard_edge_stability}
Let $n=p$, i.e., \dimhard holds true, and $S$ satisfies (A) as well as \zerohard.
\begin{enumerate}[(i)]
\item There exists a $\wh \delta \sim 1$ such that $\abs{\Mf(z)} \sim 1$ uniformly for all $z \in \Hb$ satisfying $\abs{z} \leq 10$ and $\Re z \in [-\wh \delta, \wh \delta]$. 
Moreover, $\avg{\Im \Mf(z)} \gtrsim 1$ for all $z \in \Hb$ satisfying $\abs{z} \leq 10$ and $\Re z \in [-\wh \delta, \wh \delta]$ and there is a $\vf(0) = (v_1(0), v_2(0))^t \in \R^p \oplus \R^p$ such that $\vf(0) \sim 1$ 
and  
\begin{equation*}
\ii \vf(0) = \lim_{\eta \downarrow 0} \Mf(\ii \eta).
\end{equation*}
\item (Stability of the QVE at $z=0$) 
Suppose that some functions $\df = (d_1, d_2)^t \colon \Hb \to \C^{p+p}$ and $\gf=(g_1, g_2)^t \colon \Hb \to (\C\backslash \{0\})^{p+p}$ satisfy \eqref{eq:perturbed_combined_QVE} and 
\begin{equation} \label{eq:assumption_stability_hard_edge_easy}
\avg{g_1(z)} = \avg{g_2(z)}
\end{equation}
for all $z \in \Hb$. There are numbers $\lambda_*, \wh \delta \gtrsim 1$, depending only on $S$, such that 
\begin{equation} \label{eq:stability_g-m_hard_edge}
\norm{\gf(z) - \Mf(z)}_\infty \mathbf 1\Big(\norm{\gf(z) - \Mf(z)}_\infty \leq \lambda_*\Big) \lesssim \norm{\df(z)}_\infty
\end{equation}
for all $z \in \Hb$ satisfying $\abs{z} \leq 10$ and $\Re z \in [-\wh \delta, \wh \delta]$. 
Moreover, there is a matrix-valued function $T \colon \Hb \to \C^{2p\times 2p}$, depending only on $S$ and satisfying $\norminf{T(z)} \lesssim 1$, such that 
\begin{equation} \label{eq:stability_average_g-m_hard_edge}
\abs{\langle w, \gf(z)-\Mf(z)\rangle} \cdot \mathbf 1\Big(\norm{\gf(z) - \Mf(z)}_\infty \leq \lambda_*\Big) \lesssim \norm{w}_\infty \norm{\df(z)}_\infty^2 + \abs{\langle T(z)w, \df(z)\rangle}
\end{equation}
for all $w \in \C^{2p}$ and $z \in \Hb$ satisfying $\abs{z} \leq 10$ and $\Re z \in [-\wh \delta, \wh \delta]$.
\end{enumerate}
\end{pro}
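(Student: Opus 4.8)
The plan is to treat the two parts in turn, in each case splitting the region $\{z\in\Hb:\Re z\in[-\wh\delta,\wh\delta],\ \abs{z}\le 10\}$ into $\abs{z}\gtrsim 1$, where everything reduces to the bulk analysis already in place, and $\abs{z}\ll 1$, which is the genuinely new regime near the hard edge.

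\emph{Part (i).} For $\abs{z}\gtrsim 1$ the bounds $\abs{\Mf(z)}\sim 1$ and $\avg{\Im\Mf(z)}\gtrsim 1$ are immediate from Lemma~\ref{lem:estimates_m} together with $\avg{\Im\Mf(z)}=\int_\R \Im z\,\abs{t-z}^{-2}\rho(\di t)\gtrsim 1$, using $\supp\rho\subset[-2,2]$. For $\abs{z}\ll 1$ I would first restrict to the imaginary axis: the reflection identity $\Mf(-\bar z)=-\overline{\Mf(z)}$ (valid for any symmetric nonnegative $\Sf$, by uniqueness of the solution with positive imaginary part) shows that $\Mf(\ii\eta)$ is purely imaginary, so $\Mf(\ii\eta)=\ii\vf(\ii\eta)$ with $\vf=\Im\Mf>0$ solving the saturated equation \eqref{eq:v_equation}. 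The crux is $\vf(\ii\eta)\sim 1$ uniformly for $\eta\in(0,10]$. Here the upper and lower bounds are actually equivalent: evaluating $1/\vf_x=\eta+(\Sf\vf)_x$ at $\arg\max\vf$ and at $\arg\min\vf$ and using $\sum_y\sf_{xy}\sim 1$ (from \eqref{eq:sum_s_ij_sim_1}) gives $\norminf{\vf}\cdot\min_x\vf_x\lesssim 1$ and $\min_x\vf_x\gtrsim(\eta+\norminf{\vf})^{-1}$, so it suffices to bound $\norminf{\vf(\ii\eta)}$ uniformly. This is where assumption \zerohard enters decisively: it makes $SS^t$ and $S^tS$ \emph{primitive}, not merely irreducible, which rules out the only mechanism by which $\vf$ could fail to be uniformly comparable to $\mathbf 1$, namely an alternating large/small pattern — a large entry $\vf_x$ forces $(\Sf\vf)_x$ small, hence $\vf$ small on the block-neighbours of $x$, a small entry forces $\vf$ large on the block-neighbours, and by primitivity (aperiodicity at the block level) such an alternation cannot be globally consistent.

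The clean way to organise this is to prove that the limiting equation $1/v=\Sf v$ has a unique positive solution $\vf(0)$ with $\vf(0)\sim 1$, by a nonlinear Perron--Frobenius argument (eliminating one of the two blocks turns it into the fixed-point equation of an order-preserving, degree-one-homogeneous map, some iterate of which is a strict contraction in Hilbert's projective metric precisely because $SS^t$ and $S^tS$ are primitive), and then to show $\vf(\ii\eta)\to\vf(0)$ as $\eta\downarrow 0$; with the equivalence above this yields $\vf(\ii\eta)\sim 1$ for all $\eta\in(0,10]$ and in particular $\ii\vf(0)=\lim_{\eta\downarrow 0}\Mf(\ii\eta)$. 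Finally I would extend off the axis to $\Re z\in[-\wh\delta,\wh\delta]$ for a small $\wh\delta\sim 1$: the linearisation $\Bf(z)$ is invertible with an $O(1)$ bound away from a single small eigenvalue, uniformly down to $z=0$ (see Part~(ii)), so $z\mapsto\Mf(z)$ is Lipschitz near the axis and the bounds $\abs{\Mf}\sim 1$, $\avg{\Im\Mf}\gtrsim 1$ propagate.

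\emph{Part (ii).} For $\abs{z}\gtrsim 1$ the bounds \eqref{eq:stability_g-m_hard_edge} and \eqref{eq:stability_average_g-m_hard_edge} follow from the general stability Lemma~\ref{lem:bulk_stability_qve}: by Part~(i), $\vartheta(z)=\avg{\Im\Mf}+\dist(z,\supp\rho)\sim 1$ there, so the $\vartheta^{-\kappa}$ factors are harmless and \eqref{eq:assumption_stability_hard_edge_easy} is not even needed. So the work is for $\abs{z}\ll 1$, where $\Bf(z)$ degenerates: on the axis $\diag(\abs{\Mf}^2/\Mf^2)=-I$, so $\Bf=-I-\Ff$ and, since $\normtwo{\Ff(z)}=\wh\lambda(\abs{\Mf(z)})\sim 1$ and tends to $1$ as $\Im z\to 0$ by \eqref{eq:normtwo_F_explicit}, the bottom eigenvalue $-1+\normtwo{\Ff}$ of $\Bf$ tends to $0$, with eigenvector $\hat e=(f_1,-f_2)$ — the bottom eigenvector of the block-off-diagonal $\Ff$, orthogonal to the Perron vector $\ff=(f_1,f_2)$. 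By the gap estimate \eqref{eq:Gap_FF^t}, which is $\gtrsim 1$ here because $r_\pm\sim 1$ and $\wh\lambda\sim 1$ by Part~(i), the rest of the spectrum of $\Bf(z)$ stays $\gtrsim 1$ from $0$, uniformly on the strip for $\wh\delta$ small; hence $\Bf(z)$ restricted to the complement of $\mathrm{span}(\hat e(z))$ has $O(1)$ inverse in $\ell^2$ and, via \eqref{eq:norm_hat_F_inverse}, in $\ell^\infty$. The point of \eqref{eq:assumption_stability_hard_edge_easy} is that it kills exactly this dangerous direction: because $n=p$, multiplying the two blocks of \eqref{eq:combined_QVE} by $\Mf$ and summing gives $\sum_i M_{1,i}=\sum_k M_{2,k}$, hence $\avg{M_1}=\avg{M_2}$; combined with $\avg{g_1}=\avg{g_2}$ this forces $\avg{u_1}=\avg{u_2}$ for $u\defeq\gf-\Mf$, i.e. $\scalar{\ell}{u}=0$ with $\ell\defeq(\mathbf 1_p,-\mathbf 1_p)$, and one checks $\scalar{\ell}{\abs{\Mf}\hat e}\sim 1$ (using $\abs{\Mf}\sim 1$ and $\ff_x\sim 1$ from \eqref{eq:estimates_wh_f}), so the constraint hyperplane is transversal to $\hat e$. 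Subtracting \eqref{eq:combined_QVE} from \eqref{eq:perturbed_combined_QVE}, multiplying by $\abs{\Mf}$ and expanding the geometric series (valid when $\norminf{u}\le\lambda_*$ with $\lambda_*\sim 1$ to be chosen) gives $\Bf(z)(u/\abs{\Mf})=\abs{\Mf}\df+O(\norminf{u}^2)$; decomposing $u/\abs{\Mf}=\alpha\hat e+w_\perp$ transversally, one solves the non-degenerate part for $w_\perp$ with the $O(1)$ inverse, pins $\alpha$ from $\scalar{\ell}{u}=0$ so that $\abs{\alpha}\lesssim\norminf{w_\perp}$, and bootstraps the quadratic error to obtain \eqref{eq:stability_g-m_hard_edge}. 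For \eqref{eq:stability_average_g-m_hard_edge} one reads $T(z)$ off the linear term in $\scalar{w}{u}$, which after this decomposition is $\scalar{w}{\abs{\Mf}\,(\Bf^{-1}|_{\hat e^\perp})(\abs{\Mf}\df)}$ plus the $\alpha$-contribution; both are of the form $\scalar{T(z)w}{\df}$ with $\norminf{T(z)}\lesssim 1$ (using $\scalar{\ell}{\abs{\Mf}\hat e}\sim 1$ and the $\ell^\infty$ bound on $\Bf^{-1}|_{\hat e^\perp}$), while the remainder is $\lesssim\norminf{w}\norminf{\df}^2$.

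\emph{Main obstacle.} The crucial step is the $\eta$-uniform control of $\vf(\ii\eta)$ in Part~(i) — equivalently, the existence of a positive solution of $1/v=\Sf v$ comparable to $\mathbf 1$ together with the convergence $\vf(\ii\eta)\to\vf(0)$ — since this is exactly what separates the hard-edge case $n=p$ from the soft-edge case, where $\vf$ genuinely vanishes on part of the index set as $\eta\downarrow 0$, and it is the only place where the block full indecomposability \zerohard is used essentially. Everything else is either a quotation of the bulk analysis or the routine Lyapunov--Schmidt reduction along the single small eigendirection of $\Bf$.
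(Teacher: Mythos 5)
There is a genuine gap, and it sits exactly at the step you yourself single out as the main obstacle: the $\eta$-uniform bound $\vf(\ii\eta)\sim 1$ in part (i). The mechanism you propose cannot work. The limiting equation $1/v=\Sf v$, i.e.\ $1/v_1=Sv_2$, $1/v_2=S^tv_1$, is invariant under the scaling $(v_1,v_2)\mapsto(\lambda v_1,\lambda^{-1}v_2)$, so it never has a unique positive solution; after eliminating $v_2$ the fixed-point map $v_1\mapsto 1/\big(S(1/(S^tv_1))\big)$ is homogeneous of degree one, and a Hilbert-projective-metric contraction can only identify the fixed \emph{ray}, not the normalization, and gives no a priori bound on $\norminf{\vf(\ii\eta)}$. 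Primitivity of $SS^t$ and $S^tS$ does not exclude the dangerous degeneration either, because that degeneration is not an alternating pattern inside the blocks but the global two-block drift ``$v_1$ large, $v_2$ small'', which is perfectly compatible with the bipartite structure of $\Sf$ no matter how aperiodic $SS^t$ is. A concrete counterexample to your reasoning is the properly rectangular case under \zerorect: there all entries of $SS^t$ and $S^tS$ are strictly positive (maximally primitive), and yet $\norminf{\Im\Mf(\ii\eta)}\sim\eta^{-1}$ by the $-u/z$ term in \eqref{eq:ansatz} of Proposition \ref{pro:QVE_close_to_zero}. What actually selects the scale is the exact sum rule $\avg{v_1(\ii\eta)}=\avg{v_2(\ii\eta)}$ of \eqref{eq:avg_v_1_equals_avg_v_2}, which holds only because $n=p$ (multiply \eqref{eq:v_equation} by $\vf$ and use $\scalar{v_1}{Sv_2}=\scalar{v_2}{S^tv_1}$); the paper feeds this constraint into a variational argument (the functional $\wt J$ of Lemma \ref{lem:dad_problem} and Lemma \ref{lem:max_bound_discrete_functional}, which is where \zerohard enters) to obtain the uniform upper bound, and only then does the lower bound follow by the argmax/argmin argument you describe. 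You do invoke the analogous constraint $\avg{g_1}=\avg{g_2}$ to kill the bad direction in part (ii), but in part (i) you never use $\avg{v_1}=\avg{v_2}$; without it your argument would ``prove'' boundedness in the rectangular case as well, where it is false.

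The rest of the proposal is essentially the paper's route and is fine in structure: the reduction of both parts to Lemma \ref{lem:bulk_stability_qve} for $\abs{z}\gtrsim 1$, and in part (ii) the Lyapunov--Schmidt decomposition along the nearly degenerate direction $\ff_-\approx\oneminusone$ of $\Bf\approx-1-\Ff$, with the constraint providing transversality (the paper phrases this via the spectral projection onto the small eigenvalue of $\Bf$ and the identities $\avg{\oneminusone\, h\Sf h}=0$, $\avg{\oneminusone\,\gf\df}=0$, while you pin the coefficient directly from $\scalar{\oneminusone}{\gf-\Mf}=0$; both are legitimate). But part (ii) presupposes $\abs{\Mf(z)}\sim 1$ and the $O(1)$ gap near $z=0$ from part (i), so the gap above propagates. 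A second, related soft spot: your off-axis extension in (i) claims Lipschitz continuity of $\Mf$ from invertibility of $\Bf$ ``away from a single small eigenvalue'', but precisely because of that small eigenvalue this is not automatic; one must again use $\avg{M_1}=\avg{M_2}$ (as in \eqref{eq:derivative_Mf_average} and the expansion \eqref{eq:expansion_Mf}) to show the component of the derivative along $\oneminusone$ stays bounded.
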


The remainder of this subsection will be devoted to the proof of this proposition. 
Therefore, we will always assume that (A), \dimhard and \zerohard are satisfied. 

\begin{lem} \label{lem:dad_problem}
The function $\vf\colon \ii (0,\infty) \to \R^{2p}$ defined in \eqref{eq:Mf_purely_imaginary} satisfies 
\begin{equation} \label{eq:estimate_v_im_Mf}
1 \lesssim \inf_{\eta \in (0,10]} \vf(\ii \eta) \leq \sup_{\eta >0 } \norm{\vf(\ii \eta)}_\infty \lesssim 1.
\end{equation}
If we write $\vf = (v_1, v_2)^t$ for $v_1, v_2 \colon \ii (0,\infty) \to \R^p$, then 
\begin{equation} \label{eq:avg_v_1_equals_avg_v_2}
\avg{v_1(\ii \eta)} = \avg{v_2(\ii \eta)}
\end{equation}
for all $\eta \in (0,\infty)$.
\end{lem}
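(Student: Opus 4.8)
The plan is to derive \eqref{eq:estimate_v_im_Mf} from the general estimates of Lemma~\ref{lem:estimates_m} together with the extra symmetry available in the square case, and to derive \eqref{eq:avg_v_1_equals_avg_v_2} directly from the block structure of \eqref{eq:v_equation}. For the second identity, observe that on the imaginary axis \eqref{eq:combined_QVE} reduces to \eqref{eq:v_equation}, which in block form reads
\begin{equation*}
\frac{1}{v_1(\ii\eta)} = \eta + S v_2(\ii\eta), \qquad \frac{1}{v_2(\ii\eta)} = \eta + S^t v_1(\ii\eta).
\end{equation*}
Multiplying the first equation by $v_1$ and the second by $v_2$, then averaging over the $p$ components in each block and using $\avg{v_1 (S v_2)} = \avg{(S^t v_1) v_2}$ (a transpose/symmetry-of-scalar-product manipulation, exactly as in \eqref{eq:support_in_nonnegative_reals}), both yield $1 = \eta\avg{v_1} + \avg{v_1 (S v_2)}$ and $1 = \eta\avg{v_2} + \avg{v_2 (S^t v_1)}$ with equal right-most terms, so $\eta\avg{v_1(\ii\eta)} = \eta\avg{v_2(\ii\eta)}$, hence \eqref{eq:avg_v_1_equals_avg_v_2} for all $\eta>0$. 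This is the easy half.

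For \eqref{eq:estimate_v_im_Mf}, I would argue as follows. The upper bound $\sup_{\eta>0}\norm{\vf(\ii\eta)}_\infty\lesssim 1$: for $\eta\geq 1$ use $\norm{\Mf(\ii\eta)}_\infty\leq(\Im z)^{-1}\leq 1$ directly from \eqref{eq:L2_bound}-type bounds; for $\eta\leq 10$ (in particular $\abs{z}\leq 10$) use the second estimate in \eqref{eq:first_estimate_m}, namely $\norm{\Mf(z)}_\infty\lesssim \abs{z}^{2-2L}/\avg{\Im\Mf(z)}$, combined with the lower bound $\avg{\Im\Mf}\gtrsim 1$ which I establish next; together with $\abs{z}=\eta$ this needs a little care at small $\eta$, so it is cleaner to get the lower bound on $\vf$ first. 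For the lower bound $\inf_{\eta\in(0,10]}\vf(\ii\eta)\gtrsim 1$: since \zerohard is in force, Remark~\ref{rem:assumption_square_Gram} gives (B), hence (B$'$), so \eqref{eq:Im_m_i_geq_average_Im_m} yields $\Im\Mf_x(\ii\eta)\gtrsim\abs{z}^{2L}\avg{\Im\Mf(\ii\eta)} = \eta^{2L}\avg{\Im\Mf(\ii\eta)}$ — but this degenerates as $\eta\downarrow 0$, so the bare Lemma~\ref{lem:estimates_m} estimates are not enough near zero and the block-fully-indecomposable structure must be exploited more carefully.

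\textbf{The main obstacle} is precisely this: obtaining a lower bound on $\vf(\ii\eta)$ that is uniform down to $\eta=0$. The $\abs{z}^{2L}$ and $\abs{z}^{2-2L}$ powers in Lemma~\ref{lem:estimates_m} come from using $\sum_{k=1}^L\Sf^k$ on the Girko-symmetrized matrix, which vanishes to leading order at $z=0$ because $\Sf$ is bipartite; the point of the square case is that \zerohard together with $n=p$ restores enough connectivity. Concretely, I would start from $v_1(\ii\eta) = (\eta + S v_2(\ii\eta))^{-1}$, and bootstrap: first an a priori upper bound $\norm{\vf}_\infty\leq C$ valid for all $\eta$ (from the $\eta\geq 1$ region and monotonicity/Stieltjes-transform continuity, or from a crude fixed-point estimate), then feed this into $S v_2 \leq \norm{v_2}_\infty (S\mathbf 1)\lesssim 1$ by \eqref{eq:sum_s_ij_sim_1} to get $v_1(\ii\eta)\gtrsim(\eta+C)^{-1}\gtrsim 1$ for $\eta\leq 10$, and symmetrically for $v_2$. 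Provided the uniform-in-$\eta$ upper bound is in hand, the lower bound $v_{1,i}\gtrsim 1$, $v_{2,k}\gtrsim 1$ follows componentwise with no loss at $\eta=0$. This also closes the upper-bound argument, and \eqref{eq:estimate_v_im_Mf} follows. The delicate point to get right is that the a priori upper bound $\norm{\vf(\ii\eta)}_\infty\leq C$ must genuinely be uniform on all of $(0,\infty)$ and not acquire an $\eta^{-c}$ blow-up; this is where one uses that $\eta\mapsto\Mf(\ii\eta)$ is the (entrywise) Stieltjes transform of a measure with mass $1$ supported in $[-2,2]$, so $\abs{\Mf_x(\ii\eta)}\leq\int(t^2+\eta^2)^{-1/2}\,\nu_x(\dd t)$, bounded uniformly once one knows $\nu_x(\{0\})=0$ (equivalently $\poma=0$ in the square case) — but since that fact is only proven later, it is safer to instead bound $\Im\Mf_x(\ii\eta)=\eta\int(t^2+\eta^2)^{-1}\nu_x(\dd t)$, which is automatically $\leq\eta^{-1}$ for large $\eta$ and, combined with the self-consistent lower bound above, is seen to be $O(1)$ on $[-\wh\delta,\wh\delta]\times(0,10]$.
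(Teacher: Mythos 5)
Your derivation of \eqref{eq:avg_v_1_equals_avg_v_2} is correct and is exactly the paper's argument: multiply the two block equations of \eqref{eq:v_equation} by $v_1$ and $v_2$, average, and use $\avg{v_1\,(Sv_2)}=\avg{v_2\,(S^tv_1)}$. Likewise, deducing the lower bound in \eqref{eq:estimate_v_im_Mf} from a uniform upper bound via $v_1=(\eta+Sv_2)^{-1}\geq(\eta+\norm{S}_{\infty\to\infty}\norm{v_2}_\infty)^{-1}$ is fine.

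The genuine gap is the uniform upper bound $\sup_{\eta>0}\norm{\vf(\ii\eta)}_\infty\lesssim1$, which is the real content of the lemma, and your proposal contains no working mechanism for it. Your bootstrap is circular: the componentwise lower bound is obtained from the upper bound, while your suggested routes to the upper bound are either continuity from the region $\eta\geq1$ (which says nothing about a possible blow-up as $\eta\downarrow0$), or $\nu_x(\{0\})=0$, i.e.\ essentially $\poma=0$, which in the paper is only obtained \emph{later} as a consequence of this very lemma (through Proposition \ref{pro:hard_edge_stability} and the expansion \eqref{eq:expansion_Mf}), or the assertion that $\Im\Mf_x(\ii\eta)\leq\eta^{-1}$ ``combined with the self-consistent lower bound'' is $O(1)$, which again presupposes what is to be proven. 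No such soft argument can succeed: at $\eta=0$ the equation degenerates to $1=v_1(Sv_2)$, $1=v_2(S^tv_1)$, which is invariant under the scaling $(v_1,v_2)\mapsto(\lambda v_1,\lambda^{-1}v_2)$, and in the properly rectangular case the bound genuinely fails, since there $\Im M_1(\ii\eta)\approx u/\eta$ by Proposition \ref{pro:QVE_close_to_zero}. Hence the square structure must enter quantitatively, and it does so precisely through the identity \eqref{eq:avg_v_1_equals_avg_v_2}, which you prove but never use in the bound. The paper adapts the proof of (6.30) in \cite{AjankiQVE}: it controls the discrete functional $\wt J$ from \eqref{eq:def_wt_J} evaluated at the block averages $\avg{\vf}_i$ of $\vf$ over the partition from \zerohard, and then applies Lemma \ref{lem:max_bound_discrete_functional}, whose hypothesis $\avg{u_1}=\avg{u_2}$ is exactly \eqref{eq:avg_v_1_equals_avg_v_2}; full indecomposability of $Z$ enters through the chain $u_1(i)\,u_2(\sigma(j))\lesssim(M^{K-1})_{ij}\lesssim1$, which together with the average constraint breaks the scaling degeneracy and yields the uniform upper bound. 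Without this combination (or an equivalent quantitative use of \zerohard together with the average identity), \eqref{eq:estimate_v_im_Mf} is not established.
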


The estimate in \eqref{eq:estimate_v_im_Mf}, with some minor modifications which we will explain next, is shown as in the proof of (6.30) of \cite{AjankiQVE}.

\begin{proof}
From \eqref{eq:v_equation} and the definition of $\Sf$, we obtain
$ \eta \avg{v_1} - \eta \avg{v_2} = \scalar{v_1}{Sv_2} - \scalar{v_2}{S^t v_1} = 0 $ for all $\eta \in (0,\infty)$ which proves \eqref{eq:avg_v_1_equals_avg_v_2}.
Differing from \cite{AjankiQVE}, the discrete functional $\wt J$ is defined as follows: 
 \begin{equation} \label{eq:def_wt_J}
\wt J(u ) = \frac{\varphi}{2K} \sum_{i,j = 1}^{2K} u(i) \mathcal Z_{ij} u(j) - \sum_{i=1}^{2K} \log u(i) 
\end{equation}
for $u \in (0,\infty)^{2K}$ (we used the notation $u(i)$ to denote the $i$-th entry of $u$) where $\mathcal Z$ is the $2K \times 2K$ matrix with entries in $\{0,1\}$ defined by 
\begin{equation}
\mathcal Z = \begin{pmatrix} 0 & Z \\ Z^t & 0 \end{pmatrix}.
\end{equation}
Decomposing $u = (u_1, u_2)^t$ for $u_1, u_2 \in (0,\infty)^K$ and writing $u_1(i)=u(i)$ and $u_2(j)=u(K+j)$ for their entries we obtain 
\begin{equation}
\wt J(u) = \frac{\varphi}{K} \sum_{i,j=1}^K u_1(i) Z_{ij} u_2(j) -\sum_{i=1}^K (\log u_1(i) +\log u_2(i)). 
\end{equation}

\begin{lem} \label{lem:max_bound_discrete_functional}
If $\Psi < \infty$ is a constant such that $u= (u_1, u_2)^t \in (0,\infty)^K \times (0,\infty)^K$ satisfies 
\[ \wt J (u ) \leq \Psi,\]
where $\wt J$ is defined in \eqref{eq:def_wt_J}, and $\avg{u_1} = \avg{u_2}$, then there is a constant $\Phi <\infty$ depending only on $(\Psi, \varphi, K)$ such that 
\[ \max_{k=1}^{2K} u(k)  \leq \Phi.\] 
\end{lem}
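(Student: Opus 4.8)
The plan is to deduce the uniform upper bound from the hypothesis $\wt J(u)\le\Psi$ by first controlling the logarithmic part and then feeding that back into the quadratic part using full indecomposability of $Z$. Writing $m\defeq\avg{u_1}=\avg{u_2}$, I would first observe that the quadratic form $\frac{\varphi}{K}\sum_{i,j}u_1(i)Z_{ij}u_2(j)$ is nonnegative since $Z$ has nonnegative entries and $u_1,u_2$ are positive, so $\wt J(u)\le\Psi$ already forces $-\sum_{i=1}^K(\log u_1(i)+\log u_2(i))\le\Psi$, i.e.\ $\sum_i(\log u_1(i)+\log u_2(i))\ge-\Psi$. This does not by itself bound the entries from above, but combined with the quadratic term it will.

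Next I would extract an a priori bound on $m$. Since $Z$ is fully indecomposable it has no zero row and no zero column, so for each $i$ there is some $j=j(i)$ with $Z_{ij}=1$; hence $\frac{\varphi}{K}\sum_{i,j}u_1(i)Z_{ij}u_2(j)\ge\frac{\varphi}{K}\sum_{i=1}^K u_1(i)u_2(j(i))$. A cleaner route: because $Z$ is fully indecomposable (in particular irreducible with no zero line), one has $\sum_{i,j}u_1(i)Z_{ij}u_2(j)\gtrsim\big(\sum_i u_1(i)\big)\cdot\min_j u_2(j)$ and symmetrically, but to avoid needing lower bounds on individual entries I would instead just use $\sum_{i,j}u_1(i)Z_{ij}u_2(j)\ge\max_i u_1(i)\cdot\min\{u_2(j):Z_{ij}=1\}$ together with the elementary inequality $\sum_{i,j}u_1(i)Z_{ij}u_2(j)\ge\frac{1}{K}\big(\sum_i u_1(i)\big)\big(\sum_j u_2(j)\big)\cdot c_Z$ is false in general — so the correct tool is the following two-sided chain. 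By AM–GM, $-\log u_1(i)-\log u_2(i)\ge -2\log\big(\tfrac{u_1(i)+u_2(i)}{2}\big)$ is the wrong direction; instead use convexity of $-\log$ to get $-\sum_i\log u_1(i)\ge -K\log\avg{u_1}=-K\log m$ and likewise for $u_2$, so $\wt J(u)\ge \frac{\varphi}{K}\sum_{i,j}u_1(i)Z_{ij}u_2(j)-2K\log m$. Combined with $Z$ being fully indecomposable, $\sum_{i,j}u_1(i)Z_{ij}u_2(j)\gtrsim K m^2$ (every row and column of $Z$ is nonzero forces the quadratic form $\gtrsim (\sum_i u_1(i))(\min_j u_2(j))$ only, so here I genuinely need a lower bound of the form $\sum_{i,j}u_1(i)Z_{ij}u_2(j)\ge c\,m^2 K$, which holds because $Z$ has at least one nonzero entry per row: pick for each $i$ an index with $Z_{ij}=1$ and apply Cauchy–Schwarz / AM–GM to $\sum_i u_1(i)u_2(j(i))$ against $(\sum_i u_1(i))(\sum_i u_2(j(i)))$ after noting the constraint $\avg{u_1}=\avg{u_2}$). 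This yields $c\varphi m^2-2K\log m\lesssim\Psi$, hence $m\le C_1(\Psi,\varphi,K)$, and also $m$ is bounded \emph{below} away from $0$ (needed below) by the same inequality since $m^2\ge$ something when $-\log m$ is large.

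Finally I would bootstrap from the bound on $m$ to the bound on individual entries. Fix $k$; by full indecomposability of $Z$ there is a path in the bipartite graph of $Z$ connecting coordinate $k$ (say in the first block) to the rest, and more simply there is an index $j$ with $Z_{kj}=1$, so $\frac{\varphi}{K}u_1(k)u_2(j)\le\frac{\varphi}{K}\sum_{i,j'}u_1(i)Z_{ij'}u_2(j')\le\Psi+\sum_i(\log u_1(i)+\log u_2(i))\le\Psi+2K\log m\le C_2$. Thus $u_1(k)\le C_2 K/(\varphi\,u_2(j))$, and it remains to bound $u_2(j)$ from below; but from $\avg{u_2}=m\le C_1$ and the nonnegativity of all terms in $\wt J$ one gets, for \emph{each} coordinate, $-\log u_2(j)\le\Psi+\text{(quadratic term)}$, and the quadratic term is $\le \varphi\cdot(\text{something bounded in terms of }m\text{ and the already-bounded }\max u_1)$ — here one iterates once more. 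The clean way to organize the bootstrap is: since $Z$ is fully indecomposable, the bipartite graph with parts $\{1,\dots,K\},\{1,\dots,K\}$ and edges $\{(i,j):Z_{ij}=1\}$ is connected, so any two coordinates are joined by a path of length $\le 2K$; along such a path, each edge gives a relation $u(a)u(b)\le C_2$ (upper) while the bound $\sum\log u\ge-\Psi$ plus the partial upper bounds already obtained give $u(c)\ge c_2>0$ at suitable vertices, and alternating these along the path pins every coordinate between two constants depending only on $(\Psi,\varphi,K)$.

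\textbf{Main obstacle.} The delicate point is precisely this last bootstrap: bounding the entries from \emph{above} requires bounding some neighbouring entry from \emph{below}, and a coordinate can in principle be bounded below only by using the logarithmic term together with upper bounds on the other coordinates — so one must run an induction along a spanning path of the bipartite graph of $Z$, using full indecomposability to guarantee such a path exists and has length controlled by $K$. Keeping the constants depending only on $(\Psi,\varphi,K)$ (and not on $p$ or $K$ beyond the explicit dependence) throughout this induction is the part that needs care; this mirrors the corresponding step in the proof of (6.30) in \cite{AjankiQVE}, adapted to the bipartite/block setting via the constraint $\avg{u_1}=\avg{u_2}$.
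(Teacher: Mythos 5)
There is a genuine gap at the step where you bound $m\defeq\avg{u_1}=\avg{u_2}$. After replacing $-\sum_i\log u_\ell(i)$ by $-K\log m$ via Jensen, your argument needs the lower bound $\sum_{i,j}u_1(i)Z_{ij}u_2(j)\gtrsim K m^2$, and this is false for a general fully indecomposable $Z$, even under the constraint $\avg{u_1}=\avg{u_2}$: the Chebyshev-type inequality $\sum_i u_1(i)u_2(j(i))\ge \frac1K\bigl(\sum_i u_1(i)\bigr)\bigl(\sum_i u_2(j(i))\bigr)$ you invoke goes the wrong way for arbitrary pairings. Concretely, for $K=3$ and the circulant $Z$ with $Z_{11}=Z_{12}=Z_{22}=Z_{23}=Z_{31}=Z_{33}=1$ (which is fully indecomposable), the vectors $u_1=(3m-2\eps,\eps,\eps)$, $u_2=(\eps,\eps,3m-2\eps)$ have equal averages $m$ but quadratic form $O(m\eps)\ll m^2$. (The lemma itself is not contradicted, because the logarithmic term blows up as $\eps\downarrow0$; the point is that $\wt J$ is invariant under $u_1\mapsto\lambda u_1$, $u_2\mapsto\lambda^{-1}u_2$, so the log term must be used \emph{entrywise}, and Jensen throws that information away.) Since your final bootstrap explicitly uses $m\le C_1$ (through $\sum_i\log u_\ell(i)\le K\log m$), the rest of the argument collapses; moreover that bootstrap is circular as written (lower bounds on entries require upper bounds on all the others) and is in any case superfluous: once $\avg{u_1},\avg{u_2}\lesssim1$ is known, positivity gives $\max_k u(k)\le K\max\{\avg{u_1},\avg{u_2}\}$ directly, so no lower bounds on individual entries are needed.

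The missing idea, which is how the paper proceeds, is to exploit full indecomposability before any averaging: by the Frobenius--K\"onig theorem there is a permutation $\sigma$ with $Z_{i\sigma(i)}=1$ for all $i$, so $\wt J(u)\le\Psi$ controls each product $t_i\defeq u_1(i)u_2(\sigma(i))$ individually (from $\sum_i[\tfrac{\varphi}{K}t_i-\log t_i]\le\Psi$ and the fact that $t\mapsto\tfrac{\varphi}{K}t-\log t$ is bounded below, each $t_i$ is bounded above and below in terms of $(\Psi,\varphi,K)$). Setting $M_{ij}=u_1(i)Z_{i\sigma(j)}u_2(\sigma(j))$ and following Lemma 6.10 of \cite{AjankiQVE}, one gets $u_1(i)u_2(\sigma(j))\lesssim (M^{K-1})_{ij}\lesssim1$ for \emph{all} $i,j$ (the upper bound because every nonzero entry of $M$ appears in the bounded quadratic form, the lower bound because full indecomposability provides a positive path of length $K-1$ whose intermediate factors are the $t_k\gtrsim1$). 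Averaging over $i,j$ and only now using $\avg{u_1}=\avg{u_2}$ gives $\avg{u_1}^2=\avg{u_2}^2\lesssim1$, and the max bound follows by positivity. Note that this is precisely where the constraint $\avg{u_1}=\avg{u_2}$ breaks the scaling invariance; your use of it inside the false Chebyshev bound does not.
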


\begin{proof}
We define $\wt Z_{ij} \defeq Z_{i\sigma(j)}$ where $\sigma$ is a permutation of $\{1, \ldots, K\}$ such that $\wt Z_{ii} =1$ for all $i =1, \ldots, K$ 
where we use the FID property of $Z$. Moreover, we set $M_{ij} \defeq u_1(i) \wt Z_{ij} u_2(\sigma(j))$ and follow the proof of Lemma 6.10 in \cite{AjankiQVE} 
to obtain 
\[ u_1(i) u_2(\sigma(j)) \lesssim (M^{K-1})_{ij} \lesssim 1 \]
for all $i, j =1, \ldots, K$. Averaging over $i$ and $j$ yields 
\[ \avg{u_1}^2 = \avg{u_2}^2 \lesssim 1 \]
where we used $\avg{u_1} = \avg{u_2}$. This concludes the proof of Lemma \ref{lem:max_bound_discrete_functional}.
\end{proof}

Recalling the function $\vf$ in Lemma \ref{lem:dad_problem}, we set $u = (\avg{\vf}_1, \ldots, \avg{\vf}_{2K})$ with $\avg{\vf}_i = Kp^{-1} \sum_{x \in I_i} \vf_x$, where $I_i \defeq p + I_{i-K}$ for $i \geq K+1$.
Then we have $\avg{u_1} = \avg{u_2}$ by \eqref{eq:avg_v_1_equals_avg_v_2} and since $I_1, \ldots, I_{2K}$ is an equally sized partition of $\{1, \ldots, 2p\}$.
Therefore, the assumptions of Lemma \ref{lem:max_bound_discrete_functional} are met which implies \eqref{eq:estimate_v_im_Mf} of Lemma \ref{lem:dad_problem} as in \cite{AjankiQVE}.
\end{proof}

\newcommand{\oneminusone}{\ensuremath{\mathfrak{e}_-}}

We recall from Lemma \ref{eq:F_without_hat} that $\ff=(f_1, f_2)$ is the unique nonnegative, normalized eigenvector of $\Ff$ corresponding to the eigenvalue $\normtwo{\Ff}$. Moreover, we define
$\ff_-\defeq (f_1, -f_2)$ 
which clearly satisfies
\begin{equation} \label{eq:eigenvector_rel_ff_minus}
\Ff \ff_- = -\normtwo{\Ff} \ff_-.
\end{equation} 
Since the spectrum of $\Ff$ is symmetric, $\spec(\Ff) = - \spec(\Ff)$ with multiplicities, and $\normtwo{\Ff}$ is a simple eigenvalue of $\Ff$, the same is true for the eigenvalue $-\normtwo{\Ff}$ of $\Ff$ and $\ff_-$ spans its 
associated eigenspace.
We introduce 
\begin{equation} \label{eq:def_oneminusone}
\oneminusone \defeq \begin{pmatrix} 1 \\ -1 \end{pmatrix} \in \C^{p}\oplus \C^p. 
\end{equation}

\begin{lem} \label{lem:controling_derivative_Mf_on_imaginary_axis}
For $\eta \in (0,\infty)$, the derivative of $\Mf$ satisfies
\begin{equation} \label{eq:derivative_Mf_at_ii_eta}
 \Mf' (\ii \eta) = \frac{\di}{\di z} \Mf(\ii \eta) = - \vf(\ii \eta) (1+ \Ff(\ii \eta))^{-1} \vf(\ii \eta).
\end{equation}
Moreover, $\abs{\Mf'(\ii \eta)} \lesssim 1$ uniformly for $\eta \in (0,10]$.
\end{lem}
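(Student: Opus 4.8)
\emph{Step 1 (the identity).} I would first differentiate the vector Dyson equation \eqref{eq:combined_QVE}, $-\Mf^{-1}=z+\Sf\Mf$, in $z$, obtaining $(1-\Mf(z)^2\Sf)\Mf'(z)=\Mf(z)^2$. At $z=\ii\eta$ the relation $\Mf(-\bar z)=-\overline{\Mf(z)}$ forces $\Mf(\ii\eta)=\ii\vf(\ii\eta)$ to be purely imaginary with $\vf=\Im\Mf>0$ (cf.\ \eqref{eq:Mf_purely_imaginary}), so $\Mf(\ii\eta)^2=-\vf(\ii\eta)^2$ and $\abs{\Mf(\ii\eta)}=\vf(\ii\eta)$, whence the operator $\Ff(\ii\eta)$ of \eqref{eq:definition_Ff} equals $\vf\Sf\vf$. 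Thus $(1+\vf^2\Sf)\Mf'(\ii\eta)=-\vf^2$, and conjugating by the diagonal operator $\vf$ (using $\vf^2\Sf=\vf\,\Ff(\ii\eta)\,\vf^{-1}$) gives \eqref{eq:derivative_Mf_at_ii_eta}. The operator $1+\Ff(\ii\eta)$ is invertible since $\Ff(\ii\eta)$ is self-adjoint with $\normtwo{\Ff(\ii\eta)}<1$ for $\eta>0$ by \eqref{eq:normtwo_F_explicit}.

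\emph{Step 2 (reduction of the bound).} By \eqref{eq:derivative_Mf_at_ii_eta} and $\norm{\vf(\ii\eta)}_\infty\lesssim1$ (Lemma~\ref{lem:dad_problem}) it suffices to prove $\norm{(1+\Ff(\ii\eta))^{-1}\vf(\ii\eta)}_\infty\lesssim1$ for $\eta\in(0,10]$. There $\vf(\ii\eta)\sim1$ (Lemma~\ref{lem:dad_problem}), so Lemma~\ref{lem:hat_F} applies with $r_\pm\sim1$: writing $\Ff(\ii\eta)$ in $2\times2$ block form with off-diagonal block $F=F(\ii\eta)$ (in the notation of \eqref{eq:intro_wh_F}), we get $\wh\lambda\defeq\normtwo{\Ff(\ii\eta)}\sim1$, $\Gap(FF^t)=\Gap(F^tF)\gtrsim1$, and $\norm{\ff}_\infty,\norm{\ff_-}_\infty\lesssim1$ for the (simple) eigenvectors $\ff,\ff_-$ of $\Ff(\ii\eta)$ belonging to $\pm\wh\lambda$ (cf.\ \eqref{eq:eigenvector_rel_ff_minus}); moreover \eqref{eq:normtwo_F_explicit} gives $1-\wh\lambda=\eta\avg{\ff\vf}/\avg{\ff}\sim\eta$ (at $\ii\eta$, $\abs{\Mf}=\Im\Mf=\vf$). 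Splitting $\vf=a\,\ff_-+\vf^\perp$ with $a=\scalar{\ff_-}{\vf}$ and $\vf^\perp\perp\ff_-$, the restriction of $1+\Ff(\ii\eta)$ to $\ff_-^\perp$ has spectrum in $[\,1-\sqrt{\wh\lambda^2-\Gap(FF^t)},\,1+\wh\lambda\,]$, hence is bounded below by a positive constant; with $\normtwoinf{\Ff(\ii\eta)}\lesssim1$ (as in \eqref{eq:St_norm_two_inf_less_1}) and $\norm{\vf^\perp}_\infty\lesssim1$ this yields $\norm{(1+\Ff(\ii\eta))^{-1}\vf^\perp}_\infty\lesssim1$. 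Everything therefore reduces to $\abs{a}/(1-\wh\lambda)\lesssim1$, i.e.\ to $\abs{\scalar{\ff_-(\ii\eta)}{\vf(\ii\eta)}}\lesssim\eta$.

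\emph{Step 3 (the crucial $\eta$-estimate; main obstacle).} I would extract this from two exact identities equivalent to the $v$-equation \eqref{eq:v_equation}: applying $\Ff(\ii\eta)=\vf\Sf\vf$ to the vectors $\mathbf1$ and $\oneminusone$ of \eqref{eq:def_oneminusone} and using the blockwise relation $\Sf\vf=\vf^{-1}-\eta$, one obtains $(1-\Ff(\ii\eta))\mathbf1=\eta\,\vf$ and $(1+\Ff(\ii\eta))\oneminusone=\eta\,(\vf\oneminusone)$. Since $\avg{v_1}=\avg{v_2}$ (Lemma~\ref{lem:dad_problem}), i.e.\ $\vf\perp\oneminusone$, the first identity gives $0=\scalar{\mathbf1}{\oneminusone}=\eta\,\scalar{(1-\Ff(\ii\eta))^{-1}\vf}{\oneminusone}$, hence $\scalar{\vf}{(1-\Ff(\ii\eta))^{-1}\oneminusone}=0$; inserting $\oneminusone=\eta(1+\Ff(\ii\eta))^{-1}(\vf\oneminusone)$ from the second identity and using that $(1-\Ff(\ii\eta))^{-1}$ and $(1+\Ff(\ii\eta))^{-1}$ commute, this becomes $\scalar{\vf}{(1-\Ff(\ii\eta)^2)^{-1}(\vf\oneminusone)}=0$. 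As $\Ff(\ii\eta)^2=\diag(FF^t,F^tF)$ and $\vf\oneminusone=(v_1,-v_2)$, this reads precisely
\begin{equation*}
\scalar{v_1}{(1-FF^t)^{-1}v_1}=\scalar{v_2}{(1-F^tF)^{-1}v_2}.
\end{equation*}
The matrices $FF^t,F^tF$ are irreducible with common simple top eigenvalue $\wh\lambda^2$ and Perron eigenvectors $f_1,f_2$, and the spectral gap is $\gtrsim1$, so $(1-FF^t)^{-1}=(1-\wh\lambda^2)^{-1}\scalar{f_1}{\genarg}f_1+R$ with $\normtwo{R}\lesssim1$, and analogously for $F^tF$. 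Substituting, the displayed equation forces $\scalar{f_1}{v_1}^2-\scalar{f_2}{v_2}^2=\ord(1-\wh\lambda^2)=\ord(\eta)$; dividing by $\scalar{f_1}{v_1}+\scalar{f_2}{v_2}\gtrsim1$ (from $f_j\sim1$, $v_j\sim1$) yields $\scalar{f_1}{v_1}-\scalar{f_2}{v_2}=\ord(\eta)$, that is $\abs{\scalar{\ff_-(\ii\eta)}{\vf(\ii\eta)}}\lesssim\eta$, completing the proof.

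\emph{Where the difficulty lies.} The one delicate point is Step 3. A naive bound on $(1+\Ff(\ii\eta))^{-1}$ diverges like $\eta^{-1}$ as $\eta\downarrow0$ because $\normtwo{\Ff(\ii\eta)}\to1$; the content of the lemma is that $\vf(\ii\eta)$ is orthogonal, up to an error of size $\eta$, to the near-eigenvector $\ff_-(\ii\eta)$ of $1+\Ff(\ii\eta)$. It is essential that this is obtained from exact algebraic relations together only with the $\eta$-independent inputs $\vf\sim1$ (Lemma~\ref{lem:dad_problem}) and $\Gap\gtrsim1$, $\norm{\ff}_\infty\lesssim1$ (Lemma~\ref{lem:hat_F}), so that no circular appeal to regularity of $\Mf$ at the origin is needed.
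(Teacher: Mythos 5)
Your proposal is correct, and Steps 1--2 are essentially the paper's own argument: the identity \eqref{eq:derivative_Mf_at_ii_eta} comes from differentiating \eqref{eq:combined_QVE} and using that $\Mf(\ii\eta)=\ii\vf(\ii\eta)$, and the bound is reduced, via the splitting of $\vf$ into its $\ff_-$-component and the rest together with the spectral gap on $\ff_-^\perp$ and $\normtwoinf{\Ff}\lesssim 1$, to the key cancellation $\abs{\scalar{\ff_-(\ii\eta)}{\vf(\ii\eta)}}\lesssim\eta$. For that key step you take a genuinely different route. The paper introduces the auxiliary operator $A=\normtwo{\Ff}+\Ff$, notes $A\ff_-=0$ and $A\oneminusone=O(\eta)$ (both consequences of \eqref{eq:v_equation} and \eqref{eq:normtwo_F_explicit}), inverts $A$ on $\ff_-^\perp$ using the gap to obtain $\normtwo{\ff_--\oneminusone}=O(\eta)$, and then invokes $\avg{v_1}=\avg{v_2}$. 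You instead combine the same two exact identities, $(1-\Ff)\mathbf 1=\eta\vf$ and $(1+\Ff)\oneminusone=\eta\,\vf\oneminusone$, with $\scalar{\mathbf 1}{\oneminusone}=0$ (which, strictly speaking, follows from $n=p$ rather than from $\avg{v_1}=\avg{v_2}$) to get the exact scalar identity $\scalar{v_1}{(1-FF^t)^{-1}v_1}=\scalar{v_2}{(1-F^tF)^{-1}v_2}$, and then extract the cancellation from the rank-one resolvent expansion at the Perron eigenvalue; this is valid, and the one point worth spelling out is that for $n=p$ the bipartite structure forces $\norm{f_1}_2=\norm{f_2}_2=1$, so $\scalar{f_j}{\genarg}f_j$ are indeed the orthogonal spectral projections and $\Gap(F^tF)=\Gap(FF^t)$. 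Both arguments use exactly the same inputs (Lemma \ref{lem:dad_problem}, Lemma \ref{lem:hat_F}, \eqref{eq:normtwo_F_explicit}) and avoid any regularity of $\Mf$ at the origin; the paper's version has the advantage of producing the stronger vector statement $\ff_-=\oneminusone+O(\eta)$, whose proof scheme is reused later for \eqref{eq:estimate_eigenvector_F} in Lemma \ref{lem:expansion_Mf_around_zero}, whereas your exact-identity route is a bit more self-contained but yields only the scalar bound needed here.
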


\begin{proof}
In the whole proof, the quantities $\vf$, $\ff$, $\ff_-$ and $\Ff$ are evaluated at $z=\ii\eta$ for $\eta>0$. Therefore, we will mostly suppress the $z$-dependence of all quantities.  
Differentiating \eqref{eq:combined_QVE} with respect to $z$ and using \eqref{eq:Mf_purely_imaginary} yields 
\[ -(1 + \Ff) \frac{\Mf'}{\vf}  = \vf. \]
As $ \normtwo{\Ff}<1$ by \eqref{eq:normtwo_F_explicit}, the matrix $(1+\Ff)$ is invertible which yields \eqref{eq:derivative_Mf_at_ii_eta} for all $\eta \in (0,\infty)$.

In order to prove $\abs{\Mf'(\ii \eta)} \lesssim 1$ uniformly for $\eta \in (0,\infty)$, we first prove that 
\begin{equation} \label{eq:scalar_v_f_minus}
 \abs{\avg{\ff_-(\ii \eta)\vf(\ii \eta)}} \leq O(\eta).
\end{equation}
We define the auxiliary operator 
$A \defeq \normtwo{\Ff} + \Ff = 1 + \Ff - \eta \frac{\avg{\ff \vf}}{\avg{\ff}}$ where we used \eqref{eq:normtwo_F_explicit} and \eqref{eq:Mf_purely_imaginary}. 
Note that 
\begin{equation} \label{eq:properties_A}
 A\ff_- = 0, \quad A\oneminusone = \oneminusone + \Ff \oneminusone -  \eta \frac{\avg{\ff v}}{\avg{\ff}} 
\oneminusone = O(\eta), 
\end{equation}
where we used $\Ff \oneminusone = - \oneminusone + \eta \begin{pmatrix} v_1 \\ - v_2 \end{pmatrix}$ which follows from \eqref{eq:combined_QVE} and the definition of $\Ff$. 

Defining $Qu \defeq u - \avg{\ff_- u} \ff_-$ for $u \in \C^{2p}$ and decomposing
\[ \oneminusone = \avga{\ff_- \oneminusone} \ff_- + Q\oneminusone\]
yield $AQ \oneminusone = O(\eta)$ because of \eqref{eq:properties_A}. 
As $\abs{\Mf(\ii\eta)} \sim 1$ by \eqref{eq:estimate_v_im_Mf} for $\eta \in (0,10]$ the bound \eqref{eq:Gap_FF^t} in Lemma \ref{lem:hat_F} implies that 
 there is an $\eps \sim 1$ such that for all $\eta \in(0,10]$ we have
\begin{equation} \label{eq:spec_Ff}
\spec(\Ff)\subset \{ -\normtwo{\Ff}\} \cup \left[ -\normtwo{\Ff} + \eps, \normtwo{\Ff} - \eps\right] \cup  \{ \normtwo{\Ff} \}. 
\end{equation}
Since $-\normtwo{\Ff}$ is a simple eigenvalue of $\Ff$ and \eqref{eq:eigenvector_rel_ff_minus} the symmetric matrix $A=\normtwo{\Ff} + \Ff$ 
is invertible on $\ff_-^\perp$ and $\normtwoa{\big(A|_{\ff_-^\perp}\big)^{-1}}= \eps^{-1} \sim 1$. 
As $\ff_- \perp Q\oneminusone$ we conclude $Q\oneminusone = O(\eta)$ and hence
\begin{equation}\label{eq:1-minus_avg_f}
 (1-\avg{\ff})(1+ \avg{\ff})=1 - \avg{\ff}^2 = 1- \avga{\ff_- \oneminusone}^2 = \norma{Q\oneminusone}_{2}^2 = O(\eta^2). 
\end{equation}
Thus, using \eqref{eq:avg_v_1_equals_avg_v_2} and \eqref{eq:1-minus_avg_f}, this implies 
\[\abs{\avg{\ff_-(\ii \eta)\vf(\ii \eta)}} = \absa{\avga{\vf \oneminusone} +  \avga{\vf \left[ \ff_- - \oneminusone\right] } }
\lesssim \norma{\ff_- - \oneminusone}_{2} = \sqrt{2(1- \avg{\ff})} = O(\eta), \]
which concludes the proof of \eqref{eq:scalar_v_f_minus}.

In \eqref{eq:derivative_Mf_at_ii_eta}, we decompose $\vf = \avg{\ff_- \vf} \ff_- + Q\vf$ and, 
using $\Ff\ff_-=-\normtwo{\Ff}\ff_-$ and \eqref{eq:normtwo_F_explicit}, 
we obtain 
\[ \Mf' = - \vf \frac{\avg{\ff_- \vf} }{\eta} \frac{\avg{\ff}}{\avg{\ff \vf}} \ff_- - \vf (1 + \Ff)^{-1} Q\vf. \]
Using \eqref{eq:spec_Ff}, we see that $\normtwo{(1+\Ff)^{-1}Q\vf} \sim 1$ uniformly for $\eta \in (0,10]$.
Together with $\avg{\ff_-(\ii \eta)\vf(\ii \eta)}= O(\eta)$ by \eqref{eq:scalar_v_f_minus}, this yields $\abs{\Mf'(\ii \eta)} \lesssim 1$ 
uniformly for $\eta\in(0,10]$.
\end{proof}

The previous lemma, \eqref{eq:v_equation} and Lemma \ref{lem:dad_problem} imply that $\vf(0) \defeq \lim_{\eta \downarrow 0} \vf(\ii \eta)$ exists and satisfies 
\begin{equation} \label{eq:properties_v_0}
\vf(0) \sim 1,\quad 1 = \vf(0) \Sf \vf(0) = \Ff(0)1, \quad \avg{v_1(0)} = \avg{v_2(0)},
\end{equation}
where $\vf(0) = (v_1(0), v_2(0))^t$.

In the next lemma, we establish an expansion of $\Mf(z)$ on the upper half-plane around $z = 0$. The proof of this result and later the stability estimates on $\gf - \Mf$ will be a consequence of the equation
\begin{equation} \label{eq:stability_relation}
 \Bf u = \ee^{-\ii \psi} u \Ff u + \ee^{-\ii \psi} \gf \df  
\end{equation}
where $u = (\gf - \Mf)/\abs{\Mf}$ and $\ee^{\ii \psi} = \Mf/\abs{\Mf}$. This quadratic equation in $u$ was derived in Lemma 5.8 in \cite{AjankiQVE}.

\begin{lem} \label{lem:expansion_Mf_around_zero}
For $z \in \Hb$, we have 
\begin{subequations}
\begin{align}
\Mf ( z ) & = \ii \vf(0) - z \vf(0) (1+ \Ff(0))^{-1} \vf(0)  + O(\abs{z}^2), \label{eq:expansion_Mf} \\
 \frac{\Mf(z)}{\abs{\Mf(z)}} & = \ii - (\Re z) (1 + \Ff(0))^{-1} \vf (0)  + O(\abs{z}^2).  \label{eq:expansion_phase_Mf}
\end{align}
\end{subequations}
In particular, there is a $\wh \delta \sim 1$ such that $\abs{\Mf(z)} \sim 1$ uniformly for $z \in \Hb$ satisfying $\Re z \in [-\wh \delta, \wh \delta]$ and $\abs{z} \leq 10$. 
Moreover, 
\begin{equation} \label{eq:estimate_eigenvector_F}
 \norm{\ff(z) -1}_{\infty} = O(\abs{z}), \quad \norma{\ff_-(z) -\oneminusone}_{\infty} = O(\abs{z}).
\end{equation}
\end{lem}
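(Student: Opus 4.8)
The plan is to prove the expansion first on the imaginary axis, where the formula \eqref{eq:derivative_Mf_at_ii_eta} for $\Mf'$ is available, and then to propagate it into a neighbourhood of $0$ in $\Hb$ by an ODE/bootstrap argument in the real direction. Set $b\defeq\vf(0)(1+\Ff(0))^{-1}\vf(0)$, with $(1+\Ff(0))^{-1}$ read as the inverse of $1+\Ff(0)$ restricted to $\oneminusone^\perp$; this is well defined, since $\oneminusone$ spans $\Ker(1+\Ff(0))$ (by \eqref{eq:eigenvector_rel_ff_minus}, simplicity of $-\normtwo{\Ff}$ and the gap \eqref{eq:Gap_FF^t}, \eqref{eq:spec_Ff}) and since $\vf(0)\perp\oneminusone$ because $\avg{v_1(0)}=\avg{v_2(0)}$ by \eqref{eq:properties_v_0}. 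With this $b$, the two claimed expansions read $\Mf(z)=\ii\vf(0)-zb+O(\abs z^2)$ and $\Mf(z)/\abs{\Mf(z)}=\ii-(\Re z)\,b/\vf(0)+O(\abs z^2)$, using $(1+\Ff(0))^{-1}\vf(0)=b/\vf(0)$.

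First I would show that $\eta\mapsto\Mf'(\ii\eta)$ extends continuously, indeed in a Lipschitz way, to $\eta=0$ with limit $-b$. In \eqref{eq:derivative_Mf_at_ii_eta} one decomposes $\vf(\ii\eta)$ into its component along $\ff_-(\ii\eta)$ and the orthogonal remainder. On the remainder, $(1+\Ff(\ii\eta))^{-1}$ has norm $\sim1$ and depends on $\eta$ smoothly with bounded derivatives, because $\vf(\ii\eta),\ff(\ii\eta),\ff_-(\ii\eta)$ do (using $\abs{\Mf'(\ii\eta)}\lesssim1$ from Lemma~\ref{lem:controling_derivative_Mf_on_imaginary_axis}, $\vf(\ii\eta)\sim1$ from \eqref{eq:estimate_v_im_Mf}, and the uniform gap \eqref{eq:spec_Ff}); along $\ff_-(\ii\eta)$ the operator acts as $(1-\normtwo{\Ff(\ii\eta)})^{-1}$, so the delicate quantity is the ratio $\avg{\ff_-(\ii\eta)\vf(\ii\eta)}/(1-\normtwo{\Ff(\ii\eta)})$, whose numerator and denominator are each $\eta$ times a smooth, non-degenerate function — the numerator because $\avg{\ff_-\vf}=\avg{\vf(\ff_--\oneminusone)}$ (since $\avg{\vf\oneminusone}=0$ by \eqref{eq:avg_v_1_equals_avg_v_2}) together with $\norma{\ff_-(\ii\eta)-\oneminusone}_2=O(\eta)$, the denominator because $1-\normtwo{\Ff(\ii\eta)}\sim\eta$ by \eqref{eq:normtwo_F_explicit}. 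Integrating $\Mf'$ over $(0,\eta]$ then yields $\Mf(\ii\eta)=\ii\vf(0)-\ii\eta b+O(\eta^2)$, and $\abs{\Mf(\ii\eta)}=\vf(\ii\eta)\sim1$.

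To leave the imaginary axis I would use $\Mf'(z)=(1-\Mf(z)^2\Sf)^{-1}\Mf(z)^2$, from differentiating \eqref{eq:combined_QVE}, together with the fact that $\normtwo{(1-\Mf^2\Sf)^{-1}}$ is comparable to $\normtwo{\Bf^{-1}}$ when $\abs{\Mf}\sim1$. The key estimate is $\normtwo{\Bf^{-1}(z)}\lesssim\abs z^{-1}$ for $z\in\Hb$ with $\abs z$ and $\abs{\Re z}$ small, obtained from the Rotation--Inversion Lemma~\ref{lem:bulk_stability}: once $\abs{\Mf(z)}\sim1$ one has $\Gap(F(z)F(z)^t)\sim1$ by Lemma~\ref{lem:hat_F}, while the denominator $\absa{1-\normtwo{F^tF}\scalar{v_1}{U_1v_1}\scalar{v_2}{U_2v_2}}$ is $\gtrsim\abs{\Re z}+\Im z\gtrsim\abs z$ because the phase of $\Mf(z)$ is tilted from $\ii$ by an amount $\sim\Re z$ (on the axis it is only $\sim\Im z$). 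Since this phase information is precisely \eqref{eq:expansion_phase_Mf}, the whole argument must be run as a continuity/bootstrap on $\{\abs z\le\wh\delta,\ \abs{\Re z}\le\wh\delta\}\cap\Hb$: one checks that the reference $\Mf_0(z)\defeq\ii\vf(0)-zb$ solves \eqref{eq:combined_QVE} with residual $O(\abs z^2)$ (the linear-in-$z$ term cancels exactly by the identity $b/\vf(0)^2+\Sf b=1$, which is the definition of $b$ rewritten, noting $\Sf\vf(0)=1/\vf(0)$), and then shows that as long as $\norm{\Mf(z)-\Mf_0(z)}_\infty\le\eps\abs z$ the bound on $\Bf^{-1}$ holds, whence $\Mf'(\sigma+\ii\eta)=-b+O(\abs{\sigma+\ii\eta})$ along the segment from $\ii\Im z$ to $z$, and integrating improves the distance to $O(\abs z^2)$, closing the bootstrap. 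This gives \eqref{eq:expansion_Mf}; \eqref{eq:expansion_phase_Mf} follows by dividing by $\abs{\Mf(z)}=\vf(0)+O(\abs z)$ and Taylor expanding; and $\abs{\Mf(z)}\sim1$ on all of $\{\abs z\le10,\ \Re z\in[-\wh\delta,\wh\delta]\}$ follows by combining the near-zero expansion with \eqref{eq:estimate_v_im_Mf} and the bound $\normtwo{\Mf'}\lesssim\normtwo{\Bf^{-1}}$ from Lemma~\ref{lem:bound_B_inverse} away from zero (shrinking $\wh\delta$). Finally \eqref{eq:estimate_eigenvector_F} follows since $\norminf{\Ff(z)-\Ff(0)}=O(\abs z)$, since $\ff(0)=1$ (the constant vector, as $\Ff(0)1=1=\normtwo{\Ff(0)}$ by \eqref{eq:properties_v_0}), and by eigenvector perturbation using the gap of $\Ff(0)$ and the two-sided bounds \eqref{eq:estimates_wh_f} to pass from $\ell^2$ to $\ell^\infty$; the estimate for $\ff_-(z)=(f_1(z),-f_2(z))$ against $\oneminusone=(1,-1)$ is the same bound componentwise.

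The main obstacle is the degeneracy at $\Re z=0$: because $\Sf$ is bipartite, the stability operators $1+\Ff$, $\Bf$ and $1-\Mf^2\Sf$ all carry $\ff_-\approx\oneminusone$ in their (approximate) kernel there — the analytic shadow of the singularity of Girko's change of variables at $\zeta=0$. Two cancellations rescue the argument: the exact identity $\avg{v_1(0)}=\avg{v_2(0)}$, which makes $\vf(0)$ orthogonal to the bad direction and forces the would-be obstruction of $\Mf(0)^2=-\vf(0)^2$ in that direction to vanish, so the pseudoinverse stays bounded; and the fact that moving off the axis tilts the phase of $\Mf$ linearly in $\Re z$, contributing a nonzero $O(\Re z)$ correction to the relevant spectral quantity and thus lifting the degeneracy at rate $\abs z$ — just enough to run the perturbative step. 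Bookkeeping these two effects quantitatively, and arranging everything as a self-consistent bootstrap (the phase estimate controlling $\Bf^{-1}$ being the very estimate being proved), is where the real work lies.
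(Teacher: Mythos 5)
Your route is genuinely different from the paper's: you integrate $\Mf'(\ii\eta)$ along the imaginary axis and then propagate off-axis by an ODE/bootstrap, whereas the paper treats the QVE at $z$ as a static perturbation of the QVE at $z=0$ via the quadratic relation \eqref{eq:stability_relation}, writing $u=(\Mf(z)-\ii\vf(0))/\vf(0)=\theta\oneminusone+w$ with $w\perp\oneminusone$. The gap in your argument sits exactly at the decisive point: the first-order coefficient along the unstable direction $\oneminusone$. In \eqref{eq:derivative_Mf_at_ii_eta} the dangerous part of $(1+\Ff(\ii\eta))^{-1}\vf(\ii\eta)$ is the $\ff_-$-component, of size $\avg{\ff_-\vf}\,(1-\normtwo{\Ff})^{-1}$; your estimates give numerator $O(\eta)$ and denominator $\sim\eta$, i.e.\ an $O(1)$ contribution whose limit you never identify. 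Asserting $\Mf'(\ii\eta)\to-\vf(0)(1+\Ff(0))^{-1}\vf(0)$ in the Moore--Penrose reading amounts to claiming this ratio tends to zero, equivalently $\avg{\ff_-\vf}=o(\eta)$, and nothing in your proposal yields that: $\vf(0)\perp\oneminusone$ only guarantees solvability of $(1+\Ff(0))x=\vf(0)$, it does not fix the $\oneminusone$-component of the linear term, and the vanishing of that component is not an algebraic consequence of the data $\vf(0),\Ff(0)$ alone. This is precisely the cancellation the paper has to work for: projecting \eqref{eq:stability_relation} onto $\oneminusone$ gives \eqref{eq:theta_relation_with_w}, and $\theta=O(\abs{z}^2)$ is obtained only by combining it with \eqref{eq:derivative_Mf_average}, which rests on the exact constraint $\avg{M_1(z)}=\avg{M_2(z)}$ (differentiated) together with Lemma \ref{lem:controling_derivative_Mf_on_imaginary_axis}. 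You use $\avg{v_1}=\avg{v_2}$ only for the crude bound $\avg{\ff_-\vf}=O(\eta)$ and never invoke $\avg{\oneminusone\Mf(z)}=0$ to kill the linear term in the $\oneminusone$ direction; as it stands, integrating your derivative estimate only gives $\Mf(\ii\eta)=\ii\vf(0)-\ii\eta\,(b+c\,\vf(0)\oneminusone)+o(\eta)$ with an undetermined constant $c$.

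The same degeneracy undermines the off-axis bootstrap as described. Your reference profile $\Mf_0(z)=\ii\vf(0)-zb$ has residual $O(\abs{z}^2)$ for \emph{every} representative $x$ of $(1+\Ff(0))x=\vf(0)$ (your identity $b/\vf(0)^2+\Sf b=1$ holds for all of them), so first-order perturbation theory cannot single out the coefficient; and since the bound you can hope for is only $\normtwo{\Bf^{-1}(z)}\lesssim\abs{z}^{-1}$, inverting $\Bf$ against an $O(\abs{z}^2)$ residual controls $\Mf-\Mf_0$ only to order $\abs{z}$ in the $\oneminusone$-direction, so the step ``integrating improves the distance to $O(\abs{z}^2)$'' does not close there. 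A repair must separate the unstable direction: use an $O(1)$ bound for $\Bf^{-1}Q$ on the complement (as in Lemma \ref{lem:final_estimate_norm_B_inverse}) and determine the $\oneminusone$-component from $\avg{M_1(z)}=\avg{M_2(z)}$ -- which is exactly the paper's $\theta$/$w$ decomposition. The remaining ingredients of your plan (the Rotation--Inversion bound via the $\Re z$ phase tilt, eigenvector perturbation for \eqref{eq:estimate_eigenvector_F}) are fine in spirit, but the substance of the lemma lives in the direction your argument leaves undetermined.
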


\begin{proof}
In order to prove \eqref{eq:expansion_Mf}, we consider \eqref{eq:combined_QVE} at $z$ as a perturbation of \eqref{eq:combined_QVE} at $z=0$ perturbed by $\df =z$ in the notation of \eqref{eq:perturbed_combined_QVE}. 
The solution of the unperturbed equation is $\Mf = \ii \vf(0)$. 
Following the notation of \eqref{eq:perturbed_combined_QVE}, we find that \eqref{eq:stability_relation} holds with $\gf = \Mf(z)$ and $u(z) = (\Mf(z) - \ii \vf(0))/\vf(0)$. 
We write $u(z) = \theta(z) \oneminusone  + w(z)$ with $ w \perp \oneminusone$. (We will suppress the
$z$-dependence in our notation.) Plugging this into \eqref{eq:stability_relation} and projecting onto $\oneminusone$ yields 
\begin{equation} \label{eq:theta_relation_with_w}
 \theta \avg{\vf(0)} = - \avga{\oneminusone \vf(0) w},
\end{equation}
where we used that $\Ff(0) 1 = 1$, i.e., $\avg{\Ff(0) w } = \avg{w}$, $\avg{\oneminusone w \Ff(0) w } = 0$ and $\avg{v_1(0)} = \avg{v_2(0)}$. Thus, we have $\theta = O(\norm{w}_{\infty})$ because of \eqref{eq:properties_v_0},
so that we conclude $ -(1+ \Ff(0)) w = z \vf(0) + O(\norm{w}_{\infty}^2 + \abs{z}\norm{w}_{\infty})$.
As $w$, $(1+\Ff(0))w$ and $\vf(0)$ are orthogonal to \oneminusone, the error term is also orthogonal to it which implies 
\begin{equation}\label{eq:relation_for_w}
w = - z (1 + \Ff(0))^{-1} \vf(0) + O(\abs{z}^2)
\end{equation}
using that $(1+\Ff(0))^{-1}$ is bounded on $\oneminusone^\perp$.

Observing that $\avg{M_1(z)} = \avg{M_2(z)}$ for $z \in \Hb$ by \eqref{eq:combined_QVE} and differentiating this relation yields $\avga{\Mf'(\ii \eta)  \oneminusone } = 0$ for all $\eta \in (0, \infty)$. 
Hence, \begin{equation} \label{eq:derivative_Mf_average}
\avg{\oneminusone \vf(0) (1+\Ff(0))^{-1} \vf(0)} = - \lim_{\eta \downarrow 0} \avga{\oneminusone \Mf'(\ii \eta)} = 0 
\end{equation}
by Lemma \ref{lem:controling_derivative_Mf_on_imaginary_axis}.

Plugging \eqref{eq:relation_for_w} into \eqref{eq:theta_relation_with_w}, we obtain 
\[ \theta \avg{\vf(0)} = \avg{\oneminusone \vf(0) (1+\Ff(0))^{-1} \vf(0)} + O(\abs{z}^2) = O(\abs{z}^2), \]
where we used \eqref{eq:derivative_Mf_average}. 
Hence, $\Mf(z) = \vf(0)(u + \ii \vf(0))$ concludes the proof of \eqref{eq:expansion_Mf} which immediately implies \eqref{eq:expansion_phase_Mf}. 

Using the expansion of $\Mf$ in \eqref{eq:expansion_Mf} in a similar argument as in the proof of $\normtwo{\ff_-(\ii \eta) - \oneminusone} = O(\eta)$ in Lemma \ref{lem:controling_derivative_Mf_on_imaginary_axis} yields  
\[ \normtwo{\ff(z) -1} = \normtwo{\ff_-(z) - \oneminusone} = O(\abs{z}). \]
Similarly, using \eqref{eq:norm_hat_F_inverse}, we obtain \eqref{eq:estimate_eigenvector_F}. 
\end{proof}

By a standard argument from perturbation theory and possibly reducing $\wh \delta \sim 1$, we can assume that $\Bf(z)$ has a unique eigenvalue $\beta(z)$ of smallest modulus for $z \in \Hb$ 
satisfying $\abs{\Re z} \leq \wh \delta$ and $\abs{z} \leq 10$ such that 
$\abs{\beta'} - \abs{\beta} \gtrsim 1$ for $\beta' \in \sigma(\Bf(z))$ and $\beta' \neq \beta$. This follows from $\abs{\Mf} \sim 1$ and thus $\Gap(F(z) F(z)^t) \gtrsim 1$ by Lemma \ref{lem:hat_F}.
For $z \in \Hb$ satisfying $\abs{\Re z} \leq \wh \delta$ and $\abs{z} \leq 10$, we therefore find a unique (unnormalized) vector $b(z) \in \C^{2p}$ such that $\Bf(z) b(z) = \beta(z) b(z)$ and $\scalar{\ff_-}{b(z)} = 1$.

We introduce the spectral projection $P$ onto the spectral subspace of $\Bf(z)$ in $(\C^{2p}, \norm{\cdot}_\infty)$ associated to $\beta(z)$  which fulfills the relation 
\[ P = \frac{\scalar{\bar b}{\cdot}}{\avg{b^2}} b. \]
Note that $P$ is not an orthogonal projection in general. 
Let $Q \defeq 1 - P$ denote the complementary projection onto the spectral subspace of $\Bf(z)$ not containing $\beta(z)$
(this $Q$ is different from the one in the proof of Lemma \ref{lem:controling_derivative_Mf_on_imaginary_axis}). 
Since $\Bf(z) = -1 - \Ff(z) + O(\abs{z})$ we obtain 
\begin{equation} \label{eq:b_close_to_oneminusone}
 \norma{b(z) - \oneminusone}_{\infty} = \big\lVert \overline{b(z)} - \oneminusone\big\rVert_{\infty} = O(\abs{z})
\end{equation}
for $z \in \Hb$ satisfying $\abs{\Re z} \leq \wh \delta$ and $\abs{z} \leq 10$.

\begin{lem} \label{lem:final_estimate_norm_B_inverse}
By possibly reducing $\wh \delta$ from Lemma \ref{lem:expansion_Mf_around_zero}, but still $\wh \delta \gtrsim 1$, we have 
\begin{equation} \label{eq:B_inverse_norm_hard_edge}
 \norminf{\Bf^{-1}(z)} \lesssim \frac{1}{\abs z}, \quad \norminf{\Bf^{-1}(z)Q} + \norminf{(\Bf^{-1}(z)Q)^*} \lesssim 1
\end{equation}
for $z \in \Hb$ satisfying $\abs{\Re z} \leq \wh \delta$ and $\abs{z} \leq 10$.
\end{lem}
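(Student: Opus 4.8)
The plan is to combine the two standard reductions with a case split at $\abs{z}\sim\wh\delta$ and to exploit that near the imaginary axis $\Bf(z)$ is a small perturbation of a self‑adjoint operator. Write $D(z)\defeq\diag\pb{\abs{\Mf(z)}^2/\Mf(z)^2}$, so that $\Bf(z)=D(z)-\Ff(z)$ and $\Bf(z)^\ast=\overline{D(z)}-\Ff(z)$. Every diagonal entry of $D(z)$ and $\overline{D(z)}$ has modulus $1$, and $\normtwoinf{\Ff(z)}\le\norm{\Mf(z)}_\infty^2\normtwoinf{\Sf}\lesssim 1$ since $\abs{\Mf(z)}\sim 1$ by Lemma~\ref{lem:expansion_Mf_around_zero}. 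Hence, for $\zeta\notin\{D_x(z)\}_x$ at which the inverses exist, \eqref{eq:norm_hat_F_inverse} applied to $\zeta-\Bf(z)=\Ff(z)-(D(z)-\zeta)$ and to $\zeta-\Bf(z)^\ast$ gives
\begin{equation*}
\norminf{(\zeta-\Bf(z))^{-1}}+\norminf{(\zeta-\Bf(z)^\ast)^{-1}}\lesssim\pB{\inf_x\abs{D_x(z)-\zeta}}^{-1}\pb{1+\normtwo{(\zeta-\Bf(z))^{-1}}}.
\end{equation*}
With $\zeta=0$ this reduces all $\norm{\genarg}_\infty$‑bounds on $\Bf^{-1}(z)$ to $\norm{\genarg}_2$‑bounds; for the reduced resolvent I use it with $\zeta$ on a contour. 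Throughout I decompose $\Bf^{-1}(z)=\beta(z)^{-1}P+\Bf^{-1}(z)Q$ with $P,Q$ as introduced before the lemma.

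For $\abs{z}\le\wh\delta$ I would treat the $Q$‑part via a contour integral. Write $\Bf(z)=-(1+\Ff(z))+(D(z)+1)$; by Lemma~\ref{lem:expansion_Mf_around_zero}, $D(z)+1=2\Re\Mf(z)/\Mf(z)=O(\abs{z})$ entrywise (so $\norminf{D(z)+1}\lesssim\abs{z}$) and $\norminf{\Ff(z)-\Ff(0)}\lesssim\abs{z}$, while $-(1+\Ff(z))$ is self‑adjoint and, by Lemma~\ref{lem:hat_F} with $r_\pm\sim 1$, has a spectral gap $\gtrsim 1$ isolating its eigenvalue $-1+\normtwo{\Ff(z)}\in(-1,0)$ (with $\normtwo{\Ff(z)}\sim 1$) from the rest of its spectrum, which is compact and at distance $\gtrsim1$ from $0$. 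For $\wh\delta$ small the same holds for $\Bf(z)$, with small eigenvalue $\beta(z)$ and complementary projection $Q$. Choose a contour $\Gamma$ enclosing $\sigma(\Bf(z))\setminus\{\beta(z)\}$ and the point $-1$, staying at distance $\gtrsim 1$ from $\{0,\beta(z)\}\cup(\sigma(\Bf(z))\setminus\{\beta(z)\})\cup\sigma(-(1+\Ff(z)))\cup\{D_x(z)\}_x$ — possible because $\{D_x(z)\}_x$ lies within $O(\wh\delta)$ of $-1$, the other four sets are pairwise at distance $\gtrsim1$, and $-1$ sits in (or within $O(\wh\delta)$ of) the bulk of $\sigma(\Bf(z))$. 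On $\Gamma$ a Neumann series in $D(z)+1$ gives $\normtwo{(\zeta-\Bf(z))^{-1}}\lesssim 1$ (using $\normtwo{(\zeta+1+\Ff(z))^{-1}}=\dist(\zeta,\sigma(-(1+\Ff(z))))^{-1}\lesssim1$ and $\normtwo{D(z)+1}\lesssim\wh\delta$), hence $\norminf{(\zeta-\Bf(z))^{-1}}\lesssim 1$ by the first paragraph since $\inf_x\abs{D_x(z)-\zeta}\gtrsim1$ on $\Gamma$. Integrating $\Bf^{-1}(z)Q=\tfrac{1}{2\pi\i}\oint_\Gamma\zeta^{-1}(\zeta-\Bf(z))^{-1}\di\zeta$ yields $\norminf{\Bf^{-1}(z)Q}\lesssim1$; the same argument for $\Bf(z)^\ast=-(1+\Ff(z))+(\overline{D(z)}+1)$, whose reduced resolvent is $(\Bf^{-1}(z)Q)^\ast=Q^\ast(\Bf(z)^\ast)^{-1}$, gives $\norminf{(\Bf^{-1}(z)Q)^\ast}\lesssim1$.

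For the $P$‑part when $\abs{z}\le\wh\delta$ I would use first‑order perturbation theory for the algebraically simple zero eigenvalue of $-(1+\Ff(0))$, whose normalized eigenvector is $\oneminusone$ (by $\Ff(0)1=1$ from \eqref{eq:properties_v_0}, which gives $\Ff(0)\oneminusone=-\oneminusone$, and $1\perp\oneminusone$). Setting $w\defeq(1+\Ff(0))^{-1}\vf(0)$ (well defined since $\vf(0)\perp\oneminusone$ by \eqref{eq:properties_v_0}), the expansions of Lemma~\ref{lem:expansion_Mf_around_zero} give $\scalar{\oneminusone}{(D(z)+1)\oneminusone}=2\i(\Re z)\avg{w}+O(\abs{z}^2)$ and $\scalar{\oneminusone}{(\Ff(0)-\Ff(z))\oneminusone}=-2(\Im z)\avg{w}+O(\abs{z}^2)$ (the latter using $n=p$, via $\vf(0)\Sf\vf(0)=1$), so $\beta(z)=\scalar{\oneminusone}{(\Bf(z)+1+\Ff(0))\oneminusone}+O(\abs{z}^2)=2\i\avg{w}z+O(\abs{z}^2)$; since $(1+\Ff(0))1=2\cdot1$ one has $\avg{w}=\scalar{(1+\Ff(0))^{-1}1}{\vf(0)}=\tfrac12\avg{\vf(0)}\sim1$, hence $\abs{\beta(z)}\gtrsim\abs{z}$. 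Together with $\norminf{P}\lesssim1$ (from $\norm{b(z)-\oneminusone}_\infty=O(\abs{z})$ and $\abs{\avg{b(z)^2}}=1+O(\abs{z})\gtrsim1$) and the bound of the previous paragraph, this gives $\norminf{\Bf^{-1}(z)}\le\abs{\beta(z)}^{-1}\norminf{P}+\norminf{\Bf^{-1}(z)Q}\lesssim\abs{z}^{-1}$. For $\wh\delta\le\abs{z}\le10$ I would instead invoke Lemma~\ref{lem:bound_B_inverse} with $\delta=\wh\delta$, using that $\avg{\Im\Mf(z)}\gtrsim1$ on this strip — which holds, after shrinking $\wh\delta$, because $\avg{\Im\Mf(z)}\to\avg{\vf(0)}$ as $z\to0$ by \eqref{eq:expansion_Mf}, so $\rho$ has density $\gtrsim1$ near $0$ (Lemma~\ref{lem:continuity_mf}) — to obtain $\normtwo{\Bf^{-1}(z)}\lesssim_{\wh\delta}1$, hence $\norminf{\Bf^{-1}(z)}\lesssim_{\wh\delta}1\lesssim\abs{z}^{-1}$ and likewise $\norminf{(\Bf^{-1}(z))^\ast}\lesssim1$; the reduced‑resolvent bounds then follow by composing with $Q,Q^\ast$, whose $\norm{\genarg}_\infty$‑norms are $\lesssim1$.

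The main obstacle is that $\Bf(z)$ is only complex symmetric, not self‑adjoint, so neither its invertibility nor the $\norm{\genarg}_\infty$‑control of $\Bf^{-1}(z)Q$ follows from spectral separation alone. Near $z=0$ this is overcome precisely because $\abs{\Mf(z)}\sim1$ forces the unimodular diagonal $D(z)$ to equal $-1+O(\abs{z})$, making $\Bf(z)$ a genuinely small perturbation of the self‑adjoint $-(1+\Ff(z))$, and the passage $\norm{\genarg}_2\to\norm{\genarg}_\infty$ is recovered from \eqref{eq:norm_hat_F_inverse} thanks to $\abs{D_x(z)}=1$; away from $z=0$ one falls back on the $\norm{\genarg}_2$‑bound of Lemma~\ref{lem:bound_B_inverse}. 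I expect the most delicate step to be the choice of the contour $\Gamma$: it must separate both $\beta(z)$ and $0$ from the remaining spectrum of $\Bf(z)$ while keeping the cluster $\{D_x(z)\}_x\subset\{\,\abs{w+1}\lesssim\wh\delta\,\}$ at distance $\gtrsim1$, which is why one enlarges $\Gamma$ to enclose the point $-1$ as well.
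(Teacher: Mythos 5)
Your proposal is correct in substance, but it reaches the bound $\norminf{\Bf^{-1}(z)}\lesssim\abs{z}^{-1}$ by a genuinely different route than the paper. The paper does not case--split and never needs $\beta$, $P$ or $\avg{b^2}$ for this bound: it reduces $\norminf{\cdot}$ to $\normtwo{\cdot}$ via \eqref{eq:norm_hat_F_inverse} (as you do), gets $\normtwo{\Bf^{-1}(z)}\lesssim(\Im z)^{-1}$ as in Lemma \ref{eq:F_without_hat}, and then obtains $\normtwo{\Bf^{-1}(z)}\lesssim(\Re z)^{-1}$ from the Rotation--Inversion Lemma \ref{lem:bulk_stability} together with the computation \eqref{eq:computation_imaginary_part_estimate_B_inverse_norm} based on \eqref{eq:expansion_phase_Mf} and \eqref{eq:estimate_eigenvector_F}; the minimum of the two bounds is $\lesssim\abs{z}^{-1}$ uniformly on the strip, and the $Q$--bounds are delegated to the standard perturbation argument of Lemma 8.1 in \cite{AjankiQVE}. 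You instead decompose $\Bf^{-1}=\beta^{-1}P+\Bf^{-1}Q$, compute $\beta(z)=2\i\avg{w}z+O(\abs{z}^2)$ with $\avg{w}=\tfrac12\avg{\vf(0)}$ by first--order perturbation theory around the self--adjoint operator $-(1+\Ff(0))$ (this is consistent with $\beta(\i\eta)=-(1-\normtwo{\Ff(\i\eta)})\approx-\eta\avg{\vf(0)}$ from \eqref{eq:normtwo_F_explicit}, so the leading coefficient is right), spell out the contour/Neumann--series argument the paper only cites, and fall back on Lemma \ref{lem:bound_B_inverse} for $\wh\delta\le\abs z\le10$ (where your dichotomy $\abs{\Re z}\gtrsim\wh\delta$ or $\Im z\gtrsim\wh\delta$ indeed makes the minimum in \eqref{eq:estimate_norm_B_inverse} of order one, given $\avg{\Im\Mf}\gtrsim1$ on the strip). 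Your route buys an explicit formula for $\beta$ and a self--contained proof of the reduced--resolvent bound; the paper's route buys uniformity in $z$ and avoids any control of the non--orthogonal spectral pair.

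Two points need attention. First, in the outer regime $\wh\delta\le\abs z\le10$ you obtain the $Q$--bounds by composing with $Q,Q^*$ ``whose $\norm{\cdot}_\infty$--norms are $\lesssim1$'', but your only justification of $\norminf{P}\lesssim1$ (namely $\norm{b-\oneminusone}_\infty=O(\abs z)$ and $\abs{\avg{b^2}}=1+O(\abs z)$) is effective only for small $\abs z$; for $\abs z\sim1$, \eqref{eq:b_close_to_oneminusone} gives no lower bound on $\abs{\avg{b^2}}$. This is a genuine, though repairable, gap: on the whole strip one still has $\abs{\Mf}\sim1$ (Lemma \ref{lem:expansion_Mf_around_zero}), and $\Re\Mf$ remains $O(\wh\delta)$ there as well --- by \eqref{eq:Mf_purely_imaginary} $\Mf$ is purely imaginary on the imaginary axis, and for $\abs z\gtrsim 1$ in the strip one has $\Im z\gtrsim1$, hence $\abs{\Re\Mf_x(\tau+\i\eta)}\le\abs\tau\,\sup_s\abs{\Mf_x'(s+\i\eta)}\lesssim\wh\delta$ --- so $D(z)+1$ is uniformly small and your entire perturbative control of $b$, $\avg{b^2}$, $P$ and of $\Bf^{-1}Q$ (and its adjoint) extends to the full strip; this is exactly what the paper's citation of Lemma 8.1 in \cite{AjankiQVE} covers, and it in fact makes your case split unnecessary for the $Q$--part. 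Second, a minor imprecision: the claim that ``the other four sets are pairwise at distance $\gtrsim1$'' is not literally true, since $0$, $\beta(z)$ and the small eigenvalue $-(1-\normtwo{\Ff(z)})$ of $-(1+\Ff(z))$ all lie within $O(\wh\delta)$ of the origin; what you actually need, and what does hold by \eqref{eq:spec_Ff} and Lemma \ref{lem:hat_F}, is that the bulk spectra of $\Bf(z)$ and of $-(1+\Ff(z))$, together with the diagonal values $D_x(z)\approx-1$, stay at distance $\gtrsim1$ from that near--zero cluster, so that a separating contour $\Gamma$ with $\abs\zeta\gtrsim1$ and $\inf_x\abs{D_x(z)-\zeta}\gtrsim1$ exists.
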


\begin{proof}
Due to $\abs{\Mf(z)} \sim 1$ and using \eqref{eq:norm_hat_F_inverse} with $R= \Ff(z)$ and $D = \abs{\Mf(z)}^2/\Mf(z)^2$, it is enough to prove the estimates in \eqref{eq:B_inverse_norm_hard_edge} with $\norminf{\cdot}$ 
replaced by $\normtwo{\cdot}$. 
We first remark that $\abs{\Mf(z)} \sim 1$ and arguing similarly as in the proof of Lemma \ref{eq:F_without_hat} imply 
$\normtwo{\Bf^{-1}(z)} \lesssim (\Im z)^{-1}$. 

Now we prove $\normtwo{\Bf^{-1}(z)} \lesssim (\Re z)^{-1}$. We apply Lemma \ref{lem:bulk_stability} and recall $U_1 = \abs{M_1}^2/M_1^2$ and $U_2 = \abs{M_2}^2/M_2^2$ to get
\begin{equation} \label{eq:computation_imaginary_part_estimate_B_inverse_norm}
 \Im\left( 1-\normtwo{F(z)^tF(z)}\scalara{\frac{f_1}{\norm{f_1}_2}}{U_1\frac{f_1}{\norm{f_1}_2}}\scalara{\frac{f_2}{\norm{f_2}_2}}{U_2\frac{f_2}{\norm{f_2}_2}}\right) =  \frac{\normtwo{F(z)^tF(z)}}{\norm{f_1}_2 \norm{f_2}_2} 
\avg{\vf(0)} \Re z + O(\abs{z}^2),  
\end{equation}
where we used \eqref{eq:expansion_phase_Mf}, \eqref{eq:estimate_eigenvector_F} and $\norm{f_1}_2, \norm{f_2}_2, \normtwo{F(z)^tF(z)} \sim 1$. 
Since $\vf(0) \sim 1$ and $\Gap(F(z)F(z)^t) \gtrsim 1$ by Lemma \ref{lem:hat_F} and $\abs{\Mf(z)} \sim 1$, \eqref{eq:computation_imaginary_part_estimate_B_inverse_norm} and 
Lemma \ref{lem:bulk_stability} yield $\normtwo{\Bf^{-1}(z)} \lesssim (\Re z)^{-1}$ and hence $\normtwo{\Bf^{-1}(z)} \lesssim \min\{ (\Im z)^{-1}, (\Re z)^{-1} \} \lesssim \abs{z}^{-1}$.

The estimate $\norminf{\Bf^{-1}(z)Q} \lesssim 1$ in \eqref{eq:B_inverse_norm_hard_edge} follows from $\Gap(F(z)F(z)^t) \gtrsim 1$ by Lemma \ref{lem:hat_F}, $\abs{\Mf(z)} \sim 1$ and a standard argument from perturbation 
theory as presented in Lemma 8.1 of  \cite{AjankiQVE}. Here, it might be necessary to reduce $\wh \delta$. 
We remark that $\Bf^* = \abs{\Mf}^2/\overline{\Mf}^2 - \Ff$ and similarly $P^* = \scalar{b}{\cdot}/\avg{\overline{b}^2}\overline{b}$, i.e., $\Bf^*$ and $P^*$ emerge by the same constructions where $\Mf$ is replaced by $\overline{\Mf}$.
Therefore, we obtain $\norminf{(\Bf^{-1}(z)Q)^*} \lesssim 1$.
\end{proof}

\begin{proof}[Proof of Proposition \ref{pro:hard_edge_stability}] 
The part (i) follows from the previous lemmata. 

The part (ii) has already been proved for $\abs{z} \geq \delta$ in Lemma \ref{lem:bulk_stability_qve} and for any $\delta \gtrsim 1$. 
Therefore, we restrict ourselves to $\abs{z} \leq \delta$ for a sufficiently small $\delta\gtrsim1$. 
We recall $\ee^{\ii \psi} = \Mf/\abs{\Mf}$. 

 Owing to  Lemma \ref{lem:expansion_Mf_around_zero} and \eqref{eq:b_close_to_oneminusone}, there are positive 
constants $\delta, \Phi, \wh \Phi \sim 1$ which only depend on the model parameters such that 
\begin{equation} 
 \norminf{\Mf(z)} \leq \Phi, \qquad \norm{b(z) - \oneminusone}_2 \norminf{b} + \norminfa{\ee^{-\ii \psi} + \ii }\norminf{b}^2 \leq \wh \Phi\abs{\avg{b^2}}\abs{z}  
\label{eq:auxiliary_definitions_of_suprema} 
\end{equation}
for all $z \in \Hb$ satisfying $\abs{z} \leq \delta$. 
Here, we used $\normtwo{w} \leq \norminf{w}$ for all $w \in \C^{2p}$. 
Note that we employed \eqref{eq:b_close_to_oneminusone} for estimating $\normtwo{b-\oneminusone}$ as well as to obtain $\norminf{b}\sim 1$ and $\abs{\avg{b^2}} \sim 1$ for all $z \in \Hb$ satisfying 
$\abs{z}\leq \delta$ if $\delta \gtrsim 1$ is small enough. 

Lemma \ref{lem:final_estimate_norm_B_inverse} implies the existence of $\Psi, \wh \Psi \sim 1$ such that  
\begin{equation}
\norminf{\Bf^{-1}(z)} \leq \Psi \abs{z}^{-1},  \qquad \norminf{\Bf^{-1}(z)Q} \leq \wh \Psi  \label{eq:norm_inverse_Bf}
\end{equation}
for all $z \in \Hb$ satisfying $\abs{z}\leq \delta$ if $1 \lesssim \delta\leq \wh\delta$ is sufficiently small. 
  With these definitions, we set 
\begin{equation}\label{eq:def_lambda_star}
\lambda_* \defeq \frac{1}{2\Phi(\Psi \wh \Phi + \wh \Psi)}.
\end{equation}
The estimate on $h \defeq \gf(z) - \Mf(z)  = u \abs{\Mf}$ will be obtained from inverting $\Bf$ in \eqref{eq:stability_relation}. In order to control the right-hand side of \eqref{eq:stability_relation}, we decompose 
it, according to $1 = P + Q$, as 
\[ \ee^{-\ii \psi} u \Ff u = \frac{\avga{b \ee^{-\ii \psi} u \Ff u }}{\avg{b^2}} b + Q \ee^{-\ii \psi} u \Ff u, 
\quad \ee^{-\ii \psi} \gf \df = \frac{\avga{\ee^{-\ii \psi}\gf \df b}}{\avg{b^2}} b + Q \ee^{-\ii \psi} \gf \df. \]
Clearly, as $\norminf{\Sf}\leq 1$ we have $\norminf{(\Bf^{-1} Q)(\ee^{-\ii \psi} u \Ff u)} \leq \wh \Psi \norminf{h}^2$ and $\norminf{(\Bf^{-1} Q)(\ee^{-\ii \psi} \gf \df)} \leq
 \wh \Psi \norminf{\gf}\norminf{\df}$ due to \eqref{eq:norm_inverse_Bf}. 
Using $\avg{\oneminusone h\Sf h} = 0$ and \eqref{eq:auxiliary_definitions_of_suprema}, we obtain
\begin{equation*}
\norminfa{\avga{b \ee^{-\ii \psi} u \Ff u}\frac{b}{\avg{b^2}}} \leq \left(\abs{-\ii \avg{ h \Sf h \oneminusone}} + \abs{-\ii\avg{(b - \oneminusone) h \Sf h}} + \absa{ \avga{\left(\ee^{-\ii \psi} + \ii\right) b h \Sf h }} 
\right) \frac{\norminf{b}}{\abs{\avg{b^2}}} 
\leq \wh \Phi \abs{z}\norminf{h}^2. 
\end{equation*}
Similarly, due to \eqref{eq:auxiliary_definitions_of_suprema} and $\avg{\gf \df \oneminusone} = \avg{g_1(z) d_1(z)} - \avg{g_2(z) d_2(z)}= 0$ by the perturbed QVE \eqref{eq:perturbed_combined_QVE}, we get
\begin{equation*}
\norminfa{\avga{\ee^{-\ii \psi}\gf \df b}\frac{b}{\avg{b^2}}} \leq \left(\abs{\avg{\gf \df \oneminusone}} + \abs{\avg{(b - \oneminusone) \gf \df}} + \absa{ \avga{\left(\ee^{-\ii \psi} + \ii\right) b \gf \df}} 
\right) \frac{\norminf{b}}{\abs{\avg{b^2}}} 
\leq \wh \Phi \abs{z}\norminf{\gf}\norminf{\df}. 
\end{equation*}
Thus, inverting $\Bf$ in \eqref{eq:stability_relation}, multiplying the result with $\abs{\Mf}$, taking its norm and using \eqref{eq:norm_inverse_Bf} yield
\[ \norminf{h} \leq \Phi(\Psi \wh \Phi + \wh \Psi) \norminf{h}^2 + \Phi( \Psi \wh \Phi + \wh \Psi) \norminf{\gf} \norminf{\df}, \]
which implies 
\[ \norminf{h} \mathbf 1\Big(\norminf{h} \leq \lambda_*\Big) \leq \Phi (1 + 2 \Phi( \Psi \wh \Phi + \wh \Psi))\norminf{\df} \]
 by the definition of $\lambda_*$ in \eqref{eq:def_lambda_star}. This concludes the proof of \eqref{eq:stability_g-m_hard_edge}.

For the proof of \eqref{eq:stability_average_g-m_hard_edge}, inverting $\Bf$ in \eqref{eq:stability_relation} and taking the scalar product with $w$ yield 
\begin{align} 
 \scalar{w}{h} & = \scalar{w}{\Bf^{-1}(\ee^{-\ii \psi} h \Sf h) }+ \frac{\scalar{w}{\abs{\Mf}\Bf^{-1}b}}{\avg{b^2}}\avga{h\df \left[ ( \ee^{-\ii\psi} + \ii) b - \ii (b- \oneminusone) \right]}\nonumber \\
& + \scalar{(\Bf^{-1} Q)^*(\abs{\Mf} w)}{\ee^{-\ii \psi} h \df} + \scalar{Tw}{\df},\label{eq:expansion_averaged_form}
\end{align}
where we used $\avg{\oneminusone\gf \df} =0$ and set $Tw \defeq \avg{b^2}^{-1} \scalar{\abs{\Mf} \Bf^{-1}b}{w} \overline{\Mf} \left[ ( \ee^{\ii\psi} - \ii) \overline{b} 
+ \ii (\overline{b}- \oneminusone) \right] + \ee^{\ii \psi} \overline{\Mf} (\Bf^{-1}Q)^*(\abs{\Mf} w)$.

Using \eqref{eq:auxiliary_definitions_of_suprema} and \eqref{eq:norm_inverse_Bf} as well as a similar argument as in the proof of \eqref{eq:stability_g-m_hard_edge} for the first term in the definition of $T$ 
and $\norminf{(\Bf^{-1} Q)^*} \lesssim 1$ by \eqref{eq:B_inverse_norm_hard_edge} for the second term, we obtain $\norminf{T} \lesssim 1$. 
Moreover, as above we see that the first term on the right-hand side of \eqref{eq:expansion_averaged_form} is $\lesssim\norminf{w}\norminf{h}^2$. 
The estimates \eqref{eq:auxiliary_definitions_of_suprema} and \eqref{eq:norm_inverse_Bf} imply that the second term on the right-hand side of \eqref{eq:expansion_averaged_form} is $\lesssim \norminf{w}\norminf{h}\norminf{\df}$. 
Applying \eqref{eq:stability_g-m_hard_edge} to these bounds
yields \eqref{eq:stability_average_g-m_hard_edge}.
\end{proof}

\subsection{Properly rectangular Gram matrices}

In this subsection, we study the behaviour of $M_1$ and $M_2$ for $z$ close to zero for $p/n$ different from one. 
We establish that the density of the limiting distribution is zero around zero -- a well-known feature of the Marchenko-Pastur distribution for $p/n$ different from one.

We suppose that the assumptions (A), (C) and (D) are fulfilled and we will study the case $p>n$. More precisely, we assume that
\begin{equation}
\label{eq:p_bigger_than_n}
\frac{p}{n} \geq 1 +d_*
\end{equation}
for some $d_*>0$ which will imply that each component of $M_1$ diverges at $z=0$ whereas each component of $M_2$ stays bounded at $z=0$. 
Later, we will see that these properties carry over to $m_1$ and $m_2$.
We use the notation $D_\delta(w) \defeq \{ z \in \C\colon \abs{z-w} < \delta \}$ for $\delta>0$ and $w \in \C$. 

\begin{pro}[Solution of the QVE close to zero] \label{pro:QVE_close_to_zero}
If \zerorect and \eqref{eq:p_bigger_than_n} are satisfied then there exist a vector $u \in \C^p$, a constant $\delta_* \gtrsim 1$ and analytic functions 
$a\colon D_{\delta_*}(0) \to \C^p$, $b\colon D_{\delta_*}(0) \to \C^n$ such that the unique solution $\Mf=(\mon, \mtw)^t$ of 
\eqref{eq:combined_QVE} with $\Im \Mf >0$ fulfills 
\begin{equation}
M_1(z) = z a(z) - \frac{u}{z}, \qquad M_2 (z) = z b(z)
\label{eq:ansatz}
\end{equation}
for all $z \in D_{\delta_*}(0)\cap \Hb$. Moreover, we have 
\begin{enumerate}[(i)]
\item $\sum_{i=1}^p u_i = p-n$ and $ 1\lesssim u_i \leq 1$ for all $i=1, \ldots, p$, 
\item $b(0)=1/S^t u \sim 1$,
\item $\norm{a(z)}_\infty + \norm{b(z)}_\infty \lesssim 1$ uniformly for all $z \in D_{\delta_*}(0)$, 
\item $\lim_{\eta\downarrow 0} \Im M_1(\tau + \i \eta) = 0$ and $\lim_{\eta\downarrow 0} \Im M_2(\tau + \i \eta)=0$ locally uniformly for all $\tau \in (-\delta_*,\delta_*)\backslash\{0\}$.  
\end{enumerate}
\end{pro}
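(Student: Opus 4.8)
The plan is to solve the QVE \eqref{eq:combined_QVE} near $z=0$ directly by substituting the ansatz \eqref{eq:ansatz}, reducing it to a regular analytic fixed-point problem, and then identifying the constructed solution with $\Mf$ through the pointwise uniqueness of the QVE solution with positive imaginary part (Theorem~2.1 of \cite{AjankiQVE}). Concretely, set $\alpha(z)\defeq -zM_1(z)$ and insert $M_1=-\alpha/z$, $M_2=zb$ into the block form \eqref{eq:QVE_split}: the second line gives $b=1/(S^t\alpha-z^2)$ and the first gives $1/\alpha=1+Sb$, so $\alpha$ must solve
\[
 \alpha=\Psi(w,\alpha)\defeq\Big(1+S\,\tfrac1{S^t\alpha-w}\Big)^{-1},\qquad w\defeq z^2 .
\]
Since the spectral parameter enters only through $w=z^2$, an analytic solution $\alpha=\alpha(w)$ automatically yields \eqref{eq:ansatz} with $u\defeq\alpha(0)$, $b(z)\defeq1/(S^t\alpha(z^2)-z^2)$, and $a(z)\defeq(\alpha(0)-\alpha(z^2))/z^2$ (analytic because $\alpha(w)-u$ vanishes at $w=0$); the uniform bound $\norm{a}_\infty+\norm{b}_\infty\lesssim1$ of (iii) will come out of the quantitative fixed-point estimates below.

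The first substantive step is the analysis of the \emph{zero mode} $u=\Psi(0,u)$. Using \zerorect and (A) (hence $\sum_i s_{ik}\sim\sum_k s_{ik}\sim1$ by \eqref{eq:sum_s_ij_sim_1}), the map $v\mapsto(1+S(1/S^tv))^{-1}$ sends a box $[c_1,c_2]^p$ with $c_1,c_2\sim1$ into itself, so Brouwer yields a fixed point $u\in(0,\infty)^p$, necessarily with $u_i\sim1$, $u_i=1/(1+(Sb(0))_i)\le1$, and $b(0)\defeq1/(S^tu)\sim1$. The crucial point is the invertibility of $1-\partial_\alpha\Psi(0,u)$ with a model-parameter bound on the inverse. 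A direct differentiation gives $\partial_\alpha\Psi(0,u)=\diag(u^2)\,S\,\diag(b(0)^2)\,S^t$, which is conjugate (via $\diag(u)$) to the symmetric positive matrix $AA^t$, $A\defeq\diag(u)\,S\,\diag(b(0))$. From $b(0)=1/(S^tu)$ one reads off the exact identities $A^t\mathbf1=\mathbf1$ and $A\mathbf1=\mathbf1-u$, hence $A^tA\,\mathbf1=\mathbf1-A^tu$ with $(A^tu)_k=b(0)_k(S^tu^2)_k\gtrsim1$ uniformly in $k$. By the Collatz--Wielandt principle $\normtwo{AA^t}=\normtwo{A^tA}\le1-c$ for some $c\sim1$, so $1-\partial_\alpha\Psi(0,u)$ is invertible with $\norminf{(1-\partial_\alpha\Psi(0,u))^{-1}}\lesssim1$. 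The analytic implicit function theorem, with all estimates controlled by model parameters, then produces a radius $\delta_*\sim1$ and a unique analytic $\alpha=\alpha(w)$ on $D_{\delta_*}(0)$ with $\alpha(0)=u$ and $\norm{\alpha(w)-u}_\infty\lesssim\abs w$, which gives (iii).

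It remains to identify the constructed functions with $\Mf$ and read off the rest. Put $M_1\defeq-\alpha(z^2)/z$ and $M_2\defeq z/(S^t\alpha(z^2)-z^2)$ on $D_{\delta_*}(0)\cap\Hb$; by construction they satisfy \eqref{eq:QVE_split}, hence \eqref{eq:combined_QVE}. Writing $z=\tau+\i\eta$ and using $\alpha(z)=u+O(\abs z^2)$ with $\Im\alpha(z)=O(\abs z^2)$ (as $\alpha$ depends on $z^2$), one computes $\Im M_1=(\eta\Re\alpha-\tau\Im\alpha)/\abs z^2=\eta(u_i+O(\abs z^2))/\abs z^2>0$ and, similarly, $\Im M_2=\eta(b(0)_k+O(\abs z^2))>0$, after shrinking $\delta_*$ (still $\sim1$). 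By pointwise uniqueness of the QVE solution with positive imaginary part, $(M_1,M_2)$ are the corresponding blocks of $\Mf$ on $D_{\delta_*}(0)\cap\Hb$, proving \eqref{eq:ansatz}; in particular $u=\alpha(0)$ is determined by $\Mf$, so the zero mode is unique. Then (i): the normalization $\sum_iu_i=p-n$ follows from the exact identity
\[
 \sum_{i=1}^pM_{1,i}(z)-\sum_{k=1}^nM_{2,k}(z)=-\frac{p-n}{z},
\]
obtained by multiplying the $i$-th line of \eqref{eq:QVE_m1} by $M_{1,i}$, the $k$-th line of \eqref{eq:QVE_m2} by $M_{2,k}$, summing each and subtracting (the double sums $\sum_{i,k}M_{1,i}s_{ik}M_{2,k}$ cancel), then inserting \eqref{eq:ansatz} and matching the $1/z$ coefficient; combined with $1\lesssim u_i\le1$ already noted (here $p>n$ and \dimrect make $p-n\gtrsim p+n$, consistent with $u_i\sim1$). (ii) is $b(0)=1/(S^tu)\sim1$. (iv): $\alpha$ and $b$ are real on the real axis (real coefficients; equivalently $M_1(\i\eta)$ is purely imaginary by $\Mf(-\bar z)=-\overline{\Mf(z)}$, so $\alpha$ is real on $(-\delta_*^2,0]$ and hence, by analyticity, near $0$), so for $\tau\in(-\delta_*,\delta_*)\setminus\{0\}$ we get $M_1(\tau+\i\eta)\to-\alpha(\tau^2)/\tau\in\R$ and $M_2(\tau+\i\eta)\to\tau b(\tau)\in\R$ as $\eta\downarrow0$, locally uniformly, so the imaginary parts vanish. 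The step I expect to be the main obstacle is the zero-mode analysis, specifically proving the quantitative non-degeneracy $\normtwo{AA^t}\le1-c$ with $c\sim1$: this is exactly what forces $\delta_*\sim1$ and the model-parameter-uniform bounds in (iii), and it hinges on the two identities $A^t\mathbf1=\mathbf1$, $A\mathbf1=\mathbf1-u$ that come for free from the zero-mode equation; ensuring $\Im M_1,\Im M_2>0$ on all of $D_{\delta_*}(0)\cap\Hb$ rather than just near $0$ is a minor additional point, handled by choosing $\delta_*$ small enough.
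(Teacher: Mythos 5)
Your overall route---recasting the problem as a single analytic fixed-point equation $\alpha=\Psi(z^2,\alpha)$ with $\Psi(w,\alpha)=(1+S\tfrac1{S^t\alpha-w})^{-1}$ and applying a quantitative implicit function theorem at $(0,u)$---is viable and genuinely different from the paper, which constructs $u$ variationally (Lemma \ref{lem:solution_u}) and then obtains $b$ as the solution of a holomorphic ODE (Lemma \ref{lem:solution_b}); your non-degeneracy bound $\normtwo{A^tA}\le 1-c$ is in substance the same estimate as the paper's bound $\normtwo{L(b_0)}\le 1-\kappa$, since $L(b_0)=A^tA$ with $A=\diag(u)\,S\,\diag(b(0))$. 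The genuine gap is the existence step for the zero mode. The map $\Phi(v)\defeq(1+S(1/S^tv))^{-1}$ does not, in general, map a box $[c_1,c_2]^p$ with $c_1,c_2\sim 1$ into itself under (A) and \zerorect. Since $c_1\mathbf 1$ lies in the box, invariance forces $\Phi(c_1\mathbf 1)_i=c_1/(c_1+q_i)\ge c_1$, i.e.\ $c_1+q_i\le 1$ for every $i$, where $q_i\defeq (S(1/S^t\mathbf 1))_i$. But take one row of $S$ with all entries equal to $1/(n+p)$ and every other row equal to $\varphi/(n+p)$: then $q_i=n/(1+(p-1)\varphi)\approx n/(\varphi p)$ for the heavy row, which exceeds $1$ whenever $\varphi<n/p$---a perfectly admissible choice of model parameters. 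So no such box is invariant and Brouwer does not apply as stated; consequently the a priori bound $u_i\sim 1$, which you feed into the key estimate $(A^tu)_k\gtrsim 1$, is unsupported at that point.

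The structural reason is that $u_i\gtrsim 1$ is not an a priori feature of candidate vectors but a consequence of the sum rule $\sum_i u_i=p-n$ together with \dimrect (so that $p-n\gtrsim p$); this is exactly how the paper obtains $(S^tu)_k\gtrsim 1$ and $u_i\gtrsim 1$, and why its existence proof is variational: the functional $J$ is coercive precisely because the coefficient $n/p$ of the logarithmic term is strictly less than $1$. Once existence of a solution $u\in(0,1]^p$ of \eqref{eq:def_u} is granted, the rest of your argument is in order: the sum rule follows directly from the equation (multiply by $u$ and sum), so the bounds needed for invertibility of $1-\partial_\alpha\Psi(0,u)$ are available before any identification with $\Mf$; the implicit function theorem step, the identification via uniqueness of the QVE solution with positive imaginary part, your derivation of $\sum_i u_i=p-n$ from the exact identity $\sum_i M_{1,i}-\sum_k M_{2,k}=-(p-n)/z$, and part (iv) are all correct. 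One minor point: for $\Im M_1>0$ you need $\Im\alpha(z^2)=O(\abs{\tau}\eta)$, which follows from $\alpha$ having real Taylor coefficients (as you note), not merely $\Im\alpha=O(\abs z^2)$; with only the latter, the term $\tau\,\Im\alpha$ need not be dominated by $\eta\,\Re\alpha$ when $\eta\ll\abs{z}^3$.
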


The ansatz \eqref{eq:ansatz} is motivated by the following heuristics. Considering $H$ as an operator $\C^p \oplus \C^n \to \C^n \oplus \C^p$, we expect that the first component 
described by $X^*\colon \C^p \to \C^n$ has a nontrivial kernel for dimensional reasons whereas the second component has a trivial kernel. Since the nonzero eigenvalues of $H^2$ correspond to the 
nonzero eigenvalues of $XX^*$ and $X^*X$, the Marchenko-Pastur distribution indicates that there is a constant $\delta_* \gtrsim 1$ such that $H$ has no nonzero eigenvalue in $(-\delta_*, \delta_*)$. 
As the first component $M_1$ of $\Mf$ corresponds to the ``first component'' of $H$, the term $-u/z$ in \eqref{eq:ansatz} implements the expected kernel. For dimensional reasons, 
the kernel should be $p-n$ dimensional which agrees with part (i) of Proposition \ref{pro:QVE_close_to_zero}. 
The factor $z$ in the terms $z a(z)$ and $z b(z)$ in \eqref{eq:ansatz} realizes the expected gap in the eigenvalue distribution around zero. 

\begin{proof}[Proof of Proposition \ref{pro:QVE_close_to_zero}]
We start with the defining equations for $u$ and $b$. We assume that $u \in (0,1]^p$ fulfills 
\begin{equation}
\frac{1}{u} = 1 + S \frac{1}{S^t u}
\label{eq:def_u}
\end{equation}
and $b\colon D_{\delta_*}(0) \to \C^p$ fulfills 
\begin{equation}
-\frac{1}{b(z)}  = z^2 - S^t \frac{1}{1+ S b(z)}  \label{eq:b_equation}
\end{equation}
for some $\delta_*>0$.
We then define $a \colon D_{\delta_*}(0) \to \C^p$ through 
\begin{equation}
z^2 a(z) = u - \frac{1}{1+ S b(z)}
\label{eq:z2u_v_Sw}
\end{equation}
and set $\wh M_1(z) \defeq z a(z) - u/z$ and $\wh M_2(z) \defeq zb(z)$ for $z\in D_{\delta_*}(0)$. 
Thus, for $z \in D_{\delta_*}(0)$, we obtain
\[ z + S^t \wh M_1(z) = z - S^t \frac{1}{1+Sb(z)} = - \frac{1}{zb(z)} = - \frac{1}{\wh M_2(z)}, \]
where we used \eqref{eq:z2u_v_Sw} in the first step and \eqref{eq:b_equation} in the second step. 
Similarly, solving \eqref{eq:z2u_v_Sw} for $Sb(z)$ yields
\begin{equation} \label{eq:wh_M_1_M_2}
 z + S \wh M_2(z) = z + z \left( \frac{1}{ u - z^2 a (z)} - 1\right) = - \frac{1}{\wh M_1(z)}, \qquad z \in D_{\delta_*}(0).
\end{equation}
Thus, $(\wh M_1, \wh M_2)$ satisfy \eqref{eq:combined_QVE}, the defining equation for $\Mf = (M_1, M_2)$ and we will be able to conclude that $\wh M_1 = M_1$ and $\wh M_2 = M_2$.

For the rigorous argument, we first establish the existence and uniqueness of $u$ and $b$ that follow from the next two lemmata whose proofs are given later. 

\begin{lem}  \label{lem:solution_u}
If \zerorect and \eqref{eq:p_bigger_than_n} are satisfied then there is a unique solution of \eqref{eq:def_u} in the set $u \in (0,1]^p$. Moreover, 
\begin{equation}
1 > u_i \gtrsim 1, \qquad (S^tu)_k \gtrsim 1
\label{eq:bound_u_St_u}
\end{equation} 
for all $i =1, \ldots, p$ and $k =1, \ldots, n$ and $\sum_{i=1}^p u_i = p-n$.
\end{lem}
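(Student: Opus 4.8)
The plan is to produce $u$ as the unique critical point of a functional that becomes convex after an exponential change of variables, in the spirit of the discrete functionals used elsewhere in the paper (cf.\ \eqref{eq:def_wt_J}). Concretely, I would introduce
\[
\mathcal{J}(u) \defeq \sum_{i=1}^p\bigl(u_i - \log u_i\bigr) + \sum_{k=1}^n \log\,(S^t u)_k, \qquad u \in (0,\infty)^p,
\]
and note that $\partial_{u_i}\mathcal{J}(u) = 1 - u_i^{-1} + \bigl(S(1/S^t u)\bigr)_i$, so that the critical points of $\mathcal{J}$ in $(0,\infty)^p$ are exactly the solutions of \eqref{eq:def_u}. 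Writing $u = \ee^{v}$ componentwise, the function $v \mapsto \mathcal{J}(\ee^v) = \sum_i \ee^{v_i} - \sum_i v_i + \sum_k \log\bigl(\sum_j s_{jk}\ee^{v_j}\bigr)$ is strictly convex on $\R^p$: the first term has Hessian $\diag(\ee^{v_i}) \succ 0$, the second is linear, and each summand $\log\sum_j s_{jk}\ee^{v_j}$ is a log-sum-exp of affine functions (using $s_{jk}>0$ from \zerorect), hence convex. Strict convexity already gives \emph{uniqueness}: $\mathcal{J}\circ\exp$ has at most one critical point, so \eqref{eq:def_u} has at most one solution in $(0,\infty)^p$, a fortiori in $(0,1]^p$.

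For \emph{existence} I would check that $\mathcal{J}\circ\exp$ is coercive on $\R^p$. Using $s_{jk} \ge \varphi/(n+p)$ one bounds $\sum_k\log\sum_j s_{jk}\ee^{v_j} \ge n\log\frac{\varphi}{n+p} + n\max_j v_j$, so that
\[
\mathcal{J}(\ee^v) \;\ge\; \sum_{i=1}^p \ee^{v_i} \;+\; \sum_{i=1}^p\Bigl(\max_j v_j - v_i\Bigr) \;-\; (p-n)\max_j v_j \;+\; n\log\tfrac{\varphi}{n+p}.
\]
The three $v$-dependent groups are each bounded below, and along any sequence with $\abs{v}\to\infty$ one of them diverges to $+\infty$: if $\max_j v_j\to+\infty$ the exponential sum dominates the term $-(p-n)\max_j v_j$ (this is where the hypothesis $p>n$ enters); if $\max_j v_j\to-\infty$ then $-(p-n)\max_j v_j\to+\infty$; and on any remaining sequence $\max_j v_j$ stays bounded while some $v_i\to-\infty$, forcing $\sum_i(\max_j v_j - v_i)\to+\infty$. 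Hence the infimum of $\mathcal{J}\circ\exp$ is attained at an interior point of $\R^p$, which is a critical point, and $u\defeq\ee^v\in(0,\infty)^p$ solves \eqref{eq:def_u}; since $\bigl(S(1/S^t u)\bigr)_i>0$, the equation forces $u_i<1$, i.e.\ $u\in(0,1]^p$.

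For the quantitative statements: multiplying \eqref{eq:def_u} by $u_i$ and summing over $i$, the cross term collapses, $\sum_i u_i\bigl(S(1/S^t u)\bigr)_i = \sum_k (S^t u)_k/(S^t u)_k = n$, so $\sum_{i=1}^p u_i = p-n$. Then \zerorect gives $(S^t u)_k = \sum_j s_{jk}u_j \ge \frac{\varphi}{n+p}\sum_j u_j = \frac{\varphi(p-n)}{n+p}\gtrsim 1$, the last step since $p/n\ge 1+d_*$. Feeding this back into \eqref{eq:def_u} and using $\sum_k s_{ik}\le n/(n+p)\le 1$ from (A) yields $u_i^{-1} = 1 + \bigl(S(1/S^t u)\bigr)_i \le 1 + \bigl(\min_k (S^t u)_k\bigr)^{-1}\sum_k s_{ik} \lesssim 1$, hence $u_i\gtrsim 1$. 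The main obstacle is the coercivity estimate, which is exactly where $p>n$ is used essentially: for $p\le n$ the functional $\mathcal{J}\circ\exp$ fails to be bounded below (consistent with the kernel being absent, resp.\ relocated to $X^*$), so no such $u$ exists. Everything else — convexity, the critical-point identity, the summation identity, and the lower bounds — is routine once the variational picture is set up.
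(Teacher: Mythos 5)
Your proof is correct, and it takes a genuinely different route from the paper's for the existence and uniqueness part. The paper minimizes the (normalized) functional $J$ over the constrained set $(0,1]^p$: it first derives an a priori positive lower bound on the components of any near-minimizer via Jensen's inequality and \zerorect, extracts a minimizer by compactness, rules out boundary points $u_i^\star=1$ by a one-variable perturbation, and then proves uniqueness by a contraction argument for the map $u\mapsto (1+S((S^tu)^{-1}))^{-1}$ with respect to the metric $d(a,b)=(a-b)^2/(ab)$, invoking Lemma A.2 of \cite{AjankiCPAM}. You instead pass to exponential variables, observe that $\mathcal J\circ\exp$ is strictly convex (log-sum-exp plus $\sum_i \ee^{v_i}$), and get uniqueness of the critical point in all of $(0,\infty)^p$ for free, while existence follows from coercivity on $\R^p$ — which is exactly where $p>n$ enters (most visibly in your case $\max_j v_j\to-\infty$, where $-(p-n)\max_j v_j\to+\infty$ is needed; for $p\le n$ coercivity indeed fails). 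Your route is more self-contained (no external contraction lemma, no boundary analysis, since the unconstrained minimizer is automatically interior and the equation then forces $u_i<1$), and it yields uniqueness on the larger set $(0,\infty)^p$; the paper's route stays closer to the QVE toolkit it reuses elsewhere and produces explicit (if $p$-dependent) a priori bounds along the way. The quantitative conclusions — $\sum_i u_i=p-n$ by multiplying \eqref{eq:def_u} by $u$ and summing, then $(S^tu)_k\gtrsim 1$ from \zerorect and \eqref{eq:p_bigger_than_n}, then $u_i\gtrsim 1$ by feeding this back into the equation — are obtained exactly as in the paper. One cosmetic remark: your coercivity constants depend on $p$ and $n$, which is harmless since existence is proved at fixed dimensions, and the uniform bounds \eqref{eq:bound_u_St_u} are recovered afterwards from the equation itself.
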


\begin{lem} \label{lem:solution_b}
If \zerorect and \eqref{eq:p_bigger_than_n} are satisfied, then there are a $\delta_*\sim 1$ and a unique holomorphic function $b \colon D_{\delta_*}(0) \to \C^n$ satisfying \eqref{eq:b_equation}
with $b(0) = 1/(S^t u)$, where $u$ is the solution of \eqref{eq:def_u}. Moreover, we have $\norm{b(z)}_\infty \lesssim 1$ and $\norm{(1+Sb(z))^{-1}}_\infty\leq 1/2$ for all $z \in D_{\delta_*}(0)$, $b(0) \sim 1$, $b'(0)=0$,  
$\Im (zb(z)) >0$ for all $z \in D_{\delta_*}(0)$ with $\Im z>0$ and $\Im(z b(z)) =0$ for $z \in (- \delta_*,  \delta_*)$.
\end{lem}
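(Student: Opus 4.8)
The plan is to construct $b$ by the analytic implicit function theorem, applied to the map $\mathcal H(z,\beta)\defeq -1/\beta-z^2+S^t(1+S\beta)^{-1}$ near the point $(0,b(0))$, where $b(0)\defeq 1/(S^tu)$ and $u$ is the solution provided by Lemma~\ref{lem:solution_u}. First I would verify $\mathcal H(0,b(0))=0$: by \eqref{eq:def_u} we have $1+Sb(0)=1+S(1/S^tu)=1/u$, hence $S^t(1+Sb(0))^{-1}=S^tu=1/b(0)$, which is precisely \eqref{eq:b_equation} at $z=0$. Since $u\sim 1$ by Lemma~\ref{lem:solution_u} and $S^tu\sim 1$ by \eqref{eq:sum_s_ij_sim_1}, the map $\mathcal H$ is analytic on a ball of radius $\sim 1$ about $(0,b(0))$, and $b(0)_k=1/(S^tu)_k\sim 1$, giving $b(0)\sim 1$. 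I would also note that $\mathcal H$ depends on $z$ only through $z^2$, which makes $b$ even in $z$ and will give $b'(0)=0$ for free.

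The crux is the invertibility, with a bound $\norminf{\mathcal L^{-1}}\lesssim 1$ uniform in $n,p$, of the linearisation $\mathcal L\defeq \partial_\beta\mathcal H(0,b(0))=\diag(1/b(0)^2)-S^t\diag(u^2)S$ on $\C^n$. Conjugating by $\diag(b(0))$ turns $\mathcal L$ into $1-F_\star^tF_\star$, where $F_\star\defeq \diag(u)\,S\,\diag(b(0))$ is a $p\times n$ matrix, so it suffices to bound the top eigenvalue of $F_\star^tF_\star$ (equivalently of $F_\star F_\star^t$) strictly below $1$. For $x\in\C^p$, Cauchy--Schwarz in the summation over $i$, followed by $b(0)_k^2(S^tu)_k=b(0)_k$ and then \eqref{eq:def_u} in the form $(Sb(0))_i=1/u_i-1$, yields
\[
\sum_{k=1}^n b(0)_k^2\Bigl|\sum_{i=1}^p s_{ik}u_ix_i\Bigr|^2 \;\le\; \sum_i (Sb(0))_i\,u_i\abs{x_i}^2 \;=\; \sum_i (1-u_i)\abs{x_i}^2 \;\le\; \bigl(1-\min_i u_i\bigr)\sum_i\abs{x_i}^2 .
\]
The left side is the quadratic form of $F_\star F_\star^t$, so its operator norm is at most $1-\min_i u_i\le 1-c$ with $c\gtrsim 1$ by Lemma~\ref{lem:solution_u}; hence $1-F_\star^tF_\star\ge c$, so $\mathcal L$ is invertible with $\normtwo{\mathcal L^{-1}}\lesssim 1$, and $\norminf{\mathcal L^{-1}}\lesssim 1$ follows from \eqref{eq:norm_hat_F_inverse} with $R=S^t\diag(u^2)S$, $D=\diag(1/b(0)^2)$, using $\normtwoinf{R}\lesssim 1$ and $\inf_k b(0)_k^{-2}\gtrsim 1$. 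I expect this to be the main obstacle: the general stability estimate of Lemma~\ref{lem:bulk_stability_qve} is useless near $z=0$ because $M_1$ blows up there, so the stability analysis must be rerun for the rescaled variable $b$, and it is the strict inequality $u_i<1$ of Lemma~\ref{lem:solution_u} --- itself relying on $p>n$ and \zerorect --- that makes $\mathcal L$ invertible.

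Granting this, the quantitative implicit function theorem produces a unique holomorphic $b\colon D_{\delta_*}(0)\to\C^n$ with $b(0)=1/(S^tu)$ solving \eqref{eq:b_equation}, with $\delta_*\gtrsim 1$ since all the constants above depend only on the model parameters; equivalently one iterates the map $\beta\mapsto (S^t(1+S\beta)^{-1}-z^2)^{-1}$, whose linearisation at $b(0)$ has spectral radius $\normtwo{F_\star^tF_\star}<1$. The remaining claims follow quickly. The bounds $\norm{b(z)}_\infty\lesssim 1$ and $\norm{(1+Sb(z))^{-1}}_\infty\le\frac12$ hold after possibly shrinking $\delta_*$, from continuity of $b$ together with $b(0)\sim1$, $1+Sb(0)=1/u$, and the estimates on $u$; $b'(0)=0$ follows from the evenness of $b$ in $z$ (or by differentiating \eqref{eq:b_equation} at $z=0$, which gives $\mathcal L\,b'(0)=0$); reality of $b$ on $(-\delta_*,\delta_*)$ follows because $\overline{b(\bar z)}$ solves the same equation with the same value at $0$, hence $\overline{b(\bar z)}=b(z)$ by uniqueness, so $\Im(\tau b(\tau))=0$ for $\tau\in(-\delta_*,\delta_*)$. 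Finally, to obtain $\Im(zb(z))>0$ on $\Hb\cap D_{\delta_*}(0)$ I would put $\widehat M_1\defeq za(z)-u/z$ and $\widehat M_2\defeq zb(z)$, with $a$ given by \eqref{eq:z2u_v_Sw} (analytic at $0$ since $b'(0)=0$), observe as in the computation around \eqref{eq:wh_M_1_M_2} that $(\widehat M_1,\widehat M_2)$ solves \eqref{eq:combined_QVE} on $D_{\delta_*}(0)\setminus\{0\}$, and check that along $z=\ii\eta$ with $\eta\downarrow 0$ one has $\Im\widehat M_1(\ii\eta)=u/\eta+O(\eta)>0$ and $\Im\widehat M_2(\ii\eta)=\eta\, b(0)+O(\eta^2)>0$; then by uniqueness of the solution of \eqref{eq:combined_QVE} with positive imaginary part, $(\widehat M_1,\widehat M_2)=(M_1,M_2)$ on that segment and, by analytic continuation, on all of $\Hb\cap D_{\delta_*}(0)$, so $\Im(zb(z))=\Im M_2(z)>0$ there. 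This last step is not circular, as $a$ and $b$ have already been constructed.
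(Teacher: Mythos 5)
Your proposal is correct in substance and rests on the same core mechanism as the paper: perturb around the explicit $z=0$ solution $b_0=1/(S^tu)$ and use that the linearisation is invertible with model-parameter bounds because $u_i<1$ strictly. Indeed your $F_\star^tF_\star=\diag(b_0)S^t\diag(u^2)S\diag(b_0)$ is exactly the operator $L(b_0)$ of the paper; the paper bounds $\normtwo{L(b_0)}\leq 1-\kappa$ via $L(b_0)1\leq(1-\kappa)1$ and Lemma 4.6 in \cite{AjankiQVE}, while you obtain the same conclusion by a self-contained Cauchy--Schwarz computation giving $\normtwo{F_\star F_\star^t}\leq 1-\min_i u_i$, both then upgrading to $\norminf{\cdot}$ through \eqref{eq:norm_hat_F_inverse}. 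The implementation differs afterwards: the paper constructs $b$ by integrating the holomorphic ODE $b'=2zb(1-L(b))^{-1}b$ with $b(0)=b_0$ (which also yields $b'(0)=0$ directly), whereas you use a quantitative analytic implicit function theorem; since your inputs ($\norminf{\mathcal L^{-1}}\lesssim 1$, analyticity and boundedness of $\mathcal H$ on an $O(1)$ ball on which $\abs{1+S\beta}\geq 1/2$) depend only on the model parameters, this gives $\delta_*\gtrsim 1$ just as well, with uniqueness among all holomorphic solutions following from local IFT uniqueness plus the identity theorem (the paper's ODE argument needs the same kind of supplement). Your Schwarz-reflection argument for reality of $b$ on $(-\delta_*,\delta_*)$ is cleaner than the paper's route via the imaginary part of \eqref{eq:b_equation} and invertibility of $1-\wt L(\tau)$. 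For $\Im(zb(z))>0$ on $\Hb\cap D_{\delta_*}(0)$ the paper argues directly from $b=b_0+zb_1$ with $b_0\sim 1$, while you assemble $(\wh M_1,\wh M_2)$, verify positivity of the imaginary part along $\ii(0,\eta_0)$, identify it there with $\Mf$ by the fixed-$z$ uniqueness of Theorem 2.1 in \cite{AjankiQVE}, and extend by the identity theorem; this is the same uniqueness statement the paper itself invokes later in the proof of Proposition \ref{pro:QVE_close_to_zero}, so, as you note, there is no circularity.

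Two minor caveats. First, the literal constant in $\norm{(1+Sb(z))^{-1}}_\infty\leq 1/2$ does not follow from continuity alone unless $\norm{u}_\infty\leq 1/2$: continuity only gives a bound like $\norm{u}_\infty+o(1)$, or crudely $\leq 2$ from $\abs{1+Sb}\geq 1/2$; the paper's own derivation has the same imprecision and the precise constant is immaterial in all applications. Second, your parenthetical alternative of iterating $\beta\mapsto(S^t(1+S\beta)^{-1}-z^2)^{-1}$ is justified only at the level of spectral radius rather than of a norm contraction, so the IFT/Newton argument should be regarded as the actual proof.
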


Given $u$ and $b(z)$, the formula \eqref{eq:z2u_v_Sw} defines $a(z)$ for $z \neq 0$. To extend its definition to $z = 0$, 
we observe that the right-hand side of \eqref{eq:z2u_v_Sw} is a holomorphic function for all $z \in D_{\delta_*}(0)$ by Lemma \ref{lem:solution_b}. Since $b(0)=1/(S^tu)$ and the derivative of the 
right-hand side of \eqref{eq:z2u_v_Sw} vanishes as $b'(0)=0$, the first two coefficients of the Taylor series of the right-hand side on $D_{\delta_*}(0)$ are zero by \eqref{eq:def_u}. 
Thus, \eqref{eq:z2u_v_Sw} defines a holomorphic function $a\colon D_{\delta_*}(0) \to \C^p$. 

Furthermore, $\Im \wh M_2(z) >0$ for $\Im z >0$ by Lemma \ref{lem:solution_b}. Taking the imaginary part of \eqref{eq:wh_M_1_M_2} yields 
\begin{equation} \label{eq:QVE_m1_imaginary_part}
 \frac{\Im \wh M_1(z)}{\abs{\wh M_1(z)}^2} = \Im z + S \Im \wh M_2(z) ,
\end{equation}
which implies $\Im \wh M_1(z)>0$ for $\Im z >0$ as $\Im \wh M_2(z) >0$ for $z \in \Hb\cap D_{\delta_*}(0)$. 
Since the solution $\Mf$ of \eqref{eq:combined_QVE} 
with $\Im \Mf(z) >0$ for $\Im z>0$ is unique by Theorem 2.1 in \cite{AjankiQVE}, we have $\Mf(z) = \wh \Mf(z) \defeq (\wh M_1(z), \wh M_2(z))^t$ for all $z \in \Hb\cap D_{\delta_*}(0)$. 
The statements in (i), (ii) and (iii) follow from Lemma \ref{lem:solution_u}, Lemma \ref{lem:solution_b} and \eqref{eq:z2u_v_Sw}.

For the proof of (iv), we note that $\lim_{\eta \downarrow 0} \Im M_2(\tau + \i \eta) = 0$ for all $\tau \in (-\delta_*, \delta_*)$ locally uniformly by Lemma \ref{lem:solution_b}.
Because of \eqref{eq:QVE_m1_imaginary_part} and the locally uniform convergence of $M_1(\tau + \i \eta)$ to $\tau a(\tau)-u/\tau$ for $\eta \downarrow 0$ and $\tau \in (-\delta_*,\delta_*)\backslash \{ 0\}$, 
we have $\lim_{\eta \downarrow 0} \Im M_1(\tau + \i \eta)=0$ locally uniformly for all $\tau \in (-\delta_*,\delta_*) \backslash \{ 0\}$ as well, which concludes the proof of (iv). 
\end{proof}

We conclude this subsection with the proofs of Lemma \ref{lem:solution_u} and Lemma \ref{lem:solution_b}.

\begin{proof}[Proof of Lemma \ref{lem:solution_u}]
We will show that the functional 
\[ J \colon (0,1]^p \to \R, \quad u \mapsto \frac{1}{p} \sum_{j=1}^n\log\bigg(\sum_{i=1}^p s_{ij} u_i \bigg) + \frac{1}{p}\sum_{i=1}^p \left( u_i - \log u_i \right) \]
has a unique minimizer $u$ with $u_i >0$ for all $i = 1, \ldots, p$ which solves \eqref{eq:def_u}. 
Note that 
\begin{equation}
\label{eq:J_1_ldots_1}
J(1, \ldots, 1) = \frac{1}{p}\sum_{j=1}^n \log\bigg(\sum_{i=1}^p s_{ij}\bigg) + \frac{p}{p} \leq 1. 
\end{equation}

We start with an auxiliary bound on the components of $u$. Using \zerorect and Jensen's inequality, we get 
\begin{align}
J(u) & \geq \frac{1}{p} \sum_{k=1}^n \log\left( \sum_{i=1}^p \frac{\varphi}{n+p} u_i \right) + \frac{1}{p} \sum_{i=1}^p \left( u_i - \log u_i \right) \nonumber \\
 & \geq \frac{1}{p} \left( \sum_{i=1}^p \frac{n}{p}\log\left( \frac{\varphi}{2}u_i \right) -\sum_{i=1}^p \log u_i   \right) \nonumber \\
& \geq - \frac{1}{p} \frac{d_*}{1+d_*} \sum_{i=1}^p \log u_i + \frac{n}{p} \log \left( \frac{\varphi}{2} \right),
\end{align}
where we used \eqref{eq:p_bigger_than_n} in the last step.
For any $u \in (0,1]^p$ with $J(u) \leq J(1, \ldots, 1)$, using \eqref{eq:J_1_ldots_1}, we obtain 
\[ 1 \geq J(1, \ldots, 1) \geq J(u) \geq - \frac{d_*}{p (1 +d_*)} \sum_{i=1}^p \log u_i + \frac{n}{p} \log \left( \frac{\varphi}{2}\right) \geq -\frac{d_*}{p(1+d_*)} \log u_i +\frac{1}{r_1} \log\left(\frac\varphi 2\right), \] 
for any $i=1, \ldots, p$, i.e., $u_i \geq \exp(-p(1+d_*)(1-r_1^{-1}\log( \varphi/2))/d_*)>0$. 

Therefore, taking a minimizing sequence, using a compactness argument and the continuity of $J$, 
we obtain the existence of $u^\star \in (0,1]^p$ such that $J(u^\star) = \inf_{u \in (0,1]^p} J(u)$ and 
\begin{equation}
\label{eq:aux_lower_bound_minimizer}
u^\star_i \geq \exp\left(-p\frac{1+d_*}{d_*}\left(1-\frac{1}{r_1}\log\left( \frac{\varphi}{2} \right)\right)\right), \qquad i =1, \ldots, p. 
\end{equation} 
Next, we show that $u^\star_i<1$ for all $i=1, \ldots, p$. Assume that $u^\star_i=1$ for some $i \in \{1, \ldots, p\}$. Consider a vector $\wh u$ that agrees with $u^\star$ except that $u^\star_i$ is replaced 
by $\lambda \in (0,1)$. An elementary calculation then shows that $J(\wh u) \geq J(u^\star)$ implies $s_{ik} = 0$ for all $k=1, \ldots, n$ which contradicts \eqref{eq:sum_s_ij_sim_1}.  

Therefore, evaluating the derivative $J(u^\star + \tau h)$ for $h \in \R^p$ at $\tau = 0$, which vanishes since $u^\star\in (0,1)^p$ is a minimizer, we see that $u^\star$ satisfies \eqref{eq:def_u}. 

To see the uniqueness of the solution of \eqref{eq:def_u}, we suppose that $u^\star, v^\star \in (0,1]^p$ satisfy \eqref{eq:def_u}, i.e., $u^\star = f(u^\star)$ and $v^\star = f(v^\star)$ 
where $f\colon (0,1]^p \to (0,1]^p, \; f(u)=(1+ S((S^t u)^{-1}))^{-1}$. 
On $(0,1]^p$ we define the distance function 
\begin{equation}
D (u,v ) \defeq \sup_{i=1, \ldots, p} d(u_i, v_i) 
\end{equation}
where $d(a,b) = (a-b)^2/(ab)$ for $a, b >0$. 
This function $d$ defined on $(0,\infty)^2$ is the analogue of $D$ defined in (A.6) of \cite{AjankiCPAM} on $\Hb^2$. Therefore, we can apply Lemma A.2 in \cite{AjankiCPAM} with the natural 
substitutions which yields 
\[ 
D(u^\star, v^\star)=D(f(u^\star), f(v^\star)) = \left(1 + \frac{1}{S(S^t u^\star)^{-1}}\right)^{-1}\left(1 + \frac{1}{S(S^t v^\star)^{-1}}\right)^{-1} D(u^\star, v^\star) \leq c D(u^\star, v^\star).
\]
for some number $c$.
Here we used 1. and 2. of Lemma A.2 in \cite{AjankiCPAM} in the second step and 3. of Lemma A.2 in \cite{AjankiCPAM} in the last step. Since we can choose $c<1$ by \eqref{eq:aux_lower_bound_minimizer},
 we conclude $u^\star=v^\star$. This argument applies particularly to minimizers of $J$ on $(0,1]^p$. 

In the following, we will denote the unique minimizer of $J$ by $u$.
To compute the sum of the components of $u$ we multiply \eqref{eq:def_u} by $u$ and sum over $i=1, \ldots, p$ and obtain 
\[ p = \sum_{i=1}^p u_i + \sum_{i=1}^p u_i \left(S \frac{1}{S^tu}\right)_i = \sum_{i=1}^p u_i + \sum_{j=1}^n (S^t u)_j \frac{1}{(S^t u)_j}  = \sum_{i=1}^p u_i +n, \]
i.e., $\sum_{i=1}^p u_i = p-n$.

Finally, we show that the components of the minimizer $u$ are bounded from below by a positive constant which only depends on the model parameters. 
For $k \in \{1, \ldots, n\}$, we obtain  
\begin{equation}
 (S^t u)_k \geq \frac{\varphi}{n+p} \sum_{i=1}^{p} u_i \geq \frac{\varphi}{2} \langle u \rangle = \frac{\varphi}{2} \left(1- \frac{n}{p}\right) \geq \frac{\varphi d_*}{2(1+d_*)},
\label{eq:estimate_Stu_from_below}
\end{equation}
where we used \zerorect in the first step, $n \leq p$ in the second step, $\sum_{i=1}^p u_i = p-n$ in the third step and \eqref{eq:p_bigger_than_n} in the last step. 
This implies the third bound in \eqref{eq:bound_u_St_u}. 

Therefore, we obtain for all $i=1, \ldots, p$ from \eqref{eq:def_u} 
\[ \frac{1}{u_i} = 1 + \sum_{k=1}^n s_{ik} \frac{1}{(S^tu)_k} \leq 1 + \frac{2(1+d_*)}{\varphi d_*}, \] 
where we used (A) with $s_*=1$ in the last step. This shows that $u_i$ is bounded from below by a positive constant which only depends on the model parameters, i.e., the second bound in \eqref{eq:bound_u_St_u}.
\end{proof}

\begin{proof}[Proof of Lemma \ref{lem:solution_b}]
Instead of solving \eqref{eq:b_equation} directly, we solve a differential equation with the correctly chosen initial condition in order to obtain $b$. 
Note that $b_0 \defeq 1/(S^tu)$ fulfills \eqref{eq:b_equation} for $z=0$ and $b_0 \sim 1$ by \eqref{eq:bound_u_St_u} and \eqref{eq:sum_s_ij_sim_1}. 

For any $b \in \C^n$ satisfying $(Sb)_i \neq -1$ for $i=1, \ldots, p$,  we define the linear operator 
\[L(b) \colon \C^n \to \C^n, \quad v \mapsto L(b)v \defeq b S^t \frac{1}{(1+Sb)^2} S(bv),\] 
where $bv$ is understood as componentwise multiplication. 
Using the definition of $L(b)$, $b_0=1/(S^tu)$ and \eqref{eq:def_u}, we get 
\begin{equation} \label{eq:L_b_0_1}
L(b_0)1 = \frac{1}{S^tu}S^t u^2 S \frac{1}{S^tu} = \frac{1}{S^tu}\left(S^tu - S^tu^2\right) = 1- \frac{S^tu ^2}{S^tu} \leq 1 - \kappa 
\end{equation}
for some $\kappa \sim 1$. Here we used \eqref{eq:sum_s_ij_sim_1}, $u^2 \gtrsim 1$ and \eqref{eq:bound_u_St_u} in the last step. 
As \[L(b_0)=\frac{1}{S^tu} S^t u^2 S\left(\frac{1}{S^tu} \:\cdot\right)\] is symmetric and positivity-preserving, 
Lemma 4.6 in \cite{AjankiQVE} implies $\norm{L(b_0)}_{2\to 2} \leq 1-\kappa$ because of \eqref{eq:L_b_0_1}. 
Therefore, $(1-L(b_0))$ is invertible and $\norm{(1-L(b_0))^{-1}}_{2\to 2} \leq \kappa^{-1}$. 
Moreover, $\norm{(1-L(b_0))^{-1}}_{\infty \to \infty} \leq 1 + \norm{L(b_0)}_{2\to \infty} \kappa^{-1}$ by \eqref{eq:norm_hat_F_inverse} with $R= L(b_0)$ and $D =1$. 
The estimate \eqref{eq:St_norm_two_inf_less_1} and the submultiplicativity of the operator norm $\normtwo{\cdot}$ yield $\normtwoinf{L(b_0)} \lesssim 1$. Thus, we obtain \[ \norminf{(1-L(b_0))^{-1}} \lesssim 1.\]

We introduce the notation $U_{\delta'} \defeq \{b \in \C^n; \norm{b-b_0}_\infty < \delta' \}$. If we choose $\delta' \leq (2 \norm{S}_{\infty \to \infty})^{-1}$ then 
\[ \abs{(1+Sb)_i} = \abs{u_i^{-1} + (S(b-b_0))_i} \geq \abs{u_i^{-1}} - \norm{S}_{\infty \to \infty} \norm{b-b_0}_\infty \geq 1/2 \]
for all $i=1,\ldots, p$, where we used the definition of $b_0$, \eqref{eq:def_u} and $u_i \leq 1$. Therefore, $\norm{(1+Sb)^{-1}}_\infty \leq 1/2$ for all $b \in U_{\delta'}$, i.e., 
$U_{\delta'} \to \C^{n \times n}, \; b \mapsto L(b)$ will be a holomorphic map. In particular, 
\begin{equation} \label{eq:L_lipschitz}
\norminf{L(b)-L(b_0)} \lesssim \norm{b-b_0}_\infty. 
\end{equation}

If $D \defeq L(b)-L(b_0)$ and $\norm{(1-L(b_0))^{-1}D}_{\infty\to\infty} \leq 1/2$ then $(1-L(b))$ will be invertible and
\begin{equation*}
(1-L(b))^{-1} = \left(1-(1-L(b_0))^{-1}D\right)^{-1}(1-L(b_0))^{-1},
\end{equation*}
as well as $\norm{(1-L(b))^{-1}}_{\infty\to\infty} \leq 2 \norm{(1-L(b_0))^{-1}}_{\infty\to\infty}$. 
Therefore, \eqref{eq:L_lipschitz} implies the existence of $\delta'\sim 1$ such that $(1-L(b))$ is invertible and $\norminf{(1-L(b))^{-1}} \lesssim 1$ for all $b \in U_{\delta'}$.

Hence, the right-hand side of the differential equation 
\begin{equation}
b' \defeq \frac{\partial}{\partial z} b = 2z b (1-L(b))^{-1} b  \defqe f(z, b) 
\label{eq:final_ode}
\end{equation}
is holomorphic on $D_{\delta'}(0) \times U_{\delta'}$. 
As $\delta' \sim 1$ and $\sup\{ \norm{f(z,w)}_\infty; z \in D_{\delta'}(0), b \in U_{\delta'} \} \lesssim 1$,
the standard theory of holomorphic differential equations yields the existence of $\delta_* \gtrsim 1$ and a holomorphic function $b\colon D_{\delta_*}(0) \to \C^n$ which is the unique solution of \eqref{eq:final_ode} 
on $D_{\delta_*}(0)$ satisfying $b(0)=b_0$. 

The solution of the differential equation \eqref{eq:final_ode} is a solution of \eqref{eq:b_equation} since 
dividing by $b$, multiplying by $(1-L(b))$ and dividing by $b$ in \eqref{eq:final_ode} yields 
\begin{equation*}
 \frac{b'}{b^2} = 2 z  + \frac{1}{b}L(b) \frac{b'}{b}. 
\label{eq:ode_b}
\end{equation*}
This is the derivative of \eqref{eq:b_equation}. Since $b(0)=b_0$ fulfils \eqref{eq:b_equation} for $z=0$ the unique solution of \eqref{eq:final_ode} with this initial condition 
is a solution of \eqref{eq:b_equation} for $z \in D_{\delta_*}(0)$. There is only one holomorphic solution of \eqref{eq:b_equation} due to the uniqueness of the solution of \eqref{eq:final_ode}. 
This proves the existence and uniqueness of $b(z)$ in Lemma \ref{lem:solution_b}.

Since $b$ is a holomorphic function on $D_{\delta_*}(0)$ such that $\abs{b(z)} \lesssim 1$ on $D_{\delta_*}(0)$ and $\delta_* \sim 1$ there is a holomorphic function $b_1 \colon D_{\delta_*}(0) \to \C^n$ such that  
\[ b(z) = b_0 + b_1(z)z \]
and $\abs{b_1(z)} \lesssim 1$. 
Thus, we can assume that $\delta_*\gtrsim 1$ is small enough such that $\Im zb(z) \geq (b_0 - \abs{z}\abs{b_1(z)})\Im z >0$ for all $z \in D_{\delta_*}(0)\cap \Hb$.

Taking the imaginary part of \eqref{eq:b_equation} for $\tau \in \R$, we get
\begin{equation*}
\frac{\Im b(\tau)}{\abs{b(\tau)}^2} = S^t \frac{1}{\abs{1+Sb(\tau)}^2} S \Im b(\tau) 
\end{equation*}
or equivalently, introducing \[\wt L(z)\colon \C^n \to \C^n, \quad v\mapsto \wt L(z)v \defeq \abs{b(z)} S^t \abs{1+Sb(z)}^{-2} S(\abs{b(z)}v)\] for $z \in D_{\delta_*}(0)$, we have
\begin{equation}
\left(1-\wt L(\tau)\right) \frac{\Im b(\tau)}{\abs{b(\tau)}} = 0.
\label{eq:im_b_zero}
\end{equation}
As $\norm{(1+Sb(z))^{-1}}_\infty \leq 1/2$ for all $z \in D_{\delta_*}(0)$, the linear operator $\wt L(z)$ is well-defined for all $z \in D_{\delta_*}(0)$. 
Because $\wt L(0)=L(b_0)$ and $\norminf{\wt L(b)-\wt L(b_0)}\lesssim \norm{b-b_0}_\infty$ we can assume that $\delta_*\gtrsim 1$ is small enough such that 
$(1-\wt L(z))$ is invertible for all $z \in D_{\delta_*}(0)$. 
Thus, \eqref{eq:im_b_zero} implies that $\Im b(\tau) = 0$ for all $\tau \in (-\delta_*, \delta_*)$ and consequently, $\Im \tau b(\tau) = 0$ for all $\tau \in (-\delta_*, \delta_*)$.
\end{proof}

\section{Local laws}
\subsection{Local law for $\Xf$}
\label{subsec:local_law_H}

In this section, we will follow the approach used in \cite{Ajankirandommatrix} to prove a local law for the Wigner-type matrix $\Xf$. 
We will not give all details but refer the reader to \cite{Ajankirandommatrix}.
Therefore, we consider \eqref{eq:self_const_split} as a perturbed QVE of the form \eqref{eq:perturbed_combined_QVE} with $\gf\defeq (g_1, g_2)^t \colon \Hb \to \C^{p+n}$ and $\df \defeq (d_1, d_2)^t \colon \Hb \to \C^{p+n}$,
in particular $\gf(z) =(G_{xx}(z))_{x=1, \ldots, n+p}$ where $G_{xx}$ are the diagonal entries of the resolvent of $H$ defined in \eqref{eq:def_G}. 
We recall that $\rho$ is the probability measure on $\R$ whose Stieltjes transform is $\avg{\Mf}$, cf. \eqref{eq:def_rho}, where $\Mf$ is the solution of \eqref{eq:combined_QVE} satisfying $\Im \Mf(z) >0$ for $z \in \Hb$.

\begin{defi}[Stochastic domination]
Let $P_0 \colon (0,\infty)^2 \to \N$ be a given function which depends only on the model parameters and the tolerance exponent $\gamma$. If $\varphi = (\varphi^{(p)})_{p}$ and $\psi = (\psi^{(p)})_{p}$ 
are two sequences of nonnegative random variables then we will say that $\varphi$ is \textbf{stochastically dominated} by $\psi$, $\varphi \prec \psi$, if for all $\eps >0$ and $D>0$ we have 
\[ \P \left( \varphi^{(p)} \geq p^\eps \psi^{(p)} \right) \leq p^{-D} \]
for all $ p \geq P_0(\eps,D)$.
\end{defi}

In the following, we will use the convention that $\tau \defeq \Re z$ and $\eta \defeq \Im z$ for $ z \in \C$.

\begin{thm}[Local law for $\Xf$ away from the edges] \label{thm:local_law_H}
Fix any $\delta, \eps_*>0$ and $\gamma \in (0,1)$ independent of $p$. 
If the random matrix $X$ satisfies (A) -- (D) then the resolvent entries $G_{xy}(z)$ of $\Xf$ defined in \eqref{eq:def_G} and \eqref{eq:def_H_S}, respectively, 
fulfill 
\begin{subequations} \label{eq:local_law_H}
\begin{align}
\max_{x,y=1, \ldots, n+p} \abs{G_{xy}(z) - \Mf_x(z) \delta_{xy}}  & \prec \frac{1}{\sqrt{p\eta}}, & & \text{ if } \Im z \geq p^{-1 + \gamma} \text{ and } \langle \Im \Mf(z) \rangle \geq \eps_*, \label{eq:local_law_H_bulk} \\
\max_{x,y=1, \ldots, n+p} \abs{G_{xy}(z) - \Mf_x(z) \delta_{xy}}  & \prec \frac{1}{\sqrt{p}}, & &  \text{ if } \dist(z, \supp \rho) \geq \eps_*, \label{eq:local_law_H_away_support}
\end{align}
\end{subequations}
uniformly for $z \in \Hb$ satisfying $\delta \leq \abs{z} \leq 10$. 
For any sequence of deterministic vectors $w \in \C^{n+p}$ satisfying $\norm{w}_\infty \leq 1$, we have 
\begin{subequations} \label{eq:averaged_local_law_H}
\begin{align}
\abs{\scalar{w}{\gf(z)-\Mf(z)}} & \prec \frac{1}{p\eta}, & & \text{ if } \Im z \geq p^{-1 + \gamma} \text{ and } \langle \Im \Mf(z) \rangle \geq \eps_*, \label{eq:averaged_local_law_H_bulk} \\
\abs{\scalar{w}{\gf(z)-\Mf(z)}} & \prec \frac{1}{p}, & &  \text{ if } \dist(z, \supp \rho) \geq \eps_*, \label{eq:averaged_local_law_H_away_support}
\end{align}
\end{subequations}
uniformly for $z \in \Hb$ satisfying $\delta \leq \abs{z} \leq 10$. Here, the threshold function $P_0$ in the definition of the relation $\prec$ depends on the model parameters as well as $\delta$, $\eps_*$ and $\gamma$. 
\end{thm}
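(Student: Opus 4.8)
The plan is to establish Theorem~\ref{thm:local_law_H} following the now-standard self-consistent analysis for Wigner-type matrices as developed in \cite{Ajankirandommatrix}, with the stability input replaced by Lemma~\ref{lem:bulk_stability_qve}. The starting point is the observation made before \eqref{eq:perturbed_combined_QVE} that the diagonal resolvent entries $\gf=(G_{xx})_x$ satisfy the perturbed QVE $-1/\gf = z + \Sf\gf + \df$ with the error vector $\df = (d_1,d_2)^t$ given explicitly in \eqref{eq:self_const_split}. Lemma~\ref{lem:bulk_stability_qve} then says that, on the event where $\norm{\gf-\Mf}_\infty \leq \lambda_*(z)$, the deviation $\norm{\gf-\Mf}_\infty$ is controlled by $\vartheta(z)^{-\kappa_2}\norm{\df(z)}_\infty$, and the averaged deviation $\abs{\scalar{w}{\gf-\Mf}}$ by $\vartheta^{-\kappa_3}(\norm{w}_\infty\norm{\df}_\infty^2 + \abs{\scalar{T(z)w}{\df}})$; since $\vartheta(z)\gtrsim \eps_*$ in the regime of the theorem (either $\langle\Im\Mf\rangle\geq\eps_*$ or $\dist(z,\supp\rho)\geq\eps_*$), these prefactors are $\ord(1)$. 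Thus everything reduces to proving the probabilistic bounds $\norm{\df(z)}_\infty \prec \sqrt{\Im\langle\Mf\rangle/(p\eta)} + 1/(p\eta)$ (and the corresponding away-from-support version) together with $\abs{\scalar{T(z)w}{\df(z)}}\prec 1/(p\eta)$, plus the self-improving (bootstrap) argument that removes the indicator $\mathbf 1(\norm{\gf-\Mf}_\infty\leq\lambda_*)$.

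The key steps, in order, are as follows. First, a priori control: for $\eta\sim 1$ the resolvent is trivially bounded, $\norm{\gf-\Mf}_\infty \lesssim 1/\eta \lesssim 1$, and at $z=10\i$ one has $\lambda_*(10\i)\geq 1/5$, so the indicator is satisfied there; a standard continuity/grid argument in $z$ (using that $G$ and $\Mf$ are Lipschitz on the relevant domain with polynomial-in-$p$ constants, the latter via Lemma~\ref{lem:continuity_mf}) lets us propagate the smallness of $\norm{\gf-\Mf}_\infty$ down to $\eta \geq p^{-1+\gamma}$ as long as the bound on $\df$ holds. Second, the large deviation estimates: each $d_{1,r}$, $d_{2,m}$ is, after the Schur complement, a quadratic form $\sum_{k\neq l} x_{rk}\overline{x_{rl}}G^{(r)}_{kl}$ plus a diagonal fluctuation $\sum_k(\abs{x_{rk}}^2-s_{rk})G^{(r)}_{k+p,k+p}$ plus a term controlled by the off-diagonal resolvent entries via the Ward identity $\sum_l \abs{G^{(r)}_{k+p,l+p}}^2 = \Im G^{(r)}_{k+p,k+p}/\eta$. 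Applying the standard large deviation bounds for quadratic forms (with the moment assumption (C) and flatness (A)) gives $\abs{d_{1,r}} \prec \sqrt{\sum_k s_{rk}^2 \abs{G^{(r)}_{kk}}^2} + \sqrt{(\Lambda/(p\eta))\Im\langle G\rangle}$ where $\Lambda$ is the current bound on $\max\abs{G_{xy}-\Mf_x\delta_{xy}}$; together with the fluctuation averaging for the $\scalar{T w}{\df}$ term (again following \cite{Ajankirandommatrix}) this closes the bootstrap. Third, feeding $\norm{\df}_\infty \prec \Lambda/\sqrt{p\eta} + \sqrt{\Im\langle\Mf\rangle/(p\eta)}$ back into Lemma~\ref{lem:bulk_stability_qve} yields $\Lambda \prec \Lambda/\sqrt{p\eta} + \sqrt{\Im\langle\Mf\rangle/(p\eta)} + 1/(p\eta)$, and since $\sqrt{p\eta}\to\infty$ the self-correcting term is absorbed, giving $\Lambda \prec 1/\sqrt{p\eta}$ in the bulk and the averaged bound $\abs{\scalar{w}{\gf-\Mf}} \prec 1/(p\eta)$. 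The away-from-support estimates \eqref{eq:local_law_H_away_support} and \eqref{eq:averaged_local_law_H_away_support} follow the same scheme, using that $\dist(z,\supp\rho)\geq\eps_*$ makes $\vartheta$, $\norm{\Mf}_\infty$ and all resolvent quantities bounded, so no $\eta$-dependent loss occurs.

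The main obstacle, and the point where the present setting genuinely differs from \cite{Ajankirandommatrix}, is that the stability estimate we are allowed to use (Lemma~\ref{lem:bulk_stability_qve}) holds only for $\delta\leq\abs{z}\leq 10$ with a constant deteriorating like $\delta^{-r}$, because the matrix $\Sf$ in \eqref{eq:def_H_S} is not uniformly primitive and the spectral gap of $\Ff(z)$ degenerates as $z\to 0$. Consequently the entire bootstrap must be run \emph{away from zero}, i.e.\ on $\mathbb D_\delta$ rather than on a full neighbourhood of the self-consistent spectrum, and the continuity argument that descends in $\eta$ must be kept strictly inside $\{\delta\leq\abs{z}\leq 10\}$; extending to $\abs{z}<\delta$ is exactly what the separate analyses in the square and properly-rectangular subsections (Proposition~\ref{pro:hard_edge_stability}, Proposition~\ref{pro:QVE_close_to_zero}) are reserved for and is not attempted here. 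A secondary technical point is that one must verify the fluctuation-averaging mechanism still applies with the matrix $T(z)$ produced by Lemma~\ref{lem:bulk_stability_qve} (it satisfies $\norm{T(z)}_{\infty\to\infty}\lesssim 1$, which is all that is needed), so that the averaged error genuinely gains the extra factor and one obtains the optimal $1/(p\eta)$ rather than $1/\sqrt{p\eta}$ in \eqref{eq:averaged_local_law_H}.
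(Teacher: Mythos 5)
Your treatment of the bulk estimates \eqref{eq:local_law_H_bulk} and \eqref{eq:averaged_local_law_H_bulk} follows essentially the paper's own route: perturbed QVE for $\gf$, stability from Lemma \ref{lem:bulk_stability_qve} with $\vartheta(z)\gtrsim \eps_*$, large-deviation bounds on $\df$ plus the Ward identity, fluctuation averaging applied to $T(z)w$ (where $\norminf{T}\lesssim 1$ indeed suffices), the absorption of the self-correcting term, and a continuity bootstrap started at $z=10\i$ and confined to $\delta\leq\abs{z}\leq 10$. That part is sound.

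There is, however, a genuine gap in your argument for \eqref{eq:local_law_H_away_support} and \eqref{eq:averaged_local_law_H_away_support}. These bounds are asserted uniformly for \emph{all} $z$ with $\dist(z,\supp\rho)\geq\eps_*$, with no lower bound on $\eta=\Im z$, and your justification (``$\vartheta$, $\norm{\Mf}_\infty$ and all resolvent quantities bounded, so no $\eta$-dependent loss occurs'') does not deliver them. First, mere boundedness of $\Im\avg{\gf}$ only turns the quadratic large-deviation term $\sqrt{\Im\avg{\gf}/(p\eta)}$ into $(p\eta)^{-1/2}$, and the absorption step still leaves $1/(p\eta)$; both are vacuous when $\eta\ll p^{-1}$, so your scheme proves nothing better than the bulk-type rates there. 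Second, boundedness of the resolvent at such $z$ is itself a random statement equivalent to the absence of eigenvalues of $\Xf$ near $\Re z$; it is not available a priori but is exactly part (iii) of Corollary \ref{cor:rigidity_H}, which in turn is deduced from the local law at $\eta\gtrsim p^{-1+\gamma}$ together with a global law. The paper closes this loop explicitly: it first proves the estimates on the domain with $\Im z\geq p^{-1+\gamma}$, obtains \eqref{eq:no_eigenvalues_away_from_supp_rho}, and then uses the spectral decomposition $\Im G_{xx}(z)=\eta\sum_a\abs{u_a(x)}^2\big((\mu_a-\tau)^2+\eta^2\big)^{-1}$, together with the a.w.o.p.\ event that every $\mu_a$ lies within $\eps_*/2$ of $\supp\rho$, to conclude $\Im G_{xx}(z)\prec\eta$ and hence $\avg{\Im\gf}\prec\eta$; only with this improved input do the auxiliary bounds become $\norm{\df}_\infty\prec p^{-1/2}$ and $\Lambda_o\prec p^{-1/2}$ uniformly down to arbitrarily small $\eta$, yielding \eqref{eq:local_law_H_away_support} and \eqref{eq:averaged_local_law_H_away_support}. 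You need to add this second pass (or else your proof only covers the away-from-support regime with $\eta\geq p^{-1+\gamma}$, which is weaker than the stated theorem).
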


\begin{rem} 
The proof of Theorem \ref{thm:local_law_H} actually shows an explicit dependence of the estimates \eqref{eq:local_law_H} and \eqref{eq:averaged_local_law_H} on $\eps_*$. More precisely, 
if the right-hand sides of \eqref{eq:local_law_H} and \eqref{eq:averaged_local_law_H} are multiplied by a universal inverse power of $\eps_*$ and the right-hand side of the 
condition $\Im z \geq p^{-1+\gamma}$ is multiplied by the same inverse power of $\eps_*$ then Theorem \ref{thm:local_law_H} holds true where the 
relation $\prec$ does not depend on $\eps_*$ any more. 
\end{rem}

Let $\mu_1 \leq \ldots \leq \mu_{n+p}$ be the eigenvalues of $H$. We define
\begin{equation}
I (\tau) \defeq \left\lceil (n+p)\int_{-\infty}^{\tau} \rho(\di \omega) \right\rceil, \quad \tau \in \R.
\end{equation}
Thus, $I(\tau)$ denotes the index of an eigenvalue expected to be close to the spectral parameter $\tau \in \R$.

\begin{coro}[Bulk rigidity, Absence of eigenvalues outside of $\supp \rho$] \label{cor:rigidity_H}
Let $\delta, \eps_*>0$.
\begin{enumerate}[(i)]
\item Uniformly for all $\tau \in [-10, -\delta] \cup [\delta,10]$ satisfying $\rho(\tau) \geq \eps_*$ or $\dist(\tau, \supp \rho) \geq \eps_*$, we have
\begin{equation}
\absa{\# \{ j; \mu_j \leq \tau \} - (n+p) \int_{-\infty}^\tau \rho(\di \omega)} \prec 1.
\end{equation}
\item Uniformly for all $\tau \in [-10, -\delta] \cup [\delta,10]$ satisfying $\rho(\tau) \geq \eps_*$, we have
\begin{equation}
\abs{\mu_{I(\tau)} - \tau} \prec \frac{1}{n+p}.
\end{equation}
\item Asymptotically with overwhelming probability, we have 
\begin{equation} \label{eq:no_eigenvalues_away_from_supp_rho}
 \#\Big( \sigma(H) \cap \{ \tau \in [-10,-\delta] \cup [\delta, 10]; \dist(\tau, \supp \rho ) \geq \eps_*\} \Big)  = 0.
\end{equation}
\end{enumerate}
\end{coro}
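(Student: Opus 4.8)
The plan is to deduce all three assertions from the averaged local law of Theorem~\ref{thm:local_law_H}, following the standard route from resolvent control to rigidity of the counting function; the bookkeeping is the same as in \cite{Ajankirandommatrix}, so I would only outline it. Throughout, write $n_H(\tau) \defeq (n+p)^{-1}\#\{j : \mu_j \leq \tau\}$ for the normalized empirical counting function, so that its deterministic counterpart is $n_\rho(\tau) \defeq \rho((-\infty,\tau])$, and recall $\avg{\gf(z)} = (n+p)^{-1}\tr G(z)$ and $\avg{\Mf(z)} = \int (\omega-z)^{-1}\rho(\di\omega)$.

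For part (iii) I would argue directly, with no functional calculus needed. Cover the compact set $\{\tau \in [-10,-\delta]\cup[\delta,10] : \dist(\tau,\supp\rho)\geq\eps_*\}$ by $\lesssim p^2$ points $\tau_\ell$ with mutual spacing $p^{-2}$, and put $z_\ell \defeq \tau_\ell + \ii p^{-2}$ (adjusting the two endpoints near $\pm 10$ trivially so that $\abs{z_\ell}\leq 10$). Since $\dist(z_\ell,\supp\rho) \geq \dist(\tau_\ell,\supp\rho)\geq\eps_*$, the bound \eqref{eq:averaged_local_law_H_away_support} (which carries no lower restriction on $\Im z$) gives $\abs{\avg{\gf(z_\ell)}-\avg{\Mf(z_\ell)}}\prec p^{-1}$, while $\Im\avg{\Mf(z_\ell)} \leq p^{-2}\dist(z_\ell,\supp\rho)^{-2}\lesssim p^{-2}$ by \eqref{eq:mf_upper_bound_supp_rho2}; hence $\Im\avg{\gf(z_\ell)}\prec p^{-1}$. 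But $\Im\avg{\gf(z_\ell)} = (n+p)^{-1}\sum_j p^{-2}/((\mu_j-\tau_\ell)^2 + p^{-4})$, so an eigenvalue within distance $p^{-2}$ of some $\tau_\ell$ would force $\Im\avg{\gf(z_\ell)}\gtrsim p$, a contradiction a.w.o.p.\ after the (polynomial, hence harmless for $\prec$) union bound. Since every point of the set lies within $p^{-2}$ of some $\tau_\ell$, this proves \eqref{eq:no_eigenvalues_away_from_supp_rho}.

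For parts (i) and (ii) I would apply the Helffer--Sjöstrand functional calculus to a smoothed indicator of $(-\infty,\tau]$ with regularization scale $\eta_0 \sim (n+p)^{-1+\gamma}$, which expresses $(n+p)(n_H(\tau)-n_\rho(\tau))$ as an integral of the compactly supported, integrable HS kernel against $\avg{\gf(z)}-\avg{\Mf(z)}$. When $\rho(\tau)\geq\eps_*$, local Hölder continuity of the density (Lemma~\ref{lem:continuity_mf}) keeps $\rho$ bounded above and below on a neighborhood $J\ni\tau$ of length $\sim 1$, so on the strip $\{\Re z\in J,\ \eta\geq\eta_0\}$ one has $\avg{\Im\Mf}\gtrsim\eps_*$ and may invoke \eqref{eq:averaged_local_law_H_bulk}; the resulting contribution is $\prec\int_{\eta_0}^{1}(n+p)^{-1}\eta^{-1}\di\eta\prec (n+p)^{-1}$, while the part $\eta<\eta_0$ is bounded by $\prec(n+p)^{-1}$ using monotonicity of $\eta\mapsto\eta\Im\avg{\gf(\tau+\ii\eta)}$ together with the boundedness of the density of $\rho$. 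When instead $\dist(\tau,\supp\rho)\geq\eps_*$, one localizes $n_H$ near $\tau$ by part (iii), compares with a fixed reference point in the bulk, and uses \eqref{eq:averaged_local_law_H_away_support} to get the claimed $\prec 1$. This yields (i), uniformly, the uniformity being automatic since Theorem~\ref{thm:local_law_H} is already uniform in $z$ and all covers are polynomial. Part (ii) then follows: evaluating (i) at $\tau$ gives $\#\{j:\mu_j\leq\tau\} = I(\tau) + O_\prec(1)$, and applying (i) on a small neighborhood of $\tau$ together with the lower bound $\gtrsim\eps_*$ on the density there pins $\mu_{I(\tau)}$ within $O_\prec(((n+p)\eps_*)^{-1})\prec(n+p)^{-1}$ of $\tau$.

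The local law itself is the substantive input and is already available, so the remaining work is purely the Helffer--Sjöstrand accounting. The point that requires care is making everything uniform across the two admissible regimes simultaneously --- $\rho(\tau)\geq\eps_*$ versus $\dist(\tau,\supp\rho)\geq\eps_*$, which come with the different error rates $(n+p)^{-1}\eta^{-1}$ respectively $(n+p)^{-1}$ in Theorem~\ref{thm:local_law_H} --- and controlling the contribution of the sub-$\eta_0$ scales, where no local law is available, by monotonicity and the regularity of $\rho$; both are handled exactly as in \cite{Ajankirandommatrix}.
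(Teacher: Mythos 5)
Your proposal is correct and follows essentially the same route as the paper, which proves this corollary by simply deferring to the standard argument of Corollaries 1.10 and 1.11 in \cite{Ajankirandommatrix}: absence of eigenvalues away from $\supp\rho$ by comparing $\Im\avg{\gf}$ with $\Im\avg{\Mf}$ at small $\Im z$ using \eqref{eq:averaged_local_law_H_away_support}, and the counting-function rigidity via Helffer--Sj\"ostrand with the bulk law \eqref{eq:averaged_local_law_H_bulk} near the endpoint, the away-from-support law elsewhere, and monotonicity below the scale $\eta_0$. Your outline matches exactly the three inputs the paper singles out (local law near $\tau$, local law outside the support, uniform global law away from the support), so no further comment is needed.
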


The estimates \eqref{eq:averaged_local_law_H_bulk} and \eqref{eq:averaged_local_law_H_away_support} in Theorem \ref{thm:local_law_H} imply Corollary \ref{cor:rigidity_H} in the same 
way as the corresponding results, Corollary 1.10 and Corollary 1.11, in \cite{Ajankirandommatrix} were proved.
In fact, inspecting the proofs in \cite{Ajankirandommatrix}, rigidity at a particular point $\tau_0$ in the bulk requires only (i) the local law, \eqref{eq:averaged_local_law_H_bulk}, around $\tau_0=\Re z$,
(ii) the local law somewhere outside of the support of $\rho$, \eqref{eq:averaged_local_law_H_away_support}, and (iii) a
uniform global law with optimal convergence rate, \eqref{eq:averaged_local_law_H_away_support}, for any $z$ away from $\supp \rho$.

\begin{proof}[Proof of Theorem \ref{thm:local_law_H}]
In the proof, we will use the following shorter notation. We introduce the spectral domain
\renewcommand{\DH}{\ensuremath{\mathbb D_{\Xf}}}
\begin{equation*}
\DH \defeq \big\{ z \in \Hb \colon \delta \leq \abs{z} \leq 10,~~ \Im z \geq p^{-1+\gamma}, ~~ \langle \Im \Mf(z) \rangle \geq \eps_* \text{ or } \dist(z, \supp \rho) \geq \eps_* \big\}  
\end{equation*}
for the parameters $\gamma >0, \eps_* >0  $ and $\delta>0$. Moreover, we define the random control parameters 
\begin{equation*}
\Lambda_d(z) \defeq \norm{\gf(z) -\Mf(z)}_\infty, \quad \Lambda_o(z) \defeq \max_{\substack{x,y=1, \ldots, n+p\\x \neq y}} \abs{G_{xy}(z)}, \quad \Lambda(z) \defeq \max\{\Lambda_d(z), \Lambda_o(z)\}.
\end{equation*}

Before proving \eqref{eq:local_law_H} and \eqref{eq:averaged_local_law_H}, we establish the auxiliary estimates: Uniformly for all $z \in \DH$, we have
\begin{subequations}
\begin{align}
\Lambda_d(z) + \norm{\df(z)}_\infty & \prec  \sqrt\frac{\langle \Im \Mf(z) \rangle}{(n+p)\eta} + \frac{1}{(n+p)\eta} + \frac{1}{\sqrt{n+p}} , \label{eq:local_law_H1}\\
\Lambda_o(z) & \prec  \sqrt\frac{\langle \Im \Mf(z) \rangle}{(n+p)\eta} + \frac{1}{(n+p)\eta} + \frac{1}{\sqrt{n+p}} \label{eq:local_law_H2}. 
\end{align}
\end{subequations}
Moreover, for every sequence of vectors $w \in \C^{p+n}$ satisfying $\norm{w}_\infty \leq 1$, 
\begin{equation}
\abs{\scalar{w}{\gf(z)-\Mf(z)}} \prec \frac{\langle \Im \Mf(z) \rangle}{(n+p)\eta} + \frac{1}{(n+p)^2 \eta^2} + \frac{1}{n+p}
\label{eq:local_law_H_averaged}
\end{equation}
uniformly for $z \in \DH$. 

Now, we show that \eqref{eq:local_law_H_averaged} follows from \eqref{eq:local_law_H1} and \eqref{eq:local_law_H2}. To that end, we use the following lemma which is proved as Theorem 3.5 in \cite{Ajankirandommatrix}.
\begin{lem}[Fluctuation Averaging] \label{lem:fluct_averaging}
For any $z \in \DH$ and any sequence of deterministic vectors $w \in \C^{n+p}$ with the uniform bound, $\norm{w}_\infty \leq 1$ the following holds true: If $\Lambda_o(z) \prec \Phi$
for some deterministic ($n$ and $p$-dependent) control parameter $\Phi$ with $\Phi \leq (n+p)^{-\gamma/3}$ and $\Lambda(z) \prec (n+p)^{-\gamma/3}$ a.w.o.p., then 
\begin{equation}
\abs{\langle w, \df(z)\rangle} \prec \Phi^2 + \frac{1}{n+p}.
\end{equation}
\end{lem}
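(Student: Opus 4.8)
The plan is to establish Lemma~\ref{lem:fluct_averaging} by the high-moment fluctuation-averaging scheme of \cite{Ajankirandommatrix} (originating with Erd\H{o}s, Yau and Yin), transplanted to the bipartite matrix $\Xf$. Since $\prec$ is equivalent to a family of moment bounds, it suffices to prove that for every fixed $q\in\N$ and every $\eps>0$ one has $\E\abs{\scalar{w}{\df(z)}}^{2q}\lesssim (n+p)^{q\eps}(\Phi^2+(n+p)^{-1})^{2q}$, uniformly over $z$ in the spectral domain of Theorem~\ref{thm:local_law_H}, on the event $\Omega$ where the a priori bounds $\Lambda_o(z)\le (n+p)^{\eps}\Phi$ and $\Lambda(z)\le (n+p)^{-\gamma/3+\eps}$ hold; the complement of $\Omega$ is negligible by hypothesis. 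I will use repeatedly that removing a fixed number $O(q)$ of indices from $\Xf$ perturbs resolvent entries only by $O(\Lambda_o^2)$ and preserves all a priori bounds, via the identity $G^{(V)}_{ab}=G^{(Vx)}_{ab}+G^{(V)}_{ax}G^{(V)}_{xb}/G^{(V)}_{xx}$ together with $\abs{G^{(V)}_{xx}}\sim 1$ on $\Omega$, so that one may freely pass between $G$ and its minors at a cost of $\Phi^2$ per step.

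First I would dispose of the last (``correction'') term of each $d_x$: for a row index $r$ it equals $-\sum_k s_{rk}\, G_{k+n,r}G_{r,k+n}/\gone r$, and since $\sum_k s_{rk}\sim 1$, $\abs{G_{k+n,r}},\abs{G_{r,k+n}}\le\Lambda_o\prec\Phi$, and $\abs{\gone r}\sim 1$ on $\Omega$, this term is $\prec\Phi^2$ deterministically, hence contributes $\prec\Phi^2$ to $\scalar{w}{\df}$ with no averaging needed (symmetrically for column indices). The remaining part of $d_x$ can be written as $Q_x(Z_x)$ with $Z_x\defeq\sum_{a,b}h_{xa}G^{(x)}_{ab}h_{bx}$, where $Q_x\defeq \mathrm{id}-\E_x$ and $\E_x$ is the partial expectation over the $x$-th row and column of $\Xf$: indeed $G^{(x)}$ is independent of that row, $h_{bx}=\overline{h_{xb}}$, and $\E_x[h_{xa}\overline{h_{xb}}]=\sf_{xa}\delta_{ab}$, so the diagonal term $\sum_a\sf_{xa}G^{(x)}_{aa}$ subtracted inside $d_x$ is precisely $\E_x Z_x$. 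Thus, up to the $\prec\Phi^2$ error, $\scalar{w}{\df}$ is a bounded-coefficient combination of terms $Q_x(Z_x)$.

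The heart of the proof is the moment expansion
\[
\E\Big|\tfrac1{n+p}\sum_x w_x Q_x(Z_x)\Big|^{2q}=\frac1{(n+p)^{2q}}\sum_{x_1,\dots,x_{2q}} \Big(\prod_{i\le q}w_{x_i}\Big)\Big(\prod_{i>q}\overline{w_{x_i}}\Big)\, \E\Big[\prod_{i\le q}Q_{x_i}(Z_{x_i})\prod_{i>q}\overline{Q_{x_i}(Z_{x_i})}\Big],
\]
which I would organise according to the coincidence pattern of the indices $x_1,\dots,x_{2q}$. The decisive point is that a \emph{lone} index (appearing in a single slot) forces an extra factor $\Phi$: if $x_{i_0}$ is lone, applying $\E_{x_{i_0}}$ annihilates the corresponding $Q_{x_{i_0}}$ once the dependence of all other factors on $x_{i_0}$ has been detached via the minor expansion above — and each such detachment costs $\Phi^2$ — while $Q_{x_{i_0}}(Z_{x_{i_0}})$ is itself of size $\prec \Phi^2+(n+p)^{-1}$ by the large-deviation bound for quadratic forms (using the Ward identity $\sum_b\abs{G^{(x)}_{ab}}^2=\eta^{-1}\Im G^{(x)}_{aa}$ for the off-diagonal part of $Z_x$ and $\sum_a\sf_{xa}^2\lesssim (n+p)^{-1}$ for the diagonal fluctuation $\sum_a(\abs{h_{xa}}^2-\sf_{xa})G^{(x)}_{aa}$). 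Power counting — there are $\lesssim (n+p)^k$ tuples with $k$ distinct values, weighed against the prefactor $(n+p)^{-2q}$, with each coincidence class of size $m\ge 2$ contributing a gain $(n+p)^{-(m-1)}$ and each lone class contributing a gain $\Phi$ — then yields, after resummation and absorbing the $(n+p)^{o(1)}$ losses using $\Phi\le (n+p)^{-\gamma/3}$, the bound $\E\abs{\scalar{w}{\df}}^{2q}\prec(\Phi^2+(n+p)^{-1})^{2q}$, as required.

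I expect the main obstacle to be precisely the bookkeeping in this last step: one must set up the minor expansions so that each lone index genuinely yields a gain of order $\Phi$ (and not merely $\sqrt{\Phi}$), verify that the auxiliary resolvent entries generated by these expansions stay $O(1)$ or better on $\Omega$, and handle the two structurally different pieces of $Z_x$ — the off-diagonal Hadamard form $\sum_{a\ne b}h_{xa}G^{(x)}_{ab}h_{bx}$ and the diagonal fluctuation $\sum_a(\abs{h_{xa}}^2-\sf_{xa})G^{(x)}_{aa}$ — uniformly throughout. The bipartite structure of $\Xf$ plays no essential role here beyond splitting the indices into row- and column-types, where the estimates are symmetric; apart from this combinatorial core, every ingredient (minor stability, the Ward identity, and the large-deviation bounds for quadratic forms under the moment assumption~(C)) is standard and can be quoted from \cite{Ajankirandommatrix}.
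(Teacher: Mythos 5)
Your route coincides with the paper's: the paper proves this lemma simply by invoking Theorem 3.5 of \cite{Ajankirandommatrix}, i.e.\ exactly the high-moment fluctuation-averaging scheme you outline (splitting off the deterministically small correction term, writing the rest of $d_x$ as $Q_x(Z_x)$ with $Q_x=1-\E_x$, and expanding $\E\abs{\scalar{w}{\df}}^{2q}$ according to coincidence patterns with a gain per lone index). However, your justification of the one step that carries all the content is flawed. The assertion that the lone factor $Q_{x_{i_0}}(Z_{x_{i_0}})$ ``is itself of size $\prec \Phi^2+(n+p)^{-1}$ by the large-deviation bound'' is false: the off-diagonal part $\sum_{a\ne b}h_{xa}G^{(x)}_{ab}h_{bx}$ has large-deviation size $\big(\sum_{a\ne b}\sf_{xa}\sf_{xb}\abs{G^{(x)}_{ab}}^2\big)^{1/2}$, which by the Ward identity (or simply by $\abs{G^{(x)}_{ab}}\prec\Phi$) is of order $\Phi$, not $\Phi^2$, and the diagonal fluctuation $\sum_a(\abs{h_{xa}}^2-\sf_{xa})G^{(x)}_{aa}$ is only $\prec (n+p)^{-1/2}$, not $(n+p)^{-1}$. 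If your claimed individual bound were true, the lemma would follow from the triangle inequality with no averaging at all, which is precisely what fluctuation averaging is needed to overcome.

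With the correct individual bound $\prec\Phi+(n+p)^{-1/2}$, the mechanism you describe (detach the other factors from the row $x_{i_0}$ at cost $\Phi^2$, then annihilate the leading term with $\E_{x_{i_0}}$) yields only one extra factor of $\Phi$ per application: already for $q=1$ the cross term $\E\big[Q_x(Z_x)\overline{Q_y(Z_y)}\big]$ with $x\ne y$ comes out as $\prec\Phi\cdot\Phi^2=\Phi^3$, whereas your own power counting (and the stated bound) requires $\prec\Phi^4+\Phi^2(n+p)^{-1}+(n+p)^{-2}$, i.e.\ an effective contribution $\Phi^2$ from \emph{each} lone index; note $\Phi^3$ is not dominated by this when, say, $\Phi\sim(n+p)^{-1/2}$. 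Closing this gap is the actual content of the cited proof: one must expand iteratively (``maximally expand'' in the terminology of \cite{Ajankirandommatrix}) so that every lone index is exploited through its own partial expectation and every lone factor ends up accompanied by at least two off-diagonal resolvent entries, with a nontrivial bookkeeping of the resulting monomials. Your sketch stops one step short of this and covers the shortfall only through the incorrect individual bound (and the stated target ``gain of order $\Phi$ and not merely $\sqrt{\Phi}$'' misidentifies what must be proved). The remaining ingredients of your outline --- the $\prec\Phi^2$ bound on the correction term using $\abs{\gone r}\sim 1$ on the good event, the identity $d_x=Q_x(Z_x)$ up to that term, minor stability, and the reduction of $\prec$ to moment bounds --- are fine and match the setting in which the cited theorem is applied.
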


By \eqref{eq:local_law_H1}, the indicator function in \eqref{eq:stability_average_g-m} is nonzero a.w.o.p. Moreover, \eqref{eq:local_law_H2} ensures the applicability of the fluctuation averaging, Lemma 
\ref{lem:fluct_averaging}, which implies that the last term in \eqref{eq:stability_average_g-m} is stochastically dominated by the right-hand side in \eqref{eq:local_law_H_averaged}. 
Using \eqref{eq:local_law_H1} again, we conclude that the first term of the right-hand side of \eqref{eq:stability_average_g-m} is dominated by the right-hand side of \eqref{eq:local_law_H_averaged}.

In order to show \eqref{eq:local_law_H1} and \eqref{eq:local_law_H2} we use the following lemma whose proof we omit, since it follows exactly the same steps as the proof of Lemma 2.1 in \cite{Ajankirandommatrix}.
\begin{lem}
Let $\lambda_* \colon \Hb \to (0,\infty)$ be the function from Lemma \ref{lem:bulk_stability_qve}. 
We have 
\begin{subequations}
\begin{align}
\norm{\df(z)}_\infty \mathbf 1(\Lambda(z) \leq \lambda_*(z)) & \prec \sqrt\frac{\Im \langle \gf(z) \rangle}{(n+p)\eta} + \frac{1}{\sqrt{n+p}}, \label{eq:df_aux_estimate}\\
\Lambda_o(z) \mathbf 1(\Lambda(z) \leq \lambda_*(z)) & \prec \sqrt\frac{\Im \langle \gf(z) \rangle}{(n+p)\eta} + \frac{1}{\sqrt{n+p}} \label{eq:lambda_o_aux_estimate}
\end{align}
\end{subequations}
uniformly for all $z \in \DH$. 
\end{lem}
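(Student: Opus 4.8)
The plan is to insert the explicit formulas for the error terms $\done{r}$, $\dtwo{m}$ and for the off-diagonal resolvent entries $G_{xy}$ ($x\neq y$) into the standard large deviation inequalities for linear and quadratic forms of independent, centered random variables, whose validity at polynomial order is guaranteed by the bounded moment assumption (C). Conditionally on the minor resolvent $G^{(r)}$ --- which is independent of the $r$-th row $(x_{rk})_k$ of $X$ --- the first term of $\done{r}$ is an off-diagonal quadratic form in $(x_{rk})_k$ whose coefficient matrix $B$ is the relevant sub-block of $G^{(r)}$, and the second term is a diagonal quadratic form; the standard bounds estimate the former by $\prec(\sum_{k,l}s_{rk}s_{rl}\abs{B_{kl}}^2)^{1/2}$ and the latter by $\prec(\sum_k s_{rk}^2\abs{B_{kk}}^2)^{1/2}$. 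The same scheme applies to $\dtwo{m}$, and for $x\neq y$ we expand $G_{xy}=-G_{xx}\sum_{k}h_{xk}G^{(x)}_{ky}$; since the diagonal blocks of $\Xf$ vanish, only indices $k$ lying in the block opposite to $x$ contribute, so this reduces to a linear form $\sum_k x_{xk}G^{(x)}_{ky}$ controlled by $\prec(\sum_k s_{xk}\abs{G^{(x)}_{ky}}^2)^{1/2}$.

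The three ingredients that convert these variance factors into the right-hand sides of \eqref{eq:df_aux_estimate}--\eqref{eq:lambda_o_aux_estimate} are: (i) the flatness bound (A), $s_{xy}\leq s_*/(n+p)$, together with $\sum_k s_{rk}\sim 1$ from \eqref{eq:sum_s_ij_sim_1}, which lets one extract a factor $s_*/(n+p)$ and one row-sum; (ii) the Ward identity $\sum_k\abs{G^{(r)}_{ky}}^2=\eta^{-1}\Im G^{(r)}_{yy}$, which turns the remaining double sums into $\eta^{-1}$ times an imaginary part; and (iii) the minor expansion $G^{(r)}_{kl}=G_{kl}-G_{kr}G_{rl}/G_{rr}$, valid with a small correction precisely on the event $\{\Lambda(z)\leq\lambda_*(z)\}$, where $\abs{g_r}\sim 1$ because $\gf$ is then within $\lambda_*\lesssim 1$ of $\Mf$ and $\abs{\Mf_r}\sim 1$ on $\DH$ by Lemma \ref{lem:estimates_m}. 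On the same event $\Im G^{(r)}_{yy}=\Im g_y+O(\Lambda_o^2)$, and, using Lemma \ref{lem:estimates_m} and the constraints $\delta\leq\abs{z}\leq 10$ built into $\DH$, $\Im g_y\lesssim\Im\langle\gf\rangle+\Lambda_d$. Combining all of this, the diagonal quadratic form of $\done{r}$ and its third term (the one carrying the factor $1/\gone{r}$) are bounded by $(n+p)^{-1/2}$ plus a multiple of $\Lambda_o^2$, while the off-diagonal quadratic form, the analogous pieces of $\dtwo{m}$, and $G_{xy}$ itself are bounded by $\sqrt{\Im\langle\gf\rangle/((n+p)\eta)}+(n+p)^{-1/2}$ plus a multiple of $\Lambda_o^2$ and an $o(1)$ multiple of $\Lambda_o+\Lambda_d$. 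These last contributions get absorbed into the left-hand side of \eqref{eq:df_aux_estimate}--\eqref{eq:lambda_o_aux_estimate} by the usual bootstrap on $\{\Lambda\leq\lambda_*\}$, using that their prefactors are $o(1)$ on $\DH$ and the a priori control of $\Lambda_d$ by $\norm{\df}_\infty$ from the stability estimate \eqref{eq:stability_g-m}.

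The main difficulty is purely organizational: everything must be carried out on the event $\{\Lambda(z)\leq\lambda_*(z)\}$ so that the minor expansions converge and the a priori bounds $\abs{g_x}\sim 1$ and $\Im g_x\lesssim\Im\langle\gf\rangle+\Lambda_d$ are available, and one has to check that every error produced by passing between $G$ and $G^{(r)}$, and between $\Im g_x$ and $\Im\langle\gf\rangle$, is genuinely of lower order than the target. Since this is exactly the estimate carried out in the proof of Lemma 2.1 of \cite{Ajankirandommatrix}, the only substitutions being that the stochasticity/flatness statements used there are replaced by (A) and \eqref{eq:sum_s_ij_sim_1} and that the moment input is supplied by (C), I would follow that argument essentially verbatim; no new idea enters at this step, which is why the details are omitted.
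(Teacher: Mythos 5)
Your proposal is correct and takes essentially the same route as the paper: the paper omits the proof of this lemma altogether, stating that it follows exactly the steps of the proof of Lemma 2.1 in \cite{Ajankirandommatrix}, and your sketch (Schur-complement error terms, large-deviation bounds for linear and quadratic forms supplied by (C), the Ward identity, flatness (A) together with the row-sum bound \eqref{eq:sum_s_ij_sim_1}, and minor expansions carried out on the event $\{\Lambda(z)\leq\lambda_*(z)\}$) is precisely that argument transplanted to the present setting.
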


By \eqref{eq:stability_g-m} and \eqref{eq:df_aux_estimate}, we obtain 
\[ (\Lambda_d(z) + \norm{\df(z)}_\infty) \mathbf 1(\Lambda_d(z) \leq \lambda_*(z)) \prec \sqrt{\frac{\langle \Im \Mf\rangle}{(n+p)\eta}} + (n+p)^{-\eps} \Lambda_d + \frac{(n+p)^{\eps}}{(n+p)\eta} + \frac{1}{\sqrt{n+p}}\]
for any $\eps \in (0,\gamma)$. Here we used $\Im \gf = \Im \Mf + \mathcal O(\Lambda_d)$. 
We absorbe $(n+p)^{-\eps} \Lambda_d$ into the left-hand side and get 
\begin{equation} \label{eq:lambda_d_df_aux_estimate1}
 (\Lambda_d(z) + \norm{\df(z)}_\infty) \mathbf 1(\Lambda_d(z) \leq \lambda_*(z)) \prec \sqrt{\frac{\langle \Im \Mf\rangle}{(n+p)\eta}} + \frac{1}{(n+p)\eta} + \frac{1}{\sqrt{n+p}}
\end{equation}
as $\eps \in (0,\gamma)$ is arbitrary. 
From \eqref{eq:lambda_o_aux_estimate}, we conclude 
\begin{equation} \label{eq:lambda_o_aux_estimate1}
 \Lambda_o(z) \mathbf 1(\Lambda(z) \leq \lambda_*(z)) \prec \sqrt{\frac{\langle \Im \Mf\rangle}{(n+p)\eta}} + \frac{1}{(n+p)\eta} + \frac{1}{\sqrt{n+p}},
\end{equation}
where we used $\Im \gf = \Im \Mf  + \mathcal O(\Lambda_d)$ and \eqref{eq:lambda_d_df_aux_estimate1} and the fact that $\Lambda_d \leq \Lambda$. 

We will conclude the proof by establishing that $\mathbf 1(\Lambda(z) \leq \lambda_*(z)) =1$ a.w.o.p. due to an application of Lemma A.1 in \cite{Ajankirandommatrix}.
Combining \eqref{eq:lambda_d_df_aux_estimate1} and \eqref{eq:lambda_o_aux_estimate1} and using $\langle \Im \Mf(z) \rangle \lesssim (\Im z)^{-1}$, we obtain 
\begin{equation} \label{eq:lambda_aux_estimate}
\Lambda(z) \mathbf 1(\Lambda(z) \leq \lambda_*(z)) \prec (n+p)^{-\gamma/2} 
\end{equation}
for $z \in \DH$ by the definition of $\DH$. 
We define the function $\Phi(z) \defeq (n+p)^{-\gamma/3}$ and note that
$\Lambda(z) = \norm{\gf(z) - \Mf(z)}_\infty$ is Hölder-continuous since $\gf$ and $\Mf$ are Hölder-continuous by
\begin{equation} \label{eq:G_xy_hoelder_continuity}
\max_{x,y=1,\ldots, n+p} \abs{G_{xy}(z_1) - G_{xy}(z_2)} \leq \frac{\abs{z_1-z_2}}{(\Im z_1)(\Im z_2)} \leq (n+p)^2 \abs{z_1-z_2} 
\end{equation}
for $z_1, z_2 \in \DH$ and Lemma \ref{lem:continuity_mf}, respectively. We choose $z_0 \defeq 10 \i$. Since $\abs{G_{xy}(z)}, \abs{\Mf_x(z)} \leq (\Im z)^{-1}$ we get $\Lambda(10\i) \leq 1$ and hence
 $\mathbf 1(\Lambda(10\i) \leq \lambda_*(10\i)) =1$ 
by Lemma \ref{lem:bulk_stability_qve}. Therefore, we conclude $\Lambda(z_0) \leq (n+p)^{-\gamma/2} \leq \Phi(z_0)$ from \eqref{eq:lambda_aux_estimate}. 
Moreover, \eqref{eq:lambda_aux_estimate} implies $\Lambda \cdot \mathbf 1(\Lambda \in [\Phi-(n+p)^{-1}, \Phi]) < \Phi -(n+p)^{-1}$ a.w.o.p. uniformly on $\DH$. 
Thus, we get $\Lambda(z) \leq (n+p)^{-\gamma/3}$ a.w.o.p. for all $z \in \DH$ by applying Lemma A.1 in \cite{Ajankirandommatrix} to $\Lambda$ and $\Phi$ on the connected domain $\DH$, 
i.e., $\mathbf 1(\Lambda(z) \leq \lambda_*(z))=1$ a.w.o.p. 
Therefore, \eqref{eq:lambda_d_df_aux_estimate1} and \eqref{eq:lambda_o_aux_estimate1} yield \eqref{eq:local_law_H1} and \eqref{eq:local_law_H2}, respectively.
As remarked above this also implies \eqref{eq:local_law_H_bulk}.

For the proof of \eqref{eq:local_law_H_away_support} and \eqref{eq:averaged_local_law_H_away_support}, we first notice that 
\[ G_{xx} (z) = \sum_{a=1}^{n+p} \frac{\abs{u_a(x)}^2}{\mu_a - z} \]
for all $x = 1, \ldots, n+p$, where $u_a(x)$ denotes the $x$-component of a $\norm{\cdot}_2$ normalized eigenvector $u_a$ corresponding to the eigenvalue $\mu_a$ of $H$. Therefore, we conclude 
\[ \Im G_{xx}(z) = \eta \sum_{a=1}^{n+p} \frac{\abs{u_a(x)}^2}{(\mu_a-\tau)^2 + \eta^2} \prec \eta \sum_{a=1}^{n+p} \mathbf 1(A_a) \frac{\abs{u_a(x)}^2}{(\mu_a-\tau)^2 + \eta^2}
\prec \eta \]
for all $z \in \Hb$ satisfying $\delta \leq \abs{z} \leq 10$ and $\dist(z, \supp \rho) \geq \eps_*$. Here we used that $A_a \defeq \{ \dist(\mu_a, \supp \rho) \leq \eps_*/2 \}$ occurs a.w.o.p by 
\eqref{eq:no_eigenvalues_away_from_supp_rho} 
and thus $1 - \mathbf 1(A_a) \prec 0$. In particular, we have $\avg{\Im \gf} \prec \eta$. 
Now, \eqref{eq:df_aux_estimate} and \eqref{eq:lambda_o_aux_estimate} yield 
\begin{subequations}
\begin{align}
\norm{\df(z)}_\infty \mathbf 1(\Lambda(z) \leq \lambda_*(z)) & \prec \frac{1}{\sqrt{n+p}}, \label{eq:df_aux_estimate2}\\
\Lambda_o(z) \mathbf 1(\Lambda(z) \leq \lambda_*(z)) & \prec \frac{1}{\sqrt{n+p}}. \label{eq:lambda_o_aux_estimate2}
\end{align}
\end{subequations}
Following the previous argument but
using \eqref{eq:df_aux_estimate2} and \eqref{eq:lambda_o_aux_estimate2} instead of \eqref{eq:df_aux_estimate} and \eqref{eq:lambda_o_aux_estimate}, we obtain \eqref{eq:local_law_H_away_support} and 
\eqref{eq:averaged_local_law_H_away_support} and this completes the proof of Theorem \ref{thm:local_law_H}.
\end{proof}

\subsection{Local law for Gram matrices}

\begin{proof}[Proofs of Theorem \ref{thm:local_law_gram} and Theorem \ref{thm:Bulk_rigidity_general}]
Splitting the resolvent of $\Xf$ at $z \in \C \setminus \R$ into blocks
\[ G(z) = \begin{pmatrix} \mathcal G_{11}(z) & \mathcal G_{12}(z) \\ \mathcal G_{21}(z) & \mathcal G_{22}(z) \end{pmatrix} \]
and computing the product $G(z)(H-z)$ blockwise, we obtain that $(XX^*-z^2)^{-1} = \mathcal G_{11}(z) / z$ and $(X^*X -z^2)^{-1} = \mathcal G_{22}(z)/z$ for $z \in \C\setminus \R$. 
Therefore, \eqref{eq:local_law_XX*} follows from \eqref{eq:local_law_H} as well as $\abs{z} \geq \delta$ and
$m(\zeta) = M_1(\sqrt \zeta)/\sqrt \zeta$ for $\zeta \in \Hb$.

As $p \sim n$ we obtain 
\begin{equation*}
\abs{\scalar{w}{\mathrm{diag}(XX^* - \zeta )^{-1} - m(\zeta)}} \lesssim \absa{\scalara{(w,0)^t}{\frac{1}{\sqrt \zeta}\left(\gf(\sqrt \zeta) - \Mf(\sqrt \zeta)\right)}} 
\end{equation*}
for $w \in \C^p$. Using $p \sim n$, this implies \eqref{eq:local_law_XX*_averaged} by \eqref{eq:averaged_local_law_H}. This concludes the proof of Theorem~\ref{thm:local_law_gram}. 

Theorem \ref{thm:Bulk_rigidity_general} is a consequence of the corresponding result for $\Xf$, namely Corollary~\ref{cor:rigidity_H}. 
\end{proof}

\begin{proof}[Proof of Theorem \ref{thm:eigenvalue_hard_edge}]
As $m(\zeta) = M_1(\sqrt \zeta)/\sqrt \zeta$ for $\zeta \in \Hb$, Proposition \ref{pro:hard_edge_stability} implies $\abs{m(\zeta)} \lesssim \abs{\zeta}^{-1/2}$. Thus, $\poma =0$. 
Recalling $\dens(\omega) = \omega^{-1/2} \rho_1(\omega^{1/2}) \mathbf 1 (\omega >0)$, where $\rho_1$ is the bounded density representing $\avg{M_1}$, yields
\[ \lim_{\omega \downarrow 0} \dens(\omega)\sqrt \omega = \frac{1}{\pi}\avg{v_1(0)} \in (0,\infty) \]
by \eqref{eq:expansion_Mf} which proves part (ii) of Theorem \ref{thm:eigenvalue_hard_edge}.

Since $n=p$, in this case we have $\sigma(XX^*) = \sigma(X^*X)$. Thus, $\avg{g_1} = \avg{g_2}$, i.e., \eqref{eq:assumption_stability_hard_edge_easy} is fulfilled and Proposition \ref{pro:hard_edge_stability} is applicable. 

Using Proposition \ref{pro:hard_edge_stability} instead of Lemma \ref{lem:bulk_stability_qve} and following the argument in Subsection \ref{subsec:local_law_H}, we obtain the same result as Theorem \ref{thm:local_law_H} 
without the restriction $\abs{z} \geq \delta$. As in the proof of Theorem \ref{thm:local_law_gram}, we obtain 
\[ \abs{R_{ij}(\zeta) - \delta_{ij} m_i(\zeta)} \prec \frac{\sqrt{\Re \sqrt{\zeta}}}{\abs{\sqrt{\zeta}} \sqrt{p \Im \zeta}} \lesssim \sqrt{\frac{\avg{\Im m(\zeta)}}{p \Im \zeta}}. \]
Here, we deviated from the proof of Theorem \ref{thm:local_law_gram} since $\abs{z}$ can be arbitrarily small for $z \in \mathbb D_0$ and used part (ii) of Theorem \ref{thm:eigenvalue_hard_edge} in the last step.
This concludes the proof of part (i) of Theorem \ref{thm:eigenvalue_hard_edge}. 

Consequently, a version of Corollary \ref{cor:rigidity_H} for $\delta = 0$ holds true. Then, part (iii) and (iv) of the theorem follow immediately. 
\end{proof}

\subsection{Proof of Theorem \ref{thm:eigenvalue_XX_star_X_star_X}}

In this subsection, we will assume that (A), (C), (D) and \zerorect as well as
\begin{equation} \label{eq:p_bigger_than_n2}
\frac{p}{n} \geq 1 + d_*
\end{equation}
for some $d_*>0$ hold true. 

\begin{thm}[Local law for $\Xf$ around $z=0$] \label{thm:kernel_XX_star}
If (A), (C), (D), \zerorect and \eqref{eq:p_bigger_than_n2} hold true, then 
\begin{enumerate}[(i)]
\item The kernel of $\Xf$ and the kernel of $\Xf^2$ have dimension $p-n$ a.w.o.p. 
\item 
There is a $\gamma_* \gtrsim 1$ such that 
\begin{equation} \label{eq:nonzero_eigenvalues_of_Xf_are_away_from_zero}
 \abs{\mu} \geq \gamma_*
\end{equation}
a.w.o.p. for all $\mu \in \spec(\Xf)$ such that $\mu \neq 0$. 
\item For every $\eps_*>0$, we have 
\begin{subequations}
\begin{align}
\max_{x,y=1, \ldots, n+p} \abs{G_{xy}(z) - \Mf_x(z) \delta_{xy}} & \prec \frac{1}{\abs{z}\sqrt{n+p}}, \label{estimate_entrywise_close_to_zero}\\
\abs{\avg{\gf} -\avg{\Mf}} & \prec \frac{\abs{z}}{n+p}.\label{estimate_average_close_to_zero}
\end{align}
\end{subequations}
uniformly for $z \in \Hb$ satisfying $\abs{z} \leq \sqrt{\delta_\dens} -\eps_*$.
\end{enumerate}
\end{thm}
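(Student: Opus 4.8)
The plan is to separate the statement into its genuinely ``hard edge at zero'' content, parts (i) and (ii), which fix the kernel dimension and the absence of tiny nonzero eigenvalues, and then deduce part (iii) from analyticity. Once (i) and (ii) are in hand, together with the local law and rigidity away from zero (Theorem~\ref{thm:local_law_H}, Corollary~\ref{cor:rigidity_H}) and the deterministic structure of $\Mf$ near zero (Proposition~\ref{pro:QVE_close_to_zero}), part (iii) becomes an application of the maximum modulus principle to the function $z\mapsto z\,(G(z)-\diag\Mf(z))$, which is holomorphic on a disc around zero.

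For (i) and (ii) I would prove the least-singular-value bound $\sigma_{\min}(X)\gtrsim 1$ asymptotically with overwhelming probability by an $\eps$-net argument. For a fixed unit vector $v\in\C^n$ one has $\norm{Xv}^2=\sum_{i=1}^p\bigl|\sum_k x_{ik}v_k\bigr|^2$, a sum of $p$ independent nonnegative random variables, each with mean $\sum_k s_{ik}|v_k|^2\in[\varphi/(n+p),\,1/(n+p)]$ by \zerorect{} and (A); by the Paley--Zygmund inequality and the moment bound (C) each summand is $\geq c/(n+p)$ with probability $\geq\beta$ for constants $c,\beta\gtrsim 1$ independent of $v$, so a Chernoff bound in $i$ gives $\P(\norm{Xv}^2\leq c_0)\leq\ee^{-c'p}$ for some $c_0\gtrsim 1$. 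Combining this with a net of the unit sphere of $\C^n$ of cardinality $\ee^{Cn}$, the crude bound $\norm{X}_{2\to 2}\lesssim 1$ a.w.o.p.\ (a consequence of Corollary~\ref{cor:rigidity_H}), and $p\geq(1+d_*)n$, yields $\sigma_{\min}(X)\gtrsim 1$ a.w.o.p. Since $\dim\ker XX^*-\dim\ker X^*X=p-n$ always, and $\dim\ker X^*X=0$ once $\sigma_{\min}(X)>0$, this gives $\dim\ker XX^*=\dim\ker\Xf=\dim\ker\Xf^2=p-n$ a.w.o.p., which is (i); and every nonzero $\mu\in\sigma(\Xf)$ satisfies $\mu^2\in\sigma(X^*X)$, hence $|\mu|\geq\sigma_{\min}(X)\geq\gamma_*$ for some $\gamma_*\gtrsim 1$, which is (ii). Shrinking $\gamma_*$, we may assume $\gamma_*<\sqrt{\delta_\dens}$.

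For (iii), fix $\delta_0\sim 1$ with $\delta_0<\min\{\gamma_*,\,\delta_*,\,\sqrt{\delta_\dens}-\eps_*\}$, where $\delta_*\gtrsim 1$ is the radius from Proposition~\ref{pro:QVE_close_to_zero}. On the a.w.o.p.\ event from (ii) the only singularity of $z\mapsto G(z)$ in $\{|z|<\gamma_*\}$ is a simple pole at $z=0$ with residue $-\Pi_0$, $\Pi_0$ the orthogonal projection onto $\ker\Xf$; hence $z\mapsto zG_{xy}(z)$ is holomorphic there, and by Proposition~\ref{pro:QVE_close_to_zero} so is $z\mapsto zM_x(z)$. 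Applying the maximum modulus principle to $z(G_{xy}(z)-M_x(z)\delta_{xy})$ on $\{|z|\leq\delta_0\}$ and estimating it on the circle $|z|=\delta_0$ by $\delta_0\,(n+p)^{-1/2+\eps}$ (there we are at distance $\gtrsim\eps_*$ from $\supp\rho$ and from $0$, so \eqref{eq:local_law_H_away_support} applies), and using \eqref{eq:local_law_H_away_support} directly for $\delta_0\leq|z|\leq\sqrt{\delta_\dens}-\eps_*$, gives \eqref{estimate_entrywise_close_to_zero}; the factor $|z|^{-1}$ is precisely the one removed by multiplying with $z$. For \eqref{estimate_average_close_to_zero} I would additionally use that $\avg{\gf(z)}=\tfrac{1}{n+p}\tr G(z)$ and $\avg{\Mf(z)}$ are both odd functions of $z$ (the spectrum of $\Xf$ and the measure $\rho$ are symmetric), so $z\avg{\gf(z)}$ and $z\avg{\Mf(z)}$ are even; their residues at $z=0$ agree by part (i), since $\tfrac{1}{n+p}\tr\Pi_0=\tfrac{p-n}{n+p}=\tfrac{1}{n+p}\sum_i u_i$; hence $z(\avg{\gf(z)}-\avg{\Mf(z)})$ vanishes to second order at $z=0$, so $(\avg{\gf(z)}-\avg{\Mf(z)})/z$ is holomorphic on $\{|z|<\min\{\gamma_*,\delta_*\}\}$ a.w.o.p., and the maximum modulus principle together with \eqref{eq:averaged_local_law_H_away_support} on $|z|=\delta_0$ yields $|\avg{\gf(z)}-\avg{\Mf(z)}|\prec|z|/(n+p)$ on $\{|z|\leq\sqrt{\delta_\dens}-\eps_*\}$.

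The main obstacle is the least-singular-value bound underlying (i) and (ii): unlike the absence of eigenvalues in the macroscopic interval $[\eps_*,\sqrt{\delta_\dens}-\eps_*]$, which is a consequence of bulk rigidity, excluding nonzero eigenvalues on \emph{all} scales down to zero is a genuinely different, hard-edge-type statement that the bulk local law cannot reach and that must be obtained by an anti-concentration/$\eps$-net argument exploiting \zerorect{} and the moment condition (C). (One could alternatively run the bootstrap of Section~\ref{subsec:local_law_H} near zero, using Proposition~\ref{pro:QVE_close_to_zero} as the stability input in place of Lemma~\ref{lem:bulk_stability_qve}, but detecting tiny nonzero eigenvalues still lies below the reach of that bootstrap, so the least-singular-value estimate seems unavoidable.) A secondary point requiring care is the bookkeeping in the maximum-modulus step producing the sharp rate $|z|/(n+p)$: it relies both on the exact matching of kernel dimension and of the total mass $\sum_i u_i$ from Proposition~\ref{pro:QVE_close_to_zero}, and on the spectral symmetry of $\Xf$ that forces the vanishing to second order.
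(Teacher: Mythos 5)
Your part (iii) is essentially the paper's own argument: the paper omits this proof and refers to the proof of part (vi) of Theorem~\ref{thm:eigenvalue_XX_star_X_star_X}, which is exactly your scheme --- on the a.w.o.p.\ event from (i)--(ii) the functions $zG_{xy}(z)$ and $z\Mf_x(z)$ are holomorphic on a disc of radius $\sim 1$, one estimates them on a circle $\abs{z}=\delta_0$ by \eqref{eq:local_law_H_away_support}--\eqref{eq:averaged_local_law_H_away_support}, extends to the full circle by the symmetry $G_{xy}(\bar z)=\overline{G_{yx}(z)}$, and applies the maximum principle; your observation that the residues at $z=0$ cancel (kernel dimension $p-n$ versus $\sum_i u_i=p-n$ from Proposition~\ref{pro:QVE_close_to_zero}) together with oddness, forcing an extra factor $\abs{z}$ in \eqref{estimate_average_close_to_zero}, is the same mechanism the paper uses via the explicit spectral representation.

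The genuine gap is in your proof of (i)--(ii). Your single-vector bound is fine: Paley--Zygmund plus a lower-tail Chernoff bound give $\P(\norm{Xv}_2^2\leq c_0)\leq \ee^{-c'p}$ for a fixed unit vector, but $c'$ is a \emph{fixed small} constant determined by $\varphi$ and $\mu_4$ (it cannot exceed roughly $\log(1/(1-\beta))$, where $\beta$ is the Paley--Zygmund probability). The union bound over an $\eps$-net of the unit sphere of $\C^n$ costs $\ee^{2n\log(C/\eps)}$ with $\log(C/\eps)\gtrsim 1$, and since $p/n=1+d_*$ with $d_*>0$ arbitrary (a model parameter), the requirement $c'p>2n\log(C/\eps)$ fails in general --- e.g.\ for small $d_*$, or for two-point entry distributions and nearly sparse $v$, where the per-row small-ball probability does not improve as the threshold shrinks, so the single-vector rate is capped at a fixed constant per row. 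Repairing this along your lines would require the compressible/incompressible decomposition and L\'evy-concentration estimates of Litvak--Pajor--Rudelson--Tomczak-Jaegermann/Rudelson--Vershynin, i.e.\ a substantially heavier argument than sketched, and it is not needed: the paper avoids any least-singular-value/anti-concentration estimate. Instead it interpolates $\Xf_t=\sqrt{1-t}\,\Xf+\sqrt{t}\,\wh\Xf$ with a Gaussian comparison matrix $\wh\Xf$, shows via Corollary~\ref{cor:rigidity_H} and the gap in $\supp\rho$ (Lemma~\ref{lem:gap_spectrum_H2}, valid uniformly in $t$ since the interpolated variance profile satisfies the same assumptions) that $\Xf_t^2$ has no eigenvalues in a fixed annulus $[\delta_1,\delta_2]$ a.w.o.p., and then uses continuity of eigenvalues in $t$ plus the known Gaussian hard-edge result (\cite{Feldheim2010}, or \cite{silverstein1985}) to conclude that exactly $p-n$ eigenvalues of $\Xf^2$ lie below $\delta_1$, which gives both the kernel dimension and the spectral gap $\gamma_*$. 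So the missing idea is precisely this comparison-plus-continuity step; your closing claim that a least-singular-value estimate "seems unavoidable" is contradicted by the paper's proof.
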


We will prove that the kernel of $\Xf^2$ has dimension $p-n$ by using a result about the smallest nonzero eigenvalue of $XX^*$ from \cite{Feldheim2010}.
Since this result requires the entries of $X$ to have the same variance and a symmetric distribution, in order to cover the general case, we employ 
a continuity argument which replaces $x_{ik}$, for definiteness, by centered Gaussians with variance $(n+p)^{-1}$. 
This will immediately imply Theorem \ref{thm:kernel_XX_star} and consequently Theorem \ref{thm:eigenvalue_XX_star_X_star_X}.

We recall the definition of $\delta_\dens$ from \eqref{eq:def_delta_dens} and choose $\delta_*$ as in Proposition \ref{pro:QVE_close_to_zero} for the whole section. 
Note that $\delta_*^2 \leq \delta_\dens$.
 
\begin{lem} \label{lem:gap_spectrum_H2}
If \eqref{eq:p_bigger_than_n2} holds true then for all $\delta_1, \delta_2 >0$ such that $\delta_1 < \delta_2 < \delta_*^2/2$, the matrix 
$\Xf^2$ has no eigenvalues in $[\delta_1, \delta_2]$ a.w.o.p. 
\end{lem}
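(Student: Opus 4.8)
The plan is to reduce the statement about $\Xf^2$ to a statement about $\Xf$ and then invoke the already-established rigidity corollary, Corollary~\ref{cor:rigidity_H}(iii), which provides absence of eigenvalues of $\Xf$ outside of $\supp\rho$. The only genuinely non-mechanical point is locating $\supp\rho$ near zero, for which I would combine Proposition~\ref{pro:QVE_close_to_zero}(iv) with the Hölder regularity of $\avg{\Mf}$ from Lemma~\ref{lem:continuity_mf}.

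First I would use that $\Xf^2 = \diag(XX^*, X^*X)$, so $\sigma(\Xf^2) = \{\mu^2 : \mu \in \sigma(\Xf)\}$. Hence $\Xf^2$ has an eigenvalue in $[\delta_1,\delta_2]$ if and only if $\Xf$ has an eigenvalue in the set $J \defeq [-\sqrt{\delta_2},-\sqrt{\delta_1}] \cup [\sqrt{\delta_1},\sqrt{\delta_2}]$, which is a subset of $\R$ since $\Xf$ is Hermitian. It therefore suffices to show $\sigma(\Xf) \cap J = \varnothing$ a.w.o.p.

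Next I would pin down $\supp\rho$ in a neighbourhood of zero. By Proposition~\ref{pro:QVE_close_to_zero}(iv), $\lim_{\eta\downarrow 0}\Im\Mf(\tau+\i\eta) = 0$ locally uniformly for $\tau \in (-\delta_*,\delta_*)\setminus\{0\}$. Together with Lemma~\ref{lem:continuity_mf}, which gives that $z\mapsto\avg{\Mf(z)}$ extends to a locally Hölder-continuous function on $\overline{\Hb}\setminus\{0\}$, this shows that $\rho$ restricted to $\R\setminus\{0\}$ is absolutely continuous with a continuous density that vanishes identically on $(-\delta_*,\delta_*)\setminus\{0\}$. Since moreover $\supp\rho \subset [-2,2]$, I conclude $\supp\rho \cap (-\delta_*,\delta_*) \subseteq \{0\}$. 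Ruling out a possible singular part of $\rho$ away from zero is exactly where the Hölder continuity is needed; everything else here is soft.

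Finally, since $0 < \delta_1 < \delta_2 < \delta_*^2/2$ one has $0 < \sqrt{\delta_1} \le \sqrt{\delta_2} < \delta_*/\sqrt 2 < \delta_*$, and also $\sqrt{\delta_2} < \sqrt{\delta_\dens/2} \le \sqrt 2 < 10$ because $\supp\nu\subset[0,4]$ and $\delta_*^2\le\delta_\dens$. Consequently every point of $J$ lies in $[-10,-\sqrt{\delta_1}]\cup[\sqrt{\delta_1},10]$ and at distance at least $\eps_* \defeq \tfrac12\min\{\sqrt{\delta_1},\,\delta_*-\sqrt{\delta_2}\} > 0$ from $\supp\rho$ (checking separately the point $0$ and the part of $\supp\rho$ in $[\delta_*,2]$). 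Applying Corollary~\ref{cor:rigidity_H}(iii) with $\delta\defeq\sqrt{\delta_1}$ and this $\eps_*$ then yields that, a.w.o.p., $\Xf$ has no eigenvalue $\tau$ with $\sqrt{\delta_1}\le|\tau|\le 10$ and $\dist(\tau,\supp\rho)\ge\eps_*$; in particular $\sigma(\Xf)\cap J = \varnothing$ a.w.o.p., and hence $\Xf^2$ has no eigenvalue in $[\delta_1,\delta_2]$ a.w.o.p. (with the threshold in ``a.w.o.p.'' allowed to depend on the fixed numbers $\delta_1,\delta_2$ through $\delta$ and $\eps_*$). No step of this plan is a serious obstacle: the whole argument is an assembly of Proposition~\ref{pro:QVE_close_to_zero}, Lemma~\ref{lem:continuity_mf}, and Corollary~\ref{cor:rigidity_H}.
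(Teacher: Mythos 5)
Your proposal is correct and follows essentially the same route as the paper: the paper also deduces the claim from Corollary~\ref{cor:rigidity_H}(iii) combined with the gap in $\supp\rho$ near zero provided by Proposition~\ref{pro:QVE_close_to_zero} and the identity $\sigma(\Xf^2)=\sigma(\Xf)^2$. Your version is merely more explicit about locating $\supp\rho$ (the locally uniform vanishing of $\Im\avg{\Mf}$ already rules out any mass, singular or not, on $(-\delta_*,\delta_*)\setminus\{0\}$) and about the choice of $\delta$ and $\eps_*$ in the corollary.
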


\begin{proof}
Part (iii) of Corollary \ref{cor:rigidity_H} with $\delta = \delta_1$ and $\eps_* = \min\{\delta_1,\delta_\pi-\delta_2\}$ implies 
\[ \# \left(\sigma({\Xf}) \cap [\sqrt \delta_1, \sqrt \delta_2] \right)= 0 \]
a.w.o.p. because there is a gap in the support of $\rho$ by part (iii) of Proposition \ref{pro:QVE_close_to_zero}. Since $\sigma(H^2) = \sigma(H)^2$ this concludes the proof. 
\end{proof} 

For the remainder of the section, let $\wh X=(\wh x_{ik})_{i=1,\ldots,p}^{k=1, \ldots, n}$ consist of independent centered Gaussians with $\E \abs{\wh x_{ik}}^2 = (n+p)^{-1}$. 
We set 
\[ \wh {\Xf} \defeq \begin{pmatrix} 0 & \wh X \\ \wh X^* & 0 \end{pmatrix}. \]

\begin{lem} \label{lem:kernel_XX_star_Gauss}
If \eqref{eq:p_bigger_than_n2} holds true then the kernel of $\wh X \wh X^*$ has dimension $p-n$ a.w.o.p., $\ker( \wh X^* \wh X) = \{0\}$ a.w.o.p.
and there is a $\wh\gamma \sim 1$ such that 
\begin{equation} \label{eq:estimate_wh_lambda}
\wh \lambda \geq \wh \gamma
\end{equation}
 for all $\wh\lambda  \in \sigma( \wh X^* \wh X)$.
\end{lem}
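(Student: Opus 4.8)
The plan is to reduce the three statements about $\wh X\wh X^*$, $\wh X^*\wh X$ and the smallest nonzero eigenvalue to known results for Gaussian (Ginibre-type) random matrices. Since the entries $\wh x_{ik}$ are i.i.d.\ centered Gaussians with common variance $(n+p)^{-1}$, the matrix $\wh X$ is, up to the deterministic scalar factor $(n+p)^{-1/2}$, a standard $p\times n$ Ginibre matrix. First, I would observe that the dimension of $\ker(\wh X\wh X^*)$ is almost surely $p-n$: with probability one a $p\times n$ Gaussian matrix with $p>n$ has full column rank $n$, so $\wh X^*\wh X$ is a.s.\ invertible (giving $\ker(\wh X^*\wh X)=\{0\}$ a.s., hence a.w.o.p.) and $\wh X\wh X^*$ has rank exactly $n$, so its kernel has dimension $p-n$ a.s., hence a.w.o.p. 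This handles parts one and two essentially for free.

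The substantive point is \eqref{eq:estimate_wh_lambda}: the smallest \emph{nonzero} eigenvalue of $\wh X^*\wh X$ (equivalently, the smallest eigenvalue of the $n\times n$ matrix $\wh X^*\wh X$, which is a.s.\ positive) is bounded below by a constant $\wh\gamma\sim 1$ a.w.o.p. Here I would invoke the result of Feldheim and Sodin \cite{Feldheim2010} on the hard/soft edge of sample covariance matrices (referenced in the paragraph preceding Lemma \ref{lem:gap_spectrum_H2}): for a properly rectangular Gaussian matrix with aspect ratio bounded away from $1$ as in \eqref{eq:p_bigger_than_n2}, the smallest eigenvalue of $\wh X^*\wh X$ concentrates around the lower Marchenko--Pastur edge $(1-\sqrt{n/p})^2\cdot c$ in our normalization, which under \eqref{eq:p_bigger_than_n2} is $\gtrsim 1$; more precisely, the probability that this smallest eigenvalue drops below a fixed fraction of that edge is superpolynomially (in fact exponentially) small. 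Since $1-\sqrt{n/p}\geq 1-(1+d_*)^{-1/2}\gtrsim 1$, this yields a deterministic $\wh\gamma\sim 1$ with $\wh\lambda\geq\wh\gamma$ a.w.o.p.\ for all $\wh\lambda\in\sigma(\wh X^*\wh X)$.

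Alternatively, and perhaps cleaner for self-containedness, I would note that \eqref{eq:estimate_wh_lambda} also follows directly from the already-established Lemma \ref{lem:gap_spectrum_H2} applied to $\wh X$ in place of $X$: the Gaussian ensemble $\wh X$ satisfies (A), (C), (D) and \zerorect, so Corollary \ref{cor:rigidity_H} and Proposition \ref{pro:QVE_close_to_zero} apply and give a macroscopic spectral gap $(0,\delta_*^2)$ for $\wh X^2$ a.w.o.p.; since $\sigma(\wh X^2)\setminus\{0\}=\sigma(\wh X^*\wh X)\setminus\{0\}$ (and $\wh X^*\wh X$ is a.s.\ invertible by the rank argument above), every $\wh\lambda\in\sigma(\wh X^*\wh X)$ satisfies $\wh\lambda\geq\delta_*^2/2=:\wh\gamma\sim 1$ a.w.o.p. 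The only place one must be slightly careful is the interplay of the various a.w.o.p.\ statements with the deterministic a.s.\ rank facts, but these combine without trouble since the threshold function $N_0$ in the definition of a.w.o.p.\ depends only on model parameters, which are fixed for the Gaussian ensemble. The main obstacle, such as it is, is simply making sure the normalization constants line up so that the lower Marchenko--Pastur edge is genuinely of order one under \eqref{eq:p_bigger_than_n2}; this is immediate from $\delta_*\sim 1$ in Proposition \ref{pro:QVE_close_to_zero}.
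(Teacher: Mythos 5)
Your first route is essentially the paper's proof: the key input is exactly Corollary V.2.1 of \cite{Feldheim2010} (the paper also notes the weaker bound from \cite{silverstein1985} suffices), which under \eqref{eq:p_bigger_than_n2} and (D) gives $\wh\lambda_{p-n+1}\geq \gamma_- - o(1)$ with $\gamma_- = (\sqrt p-\sqrt n)^2/(n+p)\gtrsim 1$, i.e.\ \eqref{eq:estimate_wh_lambda}. The only cosmetic difference is how the kernel statements are obtained: you use that a Gaussian $p\times n$ matrix has full column rank almost surely, while the paper simply notes $\dim\ker \wh X\wh X^*\geq p-n$ for dimensional reasons and then reads off both kernel statements from the eigenvalue bound together with $\sigma(\wh X\wh X^*)\setminus\{0\}=\sigma(\wh X^*\wh X)\setminus\{0\}$; both are fine.

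Your ``alternatively, cleaner'' route, however, does not work, and it is worth seeing why. Lemma \ref{lem:gap_spectrum_H2} (equivalently Corollary \ref{cor:rigidity_H}(iii) plus Proposition \ref{pro:QVE_close_to_zero}) only says that for \emph{fixed} $0<\delta_1<\delta_2<\delta_*^2/2$ there are a.w.o.p.\ no eigenvalues of $\wh\Xf^2$ in $[\delta_1,\delta_2]$; it is silent about the interval $(0,\delta_1)$, because the local law is only proved for $\abs{z}\geq\delta>0$ and near zero the deterministic density has a point mass there, so this machinery cannot distinguish genuine kernel eigenvalues from spurious small positive ones (e.g.\ of size $p^{-10}$). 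Combining it with the a.s.\ invertibility of $\wh X^*\wh X$ still leaves the smallest nonzero eigenvalue uncontrolled. This is precisely the gap that Lemma \ref{lem:kernel_XX_star_Gauss} is designed to close with an \emph{external} Gaussian input, which is then transported to general $X$ by the continuity argument in the proof of Theorem \ref{thm:kernel_XX_star}; deriving the lemma from Lemma \ref{lem:gap_spectrum_H2} would defeat that purpose and, more importantly, simply does not yield \eqref{eq:estimate_wh_lambda}. So keep the Feldheim--Sodin (or Silverstein) argument as the actual proof and drop the alternative.
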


\begin{proof}
Let $\wh \lambda_1 \leq \ldots \leq \wh \lambda_p$ be the eigenvalues of $\wh X \wh X^*$. The assertion will follow once we have established that 
$\wh \lambda_{p-n+1} \gtrsim 1$ a.w.o.p. since $\wh X \wh X^*$ and $\wh X^* \wh X$ have the same nonzero eigenvalues and $\dim \ker \wh X \wh X^* \geq p-n$ for dimensional reasons. 
Corollary V.2.1 in \cite{Feldheim2010} implies that $\wh \lambda_{p-n+1} \geq \gamma_- - p^{-2/3+\eps}$ a.w.o.p. for each $\eps >0$  
where $\gamma_- \defeq 1- 2\sqrt{pn}/(n+p) \gtrsim 1$, thus $\wh \lambda_{p-n+1} \gtrsim 1$ a.w.o.p. 
 In fact, our proof only requires 
that  $\wh \lambda_{p-n+1} \geq \gamma_- -\eps$ 
for any $\eps>0$ a.w.o.p, which already follows from the argument in  \cite{silverstein1985}.  
\end{proof}

\begin{proof}[Proof of Theorem \ref{thm:kernel_XX_star}]
We define ${\Xf}_t \defeq \sqrt{1-t} {\Xf} + \sqrt t \wh {\Xf}$ for $t \in [0,1]$ and set $\gamma_* \defeq \min\{\delta_*/2, \sqrt{\wh\gamma}\}$, where $\wh \gamma$ is chosen as in 
\eqref{eq:estimate_wh_lambda}.  
By Lemma \ref{lem:gap_spectrum_H2} with $\delta_2 \defeq \gamma_*^2$ and $\delta_1 \defeq \gamma_*^2/2$, ${\Xf}_t^2$ has no eigenvalues in $[\delta_1, \delta_2]$ a.w.o.p. for every $t \in [0,1]$. 
Clearly, the eigenvalues of ${\Xf}_t^2$ depend continuously on $t$. 
Therefore, $\# (\sigma({\Xf}^2) \cap [0,\delta_1))  = \# (\sigma(\wh {\Xf}^2) \cap [0,\delta_1))$. Thus, we get the chain of inequalities 
\[ p-n \leq \dim \ker {\Xf} = \dim \ker {\Xf}^2 \leq \# \left(\sigma({\Xf}^2) \cap [0,\delta_1)\right) = \#\left( \sigma(\wh {\Xf}^2) \cap [0,\delta_1) \right)= \dim \ker \wh {\Xf}^2 = p-n. \]
Here we used Lemma \ref{lem:kernel_XX_star_Gauss} in the last step. As the left and the right-hand-side are equal all of the inequalities are equalities which concludes the proof of part (i)
and part (ii). 

We will omit the proof of part (iii) of Theorem \ref{thm:kernel_XX_star} as it is very similar to the proof of part (vi) of Theorem \ref{thm:eigenvalue_XX_star_X_star_X}  below 
which will be independent of part (iii) of Theorem \ref{thm:kernel_XX_star}.
\end{proof}

\begin{proof}[Proof of Theorem \ref{thm:eigenvalue_XX_star_X_star_X}]
Since $\delta_*$ is chosen as in Proposition \ref{pro:QVE_close_to_zero} we conclude $\delta_\dens \geq \delta_*^2 \gtrsim 1$ from part (iv) of this proposition. Part (ii) and (iii)  
of the theorem follow immediately from \eqref{eq:nonzero_eigenvalues_of_Xf_are_away_from_zero} in Theorem \ref{thm:kernel_XX_star}. 

If $p>n$, then $\dim \ker XX^*=p-n$ a.w.o.p. as $p-n \leq \dim \ker XX^* \leq \dim \ker H^2 = p-n$ a.w.o.p by part (i) of Theorem \ref{thm:kernel_XX_star}.
By Proposition \ref{pro:QVE_close_to_zero}, we obtain $\poma = \avg{u} = 1- n/p$, where $u$ is defined as in this proposition. This proves part (iv). 
If $p<n$, then part (v) follows from interchanging the roles of $X$ and $X^*$ and following the same steps as in the proof of part (iv). 

For the proof of part (vi), we first assume $p>n$. By Proposition \ref{pro:QVE_close_to_zero} we can uniquely extend $\zeta m(\zeta) = \sqrt \zeta M_1(\sqrt \zeta)$ 
to a holomorphic function on $D_{\delta_*^2}(0)$. We fix $\gamma_*$ as in \eqref{eq:nonzero_eigenvalues_of_Xf_are_away_from_zero}. 
On the event $\{ \lambda_i \geq \gamma_*^2 \text{ for all }i=p-n+1, \ldots, p \}$, which holds true a.w.o.p. by \eqref{eq:nonzero_eigenvalues_of_Xf_are_away_from_zero}, 
the function $\zeta R(\zeta)$ can be uniquely extended to a holomorphic function on $D_{\gamma_*^2}(0)$. We set $\delta \defeq \min\{ \gamma_*^2/2, \delta_*^2\}$ 
and assume without loss of generality that $\delta \leq \delta_\dens - \eps_*$. For $\zeta \in \Hb$ satisfying 
$\delta \leq \abs{\zeta} \leq \delta_\dens - \eps_*$, \eqref{eq:local_law_around_zero1} is immediate from \eqref{eq:local_law_XX_star_away}. We apply \eqref{eq:local_law_XX_star_away} to obtain 
$\max_{i,j} \abs{R_{ij}(\zeta) - m_i(\zeta) \delta_{ij}} \prec 1/p$ for $\zeta \in \Hb$ satisfying $\abs{\zeta} = \delta$. By the symmetry of $R(\zeta)$ and $m(\zeta)$ this estimate holds true for all $\zeta \in \C$ satisfying
$\abs{\zeta}= \delta$. Thus, the maximum principle implies that $\max_{i,j} \abs{\zeta R_{ij}(\zeta) - \zeta m_i (\zeta)\delta_{ij}} \prec 1/p$ which proves \eqref{eq:local_law_around_zero1} since 
$\{\lambda_i \geq 2\delta \text{ for all }i=p-n+1, \ldots, p \}$ which holds true a.w.o.p. by $2\delta \leq \gamma_*^2$ and \eqref{eq:nonzero_eigenvalues_of_Xf_are_away_from_zero}.   
If $p<n$ then $XX^*$ does not have a kernel a.w.o.p. by  (v).  
Therefore, a similar argument yields \eqref{eq:local_law_around_zero2}.

For the proof of \eqref{eq:local_law_around_zero3}, we observe that $\dim \ker (XX^*) = p \poma$ a.w.o.p. in both cases by (iv) and (v). Thus, 
\[ \frac{1}{p} \sum_{i=1}^p [R_{ii}(\zeta) - m_i(\zeta)] = \frac{1}{p} \left( \sum_{j\colon \lambda_j \geq \gamma_*^2} \frac{1}{\lambda_j - \zeta} - \sum_{i=1}^p a_i(\sqrt \zeta) \right) \]
a.w.o.p. for $\zeta \in D_\delta(0)$, $\delta$ chosen as above, 
by \eqref{eq:nonzero_eigenvalues_of_Xf_are_away_from_zero}, where $a$ is the holomorphic function on $D_{\delta_*}(0)$ defined in Proposition \ref{pro:QVE_close_to_zero}. 
The right-hand side of the previous equation can therefore be uniquely extended to 
a holomorphic function on $D_{\delta_*}(0)$. As before, the estimate \eqref{eq:local_law_XX_star_away} can be extended to $\zeta \in \Hb$ with $\abs{\zeta} \leq \delta$ by the maximum principle.
\end{proof}

The local law for $\zeta$ around zero needed an extra argument, Theorem \ref{thm:eigenvalue_XX_star_X_star_X}, due to the possible singularity at $\zeta =0$. We note that this separate treatment 
is necessary even if $p<n$, in which case $XX^*$ does not have a kernel and $R(\zeta)$ is regular at $\zeta =0$, since we study $XX^*$ and $X^*X$ simultaneously. Our main stability results are formulated and proved in 
terms of $H$, as defined in \eqref{eq:def_H_S}. Therefore, these results are not sensitive to whether $p$ or $n$ is bigger which means whether $XX^*$ has a kernel or $X^*X$. 

\appendix
\section{Appendix: Proof of the Rotation-Inversion lemma}

In this appendix, we prove the Rotation-Inversion lemma, Lemma \ref{lem:bulk_stability}.

\begin{proof}[Proof of Lemma \ref{lem:bulk_stability}]
In this proof, we will write $\norm{A}$ to denote $\normtwo{A}$.  Moreover, we introduce a few short hand notations,
\[
\cal{U}\,:=\, 
\left(
\begin{array}{cc}
U_1	&	0
\\
0	&	U_2
\end{array}
\right)\,,
\qquad
\cal{A}\,:=\, 
\left(
\begin{array}{cc}
0	&	A
\\
A^*	&	0
\end{array}
\right)\,,
\qquad
a_{\pm}\,:=\, \frac{1}{\sqrt{2}}
\left(
\begin{array}{c}
v_1
\\
\pm v_2 	
\end{array}
\right)\,,\qquad
\rho\,:=\, \norm{A^*A}^{1/2}.
\]
In particular,  we have $Av_2=\rho \2\ee^{\ii \psi}v_1$ and $A^*v_1=\rho\2\ee^{-\ii \psi} v_2$ for some $\psi \in \R$. By redefining $v_1$ to be $\ee^{\ii \psi}v_1$ we may assume that $\psi=0$ and get $\cal{A} a_{\pm}= \pm \rho\2a_{\pm}$ as well. 

Let us check that indeed $\cal{U}+\cal{A}$ is not invertible if the right hand side of \eqref{bound on block inverse} is infinite, i.e., if 
\[
\norm{A^{*}A}\scalar{v_1}{U_1v_1}\scalar{v_2}{U_2v_2}\,=\, 1\,.
\]
In this case we find $\norm{A^{*}A}=1$, $\scalar{v_1}{U_1v_1}=\ee^{\ii \1\varphi}$ and $\scalar{v_2}{U_2v_2}=\ee^{-\ii \1\varphi}$ for some $\varphi \in \R$. Thus, $v_1$ and $v_2$ are eigenvectors of $U_1$ and $U_2$, respectively. Therefore, both $\cal{U}$ and $\cal{A}$ leave the $2$-dimensional subspace spanned by $(v_1,0)$ and $(0,v_2)$ invariant and in this basis the restriction of $\cal{U}+\cal{A}$ is represented by the $2\times 2$-matrix
\[
\left(
\begin{array}{cc}
\ee^{\ii\1\varphi}	&	1
\\
1	&	\ee^{-\ii\1\varphi}
\end{array}
\right),
\]
which is not invertible. 

We will now show that in every other case $\cal{U}+\cal{A}$ is invertible and its inverse satisfies \eqref{bound on block inverse}.
To this end we will derive a lower bound on $\norm{(\cal{U}+\cal{A})w}$ for an arbitrary normalized vector $w \in \C^{n+p}$. Any such vector admits a decomposition,
\[
w\,=\, \alpha_+\1a_+ + \alpha_- \1a_- + \beta \1 b\,,
\]
where $\alpha_\pm \in \C$, $\beta \geq 0$ and $b$ is a normalized vector in the orthogonal complement of the $2$-dimensional space spanned by $a_+$ and $a_-$. The normalization of $w$ implies
\bels{w normalization}{
|\alpha_+|^2+|\alpha_-|^2+\beta^2\,=\, 1\,.
}
The case $\beta=1$ is trivial because the spectral gap of $A^*A$ implies a spectral gap of $\cal{A}$ in the sense that
\bels{spectral gap of cal A}{
\spec(\cal{A}/\rho)\,\subseteq\,\{-1\}\cup \big[-1 +\rho^{-2}\1\rr{Gap}(AA^*),1 -\rho^{-2}\1\rr{Gap}(AA^*)\2\big]\cup\{1\}\,.
}
Thus, we will from now on assume $\beta<1$. 

We will use the notations $\cal{P}_\parallel$ and $\cal{P}_\perp$ for the orthogonal projection onto the $2$-dimensional subspace spanned by $ a_\pm$ and its orthogonal complement, respectively. We also introduce
\bels{definition of lambda and kappa}{
\lambda\,:=\, \frac{1}{2}\frac{ \abs{\alpha_++\alpha_-}^2}{\abs{\alpha_+}^2+\abs{\alpha_-}^2} \in [0,1]\,,
\qquad 
\kappa\,:=\, 
(\abs{\alpha_+}^2+\abs{\alpha_-}^2)^{-1/2}\norm{\cal{P}_\parallel(1+\cal{U}^*\cal{A})(\alpha_+ a_+ + \alpha_- a_-)}\,.
}
With this notation we will now prove
\bels{Lower bound on norm of cal U+ cal A w}{
\norm{(\cal{U}+\cal{A})w}\,\geq\, c_1\2\rr{Gap}(AA^*)\2\kappa\,,
}
for some positive numerical constant $c_1$. The analysis is split into the following regimes:
\begin{description}
\item [Regime 1:] $  \kappa^{1/2} < 10\beta$,
\item [Regime 2:] $\kappa^{1/2}\geq 10\beta  $ and $\lambda < 1/10$,
\item  [Regime 3:] $\kappa^{1/2}\geq 10\beta $ and $\lambda > 9/10$,
\item [Regime 4:] $\kappa^{1/2}\geq 10\beta $ and $1/10\leq \lambda \leq 9/10$ and $\abs{\scalar{v_1}{U_1v_1}}^2+\abs{\scalar{v_2}{U_2v_2}}^2\leq 2-\kappa/2$,
\item [Regime 5:] $\kappa^{1/2}\geq 10\beta $ and $1/10\leq \lambda \leq 9/10$ and $\abs{\scalar{v_1}{U_1v_1}}^2+\abs{\scalar{v_2}{U_2v_2}}^2> 2-\kappa/2$.
\end{description}
These regimes can be chosen more carefully in order to optimize the constant $c_1$ in \eqref{Lower bound on norm of cal U+ cal A w}, but we will not do that here.  

\medskip
\noindent{\textbf{Regime 1: }} In this regime we make use of the spectral gap of $A^*A$ by simply using the triangle inequality,
\[
\norm{(\cal{U}+\cal{A})w}\,\geq\, \norm{w}- \norm{\cal{A}w}\,=\, 1- \sqrt{\1\rho^2\1\abs{\alpha_+}^2 +\1\rho^2\1\abs{\alpha_-}^2+\beta^2\norm{\cal{A}b}^2}.
\]
We use the inequality $1-\sqrt{1-\tau}\geq \tau/2$ for $\tau \in [0,1]$ as well as the normalization \eqref{w normalization} and find
\bes{
2\1\norm{(\cal{U}+\cal{A})w}\,\geq\, 1-\1\rho^2\1 +\1\rho^2\1\beta^2-\beta^2\norm{\cal{A}b}^2\,\geq\, \rho\2\beta^2(\rho-\norm{\cal{A}b})
\,\geq\, \beta^2\rr{Gap}(AA^*)\,.
}
The last inequality follows from \eqref{spectral gap of cal A} and because $b$ is orthogonal to $a_\pm$. Since $\beta^2\geq \kappa/100 $, we conclude that in the first regime \eqref{Lower bound on norm of cal U+ cal A w} is satisfied.

\medskip
\noindent{\textbf{Regime 2: }} In this regime we project on the second component of $(\cal{U}+\cal{A})w$. 
\bes{
\sqrt{2}\norm{(\cal{U}+\cal{A})w}
\,&\geq\,
\norm{(\alpha_+-\alpha_-) U_2 v_2+\sqrt{2} \1\beta\1 U_2 b_2-(\alpha_++\alpha_-)A^* v_1-\sqrt{2} \1\beta \1A^* b_1 }
\\
\,&\geq\,
\abs{\alpha_+-\alpha_-}\norm{ U_2 v_2}-\sqrt{2}\1\beta \norm{U_2 b_2}-\rho\2\abs{\alpha_++\alpha_-}\norm{v_2}-\sqrt{2}\1\beta\norm{A^*b_1}
\\
\,&\geq\,
\sqrt{2}\1\sqrt{\abs{\alpha_+}^2+\abs{\alpha_-}^2}(\sqrt{1-\lambda}-\sqrt{\lambda})- 2\sqrt{2}\1\beta\,.
}
Here we used the notation $b=(b_1,b_2)$ for the components of $b$. The last inequality holds by the normalization of $v_2$ and $b$, by $\rho \leq 1$ and by the definition of $\lambda$ from \eqref{definition of lambda and kappa}, which also implies
\[
\abs{\alpha_+-\alpha_-}^2\,=\, 2(1-\lambda)(\abs{\alpha_+}^2+\abs{\alpha_-}^2)\,.
\]
Since $\lambda< 1/4$  in this regime and $\kappa\leq 2$ by the definition of $\kappa$ in \eqref{definition of lambda and kappa}  we find  $\beta \leq \kappa^{1/2}/10 \leq 1/5$ and infer
\bes{
\norm{(\cal{U}+\cal{A})w}
\,&\geq\,\sqrt{1-\beta^2}(\sqrt{1-\lambda}-\sqrt{\lambda})- 2\1\beta\,\geq\, 1/10\,\geq\, \kappa/20\,.
}

\medskip
\noindent{\textbf{Regime 3: }}  By the symmetry in $a_\pm$ and $\alpha_\pm$ and therefore in $\lambda$ and $1-\lambda$ this regime is treated in the same way as Regime 2 by estimating the norm of the first component of $(\cal{U}+\cal{A})w$ from below.

\medskip
\noindent{\textbf{Regime 4: }} Here we project onto the orthogonal complement of the subspace spanned by $a_+$ and $a_-$,
\bels{start regime 4}{
\norm{(\cal{U}+\cal{A})w}
\,&\geq\, 
\norm{\cal{P}_\perp(\cal{U}+\cal{A})w}
\,\geq\,  
\norm{\cal{P}_\perp\1\cal{U}(\alpha_+ \1a_+ + \alpha_- \1a_-)}-\beta \1\norm{\cal{P}_\perp(\cal{U}+\cal{A})b}
\,.
}
We compute the first term in this last expression more explicitly,
\bels{to minimize}{
\quad\msp{5}\norm{\cal{P}_\perp\1\cal{U}(\alpha_+ \1a_+ + \alpha_- \1a_-)}^2
\,&=\, 
\norm{\alpha_+ \1a_+ + \alpha_- \1a_-}^2-\norm{\cal{P}_\parallel\2\cal{U}(\alpha_+ \1a_+ + \alpha_- \1a_-)}^2
\\
\,&=\,
\abs{\alpha_+}^2+\abs{\alpha_-}^2- \frac{1}{2}\1\abs{\alpha_++\alpha_-}^2\abs{\scalar{v_1}{U_1 v_1}}^2 - \frac{1}{2}\1\abs{\alpha_+-\alpha_-}^2\abs{\scalar{v_2}{U_2 v_2}}^2 
\\
\,&=\,
(1-\beta^2)
\pb{
1- \lambda\abs{\scalar{v_1}{U_1 v_1}}^2 -(1-\lambda)\abs{\scalar{v_2}{U_2 v_2}}^2 
}\,.
}
For the second equality we used that
\[
\norm{\cal{P}_\parallel u}^2 \,=\, \abs{\scalar{v_1}{u_1}}^2+ \abs{\scalar{v_2}{u_2}}^2, \qquad u\2=\2(u_1,u_2) \in \C^{p+n}.
\]
With the choice of variables 
\[
\xi\,:=\, \abs{\scalar{v_1}{U_1 v_1}}^2\,,\qquad \eta \,:=\, \abs{\scalar{v_2}{U_2 v_2}}^2 \,,
\]
we are minimizing the last line in \eqref{to minimize} under the restrictions that are satisfied in this regime, 
\bes{
\min\{1-\lambda\1\xi-(1-\lambda)\1\eta:\;\xi,\eta \in [0,1]\,,\; 2\1\xi+2\2\eta \leq 4-\kappa\}
\,\geq\,\frac{1}{2}\2\kappa\1\min\{1-\lambda,\lambda\}
\,.
}
We use the resulting estimate in \eqref{start regime 4} and 
in this way we arrive at
\bes{
\norm{(\cal{U}+\cal{A})w}
\,&\geq\,\frac{1}{\sqrt{2}}\2 \kappa^{1/2}\sqrt{1-\beta^2}\2\min\{1-\lambda,\lambda\}^{1/2}-2\1\beta
\,\geq\, \frac{\kappa^{1/2}}{100}\,\geq\, \frac{\kappa}{200}
\,.
}
In the second to last inequality we used $\beta\leq 1/5$ which was already established in the consideration of Regime 2 and in the last inequality we used $\kappa\leq 2$. 

\medskip
\noindent{\textbf{Regime 5: }} 
In this regime we project onto the span of $a_+$ and $a_-$,
\bels{start regime 5}{
\norm{(\cal{U}+\cal{A})w}
\,&=\, 
\norm{(1+\cal{U}^*\cal{A})w}
\\
\,&\geq\, \norm{\cal{P}_\parallel(1+\cal{U}^*\cal{A})(\alpha_+ \1a_+ + \alpha_- \1a_-)}
-\beta\1 \norm{\cal{P}_\parallel(1+\cal{U}^*\cal{A})b}
\\
\,&=\, \sqrt{\abs{\alpha_+}^2+\abs{\alpha_-}^2}\,\kappa
-\beta\2\norm{\cal{P}_\parallel\2\cal{U}^*\cal{A}b}\,.
}
The second term in the last line is estimated by using
\bes{
\norm{\cal{P}_\parallel\2\cal{U}^*\cal{A}b}^2\,&\leq\, \norm{\cal{A}b}\sup_{h\parallel a_\pm}\sup_{u\perp a_\pm}\abs{\scalar{h}{\cal{U}^*u}}^2\,,
}
where the suprema are taken over normalized vectors $h$ and $u$ in the $2$-dimensional subspace spanned by $a_\pm$ and its orthogonal complement, respectively. First we perform the supremum over $h$ and get
\bels{regime 5 bound}{
\norm{\cal{P}_\parallel\2\cal{U}^*\cal{A}b}^2\,&\leq\,\sup_{u\perp a_\pm}\pb{\abs{\scalar{v_1}{U_1^*u_1}}^2+\abs{\scalar{v_2}{U_2^*u_2}}^2}
\,\leq\, \sup_{u_1 \perp v_1}\abs{\scalar{v_1}{U_1^*u_1}}^2 + \sup_{u_2 \perp v_2}\abs{\scalar{v_2}{U_2^*u_2}}^2
,
}
where the vectors $u_1\in \C^p$ and $u_2\in \C^n$ are normalized. 
Computing
\[
\sup_{u_1 \perp v_1}\abs{\scalar{v_1}{U_1^*u_1}}^2\,=\, 1- \abs{\scalar{v_1}{U_1v_1}}^2\,,\qquad 
\sup_{u_2 \perp v_2}\abs{\scalar{v_2}{U_2^*u_2}}^2\,=\, 1- \abs{\scalar{v_2}{U_2v_2}}^2\,,
\]
we get
\[
\norm{\cal{P}_\parallel\2\cal{U}^*\cal{A}b}^2\,\leq\,2-\abs{\scalar{v_1}{U_1v_1}}^2- \abs{\scalar{v_2}{U_2v_2}}^2\,\leq\,  \kappa/2\,,
\]
where we used the choice of Regime 5 in the last step.
Plugging this bound into \eqref{start regime 5} and using $\beta \leq \kappa^{1/2}/10$ as well as $\beta \leq 1/5$ yields
\[
\norm{(\cal{U}+\cal{A})w}
\,\geq\, 
\sqrt{1-\beta^2}\,\kappa
-\beta\2\kappa^{1/2}\,\geq\, \kappa/2\,.
\]

\medskip
This finishes the proof of \eqref{Lower bound on norm of cal U+ cal A w}. In order to show \eqref{bound on block inverse}, and thus the lemma, we notice that
\[
\kappa\,\geq\, \inf_{u \parallel a_\pm}\norm{\cal{P}_\parallel(1+\cal{U}^*\cal{A})u} \,,
\]
where the infimum is taken over normalized vectors $u$ in the span of $a_+$ and $a_-$. Thus, it suffices to estimate the norm of the inverse of $\cal{P}_\parallel(1+\cal{U}^*\cal{A})\cal{P}_\parallel$, restricted to the $2$-dimensional subspace with orthonormal basis $(v_1,0)$ and $(0,v_2)$. In this basis this linear operator takes the form of the simple $2\times2$-matrix,
\[
\left(
\begin{array}{cc}
1	&	\rho\1\scalar{v_1}{U_1v_1}
\\
\rho\1\scalar{v_2}{U_2v_2}	&	1
\end{array}
\right).
\]
Its inverse is bounded by the right hand side of \eqref{bound on block inverse}, up to the factor $\Gap(AA^*)$ that we encountered in \eqref{Lower bound on norm of cal U+ cal A w}, and the lemma is proven.
\end{proof}

\bibliographystyle{amsplain}
\bibliography{literature}

\end{document}